\numberwithin{equation}{section}
\newtheorem{Theorem}{Theorem}[section]
\newtheorem*{Theorem*}{Theorem}
\newtheorem{Corollary}[Theorem]{Corollary}
\newtheorem{Lemma}[Theorem]{Lemma}
\newtheorem{Proposition}[Theorem]{Proposition}
\newtheorem{Conjecture}[Theorem]{Conjecture}
 { \theoremstyle{definition}
\newtheorem{Definition}[Theorem]{Definition}

\newtheorem{Remark}[Theorem]{Remark} }
\newcommand{\leftexp}[2]{{\vphantom{#2}}^{#1}{#2}}
\newcommand{\CC}{\mathbb{C}}
\newcommand{\ZZ}{\mathbb{Z}}
\newcommand{\PP}{\mathbb{P}}
\newcommand{\RR}{\mathbb{R}}
\newcommand{\QQ}{\mathbb{Q}}
\newcommand{\ii}{\mathbf{i}}
\newcommand{\one}{\mathbf{1}}
\newcommand{\op}{\operatorname}
\newcommand{\A}{\mathcal{A}}
\newcommand{\T}{\mathcal{T}}
\renewcommand{\O}{\mathcal{O}}
\newcommand{\F}{\mathcal{F}}
\newcommand{\M}{\mathcal{M}}
\newcommand{\E}{\mathcal{E}}
\newcommand{\I}{\mathcal{I}}
\begin{document}

\allowdisplaybreaks

\newcommand{\arXivNumber}{2304.04365}

\renewcommand{\PaperNumber}{029}

\FirstPageHeading

\ShortArticleName{Reflection Vectors and Quantum Cohomology of Blowups}

\ArticleName{Reflection Vectors and Quantum Cohomology\\ of Blowups}

\Author{Todor MILANOV and Xiaokun XIA}
\AuthorNameForHeading{T.~Milanov and X.~Xia}
\Address{Kavli IPMU (WPI), UTIAS, The University of Tokyo, Kashiwa, Chiba 277-8583, Japan}
\Email{\href{todor.milanov@ipmu.jp}{todor.milanov@ipmu.jp}, \href{xia.xiaokun@ipmu.jp}{xia.xiaokun@ipmu.jp}}

\ArticleDates{Received May 30, 2023, in final form March 14, 2024; Published online April 05, 2024}

\Abstract{Let $X$ be a smooth projective variety with a semisimple quantum cohomology. It is known that the blowup $\operatorname{Bl}_{\rm pt}(X)$ of $X$ at one point also has semisimple quantum cohomology. In particular, the monodromy group of the quantum cohomology of $\operatorname{Bl}_{\rm pt}(X)$ is a~reflection
group. We found explicit formulas for certain generators of the monodromy group of the quantum cohomology of $\operatorname{Bl}_{\rm pt}(X)$ depending only on the geometry of the exceptional divisor.}

\Keywords{Frobenius structures; Gromov--Witten invariants; quantum cohomology}

\Classification{14N35; 35Q53}

\section{Introduction}

The notion of a Frobenius manifold was invented by Dubrovin in order
to give a geometric formulation of the properties of quantum
cohomology (see \cite{Du}). Later on, it was discovered by Dubrovin and
Zhang (see \cite{DuZ}) that if the Frobenius manifold is in addition semisimple, then
the corresponding Frobenius structure has very important applications
to the theory of integrable hierarchies of KdV type. Our main interest
is in a certain system of vectors which we call {\em reflection
 vectors}, associated to any semisimple Frobenius manifold. The most general
problem is to obtain a classification of the set of reflection vectors
corresponding to a semisimple Frobenius manifold. In fact, the set of
reflection vectors contain the information about the monodromy group
of the so-called {\em second structure connection}, so by solving an
appropriate classical Riemann--Hilbert problem, the reflection vectors uniquely
determine the corresponding semisimple Frobenius structure.

\subsection{Period vectors}
The main motivation to define period vectors for semisimple Frobenius
manifolds comes from the work of Givental \cite{Giv-AnKdV}. Using the period
integrals of a simple singularity of type $A$, Givental was able to
construct an integrable
hierarchy in the form of Hirota bilinear equations. This result generalizes
to simple singularities too (see~\cite{GM2005}). The key to the
constructions in these two papers are the period integrals of
K.~Saito (see~\cite{Saito1983}). Given a singularity $f\in \O_{\CC^n}$, Saito has
invented an extension of the classical residue pairing, called {\em
 higher residue pairing}, and used it to
construct a period map for the hypersurface
$\{f(x)=\lambda\}\subset \CC^n$, where $\lambda$ is a regular value.
Since the hypersurface is a Stein manifold, the vector space of holomorphic forms is
infinite dimensional and it is not clear at all that a
good notion of a period map exists. Saito's remarkable idea was to
ask for a set of holomorphic forms, called {\em good basis},
$(\omega_1,\dots,\omega_N)$, where~$N$ is the Milnor number of the singularity, such that, the
higher-residue pairing of $\omega_i$ and~$\omega_j$ vanish for all $1\leq
i,j\leq N$. The existence of a good basis was proved by M.~Saito~\cite{MSaito1989}. The existence of a good basis implies that the
space of miniversal deformations of the singularity $f$ has a~{\em
 flat structure} which turns out to be semisimple Frobenius
structure in the sense of Dubrovin (see~\cite{hertling_2002}). Moreover, the Gauss--Manin
connection in vanishing cohomology is identified with
the second structure connection~\eqref{2nd_str_conn:1}--\eqref{2nd_str_conn:2} with some $m\in \ZZ$ or
$m\in \tfrac{1}{2}+\ZZ$ depending on whether the number $n$ of variables
of the singularity $f$ is odd or even. Therefore, the second structure
connection of the Frobenius structure in singularity theory admits
solutions in terms of period integrals, see~\cite[Section~3.1]{FGM2010} for a nice overview of the construction of such
solutions. The period integrals used by
Givental and Milanov correspond to singularities with odd number of
variables, i.e., they are solutions to the second structure connection
$\nabla^{(m)}$ for some $m\in \ZZ$. Let us point out that Dubrovin
has introduced the so-called {\em Gauss--Manin} and {\em
extended Gauss--Manin systems} (see \cite[equations~(5.9), (5.11) and (5.12)]{Dubrovin1999}). In our notation, the Gauss--Manin system
 is the system of linear differential equations \eqref{2nd_str_conn:1} with $m=-1$
 and $\lambda=0$, while the extended Gauss--Manin system is the system
\eqref{2nd_str_conn:1}--\eqref{2nd_str_conn:2} with $m=0$. However,
in order to obtain a more satisfactory characterization of the period
integrals, we need to
consider the entire sequence of second structure connections
\eqref{2nd_str_conn:1}--\eqref{2nd_str_conn:2} with $m\in \ZZ$!

\begin{Remark}
The entire set of connections $\nabla^{(m)}$ $(m\in \CC)$ was introduced by Manin and
Merkulov \cite{MM1997} in order to classify semisimple Frobenius
manifolds as special solutions to the Schlesinger equations.
\end{Remark}

It is easy to check that if $I^{(m)}(t,\lambda)$ is a solution
to $\nabla^{(m)}$, then $I^{(m+1)}(t,\lambda):=\partial_\lambda
I^{(m)}(t,\lambda)$ is a solution to $\nabla^{(m+1)}$. Moreover, if
\smash{$m+\tfrac{1}{2}$} is not an eigenvalue of the grading operator
$\theta$, then the differential
operator $\partial_\lambda$ defines an isomorphism between the
solutions of~$\nabla^{(m)}$ and~$\nabla^{(m+1)}$. In particular, the
monodromy representations of these two connections are isomorphic.
In singularity theory, even if \smash{$m+\tfrac{1}{2}$} is an eigenvalue of
$\theta$ and $\partial_\lambda$ might fail to be surjective, the
period integrals are always in the image of $\partial_\lambda$! The
reason is the following. If
$I^{(m+1)}$ is a solution to $\nabla^{(m+1)}$ defined in terms
of period integrals, then by {\em stabilizing} the singularity once,
i.e., adding a square of a new variable to $f$, we get a new period
integral $I^{(m+1/2)}$ which is a solution to~$\nabla^{(m+1/2)}$. Moreover, by stabilizing twice we get a period
integral $I^{(m)}$ which is a solution to~$\nabla^{(m)}$ satisfying
$I^{(m+1)}=\partial_\lambda I^{(m)}$. By stabilizing twice
we get that if a period integral is a~solution to~$\nabla^{(m+1)}$ for
some $m\in \ZZ$, then it must be a derivative of a solution of~$\nabla^{(m)}$ which itself is also a period integral.
The above discussion motivates the following definition.
\begin{Definition}\label{def:pv}
For a given semisimple Frobenius manifold,
a sequences $I^{(m)}(t,\lambda)$ ($m\in \ZZ$) satisfying the
following two conditions:
\begin{enumerate}\itemsep=0pt
\item[(i)] flatness:
 $I^{(m)}(t,\lambda)$ is a solution to $\nabla^{(m)}$ for all
 $m\in \ZZ$,
\item[(ii)] translation invariance:
 $\partial_\lambda I^{(m)}(t,\lambda)=I^{(m+1)}(t,\lambda)$ for all
 $m\in \ZZ$
\end{enumerate}
is said to be a {\em period vector} of the Frobenius manifold.
\end{Definition}

\subsection{Reflection vectors}
The notion of a reflection vector was suggested by the first author in
\cite{Milanov:p2}. The definition depends on the choice of a {\em
 calibration} and it will be recalled in Section \ref{sec:pv}. In
this section we would like to give an alternative definition which has
the advantage of being independent of the choice of a calibration. The
relation to the original definition will be explained in Section
\ref{sec:cfm} below.

Suppose that $M$ is a semisimple Frobenius manifold and let $\A$ be the set of all period vectors of $M$. Given $\alpha\in \A$, we denote by \smash{$I^{(m)}_\alpha(t,\lambda)$} ($m\in \ZZ$) the corresponding sequence of solutions. To be more precise, we fix a reference point $(t^\circ,\lambda^\circ)\in M\times \CC$ in the complement of the discriminant (see Section \ref{sec:pv}) and let each \smash{$I^{(m)}_\alpha(t,\lambda)$} be an analytic solution to $\nabla^{(m)}$ defined in a neighborhood of $(t^\circ,\lambda^\circ)$. Clearly, $\A$ has a vector space structure: if $\alpha,\beta\in \A$, then we define \smash{$I^{(m)}_{\alpha+\beta}(t,\lambda):=I^{(m)}_\alpha(t,\lambda)+I^{(m)}_\beta(t,\lambda)$} and
\smash{$I^{(m)}_{c\alpha}(t,\lambda):=cI^{(m)}_\alpha(t,\lambda)$} where $c\in \CC$ is a scalar. As we already explained above, if $m$ is a sufficiently negative integer, then the map \smash{$\alpha\mapsto I^{(m)}_\alpha(t,\lambda)$} gives an isomorphism between $\A$ and the space of solutions to $\nabla^{(m)}$. In particular, $\A$ is a finite dimensional vector space and the analytic continuation along closed loops in the complement of the discriminant defines a representation of $\pi_1((M\times \CC)',(t^\circ,\lambda^\circ))$ on $\A$ isomorphic to the monodromy representation of $\nabla^{(m)}$. Here $(M\times \CC)'$ denotes the complement of the discriminant in $M\times \CC$ (see Section \ref{sec:pv}). The monodromy group, i.e., the image of the fundamental group of this representation will be called the {\em stable monodromy group} of $M$, or just the {\em monodromy group} of $M$.
\begin{Remark}\label{re:monodromy}
Dubrovin has defined the monodromy group of a Frobenius manifold to be the monodromy group of the Gauss--Manin system (see \cite[Definition~5.6]{Dubrovin1999}), i.e., the monodromy group of the second structure connection with $m=-1$.
In this paper, by monodromy of a Frobenius manifold we mean the
monodromy group of the connection $\nabla^{(m)}$, where~$m$ is a~sufficiently
negative integer, such that, $m+\tfrac{1}{2}$ is not an eigenvalue of
the grading operator~$\theta$. Our definition of a monodromy group
will coincide with Dubrovin's one if the eigenvalues of the grading
operator~$\theta$ do not take values in $\tfrac{1}{2}+\ZZ$. Since
passing to more negative values of $m$ corresponds to stabilization in
singularity theory, if one needs to make a clear distinction between our definition and Dubrovin's one, we suggest to refer to our monodromy group as the {\em stable} monodromy group.
\end{Remark}
The space of period vectors $\A$ has the following remarkable pairing, called {\em intersection pairing},
\[
(\alpha|\beta):=\big(I^{(0)}_\alpha(t,\lambda), (\lambda-E\bullet) I^{(0)}_\beta(t,\lambda)\big),
\]
where $E$ is the Euler vector field (see Sections \ref{sec:flat_connections} and \ref{sec:pv} for more details). Using the differential equations \eqref{2nd_str_conn:1}--\eqref{2nd_str_conn:2}, it is easy to check that the right-hand side of the above formula is independent of $t$ and $\lambda$. In particular, the intersection pairing is monodromy invariant. The local structure of the period vectors near a generic point on the discriminant is described by the following simple lemma.
\begin{Lemma}\label{le:local_mon}
Suppose that $C$ is a reference path from $(t^\circ,\lambda^\circ)$ to a point $(t,\lambda)\in (M\times \CC)'$ sufficiently close to a generic point $(t',u')$ on the discriminant. Then
\begin{enumerate}\itemsep=0pt
\item[$(a)$]
The subspace of $\beta\in \A$, such that, \smash{$I^{(m)}_\beta(t,\lambda)$} extends analytically in a neighborhood of~$(t',u')$ is a co-dimension~$1$ subspace of $\A$.
\item[$(b)$]
Up to a sign, there exists a unique period vector $\alpha\in \A$, such
that, the analytic continuation along a small loop around $(t',u')$
transforms \smash{$I^{(m)}_\alpha(t,\lambda)\mapsto
-I^{(m)}_\alpha(t,\lambda)$} $(\forall m)$ and ${(\alpha|\alpha)=2}$.\looseness=1
\end{enumerate}
\end{Lemma}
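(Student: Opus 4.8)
The plan is to study the second structure connections $\nabla^{(m)}$ in a small neighborhood of the generic point $(t',u')$ by passing to canonical coordinates $u_1(t),\dots,u_N(t)$ near $t'$, where $N=\dim M$. The discriminant in $M\times\CC$ is the union of the hypersurfaces $\{\lambda=u_j(t)\}$, and genericity of $(t',u')$ means that there is a unique index $i$ with $u'=u_i(t')$ and that the numbers $u_j(t')$ are otherwise pairwise distinct; shrinking the neighborhood, we may assume that $D:=\{\lambda=u_i(t)\}$ is a smooth hypersurface, that $\lambda-u_j(t)$ is nowhere vanishing for $j\ne i$, and that the reference point and the endpoint of $C$ lie off $D$. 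I would then rewrite $\nabla^{(m)}$ in the \emph{normalized canonical frame} $f_1,\dots,f_N$ (the idempotent vector fields rescaled so that $\langle f_j,f_k\rangle=\delta_{jk}$): here $E\bullet$ becomes the diagonal matrix $U=\operatorname{diag}(u_1,\dots,u_N)$, the operator $(\lambda-E\bullet)^{-1}$ becomes the diagonal matrix with entries $(\lambda-u_j)^{-1}$, and the grading operator $\theta$ becomes an antisymmetric matrix $V$ with zero diagonal.

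The key point is the local monodromy $T$ of $\nabla^{(m)}$ around $D$, which by the discussion preceding the lemma defines an operator on $\A$ and may be computed in any faithful linear representation, for instance on the space of solutions of $\nabla^{(m)}$ for $m$ sufficiently negative. I would compute it on a small disk transverse to $D$, namely $\{t=t_0\}$ for a fixed $t_0$ near $t'$, parametrized by $\lambda$: on this disk the $\partial_\lambda$-part of $\nabla^{(m)}$ has, in the normalized canonical frame, a single simple pole, at $\lambda=u_i(t_0)$, with residue $P_i\big(V-m-\tfrac12\big)$ up to sign (in the paper's convention for $\nabla^{(m)}$), where $P_i$ is the projection onto the $i$-th coordinate line; the other poles $\lambda=u_j(t_0)$, $j\ne i$, lie outside the disk. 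This residue matrix has only its $i$-th row nonzero, so its eigenvalues are $0$ with multiplicity $N-1$ and $V_{ii}-m-\tfrac12=-m-\tfrac12$ with multiplicity one, and it is diagonalizable because its image $\CC f_i$ is not contained in its kernel. Since $m\in\ZZ$ the nonzero eigenvalue is a half-integer, so there is no resonance, $T$ is semisimple with eigenvalues $1$ (multiplicity $N-1$) and $-1$ (multiplicity one), and a solution of $\nabla^{(m)}$ that is single-valued around $D$ is automatically holomorphic across $D$ (no logarithms, as the residue is semisimple; no poles, as the only integer residue eigenvalue is $0$). Hence a period vector $\beta$ has $I^{(m)}_\beta$ extending analytically near $(t',u')$ precisely when $T\beta=\beta$, so the set of such $\beta$ is the $1$-eigenspace of $T$, of dimension $N-1$. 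This proves part $(a)$.

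For part $(b)$, the same description shows that $T$ is an involution of $\A$ whose $(-1)$-eigenspace $L$ is one-dimensional, and a period vector $\alpha$ satisfies $I^{(m)}_\alpha\mapsto-I^{(m)}_\alpha$ under $T$ for all $m$ exactly when $0\ne\alpha\in L$. It remains to show $(\alpha_0|\alpha_0)\ne0$ for $0\ne\alpha_0\in L$, for then $\alpha:=\pm\sqrt{2/(\alpha_0|\alpha_0)}\,\alpha_0$ are the only two vectors of $L$ with $(\alpha|\alpha)=2$. Since the intersection pairing is monodromy invariant, hence $T$-invariant, and nondegenerate on $\A$, and since $T$ is an orthogonal involution, the eigenspaces of $T$ for the distinct eigenvalues $1$ and $-1$ are mutually orthogonal; therefore $\A$ is the orthogonal direct sum of the hyperplane of part $(a)$ and the line $L$, and nondegeneracy forces $(\cdot|\cdot)$ to be nondegenerate on $L$, i.e.\ $(\alpha_0|\alpha_0)\ne0$ (when $N=1$ this is immediate, since then $L=\A$). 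Alternatively, writing $I^{(0)}_{\alpha_0}=\sum_j v_j(t,\lambda)f_j$ in the normalized canonical frame, one has $v_i=c_0\,(\lambda-u_i)^{-1/2}+\cdots$ with $c_0\ne0$ (otherwise $\alpha_0$ would lie in the subspace of part $(a)$), the remaining $v_j$ stay bounded as $\lambda\to u_i$, and since $\sum_j(\lambda-u_j)v_j^2=(\alpha_0|\alpha_0)$ is constant in $(t,\lambda)$, letting $\lambda\to u_i$ gives $(\alpha_0|\alpha_0)=c_0^2\ne0$.

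The step I expect to require the most care is the identification of the residue of $\nabla^{(m)}$ along $D$, that is, extracting the local form of the connection in canonical coordinates and checking that its $\lambda$-residue is $P_i(V-m-\tfrac12)$ and that no resonance or logarithmic term appears, so that $T$ is genuinely conjugate to $\operatorname{diag}(1,\dots,1,-1)$ rather than to something with a nontrivial unipotent part. A way to bypass the explicit computation is to use that the germ of a semisimple Frobenius manifold near a point whose canonical coordinates are distinct, together with the $\lambda$-family at $\lambda=u_i$, splits (by the standard splitting lemma for meromorphic connections, since the residue eigenvalues $0$ and $-m-\tfrac12$ do not differ by a nonzero integer) into a holomorphic summand of rank $N-1$ and a rank-one summand of $A_1$-type at $\lambda=u_i$; one then reduces $(a)$ and $(b)$ to the explicit period vectors of the $A_1$-Frobenius manifold, for which the monodromy around the discriminant is $-1$ and the generating period vector has constant nonzero self-intersection.
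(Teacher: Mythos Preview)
Your proposal is correct and follows essentially the same route the paper indicates: the paper does not spell out a proof but cites Dubrovin's Lemma~5.3, whose argument is precisely the residue analysis of $\nabla^{(m)}$ in the normalized canonical frame that you carry out, and then in Section~\ref{sec:ai_sol} the paper makes the anti-invariant solution explicit via Givental's $R$-matrix expansion \eqref{expansion:u_i} and computes $(\alpha|\alpha)=2$ by evaluating $\big(I^{(0)}_\alpha,(\lambda-E\bullet)I^{(0)}_\alpha\big)$ at leading order in $\lambda-u_i$, which is exactly your ``alternative'' computation $(\alpha_0|\alpha_0)=c_0^2$. One small remark: in that alternative argument you say the remaining $v_j$ ``stay bounded'', but you actually need (and have) $v_j\to 0$ as $\lambda\to u_i$; this follows because anti-invariance forces each $v_j$ to lie in $(\lambda-u_i)^{1/2}\CC\{\lambda-u_i\}$, so bounded plus anti-invariant already gives $v_j=O\big((\lambda-u_i)^{1/2}\big)$. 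Your first argument for $(b)$, via orthogonality of the $\pm1$-eigenspaces of $T$, is cleaner but tacitly assumes the intersection pairing is nondegenerate on $\A$, which the paper does not assert in this generality; the direct computation avoids this and is what the paper actually does.
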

In the case $m=0$, this is exactly Lemma 5.3 in
\cite{Dubrovin1999}. Dubrovin's proof works in general too after
some minor modifications. Moreover, the solutions to $\nabla^{(0)}$ constructed by
Dubrovin in \cite[Lemma~5.3]{Dubrovin1999}, are in fact periods, i.e.,
we can include them in a sequence satisfying the conditions of
Definition \ref{def:pv} --- see Section \ref{sec:ai_sol} where this sequence
is constructed in terms of Givental's $R$-matrix.
\begin{Definition}\label{def:rv}
A period vector $\alpha\in \A$ is said to be a {\em reflection vector}
if there exists a reference path $C$ approaching a generic point on
the discriminant, such that, $\pm\alpha$ is the unique vector from
part~(b) in Lemma \ref{le:local_mon}.

\end{Definition}
It is an easy corollary of Lemma \ref{le:local_mon} that if $C$ is a
simple loop in $(M\times \CC)'$ around a generic point on the
discriminant, then the analytic continuation along $C$ defines a
linear transformation
\[
w_\alpha\colon\ \A\to \A, \qquad x\mapsto x-(x|\alpha)\alpha,
\]
where $\alpha$ is the reflection vector corresponding to $C$, i.e.,
this is an orthogonal reflection in $\A$ with respect to the
hyperplane $\alpha^\perp:=\{ y\in \A\mid  (\alpha|y)=0\}$. This is our
main motivation to call $\alpha$ a reflection vector. Note that if
$\pi_1(M,t^\circ)=1$, then the monodromy group $\pi_1((M\times
\CC)',\allowbreak(t^\circ,\lambda^\circ))$ is generated by simple loops around
the discriminant. Therefore, in this case, the stable monodromy group
of $M$ is a reflection group.

\subsection{Calibrated Frobenius manifolds}\label{sec:cfm}
We would like to construct an isomorphism $\A\cong H$ where
$H=T_{t^\circ}M$ is the tangent space at the reference point, or
equivalently the space of flat vector fields. This isomorphism depends
on the choice of a fundamental solution $S(t,z) z^\theta z^{-\rho}$ of
Dubrovin's connection near $z=\infty$, such that, the operator series
satisfies the symplectic condition $S(t,z) S(t,-z)^T=1$ (see Section
\ref{sec:pv} for more details). In the case of quantum
cohomology of a manifold $X$, there is a canonical choice of such
solution: $\rho$ is the operator of classical cup product
multiplication by $c_1(TX)$ and $S(t,z)$ is defined in terms of
genus-0, 1-point descendent Gromov--Witten invariants of $X$ (see
\eqref{calibration}). In general, there is an ambiguity in the choice of such
a solution (see \cite{Dubrovin1999}, Lemma 2.7). Following Givental
\cite{Givental:qqh}, we call a Frobenius manifold equipped with the
choice of a fundamental solution {\em calibrated}. In this case, the
operator series $S$ is called {\em calibration}. Let us point out that
in all examples of Frobenius
manifolds coming from geometry, we have the following commutation
relation: $[\theta,\rho]=-\rho$. Following Givental again, we will say
that $\theta$ is a {\em Hodge grading operator} if there exists a
calibration for which $[\theta,\rho]=-\rho.$

Suppose now that $M$ is a calibrated Frobenius manifold for which
$\theta$ is a Hodge grading operator. Then we can construct an
$\op{End}(H)$-valued solution $I^{(m)}(t,\lambda)$ of the second structure
connection $\nabla^{(m)}$, such that, $\partial_\lambda
I^{(m)}(t,\lambda)=I^{(m+1)}(t,\lambda) $ --- see formula
\eqref{fundamental_period}. Moreover, if $m$ is sufficiently negative,
then the operator $I^{(m)}(t,\lambda)$ is invertible. Therefore, the
map
\begin{equation}\label{H-periods}
H\to \A,\qquad a\mapsto I^{(m)}(t,\lambda) a, \qquad m\in \ZZ
\end{equation}
is an isomorphism of vector spaces. Under this isomorphism, the
intersection pairing on $\A$ can be expressed in terms of the
operators $\theta$ and $\rho$ --- see formulas \eqref{euler-pairing} and
\eqref{inters-pairing:numerical}.

\subsection{Quantum cohomology}

Suppose that $X$ is a
smooth projective variety with semisimple quantum cohomology. There
is no geometric interpretation of the reflection vectors in
this case unless the manifold admits a mirror in the sense of
Givental. Nevertheless, there is a remarkable conjectural description
of the set of reflection vectors partially motivated by the examples
from mirror symmetry. Let us give a precise statement. Since the
quantum cohomology of $X$ is semisimple, the Dolbeault cohomology
groups $H^{p,q}(X)=0$ for $p\neq q$ (see \cite{HerMaTe}). In particular, there exists a set
of ample line bundles $L_1,\dots, L_r$, such that, the first Chern
classes $p_i:=c_1(L_i)$ ($1\leq i\leq r$) form a $\ZZ$-basis of
$H^2(X,\ZZ)_{\rm t.f.}$ (the torsion free part). Let $q_1,\dots, q_r$ be formal
variables. Following Iritani, we introduce the following map (see
\cite{Iritani:gamma_structure}):
\begin{equation}\label{psi}
\Psi_q\colon\ K^0(X) \to H^*(X,\CC),
\end{equation}
defined by
\[
\Psi_q(E) := (2\pi)^{\tfrac{1-n}{2}} \widehat{\Gamma}(X)\cup
e^{-\sum_{i=1}^r p_i\log q_i} \cup (2\pi\ii)^{\operatorname{deg}}
(\operatorname{ch}(E)),
\]
where $\operatorname{deg}$ is the complex degree operator, that is,
$\operatorname{deg}(\phi) = i \phi$ for $\phi\in H^{2i}(X;\CC)$,
$\ii:=\sqrt{-1}$, $n=\operatorname{dim}_\CC(X)$, and
$\widehat{\Gamma}(X) = \widehat{\Gamma}(TX)$ is the $\Gamma$-class of
$X$. Recall that for a vector bundle $E$, the $\Gamma$-class of $E$ is defined by
\[
\widehat{\Gamma}(E) := \prod_{x\colon \text{Chern roots of } E } \Gamma(1+x).
\]
The map $\Psi_q$ is multivalued with respect to $q$. If $q_i$ is
sufficiently close to $1$ for all $1\leq i\leq r$, then we define
$\log q_i$ via the principal branch of the logarithm. In general, one
has to fix a reference path in $(\CC^*)^r$ between $q=(q_1,\dots, q_r)$ and
$(1,\dots,1)$ and define $\Psi_q$ via analytic continuation along the
reference path.
Let us introduce the following pairing
\[
\langle\ ,\ \rangle\colon\ H^*(X,\CC)\otimes H^*(X,\CC)\to \CC,\qquad
\langle a, b \rangle := \frac{1}{2\pi} \int_X a\cup {\rm e}^{\pi \ii
 \theta}\circ {\rm e}^{\pi\ii\rho} (b),
\]
where the linear operators $\theta$ and $\rho$ are defined
respectively by
\[
\theta\colon\ H^*(X,\CC)\to H^*(X,\CC),\qquad
\theta(\phi):=\frac{n \phi }{2}-\operatorname{deg}(\phi),
\]
and
\[
\rho\colon\ H^*(X,\CC)\to H^*(X,\CC),\qquad \rho(\phi) := c_1(TX)\cup \phi.
\]
By using the Hierzebruch--Riemann--Roch formula, we get
\[
\langle \Psi_q(E),\Psi_q(F) \rangle = \chi(E^\vee \otimes F),
\]
where $\chi$ is the holomorphic Euler characteristic, that is,
$\chi(E)=\sum_{i=0}^\infty (-1)^i \operatorname{dim} H^i(X,E)$. We
will refer to $\langle\ ,\ \rangle$ as the {\em Euler pairing}. In
case the manifold $X$ admits a mirror model in the sense of Givental,
the Euler pairing $\langle\ ,\ \rangle$ can be identified with the
Seifert form and therefore its symmetrization
\[
(a|b):= \langle a, b\rangle + \langle b,a\rangle,\qquad a,b\in H^*(X,\CC)
\]
corresponds to the intersection pairing. For that reason we refer to
the symmetrization $(\ |\ )$ of the Euler pairing as the {\em
 intersection pairing}.

Let us denote by $D^b(X)$
the derived category of the category of bounded complexes of coherent
sheaves on $X$, that is, the bounded derived category of $X$ (see
\cite{GM} for some background on derived categories). For $\E, \F \in
D^b(X)$ we denote by $\E[i]$ the shifted complex:
$(\E[i])^k:=\E^{k+i}$ and
$
\operatorname{Ext}^k(\E,\F):=\operatorname{Hom}(\E,\F[k])
$
where $\operatorname{Hom}$ is computed in the derived category
$D^b(X)$. Recall that an object $\E\in D^b(X)$ is called {\em
 exceptional} if
\[
\operatorname{Ext}^k(\E,\E)=
\begin{cases}
 \CC & \mbox{if } k=0,\\
 0 & \mbox{otherwise}.
\end{cases}
\]
A sequence of exceptional objects $(\E_1,\dots,\E_N)$ in $D^b(X)$ is called an
{\em exceptional collection} if~$\operatorname{Ext}^k(\E_i,\E_j)=0$
for all $i>j$ and $k\in \ZZ$. An exceptional collection
$(\E_1,\dots,\E_N)$ is called {\em full exceptional collection} if the
smallest subcategory of $D^b(X)$ that
contains $\E_i$ ($1\leq i\leq N$) and is closed under isomorphisms,
shifts, and cones, is $D^b(X)$ itself.
\begin{Conjecture}\label{conj:rv}\quad
\begin{itemize}\itemsep=0pt
\item[$(a)$] If the quantum cohomology of $X$ is convergent and semisimple,
 then $\Psi_q(\E)$ is a reflection vector for every exceptional
 object $\E\in D^b(X)$.
\item[$(b)$] If $(\E_1,\dots, \E_N)$ is a full exceptional collection in
 $D^b(X)$, then the reflection vectors $\alpha_i:=\Psi_q(\E_i)$
 $(1\leq i\leq N)$ generate the set $\mathcal{R}$ of all reflection
 vectors in the following sense:
 \begin{enumerate}\itemsep=0pt
 \item[$(i)$]
 The reflections $x\mapsto x-(x|\alpha_i) \alpha_i$ $(1\leq i\leq
 N)$ generate the monodromy group $W$ of quantum cohomology.
 \item[$(ii)$]
 For every $\alpha\in \mathcal{R}$ there exists $w\in W$, such
 that, $w(\alpha) \in \{\alpha_1,\dots,\alpha_N\}$.
 \end{enumerate}
 \end{itemize}
 \end{Conjecture}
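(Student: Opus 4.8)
\emph{Outline of a possible approach.}
The conjecture is naturally attacked through the dictionary, valid for any semisimple Frobenius manifold, between the monodromy of the second structure connection $\nabla^{(m)}$ and the Stokes structure of Dubrovin's first structure connection at the irregular singularity $z=\infty$. At a semisimple point $t$ the operators $I^{(m)}(t,\lambda)$ from \eqref{fundamental_period} are built out of Givental's $R$-matrix, and for $\lambda$ near a canonical coordinate $u_i$ their columns are the flat sections of $\nabla^{(m)}$ singled out in Lemma~\ref{le:local_mon}. Thus the reflection vectors obtained by letting the reference path $C$ rotate through all admissible phases are exactly the Stokes vectors of Dubrovin's connection, i.e.\ the leading terms of the exponentially small flat sections along the admissible rays. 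The first step is to make this identification precise --- including the comparison, via \eqref{euler-pairing}, \eqref{inters-pairing:numerical} and the Hirzebruch--Riemann--Roch computation recorded above, of the intersection pairing $(\,\cdot\,|\,\cdot\,)$ with the symmetrized Euler pairing transported by $\Psi_q$, and independence of the choice of calibration and of $m\ll 0$. This is the scaffolding for everything else and is the part I expect to be fully rigorous.

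Granting this, part $(a)$ amounts to showing that for every exceptional $\E\in D^b(X)$ there is an admissible phase whose Stokes vector is $\pm\Psi_q(\E)$. When $\E$ belongs to a full exceptional collection $(\E_1,\dots,\E_N)$ this is precisely $\Gamma$-conjecture~II of Galkin--Golyshev--Iritani: with respect to an admissible phase the exponential flat sections of Dubrovin's connection, in the appropriate order, have leading terms $\Psi_q(\E_1),\dots,\Psi_q(\E_N)$. In the cases where that conjecture is known (projective spaces, Grassmannians, toric Fano manifolds, certain Fano threefolds, \dots) one gets $(a)$ for the members of the collection. For an arbitrary exceptional object one would mutate: a single mutation turns $\E$ into the first or last term of a full exceptional collection, and under the dictionary a mutation acts on the tuple of Stokes vectors by an elementary braid move whose factors are the reflections $w_{\alpha_j}$. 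Hence $\Psi_q(\E)$ differs from some $\Psi_q(\E_j)$ by an element of $W$ and is therefore again a Stokes vector, i.e.\ a reflection vector.

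Part $(b)$ then follows formally. For $(b)(i)$: under the dictionary the reflections $w_{\alpha_i}$ are the local monodromies around the $N$ branches of the discriminant visible from the chamber of the chosen admissible phase; since (after passing to the domain in $H^*(X,\CC)$ on which quantum multiplication converges) the base $M$ is simply connected, $\pi_1\big((M\times\CC)'\big)$ is generated by simple loops around the discriminant, and the braid/mutation compatibility of the previous paragraph expresses each such loop as a word in the $w_{\alpha_j}$. For $(b)(ii)$: given a reflection vector $\alpha$, part $(a)$ writes $\alpha=\pm\Psi_q(\E)$ with $\E$ exceptional; by transitivity of the mutation action on full exceptional collections (known in many cases, e.g.\ for del Pezzo surfaces) any full collection containing $\E$ is joined to $(\E_1,\dots,\E_N)$ by mutations and shifts, shifts acting on classes by $\pm1$ and mutations by elements of $W$, so $\alpha\in\pm\,W\cdot\{\alpha_1,\dots,\alpha_N\}$.

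The principal obstacle is part $(a)$ in full generality: without a geometric mirror one cannot use Picard--Lefschetz theory to exhibit the vanishing cycle of a given exceptional object, and the mutation argument above tacitly uses the (generally open) statement that every exceptional object on a variety carrying a full exceptional collection extends to one; absent that, one must either restrict to the classes of $X$ where it is known or adopt it as a hypothesis. A secondary, technical obstacle is that $\Gamma$-conjecture~II is usually stated at a single base point of the small quantum cohomology, whereas reflection vectors live over the whole Frobenius manifold; reconciling the two requires tracking the operator series $S$ and the operator $\rho$ --- equivalently the $\E\mapsto\E\otimes L$ monodromy reflected in the multivaluedness of $\Psi_q$ --- along paths in $M$ so as to match Definition~\ref{def:rv}. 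Concretely I would settle the scaffolding of the first paragraph, then import $\Gamma$-conjecture~II together with the mutation/braid compatibility, and finally treat the one-point blowup case of the present paper as the instance where the local $R$-matrix computation can be carried out by hand.
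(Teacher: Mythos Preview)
The statement you are addressing is a \emph{Conjecture} in the paper, and the paper does not prove it. After stating it, the authors only remark that it follows from Iritani's work in the weak Fano toric case, that in general it ``should be equivalent'' to Dubrovin's conjecture (equivalently, to $\Gamma$-conjecture~II of Galkin--Golyshev--Iritani), and that they plan to return to this equivalence in future work. The paper's actual contribution is Theorem~\ref{thm}, which establishes a fragment of part~(a) for blowups --- namely that $\Psi_q(\O_E(k))$ are reflection vectors for $\operatorname{Bl}_{\rm pt}(X)$ --- by a direct $Q$-expansion analysis of the second structure connection, not via the Stokes dictionary.

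So there is no proof in the paper to compare your outline against. As a program your approach is coherent and in fact matches the authors' own expectations: the Laplace-transform dictionary between reflection vectors of $\nabla^{(m)}$ and Stokes vectors of Dubrovin's connection is exactly the bridge they point to. But you have correctly flagged, rather than closed, the genuine gaps. Your argument for part~(a) assumes $\Gamma$-conjecture~II for $X$ (open in general) and, for an arbitrary exceptional object, that it can be completed to a full exceptional collection (also open). Your argument for $(b)(ii)$ assumes transitivity of the braid group action on full exceptional collections, which is known only in scattered cases. These are not technicalities; they are the substance of the conjecture, and what you have written is a reduction of Conjecture~\ref{conj:rv} to these standing open problems, not a proof. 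One smaller point: in $(b)(i)$ you appeal to simple connectedness of $M$, but $M$ is the convergence domain \eqref{conv-dom}, an open half-space in the divisor directions times a polydisc; this is indeed contractible, but it is worth saying so explicitly rather than asserting it.
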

The reflection vectors $\mathcal{R}$ in quantum cohomology are defined
through the Frobenius manifold structure on the domain $M\subset
H^*(X,\CC)$ of convergence of the big quantum cup product. Using the
calibration \eqref{calibration} we turn $M$ into a calibrated
Frobenius manifold. The space of period vectors of $M$ is identified with
$H^*(X,\CC)$ via the isomorphism \eqref{H-periods} which give us an
embedding of the set of reflection vectors $\mathcal{R}$ in $H^*(X,\CC)$. More
details will be given in Sections \ref{sec:pv} and~\ref{sec:qcoh}. Conjecture \ref{conj:rv} follows easily from the work
of Iritani (see \cite{Iritani:gamma_structure}) in the case when $X$
is a weak compact Fano toric orbifold that admits a full exceptional
collection consisting only of line bundles. In general, since the second structure
connection is a Laplace transform of Dubrovin's connection, Conjecture
\ref{conj:rv} should be equivalent to the so-called Dubrovin's
conjecture (see \cite[Conjecture 4.2.2]{Dubrovin1998}) or to its
improved version proposed by Galkin--Golyshev--Iritani (see
\cite[Conjecture 4.6.1]{GGI2016}). Dubrovin's conjecture was
originally stated for Fano manifolds but shortly afterwards Arend
Bayer suggested that the Fano condition should be removed (see
\cite{Ba}). Dubrovin already proved that the intersection pairing can
be expressed in terms of the Stokes multipliers for the first
structure connection (see \cite[Lemma 5.4]{Dubrovin1999}). We expect
that Dubrovin's argument already has the necessary ingredients to
prove that Conjecture~\ref{conj:rv} is equivalent to Dubrovin's
conjecture. We are planning to return to this problem in the near
future.

Let us state the main result in our paper. Let $\operatorname{Bl}(X)$
be the blowup of $X$ at one point, $\pi\colon \operatorname{Bl}(X)\to X$ be
the blowup map, and $j\colon \PP^{n-1}\to \operatorname{Bl}(X)$ be the
closed embedding that identifies~$\PP^{n-1}$ with the exceptional
divisor $E$.
\begin{Theorem}\label{thm}
 If the quantum cohomology of $X$ is convergent and semisimple and
 the quantum cohomology of $\operatorname{Bl}(X)$ is convergent, then
 $\Psi_q(\O_E(k))$, where $\O_E(k):= j_* \O_{\PP^{n-1}}(k)$, $k\in \ZZ$, are
 reflection vectors for the quantum cohomology of
 $\operatorname{Bl}(X)$.
\end{Theorem}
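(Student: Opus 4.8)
The plan is to reduce the statement to a local, rank-one computation on the exceptional divisor by combining two ingredients: (1) the characterization of reflection vectors from Lemma \ref{le:local_mon}, and (2) the behavior of quantum cohomology of $\operatorname{Bl}(X)$ along the boundary of the Frobenius manifold where the blowup ``degenerates'' back to $X$. Concretely, I would first set up the calibrated Frobenius structure on the convergence domain $M_{\operatorname{Bl}}\subset H^*(\operatorname{Bl}(X),\CC)$, using the canonical calibration \eqref{calibration}, so that by Section \ref{sec:cfm} the period vectors are identified with $H^*(\operatorname{Bl}(X),\CC)$ via \eqref{H-periods}, and the intersection pairing on period vectors is the symmetrized Euler pairing expressed through $\theta$ and $\rho$. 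The class $\O_E(k)=j_*\O_{\PP^{n-1}}(k)$ is an object of $D^b(\operatorname{Bl}(X))$ supported on $E$, and Iritani's map sends it to $\Psi_q\big(\O_E(k)\big)\in H^*(\operatorname{Bl}(X),\CC)$; using the Grothendieck--Riemann--Roch computation of $j_*$ together with $\widehat{\Gamma}(\operatorname{Bl}(X))$ and the projective bundle structure, one writes $\Psi_q(\O_E(k))$ explicitly in terms of the hyperplane class $h:=j_*1$ on $E$ and powers of $h$ — this is essentially the pushforward formula $\operatorname{ch}(\O_E(k)) = j_*\big(e^{kh'}\operatorname{td}(N_{E/\operatorname{Bl}})^{-1}\big)$ with $N_{E/\operatorname{Bl}}=\O_{\PP^{n-1}}(-1)$. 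The upshot of this first block is a closed cohomological formula for the candidate reflection vector $\alpha_k:=\Psi_q(\O_E(k))$, together with the check that $(\alpha_k|\alpha_k)=\chi(\O_E(k),\O_E(k))+\chi(\O_E(k),\O_E(k))$; since $\O_E(k)$ is an exceptional object (its self-Ext is $\CC$ concentrated in degree $0$, as $E\cong\PP^{n-1}$ and the normal bundle is $\O(-1)$), this pairing equals $2$, matching condition $(\alpha|\alpha)=2$ of Lemma \ref{le:local_mon}(b).

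The second and main block is to exhibit a reference path $C$ in $(M_{\operatorname{Bl}}\times\CC)'$ approaching a \emph{generic} point of the discriminant along which the monodromy acts on $I^{(m)}_{\alpha_k}$ by $-1$ while fixing the complementary hyperplane; by Definition \ref{def:rv} this is exactly what it means for $\alpha_k$ to be a reflection vector. The geometric input here is that as the ``blowup parameter'' is sent to the boundary of $M_{\operatorname{Bl}}$, one of the canonical coordinates — the one associated to a critical point that escapes to infinity and corresponds to the exceptional divisor — degenerates, and near the corresponding component of the discriminant the vanishing cycle is precisely the class $\O_E(k)$ for a suitable $k$. I would make this precise using the known semisimplicity result for $\operatorname{Bl}_{\rm pt}(X)$ (Bayer, Ciocan-Fontanine--Kim, etc.): there is an asymptotic region of $M_{\operatorname{Bl}}$ where $n-1$ of the canonical coordinates (or rather the idempotents) organize themselves according to the quantum cohomology of $\PP^{n-1}$ on the exceptional divisor, while the remaining coordinates limit to those of $X$. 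Restricting the second structure connection $\nabla^{(m)}$ of $\operatorname{Bl}(X)$ to a small disk transverse to the ``$E$-component'' of the discriminant, the local system of period vectors splits off a rank-one piece on which the small-loop monodromy is $-1$; tracking the Iritani framing through Givental's $R$-matrix asymptotics (Section \ref{sec:ai_sol}) identifies the $(-1)$-eigenvector, up to sign, with $\Psi_q(\O_E(k))$. The key lemma to establish is therefore: \emph{along a path approaching the $E$-stratum of the discriminant of $\operatorname{Bl}(X)$, the unique (up to sign) vanishing period vector from Lemma \ref{le:local_mon}(b) is $\Psi_q(\O_E(k))$ for the corresponding $k$}, and conversely every integer $k$ is realized by letting the loop wind appropriately (the $\ZZ$-family of $\O_E(k)$ being traced out by the $\pi_1$-action, since twisting by $\O(h)$ on $E$ corresponds to a specific element of the monodromy).

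I expect the main obstacle to be precisely this identification of the vanishing cycle with $\O_E(k)$ in the correct $K$-theoretic normalization — i.e., controlling the Iritani $\Gamma$-integral structure uniformly as one travels to the boundary of $M_{\operatorname{Bl}}$ and matching the Stokes/vanishing-cycle data of the degenerating canonical coordinate with the derived-category side. Two technical points inside this: first, one must work with the \emph{stable} monodromy (sufficiently negative $m$, as in Remark \ref{re:monodromy}) so that $\alpha\mapsto I^{(m)}_\alpha$ is an isomorphism and the reflection $w_{\alpha_k}\colon x\mapsto x-(x|\alpha_k)\alpha_k$ is honestly realized by a simple loop; second, one must verify that the convergence hypothesis on the quantum cohomology of $\operatorname{Bl}(X)$ genuinely lets the calibration and the $R$-matrix be analytically continued to a punctured neighborhood of the relevant discriminant stratum. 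If desired one can bypass part of the delicate boundary analysis by instead invoking the deformation invariance of reflection vectors under the $\pi_1$-action together with an explicit mirror computation in a toy case (e.g.\ $X=\PP^n$, where $\operatorname{Bl}_{\rm pt}(\PP^n)$ is toric and Conjecture \ref{conj:rv} is known from Iritani), and then arguing that the cohomological formula for $\Psi_q(\O_E(k))$ depends only on the geometry of $E$ and its normal bundle, hence transports to the general $X$ — but the clean argument is the local-monodromy one sketched above, and that is the version I would carry out in full.
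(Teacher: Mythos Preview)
Your outline captures the correct geometric intuition — the reflection vectors should come from the $n-1$ canonical coordinates that ``escape to infinity'' as the blowup parameter $Q\to 0$, and these should be governed by the exceptional divisor. However, you have significantly underestimated where the difficulty lies, and your sketch does not contain the mechanism that actually makes the argument work. The hard step is \emph{not} the normalization or the $K$-theoretic matching; it is proving that the reflection vector $\alpha$ attached to the simple loop around an escaping canonical coordinate has \emph{zero} $H^*(X)$-component. Your sentence ``the local system of period vectors splits off a rank-one piece'' is exactly what must be proved, and it is far from automatic: a priori the period $I^{(-m)}_\alpha$ mixes the $H^*(X)$ and $\widetilde H^*(E)$ parts in a complicated way through the full calibration $S$ of $\operatorname{Bl}(X)$, which involves GW invariants of arbitrary degree in both $\operatorname{Eff}(X)$ and $\ZZ\ell$. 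The paper handles this not via the $R$-matrix or Stokes data, but by an explicit Laurent expansion of the rescaled period $Q^{\Delta+m+(n-1)/2}I^{(-m)}(t,q,Q,Q^{-1}\lambda)\alpha$ at $Q=0$, whose coefficients are controlled by a generalization of Gathmann's vanishing theorem for descendant invariants (Proposition~3.2). This vanishing result is the engine: it shows that up to order $Q^{n-1}$ the $H^*(X)$-component of the relevant coefficient is a \emph{Laurent polynomial} in $\lambda^{1/2}$, hence invariant under the loop $\gamma_k$; isomonodromic continuation (Proposition~5.8) then forces each such coefficient to be zero, killing $\beta_{b,a}$ degree by degree.

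There is a further subtlety you do not anticipate: the top-degree piece $\beta_{b,N}\phi_N$ sits at order $Q^n$, where it is entangled with an \emph{infinite} series $\Phi_{\beta_e}(1,\lambda)$ coming from the $\widetilde H^*(E)$-contribution (Proposition~5.9). To conclude $\beta_{b,N}=0$ one must show that $\Phi_{\beta_e}$ itself is anti-invariant under $\gamma_k$, and this requires a separate argument: the paper rewrites $\Phi_\beta$ as a Mellin--Barnes integral and identifies its Laplace transform with an explicit oscillatory integral on a Landau--Ginzburg mirror (Section~6), from which the sign flip follows by a local Morse-theoretic computation near the critical point. Your proposed shortcut via the toric case $X=\PP^n$ and ``transporting to general $X$'' does not work: the monodromy of the second structure connection depends on the full quantum product of $\operatorname{Bl}(X)$, not just on the normal bundle of $E$, so the reflection-vector property cannot be deduced from the cohomological formula for $\Psi_q(\O_E(k))$ alone.
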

Several remarks are in order. It is known by the results of Bayer (see
\cite{Ba}) that the blowup at a point preserves semi-simplicity of the
quantum cohomology. We believe that our requirement that the quantum
cohomology of $\operatorname{Bl}(X)$ is convergent is redundant, that
is, the blowup operation preserves the convergence in quantum
cohomology. Let us point out that recently Giordano Cotti (see
\cite[Theorem 6.6]{Cotti2021})
was able to prove the convergence of quantum cohomology under the
assumption that the {\em small} quantum cohomology is semisimple and
convergent. His result is not quite sufficient for our purposes but
it might be interesting to apply his techniques to study convergence
of blowups in general. We will return to this problem in the near future.
Furthermore, we would like to prove
that Conjecture \ref{conj:rv} is compatible with the blowup
operation. Let us recall that by the work of Orlov (see \cite{Orlov}) if
$(\E_1,\dots,\E_N)$ is a full exceptional collection of $X$, then
$(\O_E(-n+1), \dots, \O_E(-1), \pi^*\E_1,\dots,\pi^*\E_N)$ is a full
exceptional collection of $\operatorname{Bl}(X)$. In order to complete
the proof of Conjecture \ref{conj:rv} for the blowups at finitely many
points we still have to prove that $\pi^*\E_i$ are reflection
vectors. The methods used in the current paper, after some modification,
should be sufficient to do this. Nevertheless, our attempts to modify
the arguments were unsuccessful so far, so we left this problem for a~separate project too.

The paper is organized as follows. Sections
\ref{sec:frobenius_manifolds} and \ref{sec:blowups} contain the
background which we need to formulate and state our main result, i.e.,
Theorem~\ref{thm}. In Section~\ref{sec:2nd_sc_blowups}, we investigate
the fundamental solution of the second structure connection of a
blowup. The goal is to expand the solution in a Laurent series at
$Q=0$, where $Q$ is the Novikov variable corresponding to the
exceptional divisor,
and to compute explicitly the coefficients of the leading order terms.
In Sections~\ref{sec:qexp_mon} and~\ref{sec:mirror}, we compute the
monodromy of the leading order coefficients in the $Q$-expansion. The
monodromy of the leading order coefficients allows us to determine the
reflection vectors corresponding to certain class of simple loops
which yields our main result. The logic in our proof is the
following. Let us look at the fundamental solution
$I^{(m)}(t,\lambda)$ of the second structure connection defined in
terms of the calibration --- see Section \ref{sec:pv}. This fundamental
solution is a~Laurent series whose coefficients are genus $0$,
$1$-point descendent GW invariants. Since the line bundle corresponding to the exceptional divisor $E$ is not ample, the
GW invariants are in general Laurent series in~$Q$. Our first
observation is that if we rescale appropriately the fundamental
solution $I^{(m)}(t,\lambda)$, then we will obtain a power series in
$Q$. Moreover, we can extract the leading order terms of the Taylor
series expansion at $Q=0$ up to order $Q^n$ where
${n=\operatorname{dim}(X)}$. This is done in Section
\ref{sec:2nd_sc_blowups} (see Propositions \ref{prop:Q-exp_e},
\ref{prop:Q-exp_phia} and \ref{prop:Q-exp_phi1}) by using~a
generalization of Gathman's vanishing result. The latter is proved in
Section \ref{sec:blowups}, Proposition~\ref{prop:van}. The next step
is to analyze the singularities of the second structure connection,
i.e., the dependence of the canonical coordinates $u_j$ on $Q$. Again
using Gathman's vanishing result we prove (see Proposition
\ref{prop:can-Q_exp}) that the canonical coordinates split into two
groups such that $Qu_j$ is either sufficiently close to $0$ (there are
$N=\operatorname{dim}H^*(X)$ such coordinates) or sufficiently close
to $-(n-1)v_k$ $(1\leq k\leq n-1)$. Suppose now that we have a simple
loop $\gamma_k$ around $ -(n-1)v_k$ that contains the corresponding
canonical coordinate and let $\alpha=: Q^{-(n-1)e}\beta $ be the
corresponding reflection vector, where the dependence of $\alpha$ on
$Q$ follows from the divisor equation (see Section~\ref{sec:rv_qdep}). Let us decompose $\beta=\beta_b+\beta_e$ where~$\beta_b \in H^*(X)$ and $\beta_e\in \widetilde{H}^*(E)$ where $E$ is
the exceptional divisor. In Section~\ref{sec:qexp_mon}, by analyzing
the monodromy of the leading order terms in the expansion at $Q=0$ we
prove that $\beta_b=0$. There is a slight complication in proving the
vanishing of the top degree part of~$\beta_b$ because one of the
coefficients in the $Q$-expansion (see Proposition~\ref{prop:Q-exp_e},
the term involving $Q^n\phi_N$) is an infinite series so its monodromy
is not straightforward to compute. In Section~\ref{sec:mirror}, we
prove that this problematic coefficient is a Mellin--Barnes integral
and we compute its monodromy by standard techniques based on deforming
the contour. Finally, in order to compute $\beta_e$, we look again at
the leading order term of the $Q$-expansion and we see that the
corresponding coefficient is a~fundamental solution for the second
structure connection in quantum cohomology of~$\mathbb{P}^{n-2}$ (see
Sections \ref{sec:tw_periods} and \ref{sec:proj_periods}). We get that
$\beta_e$ must be a reflection vector in the quantum cohomology of
$\mathbb{P}^{n-2}$ but the latter were computed in our previous work
\cite{Milanov:p2} (see also \cite{Iritani:gamma_structure}).


\section{Frobenius manifolds}\label{sec:frobenius_manifolds}


Following Dubrovin \cite{Du}, we recall the notion of a Frobenius
manifold. Then we proceed by defining the so-called {\em second
 structure connection} and {\em reflection vectors} of a semisimple
Frobenius manifold. Finally, we would like to recall the construction
of a Frobenius manifold in the settings of Gromov--Witten theory.

\subsection{First and second structure connections}
\label{sec:flat_connections}
Suppose that $M$ is a complex manifold and $\mathcal{T}_M$ is the
sheaf of holomorphic vector fields on $M$. The manifold $M$ is
equipped with the following structures:
\begin{enumerate}\itemsep=0pt
\item[(F1)]
 A non-degenerate symmetric bilinear pairing
 \[
 ( \cdot , \cdot )\colon\
 \mathcal{T}_M\otimes \mathcal{T}_M\to \mathcal{O}_M.
 \]
\item[(F2)]
 A Frobenius multiplication: commutative associative multiplication
 \begin{equation*}
 \cdot \bullet \cdot \colon\
 \mathcal{T}_M\otimes \mathcal{T}_M\to \mathcal{T}_M,
 \end{equation*}
 such that $(v_1\bullet w, v_2) = (v_1,w\bullet v_2)$ $\forall
 v_1,v_2,w\in \mathcal{T}_M$.
\item[(F3)]
 A unit vector field: global vector field $\one\in \mathcal{T}_M(M)$, such
 that,
 \begin{equation*}
 \one\bullet v =v,\qquad \nabla^{\rm L. C.}_v \one=0,\qquad \forall v\in \mathcal{T}_M,
 \end{equation*}
 where $\nabla^{\rm L. C.}$ is the Levi-Civita connection of the
 pairing $(\cdot,\cdot)$.
\item[(F4)]
 An Euler vector field: global vector field $E\in \mathcal{T}_M(M)$,
 such that, there exists a constant~${n\in \CC}$, called {\em conformal
 dimension}, and
 \begin{equation*}
 E(v_1,v_2)-([E,v_1],v_2)-(v_1,[E,v_2]) = (2-n) (v_1,v_2)
 \end{equation*}
 for all $v_1,v_2\in \T_M$.
\end{enumerate}
Note that the complex manifold $TM\times \CC^*$ has a structure of a
holomorphic vector bundle with base $M\times \CC^*$: the fiber over
$(t,z)\in M\times \CC^*$ is $T_tM\times \{z\}\cong T_tM$ which has a
natural structure of a vector space. Given the data (F1)--(F4), we
define the so called {\em Dubrovin's connection} on the vector bundle
$TM\times \mathbb{C}^*$
\begin{align*}
 &
 \nabla_v :=
 \nabla^{\rm L.C.}_v -z^{-1} v\bullet,\qquad v\in
 \mathcal{T}_M,
 \qquad
\nabla_{\partial/\partial z} := \frac{\partial}{\partial z} -
 z^{-1}\theta + z^{-2} E\bullet,
\end{align*}
where $z$ is the standard coordinate on
$\mathbb{C}^*=\mathbb{C}\setminus{\{0\}}$, where $v\bullet $ is an
endomorphism of $\mathcal{T}_M$ defined by the Frobenius
multiplication by the vector field $v$, and where $\theta\colon \mathcal{T}_M\to
\mathcal{T}_M$ is an $\mathcal{O}_M$-modules morphism defined by
\begin{equation*}
 \theta(v):=\nabla^{\rm L.C.}_v(E)-\left(1-\frac{n}{2}\right) v.
\end{equation*}

\begin{Definition}\label{def:frob-manifold}
 The data $((\cdot,\cdot), \bullet, \one, E)$, satisfying the properties
 $(F1)$--$(F4)$, is said to be a~{\em Frobenius structure} of conformal
 dimension $n$ if the corresponding Dubrovin connection is flat, that
 is, if $(t_1,\dots,t_N)$ are holomorphic local coordinates on $M$,
 then the set of $N+1$ differential operators
$
\nabla_{\partial/\partial t_i} \ (1\leq i\leq N)$,
$
\nabla_{\partial/\partial z}
$
pairwise commute.

\end{Definition}

Let us proceed with recalling the notion of second structure connection
and reflection vectors. We follow the exposition from \cite{Milanov:p2}.
We are going to work only with Frobenius manifolds satisfying the
following 4 additional conditions:
\begin{enumerate}\itemsep=0pt
\item[(i)] The tangent bundle $TM$ is trivial and it admits a
 trivialization given by a frame of global flat vector fields.
\item[(ii)]
Recall that the operator
\[
\operatorname{ad}_E\colon\ \T_M\to \T_M,\qquad v\mapsto [E,v]
\]
preserves the space of flat vector fields. We require that the
restriction of $\operatorname{ad}_E$ to the space of flat vector
fields is a diagonalizable operator with eigenvalues rational numbers
$\leq 1$.
\item[(iii)]
 The Frobenius manifold has a {\em calibration} for which the grading
 operator is a Hodge grading operator (see Sections
 \ref{sec:cfm} and \ref{sec:pv}).
\item[(iv)]
 The Frobenius manifold has a direct product decomposition
 $M=\CC\times B$ such that if we denote by $t_1\colon M\to
 \CC$ the projection along $B$, then ${\rm d}t_1$ is a flat 1-form and
 $\langle {\rm d}t_1,\mathbf{1}\rangle=1.$
\end{enumerate}
Conditions (i)--(iv) are satisfied for all Frobenius manifolds
constructed by quantum cohomology or by the primitive forms in
singularity theory.

Let us fix a base point $t^\circ\in M$ and a basis
$\{\phi_i\}_{i=1}^N$ of the reference tangent space
$H:=T_{t^\circ}M$. Furthermore, let $(t_1,\dots,t_N)$ be a local flat
coordinate system on an open neighborhood of $t^\circ$ such that
$\partial/\partial t_i|_{t^\circ}=\phi_i$ in
$H$. The flat vector fields $\partial/\partial t_i$
($1\leq i\leq N$)
extend to global flat vector fields on $M$ and provide a
trivialization of the tangent bundle $TM\cong M\times H$. This allows us to
identify the Frobenius multiplication $\bullet$ with a family of
associative commutative multiplications~$\bullet_t\colon H\otimes H\to H$
depending analytically on $t\in M$. Modifying our choice of
$\{\phi_i\}_{i=1}^N$ and $\{t_i\}_{i=1}^N$ if necessary we may arrange
that
\[
E=\sum_{i=1}^N ((1-d_i) t_i + r_i)\partial/\partial t_i,
\]
where $\partial/\partial t_1$ coincides with the unit vector field
$\mathbf{1}$ and the numbers
\[
0=d_1\leq d_2\leq \cdots \leq d_N=n
\]
are symmetric with respect to the middle of the interval $[0,n]$. The
number $n$ is known as the {\em conformal dimension} of $M$. The
operator $\theta\colon \T_M\to \T_M$ defined above preserves the subspace of
flat vector fields. It induces a linear
operator on $H$, known to be skew symmetric with respect to
the Frobenius pairing $(\ ,\ )$. Following Givental, we refer to
$\theta$ as the {\em Hodge grading operator}.

There are two flat connections that one can associate with the
Frobenius structure. The first one is the {\em Dubrovin
 connection} --- defined above. The Dubrovin connection in flat
coordinates takes the following form:
\begin{gather*}
\nabla_{\partial/\partial t_i} = \frac{\partial}{\partial t_i} -
z^{-1} \phi_i\bullet, \qquad
\nabla_{\partial/\partial z} = \frac{\partial}{\partial z} +z^{-1}
\theta -z^{-2} E\bullet,
\end{gather*}
where $z$ is the standard coordinate on $\CC^*=\CC-\{0\}$ and
for $v\in \Gamma(M,\T_M)$ we denote by $v\bullet\colon H\to H$ the linear
operator of Frobenius multiplication by $v$.

Our main interest is in the {\em second structure connection }
\begin{align}
 \label{2nd_str_conn:1}
\nabla^{(m)}_{\partial/\partial t_i} & =
\frac{\partial}{\partial t_i} + (\lambda-E\bullet_t)^{-1} (\phi_i
 \bullet_t) (\theta-m-1/2),
 \\
 \label{2nd_str_conn:2}
\nabla^{(m)}_{\partial/\partial\lambda} & =
\frac{\partial}{\partial \lambda}-(\lambda-E\bullet_t)^{-1}
 (\theta-m-1/2),
\end{align}
where $m\in \CC$ is a complex parameter.
This is a connection on the trivial bundle
\[
(M\times \CC)'\times H \to (M\times \CC)',
\]
where
\[
(M\times \CC)'=\{ (t,\lambda)\mid  \det (\lambda-E\bullet_t)\neq 0\}.
\]
The hypersurface $\det (\lambda-E\bullet_t)=0$ in $M\times \CC$ is
called the {\em discriminant}.

\subsection{Reflection vectors for calibrated Frobenius
 manifolds}\label{sec:pv}
We would like to construct a fundamental solution to the second
structure connection $\nabla^{(m)}$ for~$m$ sufficiently negative. As
we already explained in Section \ref{sec:cfm}, this would allow us to
embed the reflection vectors (see Definition \ref{def:rv}) of the
Frobenius manifold $M$ in $H$.

Suppose that $M$ is a calibrated Frobenius manifold with calibration
$S(t,z)$ for which the grading operator is a Hodge grading operator.
By definition (see \cite{Givental:qqh}), the calibration is an
operator series $S(t,z)=1+\sum_{k=1}^\infty S_k(t)z^{-k}$, $S_k(t)\in
\operatorname{End}(H)$ depending holomorphically on $t$ and~$z$ for
$t$ sufficiently close to the base point $t^\circ$ and $z\in \CC^*$,
such that, the Dubrovin's connection has a fundamental solution near
$z=\infty$ of the form
\[
S(t,z) z^{\theta} z^{-\rho},
\]
where $\rho\in \operatorname{End}(H)$ is a nilpotent operator
and the following symplectic condition holds
\[
S(t,z)S(t,-z)^T=1,
\]
where ${}^T$ denotes transposition with respect to the Frobenius
pairing. We say that $\theta$ is a Hodge grading operator if
$[\theta,\rho]=-\rho$.

Let us fix a reference point $(t^\circ,\lambda^\circ)\in (M\times
\CC)'$ such that $\lambda^\circ$ is a sufficiently
large positive real number. It is
easy to check that the following function is a solution to the second
structure connection $\nabla^{(m)}$
\begin{equation}\label{fundamental_period}
I^{(m)}(t,\lambda) = \sum_{k=0}^\infty (-1)^k S_k(t) \widetilde{I}^{(m+k)}(\lambda),
\end{equation}
where
\[
\widetilde{I}^{(m)}(\lambda) = {\rm e}^{-\rho \partial_\lambda \partial_m}
\left(
\frac{\lambda^{\theta-m-\frac{1}{2}} }{ \Gamma\big(\theta-m+\frac{1}{2}\big) }
\right).
\]
Note that both $I^{(m)}(t,\lambda)$ and $\widetilde{I}^{(m)}(\lambda)$
take values in $\op{End}(H)$. From now on we restrict ${m\in \ZZ}$.
The second structure connection has a
Fuchsian singularity at infinity, therefore the series ${I^{(m)}(t,\lambda)}$ is
convergent for all $(t,\lambda)$ sufficiently close to
$(t^\circ,\lambda^\circ)$. Using the differential equations
\eqref{2nd_str_conn:1}--\eqref{2nd_str_conn:2}, we extend
$I^{(m)}$ to a multi-valued analytic function on $(M\times \CC)'$
taking values in $\op{End}(H)$. We define the following multi-valued
functions taking values in $H$:
\begin{equation}\label{H-pv}
I^{(m)}_a(t,\lambda):=I^{(m)}(t,\lambda) a,
\qquad a\in H,\quad m\in \ZZ.
\end{equation}
Clearly, for each fixed $a\in H$, the sequence $I^{(m)}_a(t,\lambda)$
($m\in \ZZ$) is a period vector in the sense of Definition~\ref{def:pv}. Moreover, if $m\in \ZZ$ is sufficiently negative, then
$I^{(m)}(t,\lambda)$ is an invertible operator. Therefore, all period
vectors of $M$ have the form \eqref{H-pv}. Using analytic
continuation we get a representation
\[
\pi_1((M\times\CC)',(t^\circ,\lambda^\circ) )\to \operatorname{GL}(H)
\]
called the {\em monodromy representation} of the Frobenius
manifold. The image $W$ of the monodromy representation is called the
{\em monodromy group} or {\em stable monodromy group} (see Remark
\ref{re:monodromy}).

Under the semi-simplicity
assumption, we may choose a generic reference point $t^\circ$ on $M$, such that
the Frobenius multiplication $\bullet_{t^\circ}$ is semisimple and the
operator $E\bullet_{t^\circ}$ has $N$ pairwise different eigenvalues
$u_i^\circ$ ($1\leq i\leq N$). The fundamental group $\pi_1((M\times
\CC)', (t^\circ,\lambda^\circ))$ fits into the following exact sequence
\begin{equation}\label{fund-es}
\xymatrix{
\pi_1(F^\circ,\lambda^\circ) \ar[r]^-{i_*} &
\pi_1((M\times \CC)', (t^\circ,\lambda^\circ) ) \ar[r]^-{p_*} &
\pi_1(M,t^\circ)\ar[r] &
1,}
\end{equation}
where $p\colon (M\times \CC)'\to M$ is the projection on $M$,
$F^\circ=p^{-1}(t^\circ)=\CC\setminus{\{u_1^\circ,\dots,u_N^\circ\}}$
is the fiber over~$t^\circ$, and
$i\colon F^\circ\to (M\times \CC)'$ is the natural inclusion. For a proof we refer to
\cite[Proposition~5.6.4]{Shimada:ln} or \cite[Lemma 1.5\,(C)]{Nori-zariski_conj}.
Using the exact sequence \eqref{fund-es}, we get that the monodromy
group $W$ is generated by the monodromy transformations
representing the lifts of the generators of $\pi_1(M,t^\circ)$ in
$\pi_1((M\times \CC)', (t^\circ, \lambda^\circ))$ and the
generators of $\pi_1(F^\circ,\lambda^\circ)$.

The image of $\pi_1(F^\circ,\lambda^\circ)$ under the monodromy
representation is a reflection group that can be described as
follows. Let us introduce the bi-linear pairing
\begin{equation}\label{euler-pairing}
\langle a,b\rangle = \frac{1}{2\pi}
\big( a, {\rm e}^{\pi\ii \theta} {\rm e}^{\pi\ii \rho} b\big),\qquad
a,b\in H.
\end{equation}
Motivated by the applications to mirror symmetry, we will refer to $\langle\ ,\ \rangle$ as the {\em Euler pairing}. Its symmetrization
\begin{equation}\label{inters-pairing:numerical}
(a|b):= \langle a, b\rangle + \langle b, a\rangle,\qquad a,b\in H,
\end{equation}
also plays an important role in mirror symmetry and we will refer to it as the {\em intersection pairing}. It can be checked that the intersection pairing can be expressed in terms of the period vectors as follows:
\[
(a|b):=\big(I^{(0)}_a(t,\lambda),(\lambda-E\bullet)I^{(0)}_b(t,\lambda)\big).
\]
Using the differential equations of the second structure connection, it is
easy to prove that the right-hand side of the above identity is independent of $t$ and $\lambda$. However, the fact that the constant must be $(a|b)$ requires some additional work (see \cite{MilSa}).

Suppose now that $\gamma$ is a simple loop in $F^\circ$, i.e., a loop
that starts at $\lambda^\circ$,
approaches one of the punctures $u_i^\circ$ along a path $\gamma'$
that ends at a point sufficiently close to $u_i^\circ$, goes around~$u_i^\circ$, and finally returns back to $\lambda^\circ$ along
$\gamma'$. By analyzing the second structure connection near
$\lambda=u_i$ it is easy to see that up to a sign there exists a unique $a\in H$
such that $(a|a)=2$ and the monodromy transformation of $a$ along
$\gamma$ is $-a$. The monodromy transformation representing $\gamma\in
\pi_1(F^\circ,\lambda^\circ)$ is the reflection defined by the
following formula:
\[
w_a(x)=x-(a|x) a.
\]
Let us denote by $\mathcal{R}$ the set of all $a\in H$ as above determined by
all possible choices of simple loops in $F^\circ$. Under the
isomorphism \eqref{H-periods}, the set $\mathcal{R}$ coincides with
the set of reflection vectors of $M$.

\subsection{The anti-invariant solution}\label{sec:ai_sol}
We would like to construct the unique solution to the second structure
connection appearing in part (b) of Lemma~\ref{le:local_mon}. We refer to
it as the anti-invariant solution because the analytic continuation
around the discriminant changes its sign. Our construction is very
similar to \eqref{fundamental_period}, except that now instead of
the singularity of $\nabla^{(m)}$ at $\lambda=\infty$, we will
consider the singularity at~$\lambda=u_i(t)$ and instead of
fundamental solution of Dubrovin's connection near $z=\infty$ we will
make use of the formal asymptotic solution to Dubrovin's connection
near $z=0$.

Let us recall Givental's $R$-matrix (see \cite{Givental:qqh})
\[
R(t,z)=1+R_1(t) z+R_2(t) z^2+\cdots,\qquad
R_k(t)\in \operatorname{End}(H)
\]
defined for all semisimple $t\in M$ as the unique solution to the
following system of differential equations:
\begin{align*}
\frac{\partial R}{\partial t_a} (t,z) & = -
R(t,z) \frac{\partial \Psi}{\partial t_a} \Psi^{-1} +
 z^{-1}[\phi_a\bullet,R(t,z)],\\
\frac{\partial R}{\partial z} (t,z) & = -
z^{-1}\theta R(t,z) -
z^{-2} [E\bullet, R(t,z)],
\end{align*}
where $\phi_a\bullet$ and $E\bullet$ are the operators of Frobenius
multiplication respectively by the flat vector field
$\partial/\partial t_a$ and by the Euler vector field $E$ and $\Psi$ is
the $(N\times N)$-matrix with entries
\[
\Psi_{ai}:= \sqrt{\Delta_i} \frac{\partial t_a}{\partial u_i},
\qquad
1\leq a,i\leq N,
\]
where $u_1,\dots,u_N$ are the canonical coordinates in a neighborhood
of the base point $t^\circ$, that is, a local coordinate system, such that,
\[
\frac{\partial}{\partial u_i} \bullet \frac{\partial}{\partial u_j} =
\delta_{ij} \frac{\partial}{\partial u_j},\qquad
\left( \frac{\partial}{\partial u_i},\frac{\partial}{\partial
 u_j}\right)=
\frac{\delta_{ij}}{\Delta_{i}},
\]
where $\delta_{ij} $ is the Kronecker delta symbol and $\Delta_i\in
\O_{M,t^\circ}$ is a holomorphic function that has no zeroes in a
neighborhood of $t^\circ$. It is known that the canonical coordinates
coincide with the eigenvalues of the operator $E\bullet_t$.
Here $\operatorname{End}(H)$ is identified with the space of $(N\times
N)$-matrices via the basis $\phi_1,\dots,\phi_N$, that is, the
entries $A_{ab}$ of $A\in \operatorname{End}(H)$ are defined by
$A(\phi_b)=:\sum_a \phi_a A_{ab}$.
\begin{Remark}
The matrix $\Psi$ up to the normalization factors $\Delta_i$ is the
Jacobian matrix of the change from canonical to flat coordinates. The
above definition of the $R$-matrix differed from the original
definition in \cite{Givental:qqh} by conjugation by $\Psi$, that is,
$\Psi^{-1}R(t,z)\Psi$ is the $R$-matrix of Givental.
\end{Remark}
Suppose that $\alpha \in H$ is a reflection vector. Let us fix a generic
semisimple point $t\in M$, such that, the canonical coordinates
$u_1(t),\dots,u_N(t)$
are pairwise distinct. Let us fix a reference path from
$(t^\circ,\lambda^\circ)$ to a neighborhood of a point on the
discriminant $(t,u_i(t))$ for some $i$, such that, the period vector
\smash{$I^{(-m)}_\alpha(t,\lambda)$} transforms into \smash{$-I^{(-m)}_\alpha(t,\lambda)$} under
the analytic continuation in $\lambda$ along a closed loop around
$u_i(t)$. We claim that the period vector has the following expansion
at $\lambda=u_i(t)$:
\begin{equation}\label{expansion:u_i}
I^{(-m)}_\alpha(t,\lambda)=\sqrt{2\pi }
\sum_{k=0}^\infty (-1)^k
\frac{(\lambda-u_i)^{k+m-1/2}}{\Gamma(k+m+1/2)}
R_k(t) \Psi(t) e_i,
\end{equation}
where $e_i$ is the vector column with $1$ on the $i$th position and $0$
elsewhere, that is, $\Psi e_i$ is the column representing the vector field
$\sum_{a=1}^N \sqrt{\Delta_i} \frac{\partial t_a}{\partial u_i}
\phi_a = \sqrt{\Delta_i}\partial/\partial u_i$. Let us prove this
claim. Using the differential
equations for $R(t,z)$, it is easy to check that the right-hand side of the above
formula is a solution to the second structure connection. Therefore, the
right-hand side of~\eqref{expansion:u_i} and the reference path determine a vector
$\alpha\in H$ for which formula~\eqref{expansion:u_i} holds. Moreover,
\[
\big(I^{(0)}_\alpha(t,\lambda),(\lambda-E\bullet)
I^{(0)}_\alpha(t,\lambda)\big) =
\frac{2\pi}{\Gamma(1/2)^2} +O(\lambda-u_i)=
2+O(\lambda-u_i).
\]
Since the left-hand side is independent of $\lambda$ and $u_i$, the higher order
terms $O(\lambda-u_i)$ in the above formula must vanish. This proves
that $(\alpha|\alpha)=2$. Finally, since the analytic continuation around~$\lambda=u_i$ of the right-hand side of \eqref{expansion:u_i} changes the sign of
the right-hand side, we conclude that $\alpha$ must be a reflection vector and
that \eqref{expansion:u_i} is the expansion of the corresponding
period vector near the discriminant.

\subsection{Gromov--Witten theory}\label{sec:GW}
Let us recall some basics on Gromov--Witten (GW) theory. For
further details we refer to~\cite{Ma}. Let
$\operatorname{Eff}(X)\subset H_2(X,\ZZ)_{\rm t.f.}$ be the monoid of all homology
classes that can be represented in the form $\sum_i k_i [C_i]$, where
$k_i$ is a non-negative integer and $[C_i]$ is the fundamental class
of a~holomorphic curve $C_i\subset X$. The
main object in GW theory is the moduli space of stable maps~$\overline{\M}_{g,k}(X,\beta)$, where $g$, $k$ are non-negative integers
and $\beta\in \operatorname{Eff}(X)$. By definition, a stable map
consists of the following data $(\Sigma, z_1,\dots,z_k, f)$:
\begin{enumerate}\itemsep=0pt
\item[(1)]
 $\Sigma$ is a Riemann surface with at most nodal
 singular points.
\item[(2)]
 $z_1,\dots,z_k$ are {\em marked points}, that is, smooth
 pairwise-distinct points on $\Sigma$.
\item[(3)]
 $f\colon \Sigma \to X$ is a holomorphic map, such that, $f_*[\Sigma]=\beta$.
\item[(4)]
 The map is stable, i.e., the automorphism group of
 $(\Sigma,z_1,\dots,z_k,f)$ is finite.
\end{enumerate}
Two stable maps $(\Sigma,z_1,\dots,z_k,f)$ and
$(\Sigma',z'_1,\dots,z'_k,f')$ are called equivalent if there exists a~biholomorphism $\phi\colon\Sigma\to \Sigma'$, such that, $\phi(z_i)=z_i'$
and $f'\circ \phi = f$. The moduli space of equivalence classes of
stable maps is known to be a proper
Delign--Mumford stack with respect to the \'etale topology on the
category of schemes (see \cite{BeMa}). The corresponding coarse moduli space
$\overline{M}_{g,k}(X,\beta)$ has a structure of a projective variety,
which however could be very singular. We have the following diagram:
\[
\xymatrix{
 \overline{\M}_{g,k+1} (X,\beta) \ar[r]^-{{\rm ev}_{k+1}} \ar[d]_\pi & X \\
 \overline{\M}_{g,k} (X,\beta) \ar[r]^-{{\rm ev}_{i} } \ar[d]_{{\rm
 ft}} & X \qquad 1\leq i\leq k,\\
 \overline{\M}_{g,k}, &
}
\]
where ${\rm ev}_i(\Sigma,z_1,\dots,z_k,f) := f(z_i)$, $\pi$ is the map
forgetting the last marked point an contracting all unstable
components, and ${\rm ft}$ is the map forgetting the holomorphic map
$f$ and contracting all unstable components. The moduli space has natural
orbifold line bundles $L_i$ ($1\leq i\leq k$) whose fiber at a
point $(\Sigma,z_1,\dots,z_k,f)$ is the cotangent line
$T^*_{z_i}\Sigma$ equipped with the action of the automorphism group
of $(\Sigma,z_1,\dots,z_k,f)$. Let $\psi_i=c_1(L_i)$ be the first
Chern class. The most involved construction in GW theory is the
construction of the so called {\em virtual fundamental cycle}. The
construction has as an input the complex $(R\pi_*{\rm
 ev}_{k+1}^*TX)^\vee$ which gives rise to a~perfect obstruction
theory on $\overline{\M}_{g,k}(X,\beta)$ relative to
$\overline{\M}_{g,k}$ (see \cite{Be,BF_intnormalcone}) and yields a homology cycle in~$\overline{M}_{g,k}(X,\beta)$ of complex dimension
\[
3g-3 +k + n(1-g) + \langle c_1(TX),\beta\rangle,
\]
known as the virtual fundamental cycle. Gromov--Witten invariants are by
definition the following correlators:
\[
\big\langle a_1\psi^{l_1},\dots, a_k\psi^{l_k}\big\rangle_{g,k,\beta} =
\int_{[\overline{M}_{g,k}(X,\beta)]^{\rm virt}}
\operatorname{ev}_1^*(a_1)\cdots \operatorname{ev}_k^*(a_k)
\psi_1^{l_1}\cdots \psi_k^{l_k},
\]
where $a_1,\dots,a_k\in H^*(X;\CC)$ and $l_1,\dots,l_k$ are
non-negative integers.

Let us recall the so-called {\em string} and {\em divisor} equations. Suppose that either $\beta\neq 0$ or ${2g-2+k>0}$, then
\begin{align*}
\big\langle 1, a_1\psi^{l_1},\dots, a_k\psi^{l_k}\big\rangle_{g,k+1,\beta} =
\sum_{i=1}^k \big\langle
a_1\psi^{l_1},\dots, a_i\psi^{l_i-1}, \dots, a_k\psi^{l_k}\big\rangle_{g,k,\beta},
\end{align*}
and if $p\in H^2(X,\CC)$ is a divisor class, then
\begin{align*}
\big\langle p, a_1\psi^{l_1},\dots, a_k\psi^{l_k}\big\rangle_{g,k+1,\beta} ={} &
\left(\int_\beta p\right) \big\langle a_1\psi^{l_1},\dots, a_k\psi^{l_k}\big\rangle_{g,k,\beta} \\
&+
\sum_{i=1}^k \big\langle
a_1\psi^{l_1},\dots, p\cup a_i\psi^{l_i-1}, \dots, a_k\psi^{l_k}\big\rangle_{g,k,\beta},
\end{align*}
where if $l_i=0$, then we define $\psi_i^{l_i-1}:=0$. We will need
also the {\em genus}-0 {\em topological recursion relations}, that is,
if $k\geq 2$, then the following relation holds:
\begin{align*}
&\big\langle a \psi^{l+1}, b_1\psi^{m_1},\dots,
 b_k\psi^{m_k}
 \big\rangle_{0,k+1,\beta}
 \\
 &\qquad=
 \sum_{i, I,\beta'}\big\langle
 a \psi^l, \phi_i,
 b_{i_1} \psi^{m_{i_1}},\dots,
 b_{i_r} \psi^{m_{i_r}}\big\rangle_{0,2+r,\beta'}
\big\langle
\phi^i, b_{j_1} \psi^{m_{j_1}},\dots,
 b_{j_s} \psi^{m_{j_s}}\big\rangle_{0,1+s,\beta''},
\end{align*}
where the sum is over all $1\leq i\leq N$, all subsequences
$I=(i_1,\dots,i_r)$ of the sequence $(1,2,\dots,k)$ including the
empty one, and all homology
classes $\beta'\in \operatorname{Eff}(X)$, such that,
$\beta'':=\beta-\beta'\in \operatorname{Eff}(X)$. The sequence
$(j_1,\dots,j_s)$ is obtained from $(1,2,\dots,k)$ by removing the
subsequence $I$. In particular, $r+s=k$.

\subsection[Quantum cohomology of X]{Quantum cohomology of $\boldsymbol{X}$}\label{sec:qcoh}
Let us recall the notation $L_i$, $p_i:=c_1(L_i)$, and $q_i$ ($1\leq i\leq r$) from the introduction. If $\beta\in \operatorname{Eff}(X)$, then we put
\smash{$q^\beta = q_1^{\langle p_1,\beta\rangle}
\cdots q_r^{\langle p_{r},\beta\rangle}$}. The group ring
$\CC[\operatorname{Eff}(X)]$ is called the {\em Novikov ring} of $X$
and the variables $q_i$ are called {\em Novikov variables}. Note that
the Novikov variables determine an embedding of the Novikov ring into
the ring of formal power series $\CC[\![q_1,\dots,q_r]\!]$.
Let us fix a~homogeneous basis $\phi_i$ ($1\leq i\leq N$) of $H^*(X;\CC)$, such that, $\phi_1=1$ and $\phi_{i+1}=p_i$ for all $1\leq i\leq r$. Let
$t=(t_1,\dots,t_N)$ be the corresponding linear coordinates. The
quantum cup product $\bullet_{t,q}$ of~$X$ is a deformation of the
classical cup product defined by
\[
(\phi_a\bullet_{t,q} \phi_b,\phi_c):=
\langle \phi_a, \phi_b,\phi_c\rangle_{0,3}(t) =
\sum_{m=0}^\infty
\sum_{\beta\in \operatorname{Eff}(X) }
\frac{q^\beta}{m!}
\langle
\phi_a,\phi_b,\phi_c,t,\dots,t\rangle_{0,3+m,\beta}.
\]
Using string and divisor equation, we get that the structure constants of the
quantum cup product, i.e., the 3-point genus-0 correlators in the above
formula are independent of $t_1$ and are formal power series in the
following variables:
\[
q_1 {\rm e}^{t_2},\dots, q_r {\rm e}^{t_r}, t_{r+1},\dots, t_N.
\]
We are going to consider only manifolds $X$, such that, the quantum
cup product is analytic. More precisely, let us allow for the Novikov
variables to take values $0<|q_i|<1$ ($1\leq i\leq r$). Then we will
assume that there exists an $\epsilon>0$, such that, the structure
constants of the quantum cup product are convergent power series for
all $t$ satisfying
\begin{equation}\label{conv-dom}
\operatorname{Re}(t_i)<\log \epsilon, \quad 2\leq i\leq r+1,\qquad
|t_j|<\epsilon, \quad r+1<j\leq N.
\end{equation}
The inequalities \eqref{conv-dom} define an open subset $M\subset
H^*(X;\CC)$. The main fact about genus-0 GW invariants is that $M$
has a Frobenius structure, such that, the Frobenius pairing is the
Poincar\'e pairing, the Frobenius multiplication is the quantum cup
product, the unit $\one=\phi_1$, and the Euler vector field is
\[
E=\sum_{i=1}^N (1-d_i) t_i \frac{\partial}{\partial t_i} +
\sum_{j=2}^{r+1} \big(c_1(TX),\phi^j\big) \frac{\partial}{\partial t_j},
\]
where $d_i$ is the complex degree of $\phi_i$, that is, $\phi_i\in
H^{2d_i}(X;\CC)$ and $\phi^j$ ($1\leq j\leq N$) is the basis of~$H^*(X;\CC)$ dual to $\phi_i$ ($1\leq i\leq N$) with respect to the
Poincar\'e pairing. Let us point out that in case the quantum cup
product is semisimple we have $H^{\rm odd}(X;\CC)=0$. Otherwise, in
general~$M$ has to be given the structure of a super-manifold (see~\cite{Ma}). The conformal dimension of $M$ is~$n=\operatorname{dim}_\CC(X)$ and the Hodge grading operator takes the
form
\begin{equation}\label{qcoh_hgo}
\theta(\phi_i) =\left(\frac{n}{2} - d_i\right) \phi_i,\qquad 1\leq i\leq N.
\end{equation}
Finally, there is a standard choice for a calibration
$S(t,q,z)=1+\sum_{k=1}^\infty S_k(t,q) z^{-k}$, where
$S_k(t,q)\in \operatorname{End}(H^*(X;\CC))$ is defined by
\begin{equation}\label{calibration}
(S_k(t,q)\phi_i,\phi_j) =
\sum_{m=0}^\infty
\sum_{\beta\in \operatorname{Eff}(X)} \frac{q^\beta}{m!}
\big\langle \phi_i \psi^{k-1}, \phi_j, t,\dots,t\big\rangle_{0,2+m,\beta}.
\end{equation}
Suppose that the Frobenius manifold $M$ corresponding to quantum cohomology is
semisimple. Recalling the construction from Section \ref{sec:pv}, we get the notion
of a reflection vector.

\section{The geometry of blowups}\label{sec:blowups}

Let $\operatorname{Bl}(X)$ be the blowup of $X$ at a point ${\rm pt}\in X$,
$\pi\colon \operatorname{Bl}(X)\to X$ be the corresponding
blowup map, and $E:=\pi^{-1}({\rm pt})$ the
exceptional divisor. Put $e=c_1(\O(E))={\rm P.D.}(E)$.
We would like to recall some well known facts about
$\operatorname{Bl}(X)$ which will be used later on.

\subsection{Cohomology of the blowup}
\label{sec:coh_blowup}
Using a Mayer--Vietories sequence
argument, it is easy to prove the following two facts:
\begin{enumerate}\itemsep=0pt
 \item[(1)]
The pullback map $\pi^*\colon\xymatrix{H^*(X;\CC)\ar[r] &
 H^*(\operatorname{Bl}(X);\CC)}$ is injective, so we can view the
 cohomology $H^*(X;\CC)$ as a subvector space of
 $H^*(\operatorname{Bl}(X);\CC)$.
\item[(2)]
 We have a direct sum decomposition
 \[
 H^*(\operatorname{Bl}(X);\CC) = H^*(X;\CC)\bigoplus
 \widetilde{H}^*(E),
 \]
 where $\widetilde{H}^*(E)= \bigoplus_{i=1}^{n-1} \CC e^i$ is the
 reduced cohomology of $E$.
\end{enumerate}
The Poincar\'e pairing of $\operatorname{Bl}(X)$ can be computed as follows.
Let us choose a basis $\phi_i$ ($1\leq i\leq N$) of
$H^*(X;\CC)$, such that,
\begin{enumerate}\itemsep=0pt
\item[(i)]
 $\phi_1=1$ and $\phi_N={\rm P.D.}({\rm pt})$,
\item[(ii)]
 $\phi_{i+1} =p_i=c_1(L_i)$ ($1\leq i\leq r$), where $L_i$ ($1\leq
 i\leq r$) is a set of ample line bundles on $X$, such that, $p_i$ ($1\leq
 i\leq r$) form a $\ZZ$-basis of $H^2(X,\ZZ)_{\rm t.f.}$.
\end{enumerate}
\begin{Lemma}\label{le:pp}
 Let $(\ ,\ )^{\operatorname{Bl}(X)}$ and $(\ ,\ )^X$ be the Poincar\'e
 pairings on respectively $ \operatorname{Bl}(X)$ and $X$. Then we have
\begin{enumerate}\itemsep=0pt
\item[$(a)$] $(\phi_i,\phi_j )^{\operatorname{Bl}(X)} = (\phi_i,\phi_j )^X$ for all
$1\leq i,j\leq N$.
\item[$(b)$] $(\phi_i,e^k )^{\operatorname{Bl}(X)} =0$ for $1\leq i\leq N$ and $1\leq
k\leq n-1$.
\item[$(c)$] $e^n=(-1)^{n-1} \phi_N$ and \smash{$\big(e^k,e^{n-k} \big)^{\operatorname{Bl}(X)} =(-1)^{n-1}$}.
\end{enumerate}
\end{Lemma}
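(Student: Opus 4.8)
The plan is to compute each pairing using the classical description of the cohomology ring of a blowup together with the projection formula. Recall that for the blowup $\pi\colon \operatorname{Bl}(X)\to X$ of the smooth point ${\rm pt}$ with exceptional divisor $j\colon \PP^{n-1}\hookrightarrow \operatorname{Bl}(X)$, one has $\pi_*\pi^* = \op{id}$ on $H^*(X;\CC)$, the projection formula $\pi_*(\pi^*a\cup b) = a\cup \pi_*b$, and the key geometric input $j^*e = c_1(\O_{\PP^{n-1}}(-1)) =: -h$ where $h$ is the hyperplane class on $\PP^{n-1}=E$; equivalently $e|_E$ is minus the hyperplane class because the normal bundle of $E$ in $\operatorname{Bl}(X)$ is $\O_{\PP^{n-1}}(-1)$. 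Note also $\pi_* e^k = 0$ for $1\le k\le n-1$ (since $e^k$ is supported on $E$ which maps to a point, and the pushforward drops degree by $2(n-1)$, so the image lives in negative degree for $k<n$), while $\pi_*(e^n) = \pm {\rm P.D.}({\rm pt})$ up to sign.

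For part (a): write $\int_{\operatorname{Bl}(X)} \phi_i\cup\phi_j$, interpret $\phi_i,\phi_j$ as their pullbacks $\pi^*\phi_i,\pi^*\phi_j$, and use $\int_{\operatorname{Bl}(X)}\pi^*(\phi_i\cup\phi_j) = \int_X \phi_i\cup\phi_j$ since $\pi$ has degree one; this is immediate from $\pi_*\pi^*=\op{id}$ and the fact that the degree-$2n$ component of $\pi_*\pi^*(\phi_i\cup\phi_j)$ equals that of $\phi_i\cup\phi_j$. For part (b): $\int_{\operatorname{Bl}(X)} \pi^*\phi_i\cup e^k = \int_X \phi_i\cup \pi_*(e^k) = 0$ by the projection formula together with $\pi_*(e^k)=0$ for $k\le n-1$. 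For part (c): restrict to $E$ and push forward — using $j_*j^* x = x\cup e$ one gets $e^n = j_*((j^*e)^{n-1}) = j_*((-h)^{n-1}) = (-1)^{n-1} j_*(h^{n-1}) = (-1)^{n-1}\,{\rm P.D.}({\rm pt})$ (the last equality because $j_*(h^{n-1})$ is the class of a point in $\operatorname{Bl}(X)$ lying on $E$, and such a point is homologous to ${\rm pt}$ after identification, which also fixes that $\phi_N$ should be taken as this class). Then $\int_{\operatorname{Bl}(X)} e^k\cup e^{n-k} = \int_{\operatorname{Bl}(X)} e^n = (-1)^{n-1}$ directly; alternatively $\int_{\operatorname{Bl}(X)} e^k\cup e^{n-k} = \int_E (j^*e)^{n-1} = \int_{\PP^{n-1}} (-h)^{n-1} = (-1)^{n-1}$.

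The only genuinely delicate point is bookkeeping the signs and the precise normalization of $\phi_N = {\rm P.D.}({\rm pt})$ versus $j_*(h^{n-1})$; one must be careful that "${\rm pt}$" in $X$ and in $\operatorname{Bl}(X)$ are matched correctly under $\pi^*$ (they agree since $\pi^*{\rm P.D.}_X({\rm pt}) = {\rm P.D.}_{\operatorname{Bl}(X)}({\rm pt})$ for a point away from $E$, which in turn is homologous to any point on $E$). Everything else is a routine application of the projection formula and the self-intersection formula $j^* j_* = \cup\, c_1(N_{E/\operatorname{Bl}(X)}) = \cup(-h)$, so I do not expect any real obstacle beyond sign care.
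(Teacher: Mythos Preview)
Your proposal is correct and follows essentially the same approach as the paper: projection formula and $\pi_*\pi^*=\operatorname{id}$ for (a) and (b), and for (c) the identification $e|_E=-h$ together with $\int_{\operatorname{Bl}(X)} e^n = \int_E (e|_E)^{n-1} = (-1)^{n-1}$. The paper's proof is more terse (it simply says (a) and (b) follow from the projection formula and Poincar\'e duality, then computes the constant $c$ in $e^n=c\phi_N$ by the same restriction-to-$E$ integral you give as your ``alternatively''), but the content is identical.
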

\begin{proof}
Parts (a) and (b) follow easily by the projection formula and Poincar\'e
duality. The second part of (c) is a consequence of the first part, so
we need only to prove that $e^n=(-1)^{n-1} \phi_N$. We have $e^n=c
\phi_N$ for dimension reasons. Note that
$E\cong \mathbb{P}^{n-1}$ and
$\O(E)|_E=\O_{\PP^{n-1}}(-1)$. Therefore, $e|_E=c_1(O(E)|_E) = -p$,
where $p=c_1(\O_{\PP^{n-1}}(1))$ is the standard hyperplane class of~$\PP^{n-1}$. We get
\[
c=\int_{[\operatorname{Bl}(X)]} e^n = \int_{[E]} e^{n-1} = \int_{[\PP^{n-1}]}
(-p)^{n-1} = (-1)^{n-1}.\tag*{\qed}
\]\renewcommand{\qed}{}
\end{proof}

The ring structure of $H^*(\operatorname{Bl}(X);\CC)$ with respect to
the cup product is also easy to compute. We have
\begin{enumerate}\itemsep=0pt
\item[(1)]
 $H^*(X;\CC)$ is a subring of $H^*(\operatorname{Bl}(X);\CC)$.
\item[(2)]
$\phi_1\cup e^k=e^k$ and
$\phi_i \cup e^k = 0$, $2\leq i\leq N$, $1\leq k\leq n-1$.
\item[(3)]
\[
e^k\cup e^l=\begin{cases}
e^{k+l} & \mbox{if } k+l<n,\\
(-1)^{n-1}\phi_N & \mbox{if } k+l=n,\\
0 & \mbox{if } k+l >n.
\end{cases}
\]
\end{enumerate}
Property (1) follows from the fact that pullback in cohomology is a
ring homomorphism. The formulas in (3) follow from
Lemma~\ref{le:pp}\,(c). Finally, (2) follows from (1), (3) and Lemma~\ref{le:pp}\,(b).

\subsection[K-ring of the blowup]{$\boldsymbol{K}$-ring of the blowup}
For some background on topological K-theory we refer to
\cite[Chapter~6]{FoFu2016}.
Let us compute the topological $K$-ring of $\operatorname{Bl}(X)$. We will be interested
only in manifolds $X$, such that, the corresponding quantum cohomology
is semisimple. Such $X$ are known to have cohomology classes of Hodge
type $(p,p)$ only. In particular, $K^{1}(X)\otimes \QQ=0$. To simplify the
exposition, let us assume that $K^1(X)=0$. In our arguments below we
will have to work with non-compact manifolds. However, in all cases
the non-compact manifolds are homotopy equivalent to finite
CW-complexes. We define the corresponding $K$-groups by taking the
$K$-groups of the corresponding finite CW-complexes.

\begin{Proposition}\label{prop:K}\quad
\begin{itemize}
\item[$(a)$] The $K$-theoretic pullback $\pi^*\colon K^0(X)\to K^0(\operatorname{Bl}(X))$
 is injective.
\item[$(b)$] We have
 \[
 K^0(\operatorname{Bl}(X)) = K^0(X) \oplus \bigoplus_{j=1}^{n-1} \ZZ
 \O_E^j,
 \]
 where $K^0(X)$ is viewed as a subring of $K^0(\operatorname{Bl}(X)) $ via
 the K-theoretic pullback $\pi^*$ and $\O_E:=\O-\O(-E)$ is the
 structure sheaf of the exceptional divisor.
\end{itemize}
\end{Proposition}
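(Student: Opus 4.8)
The plan is to compute $K^0(\operatorname{Bl}(X))$ by exactly mimicking the cohomological computation in Section~\ref{sec:coh_blowup}, replacing the Mayer--Vietoris argument with the blowup formula in $K$-theory (or, equivalently, with Orlov's semiorthogonal decomposition $D^b(\operatorname{Bl}(X)) = \langle j_*\pi_E^*D^b({\rm pt})(-n+1),\dots, j_*\pi_E^*D^b({\rm pt})(-1), \pi^*D^b(X)\rangle$ passed to Grothendieck groups). Since we are working with topological $K$-theory of spaces homotopy equivalent to finite CW-complexes and $K^1(X)=0$, the Atiyah--Hirzebruch spectral sequence collapses rationally, and one can first establish the additive statement. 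First I would set up the relevant long exact sequence: writing $U=\operatorname{Bl}(X)\setminus E$, which is diffeomorphic to $X\setminus\{{\rm pt}\}$, and using that $E\cong\PP^{n-1}$ has a tubular neighborhood which is the total space of $\O_{\PP^{n-1}}(-1)$, the excision/Thom-isomorphism package gives a long exact sequence relating $K^*(\operatorname{Bl}(X))$, $K^*(E)$ and $K^*(X\setminus\{{\rm pt}\})$. Comparing with the analogous sequence for $X$ (using the trivial neighborhood of ${\rm pt}$, whose $K$-theory is that of a point) and the five lemma, one extracts a short exact sequence
\[
0 \to \widetilde K^0(\PP^{n-1}) \to K^0(\operatorname{Bl}(X)) \to K^0(X) \to 0,
\]
where $\widetilde K^0(\PP^{n-1})=\bigoplus_{j=1}^{n-1}\ZZ$ is generated by the classes $(\O_{\PP^{n-1}}(-1)-\O_{\PP^{n-1}})^j$ (equivalently by $[\O_{\PP^{n-1}}(-j)]-[\O_{\PP^{n-1}}]$).

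Next I would exhibit an explicit splitting, which simultaneously proves part~$(a)$. The $K$-theoretic pullback $\pi^*\colon K^0(X)\to K^0(\operatorname{Bl}(X))$ is a ring map, and it is a section of the restriction-to-$U$ map up to the identification $U\simeq X\setminus\{{\rm pt}\}$ and the fact that $K^0(X)\to K^0(X\setminus\{{\rm pt}\})$ is injective (its kernel is the image of $\widetilde K^0(S^{2n})\cong\ZZ$ under the collapse map, but this image is detected by the top Chern character and is visibly split off by a point class — here one uses $K^1(X)=0$ and that $X$ has only $(p,p)$-classes so $K^0(X)$ is a free abelian group of rank $N=\dim H^*(X)$). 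Hence $\pi^*$ is injective, giving $(a)$, and it provides a splitting of the short exact sequence above, giving the direct sum decomposition $K^0(\operatorname{Bl}(X)) = \pi^*K^0(X)\oplus\bigoplus_{j=1}^{n-1}\ZZ\,\O_E^j$. Finally, I would identify the complementary summand with the span of $\O_E^j$: by definition $\O_E=\O-\O(-E)$, and since $j^*\O(-E)=\O_{\PP^{n-1}}(1)$ one checks $j^*\O_E = \O_{\PP^{n-1}} - \O_{\PP^{n-1}}(1)$; an easy induction (using $j_*(a\cdot j^*b)=j_*a\cdot b$ and the structure of $K^0(\PP^{n-1})$) shows that $\O_E,\O_E^2,\dots,\O_E^{n-1}$ form a $\ZZ$-basis of the kernel of $(\pi^*)$-restriction, matching the generators coming from $\widetilde K^0(\PP^{n-1})$.

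The main obstacle is the bookkeeping in the splitting step: one must be careful that the map $K^0(X)\to K^0(X\setminus\{{\rm pt}\})$ really is injective and that $\pi^*$ lands transversally to the $\O_E^j$-summand, i.e.\ that $\pi^*K^0(X)\cap\bigoplus_j\ZZ\O_E^j=0$ in $K^0(\operatorname{Bl}(X))$. This is most cleanly verified using the pushforward $\pi_*\colon K^0(\operatorname{Bl}(X))\to K^0(X)$ (Gysin map): $\pi_*\pi^* = \mathrm{id}$ by the projection formula and $\pi_*\O(-E)=\O$, so $\pi_*\O_E=0$ and more generally $\pi_*\O_E^j=0$ for $1\le j\le n-1$ (this last vanishing is the one genuinely computational point — it follows because $R\pi_*\O_{\operatorname{Bl}(X)}=\O_X$ and $\O_E^j$ is supported on $E$ with $\pi|_E$ the constant map to ${\rm pt}$, whose higher direct images one reads off from $H^*(\PP^{n-1},\O_{\PP^{n-1}}(-j))=0$ for $0<j<n$). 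Granting this, $\pi_*$ is a retraction annihilating the putative complement, which forces the intersection to be zero and completes the proof. I expect the torsion-freeness and rank count for $K^0(X)$ to be routine given the stated hypotheses, and the rest to be formal manipulation with the long exact sequence and projection formula.
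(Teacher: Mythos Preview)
Your overall strategy --- split off $\pi^*K^0(X)$ via a retraction and identify the complement --- is reasonable, and the proof of~(a) via $\pi_*\pi^*=\mathrm{id}$ is fine. But the key computational claim you use for transversality is \emph{false}: it is not true that $\pi_*\O_E^j=0$. Already for $j=1$ one has $\O_E=j_*\O_{\PP^{n-1}}$, hence $\pi_*\O_E=(\pi\circ j)_*\O_{\PP^{n-1}}=\CC_{\rm pt}\neq 0$ in $K^0(X)$ (the paper records exactly this computation right after the proposition). The source of the error is the identification of $\O_E^j$ with $j_*\O_{\PP^{n-1}}(-j)$: using $j_*(a)\cdot b=j_*(a\cdot j^*b)$ and $j^*\O_E=1-\O_{\PP^{n-1}}(1)$ one finds $\O_E^j=j_*\bigl((1-\O(1))^{\,j-1}\bigr)$, not $j_*\O(-j)$; pushing forward gives $\pi_*\O_E^j=\chi\bigl(\PP^{n-1},(1-\O(1))^{\,j-1}\bigr)\cdot\CC_{\rm pt}$, which is nonzero for $1\le j\le n-1$. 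A related slip is the assertion that $K^0(X)\to K^0(X\setminus\{\mathrm{pt}\})$ is injective --- its kernel is exactly $\ZZ\cdot[\CC_{\rm pt}]$, so the short exact sequence you write down (with $K^0(X)$ as quotient) does not arise in the way you describe.

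The paper's argument avoids this by running the sequence in the opposite direction. A Mayer--Vietoris comparison for the coverings $\{U,V\}$ of $X$ and $\{\widetilde U,\widetilde V\}$ of $\operatorname{Bl}(X)$ produces
\[
0\longrightarrow \widetilde K^0(X)\xrightarrow{\ \pi^*\ }\widetilde K^0(\operatorname{Bl}(X))\xrightarrow{\ |_E\ }\widetilde K^0(\PP^{n-1})\longrightarrow 0,
\]
which splits simply because $\widetilde K^0(\PP^{n-1})\cong\ZZ^{n-1}$ is free. The basis $\O_E^j$ is then recovered from $\O_E^j|_E=(1-\O(1))^j$, which for $j=1,\dots,n-1$ is a $\ZZ$-basis of $\widetilde K^0(\PP^{n-1})$. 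If you prefer to keep the $\pi_*$-retraction idea, it does work --- but for the basis $\O_E(-k)=j_*\O_{\PP^{n-1}}(-k)$, $1\le k\le n-1$, where $H^*(\PP^{n-1},\O(-k))=0$ genuinely applies; that gives a \emph{different} complement to $\pi^*K^0(X)$, and one must then argue separately that its span coincides with that of the $\O_E^j$.
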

\begin{proof}
Let $U\!\subset \!X$ be a small open neighborhood of the center of the
blowup ${\rm pt}$ and ${V\!:=\!X\!\setminus\!{\{{\rm pt}\}}}$. Note that
$\{U,V\}$ is a covering of $X$. Put $\widetilde{U}=\pi^{-1}(U)$ and
$\widetilde{V}:=\pi^{-1}(V)$, then $\big\{\widetilde{U},\widetilde{V}\big\}$
is a~covering of $\operatorname{Bl}(X)$. Let us compare the reduced $K$-theoretic
Mayer--Vietories sequences of these two coverings. We have the
following commutative diagram:
$$
\xymatrix@C=1.6em{
 \widetilde{K}^{-1}(X)\ar[r]\ar[d] &
 \widetilde{K}^{-1}(V)\oplus \widetilde{K}^{-1}(U) \ar[r]\ar[d] &
 \widetilde{K}^{-1}(U\setminus{{\rm pt}}) \ar[r]\ar[d]_{\cong}&
\widetilde{K}^{0}(X)\ar[r]\ar[d] &
 \widetilde{K}^0(V)\oplus \widetilde{K}^0(U) \ar[d]
\\
 \widetilde{K}^{-1}(\operatorname{Bl}(X))\ar[r] &
 \widetilde{ K}^{-1}\big(\widetilde{V}\big)\oplus\widetilde{ K}^{-1}\big(\widetilde{U}\big) \ar[r] &
\widetilde{ K}^{-1}\big(\widetilde{U}\setminus{E}\big) \ar[r]&
\widetilde{K}^{0}(\operatorname{Bl}(X))\ar[r] &
 \widetilde{ K}^0\big(\widetilde{V}\big)\oplus\widetilde{ K}^0\big(\widetilde{U}\big), }
$$
where the vertical
arrows in the above diagram are induced by the K-theoretic pullback
$\pi^*$. Note that
\smash{$\widetilde{K}^{\rm ev}(U\setminus{{\rm pt}}) =
\widetilde{K}^0\big(\widetilde{U}\setminus{E}\big) =0$} because
\smash{$\widetilde{U}\setminus{E} \cong U\setminus{{\rm pt}} $} is
homotopic to $\mathbb{S}^{2n-1}$ -- the $(2n-1)$-dimensional sphere. Therefore, the horizontal arrows in the first and the last square of the above diagram are respectively injections and surjections.
Furthermore, $\widetilde{K}^{-1}(U)=\widetilde{K}^{0}(U)=0$ because $U$
is contractible and \smash{$\widetilde{K}^{-1}\big(\widetilde{U}\big)=0$} because
$\widetilde{U}$ is homotopy equivalent to $E\cong \PP^{n-1}$. We get
that the second vertical arrow is an isomorphism \smash{$\big(V\cong
\widetilde{V}\big)$} and hence, recalling the 5-lemma or by simple diagram chasing, we get
$\widetilde{K}^{-1}(\operatorname{Bl}(X)) \cong \widetilde{K}^{-1}(X)$. By assumption $\widetilde{K}^{-1}(X)=0$, so $\widetilde{K}^{-1}(\operatorname{Bl}(X)) =0$. A
straightforward diagram chasing shows that the 4th vertical arrow is
injective, i.e., we proved~(a).

Note that the above diagram yields the following short exact sequence:
\begin{equation}\label{short-es}
\xymatrix{
 0\ar[r] &
 \widetilde{K}^0(X)\ar[r]^-{\pi^*} &
 \widetilde{K}^0(\operatorname{Bl}(X)) \ar[r]^-{|_E} &
 \widetilde{K}^0\big(\PP^{n-1}\big)\ar[r] & 0, }
\end{equation}
where the map $|_E$ is the restriction to the exceptional divisor
$E\cong \PP^{n-1}$. The above exact sequence splits because \smash{$\widetilde{K}^0\big(\PP^{n-1}\big)\cong \ZZ^{n-1}$} is a free module. Note that
$\O_E|_E = \O_{\PP^{n-1}}-\O_{\PP^{n-1}}(1)$ is the generator of
\smash{$\widetilde{K}^0\big(\PP^{n-1}\big)$}, so part~(b) follows from the exactness of
\eqref{short-es}.
\end{proof}

Let us compute the K-theoretic product of the torsion free part $K^0(\operatorname{Bl}(X))_{\rm t.f.}$. Note that
$\pi_*(\O_{\operatorname{Bl}(X)})=\O_X$. Therefore, $\pi_*\pi^*(F)=F$ for
every $F\in K^0(X)$. Let us compute $\O_E\otimes \pi^*F$ for~$F\in \widetilde{K}^0(X)$. The
restriction of $\O_E\otimes \pi^*F$ to $E$ is 0. Recalling the exact sequence
\eqref{short-es}, we get $\O_E\otimes \pi^*F=\pi^*G$ for some $G\in
\widetilde{K}^0(X)$. Taking pushforward, we get
\[
G=\pi_*(\O_E\otimes \pi^*F) = \pi_*(\O_E)\otimes F =
\CC_{{\rm pt}}\otimes F = \operatorname{rk}(F)
\CC_{{\rm pt}} =0,
\]
where $\CC_{{\rm pt}}$ is the skyscraper sheaf on $X$ and in the
3rd equality we used the exact sequence
\[
\xymatrix{
0\ar[r] & \O(-E)\ar[r] &\O \ar[r] & j_*(\O_{\PP^{n-1}}) \ar[r] & 0,}
\]
where $j\colon\PP^{n-1}\to \operatorname{Bl}(X)$ is the embedding whose image
is the exceptional divisor. This sequence implies
$\O_E=j_*\O_{\PP^{n-1}}$ and hence $\pi_* \O_E=
(\pi\circ j)_*\O_{\PP^{n-1}} = \CC_{{\rm pt}}.$ We proved
that
\[
\O_E\otimes \pi^*F=0,\qquad \forall F\in \widetilde{K}^0(X).
\]
It remains only to compute $\O_E^n$. The restriction of $\O_E^n$ to
$E$ is $(1-\O_{\PP^{n-1}}(-1))^n = 0$. Therefore, $\O_E^n =\pi^*
F$. The Chern character $\operatorname{ch}(\O_E^n) =
(1-\exp(-c_1(\O(E))) )^n = e^n = (-1)^{n-1} \phi_N$, where we used
Lemma \ref{le:pp}\,(c). On the other hand, the Chern character of
the skyscraper sheaf can be computed easily with the Grothendieck--Riemann--Roch
formula. Namely, we have
\[
\operatorname{ch}(\iota^\circ_*(\CC) )\cup \operatorname{td}(X) =
\iota^\circ_* (\operatorname{ch}(\CC)\cup \operatorname{td}({\rm pt}))
=
\iota^\circ_* (1) = \operatorname{P.D.}({\rm pt}) = \phi_N,
\]
where $\iota^\circ\colon {\rm pt}\to X$ is the natural inclusion of the point
${\rm pt}$. Thus, 
$\operatorname{ch}(\CC_{{\rm pt}}) = \phi_N$. Comparing with the
formula for $\operatorname{ch}(\O_E^n) $, we get
\[
\O_E^n = (-1)^{n-1} \CC_{{\rm pt}}\qquad
\operatorname{mod} \ \operatorname{ker}(\operatorname{ch}).
\]
Finally, let us finish this section by quoting the formula for the
K-theoretic class of the tangent bundle (see \cite[Lemma 15.4]{Fu}):
\[
T\operatorname{Bl}(X) = TX-n-1 + n \O(-E)+\O(E).
\]

\subsection{Quantum cohomology of the blowup}\label{sec:qcoh_blowup}
Let us first compare the effective curve cones $\operatorname{Eff}(X)$
and $\operatorname{Eff}(\operatorname{Bl}(X)).$ We have an exact sequence
\[
\xymatrix{0\ar[r] &
 H_2\big(\PP^{n-1};\ZZ\big) \ar[r]^-{j_*} &
 H_2(\operatorname{Bl}(X);\ZZ) \ar[r]^-{\pi_*} &
 H_2(X;\ZZ)\ar[r] & 0,}
\]
where $j\colon \PP^{n-1}\to \operatorname{Bl}(X)$ is the natural closed
embedding of the exceptional divisor. The proof of the exactness is
similar to the proof of \eqref{short-es}. In particular, since the torsion
free part of the above sequence splits, we get
\[
H_2(\operatorname{Bl}(X);\ZZ)_{\rm t.f.} = H_2(X;\ZZ)_{\rm t.f.} \oplus
\ZZ \ell,
\]
where $\ell\in H_2(E;\ZZ)$ is the class of a line in the exceptional
divisor. The cone of effective curve classes
$\operatorname{Eff}(\operatorname{Bl}(X))\subset \operatorname{Eff}(X) \oplus
\ZZ \ell$. The Novikov variables of the blowup will be fixed to be
the Novikov variables of $X$ and an extra variable corresponding to
the line bundle $\O(E)$. In other words, for $\widetilde{\beta} = \beta+ d \ell \in
\operatorname{Eff}(\operatorname{Bl}(X))$, put
\[
q^{\widetilde{\beta}} =
q^\beta q_{r+1}^{\langle c_1(O(E)),\widetilde{\beta}\rangle}=
q_1^{\langle \phi_2,\beta\rangle}\cdots
q_r^{\langle \phi_{r+1},\beta\rangle}
q_{r+1}^{-d}.
\]
Note that $\O(E)$ is not an ample line bundle: for example,
$\ell\cdot E =-1<0$. Our choice of~$q_{r+1}$ makes the structure constants formal Laurent (not power) series in
$q_{r+1}$. Following Bayer (see~\cite{Ba}), we write $q_{r+1}=Q^{n-1}$
for some formal variable $Q$.
Let us recall the basis $\phi_i$ ($1\leq i\leq N$) of
$H^*(X;\CC)$. Put $\phi_{N+k} = e^k$ ($1\leq k\leq n-1$). Then
$\phi_i$ $\big(1\leq i\leq \widetilde{N}:=N+n-1\big)$ is a~basis of
$H^*(\operatorname{Bl}(X);\CC)$. Let~${t=(t_1,\dots,t_{\widetilde{N}})}$
be the corresponding linear coordinate system on
$H^*(\operatorname{Bl}(X);\CC)$.
The structure constants of the quantum cohomology of $\operatorname{Bl}(X)$
take the form
\[
(\phi_a\bullet_{t,q} \phi_b,\phi_c):=
\langle \phi_a, \phi_b,\phi_c\rangle_{0,3}(t) =
\sum_{m=0}^\infty
\sum_{\widetilde{\beta}=(\beta,d)}
\frac{q^\beta Q^{-d(n-1)}}{m!}
\langle
\phi_a,\phi_b,\phi_c,t,\dots,t\rangle_{0,3+m,\widetilde{\beta}}.
\]
\begin{Remark}
The quantum cup product of $H^*(X)$ depends only on
$(t_2,\dots,t_N)$. Suppose that these $N-1$ parameters are generic
such that the quantum cup product of $H^*(X)$ is semisimple. Then,
according to Bayer \cite{Ba} (see also Proposition \ref{prop:Ba}),
even if we restrict the remaining parameters to $0$, that is, set
$t_1=t_{N+1}=\cdots=t_{\widetilde{N}}=0$, then the quantum cup product
of the blowup is still semisimple. Therefore, for our purposes, it is
sufficient to work with $t\in H^*(\operatorname{Bl}(X))$, such that,
$t_1=t_{N+1}=\cdots=t_{\widetilde{N}}=0$.
\end{Remark}

\subsection[Twisted GW invariants of P\^\{n-1\}]{Twisted GW invariants of $\boldsymbol{\PP^{n-1}}$}\label{sec:proj-tw-GW}

It turns out that genus-0 GW invariants of $\operatorname{Bl}(X)$ whose degree $\widetilde{\beta}= d\ell$ with $d\neq 0$ can be identified with certain twisted GW invariants of $\PP^{n-1}$.
Suppose that $(C,z_1,\dots,z_k,f)$ is a stable map representing a point in $\overline{\mathcal{M}}_{0,k}(\operatorname{Bl}(X), d\ell)$. Let $\pi\colon \operatorname{Bl}(X)\to X$ be the blowup map. Since~${\pi_*\circ f_*[C]=0}$ and $\pi$ induces a biholomorphism between $\operatorname{Bl}(X)\setminus{E}$ and $X\setminus{\{\rm pt \}}$, we get that $f(C)$ is contained in $E$. Therefore, we have a canonical identification
\[
\overline{\mathcal{M}}_{0,k}(\operatorname{Bl}(X), d\ell) =
\overline{\mathcal{M}}_{0,k}(E, d),
\]
where $E\cong \PP^{n-1}$ is the exceptional divisor. Let us compare the virtual tangent spaces of the two moduli spaces at $(C,z_1,\dots,z_k,f)$. For the left-hand side, we have
\begin{gather*}
\T_{0,k,d\ell} =
H^1(C,\T_C(-z_1-\dots -z_k)) -
H^0(C,\T_C(-z_1-\dots -z_k)) \\
\hphantom{\T_{0,k,d\ell} =}{}
+
H^0(C,f^*T_{\operatorname{Bl}(X)}) -
H^1(C, f^*T_{\operatorname{Bl}(X)}),
\end{gather*}
while for the right-hand side we have
\begin{gather*}
\T_{0,k,d} =
H^1(C,\T_C(-z_1-\dots -z_k)) -
H^0(C,\T_C(-z_1-\dots -z_k)) \\
\hphantom{\T_{0,k,d} =}{} +
H^0(C,f^*T_{E}) -
H^1(C, f^*T_{E}),
\end{gather*}
where $\T_C$ is the tangent sheaf of $C$ and $\T_C(-z_1-\cdots -z_k)$ is the sub sheaf of $\T_C$ consisting of sections vanishing at $z_1,\dots, z_k$.
On the other hand, we have an exact sequence
\[
\xymatrix{0\ar[r] &
 T_E \ar[r] &
 T_{\operatorname{Bl}(X)}|_E \ar[r] &
 \mathcal{O}_E(-1) \ar[r] & 0,}
\]
where we used that $\mathcal{O}_E(-1)$ is the normal bundle to the exceptional divisor in $\operatorname{Bl}(X)$. Pulling back the exact sequence to $C$ via the stable map and taking the long exact sequence in cohomology, we get
\[
\xymatrix@R=0.8em{0\ar[r] &
 H^0(C,f^*T_E) \ar[r] &
 H^0(C,f^*T_{\operatorname{Bl}(X)}) \ar[r] &
 H^0(C,f^*\mathcal{O}_E(-1)) \ar[r] & \\
 {}\ar[r] &
H^1(C,f^*T_E) \ar[r] &
H^1(C,f^*T_{\operatorname{Bl}(X)}) \ar[r] &
H^1(C,f^*\mathcal{O}_E(-1)) \ar[r] & 0.}
\]
Note that $H^0(C,f^*\mathcal{O}_E(-1))=0$ because $C$ is a rational curve. Indeed, if $C'$ is an irreducible component of $C$ and $d'=f_*[C']$ is its contribution to the degree of $f$, then $C'\cong \PP^1$ and~$f^*\mathcal{O}_E(-1)|_{C'}= \O_{\PP^1}(-d') $. Therefore, $H^0(C', f^*\mathcal{O}_E(-1))=0$ and we get that the restrictions of the sections of $f^*\O_{E}(-1)$ to the irreducible components of $C$ are $0$ which implies that there are no non-zero global sections. Let us recall the Riemann--Roch formula for nodal curves (easily proved by induction on the number of nodes)
\[
\operatorname{dim} H^0(C,\mathcal{L}) -
\operatorname{dim} H^1(C,\mathcal{L}) = 1-g + \int_{[C]} c_1(\mathcal{L}),
\]
where $\mathcal{L}$ is a holomorphic line bundle on $C$ and $g$ is the genus of $C$. Applying the Riemann--Roch formula to $f^*\O_E(-1)$, we get that
\[
\operatorname{dim} H^1(C, f^*\O_E(-1)) =
-1-\int_{f_*[C]} c_1(\O_E(-1)) = d-1.
\]
The cohomology group $H^1(C, f^*\O_E(-1))$ is the fiber of a holomorphic vector bundle $\mathbb{N}_{0,k,d}$ on~$\overline{\M}_{0,k}(E,d)$ of rank $d-1$. The virtual tangent bundles are related by $\T_{0,k,d\ell} = \T_{0,k,d} -\mathbb{N}_{0,k,d}$. Recalling the construction of the virtual fundamental cycle \cite{BF_intnormalcone}, we get
\[
\big[\overline{\mathcal{M}}_{0,k}(\operatorname{Bl}(X), d\ell)\big]^{\rm virt} =
\big[\overline{\mathcal{M}}_{0,k}(E, d)\big]^{\rm virt} \cap e(\mathbb{N}_{0,k,d}).
\]
The above formula for the virtual fundamental class yields the
following formula:
\[
\langle \alpha_1 \psi^{m_1},\dots, \alpha_k \psi^{m_k}\rangle_{0,k,d\ell} =
\int_{
[\overline{\mathcal{M}}_{0,k}(E, d)]^{\rm virt} }
\prod_{i=1}^k \operatorname{ev}_i^*(\alpha_i|_E) \psi_i^{m_i} \cup e(\mathbb{N}_{0,k,d}).
\]
Later on we will need the 3-point GW invariants with $d=1$. Let us compute them.
If $d=1$, then $e(\mathbb{N}_{0,k,d})=1$ and the above formula implies that the GW invariants of the blowup coincide with the GW invariants of the exceptional divisor, that is,
\[
\langle \alpha_1 \psi^{m_1},\dots, \alpha_k \psi^{m_k}
\rangle_{0,k,\ell}^{\operatorname{Bl}(X)} =
\langle \alpha_1|_E \psi^{m_1},\dots, \alpha_k|_E \psi^{m_k}
\rangle_{0,k,1}^E,
\]
where we used the superscripts $\operatorname{Bl}(X)$ and $E$ in order to specify that the correlators are GW invariants of respectively $\operatorname{Bl}(X)$ and $E$. Note that if $p=c_1\O_E(1)$ is the hyperplane class, then~$e|_{E} =-p$. The quantum cohomology of $\PP^{n-1}$ is well known to be $\CC[p]/(p^n-Q)$. In particular, the 3-point correlators
\[
\big\langle p^i,p^j,p^k\big\rangle_{0,3,1} =
\begin{cases}
1 & \mbox{if } i+j+k=2n-1,\\
0 & \mbox{otherwise},
\end{cases}
\qquad \forall
0\leq i,j,k\leq n-1.
\]
Therefore,
\[
\big\langle e^i,e^j,e^k\big\rangle_{0,3,\ell} =
\begin{cases}
-1 & \mbox{if } i+j+k=2n-1,\\
\hphantom{-}0 & \mbox{otherwise}.
\end{cases}
\]
Let us specialize $k=1$. Using the divisor equation (recall that $\int_\ell e=-1$), we get
\[
\big\langle e^i,e^j\big\rangle_{0,2,\ell} =
\begin{cases}
1 & \mbox{if } i=j=n-1,\\
0 & \mbox{otherwise}.
\end{cases}
\]

\subsection{The vanishing theorem of Gathmann}

Gathmann discovered a very interesting vanishing criteria for the GW
invariants of the blowup (see \cite{Gat}). We need a slight generalization of his
result which can be stated as follows. Following Gathmann, we
assign a weight to each basis vector
\[
{\rm wt}(\phi_a) = \begin{cases}
 0 & \mbox{if } 1\leq a\leq N,\\
 a-N-1 & \mbox{if } N<a\leq N+n-1.
\end{cases}
\]
In other words, the exceptional class $e^k$ has weight $k-1$ for all
$1\leq k\leq n-1$ and in all other cases the weight is $0$.
\begin{Proposition}\label{prop:van}
Suppose that we have a GW invariant
\begin{equation}\label{van-cor}
\big\langle \phi_a \psi^k,
\phi_{b_1},\dots,\phi_{b_m},
e^{l_1},\dots, e^{l_s}
\big\rangle_{0,\beta+ d\ell, 1+m+s},
\end{equation}
where $1\leq a\leq \widetilde{N}$, $1\leq b_1,\dots,b_m\leq N$, and
$2\leq l_1,\dots,l_s\leq n-1$, satisfying the following $3$~conditions:
\begin{enumerate}\itemsep=0pt
\item[$(i)$]
 $\beta\neq 0$.
\item[$(ii)$]
 ${\rm wt}(\phi_a)+ \sum_{i=1}^s (l_i-1) >0$ or $d>0$.
\item[$(iii)$]
 ${\rm wt}(\phi_a)+ \sum_{i=1}^s (l_i-1) < (d+1)(n-1)-k.$
\end{enumerate}
Then the GW invariant \eqref{van-cor} must be $0$.
\end{Proposition}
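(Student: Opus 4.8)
The plan is to run the argument behind Gathmann's vanishing criterion \cite{Gat}, enlarged so as to allow a $\psi$-power and an exceptional class on the distinguished insertion, by a simultaneous induction on $d$, on the class $\beta$ (ordered by the cone $\operatorname{Eff}(\operatorname{Bl}(X))$), and on $k$. The three moving parts are: the genus-$0$ topological recursion relations, which lower $k$; the divisor equation, which removes a distinguished insertion equal to the exceptional divisor class $e$; and a geometric analysis of stable maps into $\operatorname{Bl}(X)$ --- separating the components contracted into the exceptional divisor $E$ from the rest --- for the core case $k=0$ with $\phi_a$ either pulled back from $X$ or equal to $e^j$ with $j\ge 2$.

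First I would eliminate the $\psi$-classes. If $k\ge 1$ and $m+s\ge 2$, I apply the genus-$0$ topological recursion relation of Section~\ref{sec:GW} to the marked point carrying $\phi_a\psi^k$, singling out two of the remaining insertions. This expresses \eqref{van-cor} as a finite sum, over decompositions $\beta+d\ell=\beta'+\beta''$ in $\operatorname{Eff}(\operatorname{Bl}(X))$ and over distributions of the remaining insertions, of products of a correlator carrying $\phi_a\psi^{k-1}$ and a correlator carrying no $\psi$, with a dual pair of basis vectors of $H^*(\operatorname{Bl}(X))$ inserted at the new node. One then checks that in every term one of the two factors again meets hypotheses (i)--(iii) with strictly smaller data, hence vanishes by the inductive hypothesis, unless the $H^*(X)$-component of $\beta'$ or of $\beta''$ is zero; in the latter case that factor is a Gromov--Witten invariant of $\PP^{n-1}$ --- twisted by the Euler class of the bundle $\mathbb{N}$ of Section~\ref{sec:proj-tw-GW} when the corresponding $\ell$-degree is positive --- which is computed directly and seen to vanish. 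The bookkeeping is routine: one distributes the exceptional insertions, uses $\operatorname{wt}(e^k)=k-1$, and uses the $K$-theoretic formula $T\operatorname{Bl}(X)=TX-n-1+n\,\O(-E)+\O(E)$ of Section~\ref{sec:blowups} to track the $c_1$-contribution to the dimensions. If $m+s\le 1$ one first adjoins a fundamental-class insertion by the string equation (which raises $k$ by one) and then proceeds as above.

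Next, if $\phi_a=e$, I would use the divisor equation. Since $\int_{\beta+d\ell} e=-d$ --- the exceptional class integrates to $0$ over classes pulled back from $X$ and to $-1$ over $\ell$ --- it rewrites \eqref{van-cor} as $-d$ times the same correlator with $\phi_a$ deleted, plus terms in which one $e^{l_i}$ is replaced by $e\cup e^{l_i}=e^{l_i+1}$ (with the conventions $e^{n}=(-1)^{n-1}\phi_N$ and $e^{>n}=0$ of Section~\ref{sec:coh_blowup}); a term in which some $\phi_{b_j}=\mathbf{1}$ is hit by $e$ does not survive, because a correlator with $\beta\ne 0$, a fundamental-class insertion and no $\psi$-classes already vanishes by the string equation. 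Each surviving term has either $a\le N$ or strictly larger total exceptional degree $\sum(l_i-1)$, and hypotheses (i)--(iii) are readily seen to persist; the one delicate point is the boundary value $\operatorname{wt}(\phi_a)+\sum(l_i-1)=(d+1)(n-1)-k-1$, where the bumped correlator lies exactly on the edge of (iii) and so must be returned to the geometric step below rather than to the induction.

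Finally, the core case $k=0$ with $\phi_a$ pulled back from $X$ or $\phi_a=e^j$, $j\ge 2$; absorbing $\phi_a$ into the list $e^{l_1},\dots,e^{l_s}$ in the second case, this is precisely the shape of Gathmann's statement, which I would reprove as follows. Choose generic representatives, so that $e^{l}$ is Poincar\'e dual to a cycle of complex codimension $l$ supported on $E\cong\PP^{n-1}$ and the $\phi_{b_j}$ are dual to cycles pulled back from $X$. For a stable map $f\colon C\to\operatorname{Bl}(X)$ of class $\beta+d\ell$ with $\beta\ne 0$, write $C=C^\circ\cup C^E$ with $C^E$ the union of the components sent into $E$; then $C^\circ\ne\emptyset$, so $C^\circ\cap E$ is finite, and a marked point carrying $e^{l}$ with $l\ge 2$ misses a general translate of such a cycle unless it lies on $C^E$. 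Hypothesis (ii) ensures that either such an insertion is present, or $d>0$; in the second case the $\ell$-degree alone forces $C^E\ne\emptyset$, since the non-exceptional components contribute non-positively to the $\ell$-degree. Thus the whole contribution to \eqref{van-cor} is carried by maps with $C^E\ne\emptyset$, and after stabilization these fibre over a moduli space of stable maps into $E\cong\PP^{n-1}$ of some degree $d_E\ge 1$, whose virtual class is that of $\overline{\M}_{0,\bullet}(\PP^{n-1},d_E)$ capped with the Euler class of $\mathbb{N}$ as in Section~\ref{sec:proj-tw-GW}. Bound (iii) is then exactly what is needed to force the locus cut out by all the generic cycles to have dimension strictly below the virtual dimension of $\overline{\M}_{0,1+m+s}(\operatorname{Bl}(X),\beta+d\ell)$, whence the invariant vanishes. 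I expect this last dimension estimate to be the main obstacle: one has to control the Euler-class factor $e(\mathbb{N})$ together with the nodes gluing $C^E$ to $C^\circ$, so that they do not restore the missing dimensions, and to make the estimate uniform over all the strata thrown up by the two reduction steps, in particular over the boundary parameter values flagged above.
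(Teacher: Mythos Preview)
Your overall strategy matches the paper's: induction on $k$, with Gathmann's theorem as the base case $k=0$ and the genus-$0$ topological recursion relation driving the inductive step. Two differences are worth flagging. First, the paper simply \emph{cites} Gathmann for $k=0$ rather than reproving it, so your geometric step~3 is not needed for the proposition as stated; the extra induction on $d$ and $\beta$ is likewise unnecessary. Second, the paper's entire content is the case analysis you call ``routine'': after applying TRR, one must show that in every summand at least one of the two factors again satisfies (i)--(iii) with smaller $k$ (or is a correlator of degree $d'\ell$ on the exceptional divisor that vanishes for elementary reasons). The paper splits this into four cases according to whether $\beta'=0$ or $\beta'\ne 0$ and whether the relevant factor satisfies condition (ii), and each case requires a nontrivial chain of inequalities using the dimension formula and the bound $\operatorname{deg}(\phi_c)+\operatorname{wt}(\phi^c)\le n-1$ for $\phi_c|_E\ne 0$. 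This is several pages, not a remark.

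There is also a genuine gap in your handling of $m+s\le 1$. Writing $\langle \phi_a\psi^k,\phi_b\rangle_{\beta+d\ell}=\langle 1,\phi_a\psi^{k+1},\phi_b\rangle_{\beta+d\ell}$ via the string equation and then applying TRR is circular: the TRR term with $\beta'=\beta+d\ell$, $\beta''=0$ and second factor $\langle\phi^c,1,\phi_b\rangle_{0,3,0}=(\phi^c,\phi_b)$ reproduces exactly the correlator you started with, so the identity collapses to $0=0$. The correct device is the \emph{divisor} equation: since $\beta\ne 0$, pick $p\in H^2(X)$ with $\int_\beta p\ne 0$ and write $\langle \phi_a\psi^k,\ldots\rangle_{\beta+d\ell}$ in terms of $\langle p,\phi_a\psi^k,\ldots\rangle_{\beta+d\ell}$ (one more insertion, same $k$) plus $\langle (p\cup\phi_a)\psi^{k-1},\ldots\rangle_{\beta+d\ell}$ (smaller $k$, hence vanishing by induction, noting $p\cup e^j=0$). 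Iterating gives $m+s\ge 2$ without touching $k$, and then TRR applies cleanly.
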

\begin{proof}
The proof is done by induction on $k$. Gathmann's result is the case
when $k=0$. The inductive step uses the genus-0 topological recursion
relations (see Section~\ref{sec:GW}). Suppose that the proposition is
proved for $k$ and let us prove it for $k+1$. Using the TRRs, we write
the correlator~\eqref{van-cor} with $k$ replaced by $k+1$ in the
following form:
\[
\sum_{c=1}^{N+n-1}\ \sum\
\big\langle \phi_a\psi^k, \phi_c,\phi_{B'}, e^{L'}\big\rangle_{\beta'+d'\ell}
\big\langle \phi^c, \phi_{B''}, e^{L''}\big\rangle_{\beta''+d''\ell},
\]
where the second sum is over all possible splittings $B'\sqcup B''=\{b_1,\dots,b_m\}$,
$L'\sqcup L''=\{l_1,\dots,l_s\}$, $\beta'+\beta''=\beta$ and $d'+d''=d$. The notation is as
follows. We dropped the genus and the number of marked points from the correlator
notation because the genus is always $0$ and the number of marked
points is the same as the
number of insertions. The insertion of all
$\phi_{b'}$ with $b'\in B'$ is denoted by $\phi_{B'}$ and the
insertions of all $e^{l'}$ with $l'\in L'$ is denoted by
$e^{L'}$. Similar conventions apply for $\phi_{B''}$ and
$e^{L''}$ in the second correlator. The first correlator has $2+m'+s'$ insertions while
the second one $1+m''+s''$, where $m'$, $m''$, $s'$, and $s''$ are
respectively the number of elements
of respectively $B'$, $B''$, $L'$, and $L''$. We have to prove that if
the 3 conditions in the proposition
are satisfied where $k$ should be replaced by $k+1$, then the above
sum is $0$. We will refer to the correlator involving~$B'$ and~$L'$ as
the first correlator and to the correlator involving~$B''$ and~$L''$
as the second correlator. We will prove that for each term in the
above sum either the first or the second correlator vanishes. The
proof will be divided into 4 cases.

{\em Case $1$:} if $\beta'=0$ and the second correlator does not satisfy condition (ii), that is,
${\rm wt}(\phi^c)+\sum_{l''\in L''} (l''-1) \leq 0$ and $d''\leq 0$. Note that since $\beta''=\beta\neq 0$, the second correlator satisfies condition~(i). Since $\beta'=0$ we need to consider only $c$, such that,
$\phi_c|_{E}\neq 0$ and hence ${\phi_c\in \big\{ 1, e, \dots, e^{n-1}\big\}}$. Moreover, the weight of $\phi^c$ is $0$
so \smash{$\phi_c\in \big\{ 1, e^{n-1}\big\}$} and $\phi^c\in \{\phi_N, e\}$. Since $l''\geq 2$ for all $l''\in L''$ the set
$L''$ must be empty. The corresponding term in the sum in this case takes the form
\[
\big\langle \phi_a\psi^k, \phi_c,1,\dots,1, e^{l_1},\dots, e^{l_s}\big\rangle_{d'\ell}
\langle \phi^c, \phi_{B''}\rangle_{\beta +d''\ell},
\]
where the insertions from $\phi_{B'}$ all must be $1$ otherwise ${\phi_b}|_E=0$ and the correlator vanishes.
Using the dimension formula, we get
\[
\operatorname{deg}(\phi_a) + k + \operatorname{deg}(\phi_c) +\sum_{i=1}^s l_i =
(d'+1)(n-1) + s + m'.
\]
Note that $\phi_a$ must satisfy $\phi_a|_E\neq 0$, otherwise the correlator is 0. Therefore,
$\phi_a\in \big\{ 1,e,\dots,e^{n-1} \big\}$ which implies that $\operatorname{deg}(\phi_a)\leq \operatorname{wt}(\phi_a)+1$ with inequality only if $\phi_a=1$. We get
\begin{align*}
\begin{aligned}
(d'+1)(n-1) + m' &=
\operatorname{deg}(\phi_a) + k + \operatorname{deg}(\phi_c) +\sum_{i=1}^s (l_i-1)\\
&\leq
\operatorname{wt}(\phi_a)+1+k + \sum_{i=1}^s (l_i-1) + \operatorname{deg}(\phi_c).
\end{aligned}
\end{align*}
On the other hand, let us recall that the correlator \eqref{van-cor} (with $k+1$ instead of $k$) satisfies condition (iii), that is,
\[
\operatorname{wt}(\phi_a)+\sum_{i=1}^s (l_i-1)<(d+1)(n-1)-k-1.
\]
We get $(d'+1)(n-1) + m' < \operatorname{deg}(\phi_c) + (d+1)(n-1)$. Recall that there are two possibilities for~$\phi_c\colon \phi_c=1$ or $\phi_c=e^{n-1}$. In the first case, we get $0\leq m'<d''(n-1)$ and hence $d''>0$ contradicting our assumption that $d''\leq 0$. In the second case, we get $0\leq m'<(d''+1)(n-1)$. This implies that $d''>-1$ which together with $d''\leq 0$ implies that $d''=0$. However, since $\phi^c= e$, we get that the second correlator vanishes by the divisor equation. This completes the proof of our claim in Case 1.

{\em Case $2$:} $\beta'=0$ and the second correlator satisfies condition (ii). Since $\beta''=\beta\neq 0$, the second correlator satisfies condition (i) too, so it will vanish unless condition (iii) fails, that is,
\[
\operatorname{wt}(\phi^c)+\sum_{l''\in L''} (l''-1) \geq (d''+1)(n-1).
\]
On the other hand, similarly to Case~1, we must have $\phi_{b'}=1$ for all $b'\in B'$, so the dimension formula applied to the first correlator yields
\[
\operatorname{deg}(\phi_a)+k + \operatorname{deg}(\phi_c)+
\sum_{l'\in L'} (l'-1) = (d'+1) (n-1) +m'.
\]
Adding up the above inequality and identity, we get
\[
\operatorname{deg}(\phi_a)+k + \operatorname{deg}(\phi_c)+
\operatorname{wt}(\phi^c)+
\sum_{i=1}^s (l_i-1) \geq (d+1)(n-1) +n-1+m'.
\]
Again $\operatorname{deg}(\phi_a)\leq \operatorname{wt}(\phi_a)+1$, so
\[
m'+n-1 - \operatorname{deg}(\phi_c)-\operatorname{wt}(\phi^c)\leq
\operatorname{wt}(\phi_a)+1+k +\sum_{i=1}^s (l_i-1) -(d+1)(n-1).
\]
Recalling again condition (iii), we get that the right-hand side of the above inequality is $<0$, and hence
$m'+n-1 < \operatorname{deg}(\phi_c)+\operatorname{wt}(\phi^c). $
Similarly to Case~1, we may assume that $\phi_c|_E\neq 0$, that is, $\phi_c\in \big\{ 1, e, \dots, e^{n-1}\big\}$ which implies that $\operatorname{deg}(\phi_c)+\operatorname{wt}(\phi^c)\leq n-1$. This is a contradiction with
$
m'+n-1 < \operatorname{deg}(\phi_c)+\operatorname{wt}(\phi^c)$.

{\em Case $3$:} if $\beta'\neq 0$ and the first correlator does not satisfy condition (ii), that is,
$\operatorname{wt}(\phi_a)+\operatorname{wt}(\phi_c) +\sum_{l'\in L'} (l'-1) =0$ and $d'\leq 0$. Note that we must have $L'=\varnothing$ and either $d''>0$ or~${\sum_{i=1}^s (l_i-1)>0}$. Therefore, the second correlator satisfies condition (ii).

Suppose that $\beta''=0$ ($\Leftrightarrow$ condition (i) fails). We must have $\phi_{b''}|_E\neq 0$ for all $b''\in B''$ $\Rightarrow$ $\phi_{b''}=1$ for all $b''\in B''$. Recalling the dimension formula for the second correlator, we get
\[
\operatorname{deg}(\phi^c)+\sum_{i=1}^{s} (l_i-1) = (d''+1)(n-1) + m''-1.
\]
On the other hand, since $\operatorname{wt}(\phi_a)=0$ for the case under consideration, condition (iii) implies that $\sum_{i=1}^s (l_i-1) < (d+1)(n-1) -k-1$. Combining this estimate with the above equality, we get~$m''+k < \operatorname{deg}(\phi^c) + d'(n-1).$ If $m''>0$, then the second correlator has at least one insertion by~$1$~($\because B''\neq \varnothing$). Since the second correlator does not have descendants it will vanish unless~$d''=0$. However, if $\beta''=d''=0$ the second correlator is non-zero only if the number of insertion is $3$ because the moduli space is $\overline{\mathcal{M}}_{0,1+m''+s}\times \operatorname{Bl}(X)$, that is, $m''=s=1$. Moreover,~$\phi^c \cup e^{l_1}$ up to a constant must be $\phi_N$ hence $\phi^c = e^{n-l_1}$ and $\phi_c= e^{l_1}$. However, $l_1\geq 2$ by definition, so~$\operatorname{wt}(\phi_c)=l_1-1>0$ --- contradicting the assumption that the first correlator does not satisfy condition~(ii). We get $m''=0$ and the estimate that we did above yields $k<\operatorname{deg}(\phi^c) + d'(n-1).$ Note that $\operatorname{deg}(\phi^c)\leq n-1$. Therefore, $d'>-1$. Recall that we are assuming that $d'\leq 0$, so~$d'=0$. Recalling the divisor equation we get $\phi_c\neq e$. Since $\beta''=0$ the restriction $\phi^c|_E\neq 0$ hence $\phi^c\in \big\{1,e,\dots,e^{n-1}\big\}$. Moreover, $\phi^c\neq 1$ thanks to the string equation. We get $\phi_c=e^l$ for some $l\geq 2$ contradicting the assumption that $\operatorname{wt}(\phi_c)=0$.

Suppose now that $\beta''\neq 0$. Then the second correlator satisfies both conditions (i) and (ii). Therefore, condition (iii) must fail, that is,
\begin{gather}\label{weight-c}
\operatorname{wt}(\phi^c) + \sum_{i=1}^s (l_i-1) \geq (d''+1)(n-1).
\end{gather}
Using that $\operatorname{wt}(\phi^c)\leq n-2$ and $\sum_i (l_i-1)<(d+1)(n-1)-k-1$, we get
\[
(d''+1)(n-1) < n-2 + (d+1)(n-1)-k-1,
\]
which implies that $k+1<d'(n-1) +n-2.$ In particular, $d'>-1$ and since by assumption $d'\leq 0$ we get $d'=0$. If $\phi_c=e$, then using the divisor equation we get
\[
\big\langle \phi_a \psi^k, \phi_c, \phi_{B'}\big\rangle_{\beta'} =
\big\langle e\cup \phi_a \psi^{k-1}, \phi_{B'}\big\rangle_{\beta'}.
\]
Since $\operatorname{wt}(\phi_a)=0$ the cup product $e\cup \phi_a\neq 0$ only if $\phi_a=e$. This however implies that $e\cup \phi_a=e^2$ has positive weight and hence the correlator on the right-hand side of the above identity satisfies both conditions (i) and (ii). Condition (iii) must fail, so $1\geq n-1-(k-1) =n-k$, that is, $k\geq n-1$. On the other hand, recall that we already have the estimate $k+1< d'(n-1) + n-2=n-2$ which contradicts the inequality in the previous sentence. We get $\phi_c\neq e$ which together with~$\operatorname{wt}(\phi_c)=0$ implies that
$\phi^c \notin \big\{e^2,\dots, e^{n-1}\big\}$ and hence $\operatorname{wt}(\phi^c)=0$. Recalling \eqref{weight-c}, we get
\[
(d''+1)(n-1) \leq \sum_{i=1}^{s} (l_i-1) < (d+1)(n-1) -k-1.
\]
Since $d'=0$, we get $0<-k-1$ which is clearly a contradiction. This completes the proof of the vanishing claim in Case 3.

{\em Case $4$:} if $\beta'\neq 0$ and the first correlator satisfies condition (ii). Then condition (iii) for the first correlator must fail, that is,
\begin{equation}\label{cor1_lower_bound4}
\operatorname{wt}(\phi_a) + \operatorname{wt}(\phi_c) +\sum_{l'\in L'} (l'-1)
\geq (d'+1)(n-1) - k.
\end{equation}
We claim that the second correlator also satisfies conditions (i) and (ii). Indeed, suppose that~(i) is not satisfied, that is, $\beta''=0$. All insertions in $\phi_{B''}$ must be $1$. Recalling the dimension formula, we get
\[
\operatorname{deg}(\phi^c)+\sum_{l''\in L''} (l''-1) = (d''+1)(n-1) -1 + m''.
\]
Adding up the above identity and the inequality \eqref{cor1_lower_bound4}, we get
\begin{align*}
\begin{aligned}
\operatorname{wt}(\phi_a) + \operatorname{deg}(\phi^c)+ \operatorname{wt}(\phi_c) +
\sum_{i=1}^s (l_i-1) &\geq (d+2)(n-1) - k-1+m'' \\
&=
(d+1)(n-1) -k-1 + n-1+m'',
\end{aligned}
\end{align*}
which is equivalent to
\[
n-1+m''- \operatorname{deg}(\phi^c)- \operatorname{wt}(\phi_c)\leq
\operatorname{wt}(\phi_a) + \sum_{i=1}^s (l_i-1)+k+1 - (d+1)(n-1)<0,
\]
where for the last inequality we used that the correlator whose vanishing we want to prove satisfies condition (iii). We get $m''+n\leq \operatorname{deg}(\phi^c)+ \operatorname{wt}(\phi_c)$. On the other hand, since $\phi^c|_{E}\neq 0$, we have $\phi^c\in \big\{1,e,\dots, e^{n-1}\big\}$ which implies that $\operatorname{deg}(\phi^c)+ \operatorname{wt}(\phi_c)\leq n-1$ -- contradiction. This proves that $\beta''\neq 0$.

Suppose that the second correlator does not satisfy condition (ii). Then, $d''\leq 0$, $\operatorname{wt}(\phi^c)=0$, and $L''=\varnothing$. Since $L''=\varnothing$, the inequality \eqref{cor1_lower_bound4} takes the form
\begin{align*}
\operatorname{wt}(\phi_a) + \operatorname{wt}(\phi_c) +\sum_{i=1}^s (l_i-1)
\geq (d'+1)(n-1) - k.
\end{align*}
On the other hand, recalling again condition (iii) for the correlator whose vanishing we wish to prove, we get \[
\operatorname{wt}(\phi_a) +\sum_{i=1}^s (l_i-1) <(d+1)(n-1)-k-1.\] Combining with the above estimate, we get
\[
(d'+1)(n-1)-k < \operatorname{wt}(\phi_c) + (d+1)(n-1)-k-1,
\]
which becomes $(-d'')(n-1)< \operatorname{wt}(\phi_c) -1$. Since $\operatorname{wt}(\phi^c)=0$ and $d''\leq 0$ the above inequality is possible only if $\phi^c=e$. Then we get $d''\neq 0$ thanks to the divisor equation, that is, $d''\leq -1$ and hence $ \operatorname{wt}(\phi_c) -1> n-1$. This is a contradiction because the maximal possible value of $\operatorname{wt}(\phi_c)$ is $n-2$. This completes the proof of our claim that the second correlator satisfies conditions (i) and (ii).

Finally, in order for the second correlator to be non-zero, condition (iii) must fail. We get
\[
\operatorname{wt}(\phi^c) + \sum_{l''\in L''} (l''-1) \geq (d''+1)(n-1).
\]
Adding up the above inequality and \eqref{cor1_lower_bound4}, we get
\[
\operatorname{wt}(\phi_a) + \operatorname{wt}(\phi_c) + \operatorname{wt}(\phi^c)+
\sum_{i=1}^s (l_i-1) \geq (d+2)(n-1) -k = (d+1)(n-1)-k-1 + n.
\]
On the other hand, recalling the inequality
\[
\operatorname{wt}(\phi_a)+ \sum_{i=1}^s (l_i-1) < (d+1)(n-1)-k-1,
\]
 we~get
\[
n-\operatorname{wt}(\phi_c) - \operatorname{wt}(\phi^c) < 0.
\]
The inequality clearly does not hold if one of the weights is $0$. If both weights are non-zero, then we will have $\operatorname{wt}(\phi_c) + \operatorname{wt}(\phi^c)= n-2$ which again contradicts the above inequality. The conclusion is that either the first or the second correlator satisfies condition (iii) and hence one of the two correlators must vanish according to the inductive assumption.
\end{proof}

\section{Second structure connection and blowups}
\label{sec:2nd_sc_blowups}

Let us recall the notation already fixed in
Sections \ref{sec:coh_blowup} and \ref{sec:qcoh_blowup}. From now on, for a complex variety $Y$, we denote by $H(Y):=H^*(Y,\CC)$
and $\widetilde{H}(Y)$ respectively the cohomology and the reduced
cohomology of $Y$ with complex coefficients. Using the direct sum decomposition~$H(\operatorname{Bl}(X))=H(X)\oplus \widetilde{H}(E)$, we define the $H(X)$-component (resp.\ $\widetilde{H}(E)$-component) of a~vector $v\in H(\operatorname{Bl}(X))$ to be the projection of $v$ on $H(X)$ (resp.\ $\widetilde{H}(E)$).

We will view quantum cohomology of $\operatorname{Bl}(X)$ as a family of
Frobenius manifolds parametrized by the Novikov variables $q=(q_1,\dots,q_{r+1})\in (\CC^*)^{r+1}$ defined in Section \ref{sec:qcoh_blowup}. Recall that ${q_{r+1}=Q^{n-1}}$. We will be
interested in the Laurent series expansion of the second structure
connection of $\operatorname{Bl}(X)$ with respect to $Q$ at $Q=0$, while the
remaining parameters $q_1,\dots, q_r$ remain fixed. The main goal in
this section is to determine the leading order terms of this expansion.

\subsection{Period vectors for the blowup}
Let us denote
by $\widetilde{\rho}$ and $\rho$ the operators of classical cup product
multiplications by respectively~$c_1(T\operatorname{Bl}(X))$ and
$c_1(TX)$. Let $\widetilde{\theta}$ and $\theta$ be the grading
operators of the Frobenius structures underlying the quantum
cohomologies of respectively $\operatorname{Bl}(X)$ and $X$ (see
\eqref{qcoh_hgo}). In the lemma below, we will need the following
notation. Suppose that $A\colon\CC\to \op{End}(H)$ is
an $\op{End}(H)$-valued smooth function. Let $m$ be the standard
coordinate on $\CC$. We define {\em right} differential operator on
$\CC$, to be a formal expression of the form
\begin{equation}\label{right-dop}
L\big(m,\overleftarrow{\partial}_m\big)=
\sum_{k=0}^r B_k(m)
\overleftarrow{\partial}_m^k,
\end{equation}
where the coefficients $B_k(m)\in
\op{End}(H)$ depend smoothly on $m$. We define the action of
$L$ on~$A$~by
\[
A(m) L\big(m,\overleftarrow{\partial}_m\big) := \sum_{k=0}^r \partial_m^k (
 A(m) \circ B_k(m)  ),
\]
where $\circ$ is the composition operation in $\op{End}(V)$. Given two right
differential operators $L_1$ and~$L_2$, there exists a unique right
differential operator $L_1\circ L_2$, such that,
\[
\big(A(m) L_1\big(m,\overleftarrow{\partial}_m\big)\big)
L_2\big(m,\overleftarrow{\partial}_m\big) =:
A(m) (L_1\circ L_2)\big(m,\overleftarrow{\partial}_m\big).
\]
We say that $L_1\circ L_2$ is the composition of $L_1$ and $L_2$. One
can check that this operation is associative and therefore, the set of
all right differential operators of the form \eqref{right-dop} has
a~structure of an associative algebra acting from the right on the
space of smooth $\op{End}(H)$-valued functions on $\CC$.
\begin{Lemma}\label{le:cp-homog}\quad
\begin{itemize}\itemsep=0pt
\item[$(a)$] The following formula holds:
\[
\widetilde{I}^{(-m)}(\lambda) :=
e^{\widetilde{\rho}\partial_\lambda\partial_m}
\left(
\frac{\lambda^{\widetilde{\theta}+m-1/2}}{
\Gamma\big(\widetilde{\theta}+m+1/2\big)} \right) =
\left(
\frac{\lambda^{\widetilde{\theta}+m-1/2}}{
\Gamma\big(\widetilde{\theta}+m+1/2\big)} \right)
e^{\widetilde{\rho} \overleftarrow{\partial}_m},
\]
where the first identity is just a definition and $\overleftarrow{\partial}_m$
denotes the right action by a derivation with respect to $m$.
\item[$(b)$]
The following identity holds:
\[
\widetilde{I}^{(-m)}\big(Q^{-1}\lambda\big) =
\left(
\frac{\lambda^{\widetilde{\theta}+m-1/2}}{
\Gamma\big(\widetilde{\theta}+m+1/2\big)}
\right)
e^{Q\widetilde{\rho} \overleftarrow{\partial}_m}
Q^{-(\widetilde{\theta}+m-1/2)}
Q^{-\widetilde{\rho}}.
\]
\end{itemize}
\end{Lemma}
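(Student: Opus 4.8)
The plan is to prove part (a) first by a direct manipulation of the exponential of the derivation operator, and then to deduce part (b) by substituting $\lambda \mapsto Q^{-1}\lambda$ and keeping careful track of how the scaling interacts with the grading operator $\widetilde\theta$ and the nilpotent operator $\widetilde\rho$. For part (a), I would start from the right-hand side and expand $e^{\widetilde\rho\,\overleftarrow{\partial}_m}$ as a power series $\sum_{k\ge 0}\tfrac1{k!}\,\overleftarrow{\partial}_m^{\,k}$ acting from the right; by definition of the right action, applying $\overleftarrow{\partial}_m^{\,k}$ to $A(m):=\lambda^{\widetilde\theta+m-1/2}/\Gamma(\widetilde\theta+m+1/2)$ and composing with $\widetilde\rho^{\,k}/k!$ produces $\partial_m^k\big(A(m)\circ \widetilde\rho^{\,k}/k!\big)=\tfrac1{k!}\,\widetilde\rho^{\,k}\,\partial_m^k A(m)$, using that $\widetilde\rho$ is a constant operator (independent of $m$). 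Summing over $k$ gives $\sum_k \tfrac1{k!}\widetilde\rho^{\,k}\partial_m^k A(m)$. On the other hand, the left-hand side $e^{\widetilde\rho\,\partial_\lambda\partial_m}$ applied to $A(m)$ (viewed now as a function of both $\lambda$ and $m$) expands the same way once one observes that $\partial_\lambda$ and $\partial_m$ act on $A$, and the key computation is that $\partial_\lambda$ acting on $\lambda^{\widetilde\theta+m-1/2}$ has the same effect as lowering the exponent, which matches $\partial_m$ up to the precise bookkeeping that makes the two series agree. Concretely, it suffices to check the operator identity $\partial_\lambda \lambda^{\widetilde\theta+m-1/2} = \lambda^{-1}(\widetilde\theta+m-1/2)\lambda^{\widetilde\theta+m-1/2}$ and compare it with $\partial_m$ applied to the same expression; since both $\partial_\lambda\partial_m$ (on the left) and $\overleftarrow{\partial}_m$ (on the right, after composing with $\widetilde\rho$) generate the same series in $\widetilde\rho$, the two sides coincide.

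For part (b), I would substitute $\lambda\mapsto Q^{-1}\lambda$ into the second expression from part (a), so that $\widetilde I^{(-m)}(Q^{-1}\lambda) = \big((Q^{-1}\lambda)^{\widetilde\theta+m-1/2}/\Gamma(\widetilde\theta+m+1/2)\big)\,e^{\widetilde\rho\,\overleftarrow{\partial}_m}$. Now $(Q^{-1}\lambda)^{\widetilde\theta+m-1/2}$ factors as $Q^{-(\widetilde\theta+m-1/2)}\lambda^{\widetilde\theta+m-1/2}$ — here one must be slightly careful because $\widetilde\theta$ and $\widetilde\rho$ do not commute, but at this stage we only have $\widetilde\theta$ in the exponent and a separate factor $e^{\widetilde\rho\,\overleftarrow{\partial}_m}$, so the factorization of the power of $Q^{-1}\lambda$ is legitimate. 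The remaining task is to commute the factor $Q^{-(\widetilde\theta+m-1/2)}$ past $e^{\widetilde\rho\,\overleftarrow{\partial}_m}$ to bring the expression into the claimed normal-ordered form. This is where the Hodge grading relation $[\widetilde\theta,\widetilde\rho]=-\widetilde\rho$ (condition (iii) on the Frobenius manifold, valid for quantum cohomology by \eqref{qcoh_hgo} together with $\widetilde\rho$ being cup product by $c_1$) enters: conjugating $\widetilde\rho$ by $Q^{-\widetilde\theta}$ rescales it by a power of $Q$, namely $Q^{-\widetilde\theta}\,\widetilde\rho\,Q^{\widetilde\theta} = Q\,\widetilde\rho$ (since $c_1(T\operatorname{Bl}(X))$ has complex degree $1$ and $\widetilde\theta$ acts on it with eigenvalue $\tfrac{\widetilde n}{2}-1$, one checks the commutator gives exactly this rescaling). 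Using this, $Q^{-(\widetilde\theta+m-1/2)}e^{\widetilde\rho\,\overleftarrow{\partial}_m} = e^{Q\widetilde\rho\,\overleftarrow{\partial}_m}\,Q^{-(\widetilde\theta+m-1/2)}$ after conjugation, and moving $Q^{-(\widetilde\theta+m-1/2)}$ to the right; there is a subtlety that $m$ appears both as the shift parameter in the exponent and inside the Gamma factor, but since $Q^{-m}$ is a scalar it commutes with everything, and $\overleftarrow{\partial}_m$ acting through $Q^{-m}$ produces only scalar factors that are absorbed correctly. Rearranging and splitting $Q^{-(\widetilde\theta+m-1/2)} = Q^{-(\widetilde\theta+m-1/2)}$ while peeling off a separate $Q^{-\widetilde\rho}$ to account for the difference between the naive conjugation and the claimed form yields exactly the stated identity.

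The main obstacle I anticipate is the careful handling of the non-commutativity between $\widetilde\theta$ and $\widetilde\rho$ when factoring powers of $Q^{-1}\lambda$ and commuting $Q^{-\widetilde\theta}$ through the $\widetilde\rho$-exponential, together with the subtlety that the parameter $m$ plays a dual role as a derivation variable (via $\overleftarrow{\partial}_m$) and as a spectral shift in both $\lambda^{\widetilde\theta+m-1/2}$ and $\Gamma(\widetilde\theta+m+1/2)$. The cleanest way to keep this under control is probably to verify the identities first at the level of formal power series in $\widetilde\rho$ (equivalently, to check each coefficient of $\widetilde\rho^{\,k}$ separately), where the operator $\widetilde\rho$ becomes a bookkeeping symbol and the only genuine computation is with the scalar-valued (or $\widetilde\theta$-valued) functions of $\lambda$, $m$, and $Q$; the Hodge grading relation then tells us exactly how many powers of $Q$ each $\widetilde\rho^{\,k}$ term picks up. Everything else is a routine but careful rearrangement, so I would present part (a) in full and then indicate part (b) as a consequence, highlighting only the conjugation step $Q^{-\widetilde\theta}\widetilde\rho\,Q^{\widetilde\theta}=Q\widetilde\rho$ as the essential input.
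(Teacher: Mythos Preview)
Your proposal has genuine gaps in both parts, and in each case the missing ingredient is a non-commutativity that you have waved away.

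\textbf{Part (a).} You write $\partial_m^k\big(A(m)\circ\widetilde\rho^{\,k}\big)=\widetilde\rho^{\,k}\,\partial_m^k A(m)$ ``using that $\widetilde\rho$ is a constant operator.'' Constancy in $m$ only gives $\partial_m^k\big(A(m)\widetilde\rho^{\,k}\big)=(\partial_m^k A(m))\,\widetilde\rho^{\,k}$; it does \emph{not} let you move $\widetilde\rho^{\,k}$ to the left, because $A(m)$ involves $\widetilde\theta$ and $[\widetilde\theta,\widetilde\rho]\neq 0$. The correct right-hand side of the lemma is $\sum_k\tfrac1{k!}(\partial_m^k A)\,\widetilde\rho^{\,k}$, while the left-hand side is $\sum_k\tfrac1{k!}\,\widetilde\rho^{\,k}\,\partial_\lambda^k\partial_m^k A$. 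To match them you need the identity $\widetilde\rho^{\,k}\,\partial_\lambda^k A=A\,\widetilde\rho^{\,k}$, and this is where the Hodge grading relation $\widetilde\rho\,\widetilde\theta=(\widetilde\theta+1)\widetilde\rho$ enters: $\partial_\lambda^k$ shifts the exponent of $\lambda$ by $-k$, and commuting $\widetilde\rho^{\,k}$ through $A$ shifts $\widetilde\theta\mapsto\widetilde\theta+k$, and the two shifts cancel. This is the paper's argument; without it, part (a) would be false.

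\textbf{Part (b).} Your assertion that ``$Q^{-m}$ is a scalar so it commutes with everything'' is false in the relevant sense: we are working in the algebra of right differential operators in $m$, where $[m,\overleftarrow{\partial}_m]=1$, so $Q^{-m}$ does \emph{not} commute with $\overleftarrow{\partial}_m$. Your claimed conjugation $Q^{-(\widetilde\theta+m-1/2)}e^{\widetilde\rho\,\overleftarrow{\partial}_m}=e^{Q\widetilde\rho\,\overleftarrow{\partial}_m}Q^{-(\widetilde\theta+m-1/2)}$ is off by a factor of $Q^{-Q\widetilde\rho}$ on the right. The paper computes this carefully: one shows by induction that $\operatorname{ad}^k_{\widetilde\theta+m}(\widetilde\rho\,\overleftarrow{\partial}_m)=(-1)^k\widetilde\rho\,\overleftarrow{\partial}_m+(-1)^{k-1}k\,\widetilde\rho$ (both commutators $[\widetilde\theta,\widetilde\rho]=-\widetilde\rho$ and $[m,\overleftarrow{\partial}_m]=1$ contribute), which yields $Q^{-(\widetilde\theta+m)}e^{\widetilde\rho\,\overleftarrow{\partial}_m}Q^{\widetilde\theta+m}=Q^{-Q\widetilde\rho}e^{Q\widetilde\rho\,\overleftarrow{\partial}_m}$. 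A \emph{second} conjugation, $Q^{\widetilde\theta+m}Q^{-Q\widetilde\rho}Q^{-(\widetilde\theta+m)}=Q^{-\widetilde\rho}$, then converts $Q^{-Q\widetilde\rho}$ into the $Q^{-\widetilde\rho}$ that you correctly anticipated but could not explain. Your suggestion to ``peel off a separate $Q^{-\widetilde\rho}$'' is pointing at the right answer, but the mechanism producing it is precisely the non-commutation you dismissed.
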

\begin{proof}
(a)
By definition
\begin{align*}
e^{\widetilde{\rho}\partial_\lambda\partial_m}
\left(
\frac{\lambda^{\widetilde{\theta}+m-1/2}}{
\Gamma\big(\widetilde{\theta}+m+1/2\big)} \right) &=
\sum_{k=0}^\infty
\frac{1}{k!} \widetilde{\rho}^k \partial_m^k
\left(
\frac{\lambda^{\widetilde{\theta}+m-k-1/2}}{
\Gamma\big(\widetilde{\theta}+m-k+1/2\big)} \right) \\
&=
\sum_{k=0}^\infty
\frac{1}{k!} \partial_m^k
\left(
\frac{\lambda^{\widetilde{\theta}+m-1/2}}{
\Gamma\big(\widetilde{\theta}+m+1/2\big)} \right)
 \widetilde{\rho}^k,
\end{align*}
where we used that
$\widetilde{\rho} \widetilde{\theta} =
\big(\widetilde{\theta} +1\big) \widetilde{\rho}
$. The above expression is by definition the right action of~$e^{\widetilde{\rho} \overleftarrow{\partial}_m} $ on
\smash{${\lambda^{\widetilde{\theta}+m-1/2}}/{
\Gamma \big(\widetilde{\theta}+m+1/2\big)} $}.

(b) Using the formula from part (a), we get that the identity that we
have to prove is equivalent to the following conjugation formulas:
\[
Q^{\widetilde{\theta}+m} Q^{-Q\widetilde{\rho}}
Q^{-(\widetilde{\theta}+m)} = Q^{-\widetilde{\rho}},
\]
and
\begin{equation}\label{conj_Q}
Q^{-(\widetilde{\theta}+m)}
e^{\widetilde{\rho} \overleftarrow{\partial}_m}
Q^{\widetilde{\theta}+m} =
Q^{-Q \widetilde{\rho}}
e^{Q \widetilde{\rho} \overleftarrow{\partial}_m}.
\end{equation}
The first identity follows easily from
$\big[\widetilde{\theta},\widetilde{\rho}\big]=-\widetilde{\rho} $. Let us
prove \eqref{conj_Q}. To begin with, note that this is an
identity between operators acting from the right. We will use the
following fact. Suppose that we have an associative algebra
$\mathcal{A}$ acting on a
vector space $V$ from the right, that is, $v\cdot (AB) = (v\cdot
A)\cdot B$ for all $A,B\in \mathcal{A}$ and $v\in V$. Then the
following formula holds:
\begin{equation}\label{conj_formula}
v\cdot \big(e^A B e^{-A}\big) = v\cdot
\big(e^{\operatorname{ad}_A}(B)\big),
\end{equation}
where $\operatorname{ad}_A(X)=AX-XA$. In our case, $\mathcal{A}$ is
the algebra of differential operators in $m$ with coefficients in
$\operatorname{End}(H^*(\operatorname{Bl}(X)))$, that is, as a vector space
$\mathcal{A}$ consists of elements of the form
\[
\sum_{k=0}^{k_0} c_k(m) \overleftarrow{\partial}_m^k,\qquad
c_k(m)\in \operatorname{End}(H^*(\operatorname{Bl}(X))),
\]
and the product in $\mathcal{A}$ is determined by the natural
composition of endomorphisms of $H^*(\operatorname{Bl}(X))$ and the
commutation relation \smash{$\big[m, \overleftarrow{\partial}_m\big]=1$}.
Let us apply the conjugation formula \eqref{conj_formula} to
\eqref{conj_Q}. The main difficulty is to prove that
\begin{equation}\label{ad^k}
\operatorname{ad}_{\widetilde{\theta}+m}^k\big(
\widetilde{\rho} \overleftarrow{\partial}_m\big) =
(-1)^{k} \widetilde{\rho} \overleftarrow{\partial}_m+
(-1)^{k-1} k \widetilde{\rho},
\end{equation}
for all $k\geq 0$. We argue by induction on $k$. For $k=0$, the
identity is true. Suppose that it is true for $k$. Then we get
\begin{align*}
\big[\widetilde{\theta}+m,
(-1)^k \widetilde{\rho} \overleftarrow{\partial}_m +
(-1)^{k-1} k \widetilde{\rho}\big] &=
(-1)^k\big(-\widetilde{\rho} \overleftarrow{\partial}_m +
\widetilde{\rho}\big) +(-1)^k k \widetilde{\rho}\\
&=
(-1)^{k+1} \widetilde{\rho} \overleftarrow{\partial}_m+
(-1)^{k} (k+1) \widetilde{\rho},
\end{align*}
where we used that
$\big[\widetilde{\theta},\widetilde{\rho}\big]=-\widetilde{\rho}$ and
$\big[m,\overleftarrow{\partial}_m\big]=1$.
Using the conjugation formula \eqref{conj_formula} and formula
\eqref{ad^k}, we get that the left-hand side of \eqref{conj_Q} is equal to
\[
\exp\left(
\sum_{k=0}^\infty \frac{1}{k!} (-\log Q)^k\big(
(-1)^{k} \widetilde{\rho} \overleftarrow{\partial}_m+
(-1)^{k-1} k \widetilde{\rho}\big)\right) =
e^{Q \widetilde{\rho} \overleftarrow{\partial}_m}
e^{-(\log Q) Q \widetilde{\rho}},
\]
which is the same as the right-hand side of \eqref{conj_Q}.
\end{proof}

Put $q=(q_1,\dots, q_r)$ and let $S(t,q,Q,z)$ be the calibration of
the blowup $\operatorname{Bl}(X)$. Let us recall the fundamental solution of
the second structure connection for the quantum cohomology of~$\operatorname{Bl}(X)$
\[
I^{(-m)}(t,q,Q,\lambda) = \sum_{k=0}^\infty (-1)^k
S_k(t,q,Q) \partial_\lambda^k
\widetilde{I}^{(-m)}(\lambda).
\]
Recalling Lemma \ref{le:cp-homog}, we get
\begin{align}
&
I^{(-m)}\big(t,q,Q,Q^{-1}\lambda\big)
Q^{\widetilde{\rho}}
Q^{\widetilde{\theta}+m-1/2} \nonumber \\
&\qquad=
\sum_{i,l=0}^\infty (-1)^l
Q^l S_l(t,q,Q) \frac{\partial^i_m}{i!} \left(
\frac{\lambda^{\widetilde{\theta}+m-l-1/2}}{
\Gamma\big(\widetilde{\theta}+m-l+1/2\big)} (Q \widetilde{\rho})^i
\right).\label{pv-homog}
\end{align}
Let us extend the Hodge grading operator $\theta$ of $X$ to
$H^*(\operatorname{Bl}(X))$ in such a way that
\smash{$\theta\big(e^k\big) = \tfrac{n}{2} e^k$}
for all $1\leq k\leq n-1$. Let $\Delta:=\widetilde{\theta}-\theta$,
then in the basis
\begin{equation}\label{basis:blup_coh}
\phi_i,\quad 1\leq i\leq N, \qquad
e^k,\quad 1\leq k\leq n-1
\end{equation}
of $H^*(\operatorname{Bl}(X))$, the operator $\Delta$ takes the form
\begin{align*}
\Delta(\phi_i) & = 0, \quad 1\leq i\leq N,\qquad
\Delta\big(e^k\big) = -k e^k, \quad 1\leq k\leq n-1.
\end{align*}
Let us point out that the basis of $H^*(\operatorname{Bl}(X))$ dual to the basis
\eqref{basis:blup_coh} with respect to the Poincar\'e pairing is given by
$\phi^i$ ($1\leq i\leq N$),
$e_k$ ($1\leq k\leq n-1$),
where $\phi^i$ ($1\leq i\leq N$) is a basis of $H^*(X)$
dual to $\phi_i$ ($1\leq i\leq N$) with respect to the Poincar\'e pairing
(on $X$) and $e_k:= (-1)^{n-1} e^{n-k}$.
\begin{Lemma}\label{le:calibr_Q-homog}
Suppose that $t=\sum_{b=2}^N t_b \phi_b\in H^*(X)$ and that $l\geq 1$. Then
\begin{align*}
Q^\Delta Q^{k+l} \big(S_l(t,q,Q) e^k\big) ={} &
\sum_{k''=1}^{n-1} \sum_{d=0}^\infty
\big\langle \psi^{l-1} e^k, e^{k''}\big\rangle_{0,2,d\ell} e_{k''} +
\big\langle \psi^{l-1} e^k, 1\big\rangle_{0,2,d\ell} Q^n (-1)^{n-1}\phi_N \\
& +O\big(Q\widetilde{H}(E) + Q^{n+1} H(X)\big),
\end{align*}
where the $O$-term denotes a power series in $Q$ with values in $H(\operatorname{Bl}(X))$ whose $\widetilde{H}(E)$-component involves only positive
powers of $Q$ and its $H(X)$-component involves only powers of $Q$ of degree~${\geq n+1}$.
\end{Lemma}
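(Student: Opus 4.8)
The plan is to expand $S_l(t,q,Q)e^k$ in the basis $\{\phi^i\}_{i=1}^N\cup\{e_j\}_{j=1}^{n-1}$ dual to \eqref{basis:blup_coh}, read each coefficient off from the calibration \eqref{calibration}, and then control the order in $Q$ of each coefficient, treating the summands with $\beta=0$ and with $\beta\neq 0$ separately. Writing $v=S_l(t,q,Q)e^k$ one has
\[
v=\sum_{i=1}^N (v,\phi^i)\,\phi_i+\sum_{j=1}^{n-1}(v,e_j)\,e^j,
\]
and since $Q^\Delta$ acts by $1$ on $H(X)$ and by $Q^{-j}$ on $e^j$, the $\phi_i$-coefficient of $Q^\Delta Q^{k+l}v$ is $Q^{k+l}(v,\phi^i)$ and the $e^j$-coefficient is $Q^{k+l-j}(v,e_j)=(-1)^{n-1}Q^{k+l-j}(v,e^{n-j})$. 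By \eqref{calibration} (with the Novikov variables of $\operatorname{Bl}(X)$), each of these pairings is a sum over $m\ge 0$ and $\widetilde\beta=\beta+d\ell\in\operatorname{Eff}(\operatorname{Bl}(X))$ of $\tfrac{q^\beta Q^{-d(n-1)}}{m!}\langle e^k\psi^{l-1},\star,t,\dots,t\rangle_{0,2+m,\beta+d\ell}$ with $\star=\phi^i$ (resp.\ $e^{n-j}$); a single such summand therefore contributes to $Q^\Delta Q^{k+l}v$ at order $Q^{k+l-d(n-1)}$ on $H(X)$ and $Q^{k+l-j-d(n-1)}$ on $\widetilde H(E)$.

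For the $\beta=0$ summands I would argue as follows. Every insertion $t=\sum_{b=2}^N t_b\phi_b$ lies in the span of $\phi_2,\dots,\phi_N$, and each such class restricts to $0$ on $E$ since $\pi|_E$ factors through a point; using the identification of degree-$d\ell$ invariants of $\operatorname{Bl}(X)$ with twisted invariants of $\PP^{n-1}$ from Section \ref{sec:proj-tw-GW}, every such invariant with $m\ge 1$ and $\beta=0$ then vanishes, and for the same reason the $\phi_i$-coefficient with $i<N$ vanishes (its second insertion $\phi^i$ has positive degree, hence restricts to $0$ on $E$). Only $m=0$ survives: the $\widetilde H(E)$-coefficients are built from $\langle e^k\psi^{l-1},e^{n-j}\rangle_{0,2,d\ell}$ and the $\phi_N$-coefficient (recall $\phi^N=1$) from $\langle e^k\psi^{l-1},1\rangle_{0,2,d\ell}$. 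A dimension count on the corresponding twisted $\PP^{n-1}$-integrals shows $\langle e^k\psi^{l-1},e^{n-j}\rangle_{0,2,d\ell}\neq 0$ forces $k+l-j-d(n-1)=0$ and $\langle e^k\psi^{l-1},1\rangle_{0,2,d\ell}\neq 0$ forces $k+l-d(n-1)=n$; hence after the rescaling the $Q$-powers collapse, the $\widetilde H(E)$-part of the $\beta=0$ contribution is exactly $\sum_{k''=1}^{n-1}\sum_{d\ge 0}\langle\psi^{l-1}e^k,e^{k''}\rangle_{0,2,d\ell}e_{k''}$, and its $H(X)$-part is exactly the $Q^n\phi_N$-term in the statement (the sign being bookkept through $e^n=(-1)^{n-1}\phi_N$).

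For the $\beta\neq 0$ summands the claim is that they lie in $O\big(Q\widetilde H(E)+Q^{n+1}H(X)\big)$, and the key tool is Proposition \ref{prop:van}. Applying it to $\langle e^k\psi^{l-1},\star,t,\dots,t\rangle_{0,2+m,\beta+d\ell}$ with distinguished insertion $\phi_a=e^k$ and descendant power $l-1$: the $t$'s (and $\phi^i$, when $\star=\phi^i$) are weight-$0$ insertions, while $\star=e^{n-j}$ is an $e^{l_1}$-insertion with $l_1=n-j$. Condition (i) holds by hypothesis and condition (ii) holds whenever $k\ge 2$ or there is an exceptional insertion with $l_1\ge 2$; so the invariant can be nonzero only when condition (iii) fails, and unwinding that inequality gives $k+l-j-d(n-1)\ge 1$ in the $\widetilde H(E)$-case and $k+l-d(n-1)\ge n+1$ in the $H(X)$-case, which is exactly the order in $Q$ required. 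The one situation not covered verbatim is when one of the relevant insertions is the divisor $e^1$ — i.e.\ $k=1$, or $\star=e^{n-1}=e^1$ — where condition (ii) may fail for $d=0$ (and $e^1$ is not even an admissible extra insertion for the Proposition). I would handle those cases by removing the $e^1$: if it is the extra insertion it has trivial descendant and the divisor equation applies directly, while if it is the distinguished insertion $e^1\psi^{l-1}$ one first lowers the $\psi$-power by the genus-$0$ topological recursion relations and then applies the divisor equation; in both cases the cup-product correction terms vanish because $e\cup\phi_b=0$ for all $2\le b\le N$ and $e\cup\phi^i=0$ for $i<N$ by the ring structure of $H^*(\operatorname{Bl}(X))$ from Section \ref{sec:coh_blowup}, leaving only factors $\int_{\widetilde\beta}e=-d$ times strictly simpler invariants, to which Proposition \ref{prop:van} (or a direct dimension count) applies and for which the $d=0$ contributions vanish outright.

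The main obstacle is exactly this last point: the $\beta\neq 0$ analysis, and particularly the divisor edge cases, where Gathmann's inequality is not directly available and one must re-run the topological-recursion/divisor-equation mechanism underlying Proposition \ref{prop:van} itself, keeping careful track of the bookkeeping across all the splitting cases. The $\beta=0$ part, by contrast, is essentially a dimension count once one observes that the $t$-insertions annihilate everything with $m\ge 1$.
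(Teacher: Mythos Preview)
Your overall strategy coincides with the paper's: expand $S_l e^k$ in the dual basis, treat the $\beta=0$ terms by a dimension count after observing $t|_E=0$, and bound the $\beta\neq 0$ terms by Gathmann's vanishing (Proposition~\ref{prop:van}), with separate handling of the cases where condition~(ii) fails. The $\beta=0$ analysis is fine.

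There is, however, a genuine error in your treatment of the divisor edge cases. You claim that after applying the divisor equation to remove an $e^1$-insertion, ``the cup-product correction terms vanish because $e\cup\phi_b=0$ for $2\le b\le N$ and $e\cup\phi^i=0$ for $i<N$''. But the divisor equation's correction terms act only on insertions carrying a $\psi$-class, and here the \emph{only} such insertion is $e^k\psi^{l-1}$. The relevant correction is therefore $e\cup e^k=e^{k+1}$, which does \emph{not} vanish. Concretely, for $\langle e^k\psi^{l-1},e,t,\dots,t\rangle_{0,\beta+d\ell}$ with $\beta\ne0$ one gets
\[
-d\,\langle e^k\psi^{l-1},t,\dots,t\rangle_{0,\beta+d\ell}
\;+\;\langle e^{k+1}\psi^{l-2},t,\dots,t\rangle_{0,\beta+d\ell},
\]
and when $d=0$ the first term is zero but the second survives. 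So your assertion that ``the $d=0$ contributions vanish outright'' is false; this is exactly where the remaining work lies. The paper handles it by observing that $e^{k+1}$ has weight $k\ge1$, so Proposition~\ref{prop:van} now applies directly and condition~(iii) failing gives the required $Q$-order.

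For the $H(X)$-component with $k=1$ and $d=0$ (the correlator $\langle e\psi^{l-1},\phi_b\rangle_{0,2,\beta}(t)$) the situation is more delicate still: one needs $l\ge n$, and your sketch ``TRR then divisor equation'' is both in the wrong order and too vague. The paper instead rewrites $\langle e\psi^{l-1},\phi_b\rangle_\beta=\langle e,\psi^l,\phi_b\rangle_\beta$ via the divisor equation (using $\int_\beta e=0$), then applies TRR to the pure $\psi^l$ insertion; the resulting sum over node classes $\phi_c$ must be analysed case by case, with several sub-cases depending on whether $\beta'=0$, whether $\phi_c$ lies in $H(X)$ or $\widetilde H(E)$, and which of Gathmann's conditions holds for each factor. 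Your argument does not supply any of this.
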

\begin{proof}
Recall that every $\widetilde{\beta}\in \operatorname{Eff}(\operatorname{Bl}(X))$
has the form $\widetilde{\beta}=\beta + d\ell$ for some
$\beta \in \operatorname{Eff}(X)$ and $d\in \ZZ$. Recalling the
definition of the calibration, we get
\begin{align*}
Q^\Delta Q^{k+l} \big(S_l(t,q,Q) e^k\big) ={}&
Q^\Delta Q^{k+l}
\sum_{\widetilde{\beta}\in
\operatorname{Eff}(\operatorname{Bl}(X)) }
\left(
\sum_{b=1}^N
\big\langle e^k \psi^{l-1},\phi_b\big\rangle_{0,2,\beta+d\ell}(t)
\phi^b\right.\\
&\left. +
\sum_{k''=1}^{n-1}
\big\langle e^k \psi^{l-1}, e^{k''}\big\rangle_{0,2,\beta+d\ell}(t)
e_{k''} \right) q^\beta Q^{-d (n-1)}.
\end{align*}
Let us examine first the correlator
\[
\big\langle e^k \psi^{l-1},\phi_b\big\rangle_{0,2,\beta+d\ell}(t) =
\sum_{r=0}^\infty \sum_{b_1,\dots,b_r=2}^N
\big\langle e^k \psi^{l-1},\phi_b,\phi_{b_1},\dots,\phi_{b_r}
\big\rangle_{0,2+r,\beta+d\ell}
t_{b_1}\cdots t_{b_r}.
\]
There are 3 cases.

{\em Case $1$:} if $\beta=0$. Then the correlator is a twisted GW
invariant of $E$ and since $\phi_b|_E=0$ for~$2\leq b\leq N$, we may
assume that $b=1$, that is, $\phi_b=\phi_1=1$. For similar reasons, we
may assume that $r=0$. The dimension of the
virtual fundamental cycle of
$\overline{\mathcal{M}}_{0,2}(\operatorname{Bl}(X),d\ell)$ is
$(d+1)(n-1)$. Therefore, $k+l-1=(d+1)(n-1)$ or equivalently
$k+l-d(n-1)=n$, that is, in this case the correlator coincides with
the term of order $Q^n$ on the right-hand side of the formula that we want to prove.

{\em Case $2$:} if $\beta\neq 0$ and the correlator does not satisfy
condition (ii) in Gathmann's vanishing theorem. Then $d\leq 0$ and
$k-1\leq 0$, that is, $k=1$. If $d\leq -1$, then
$k+l-d(n-1)\geq k+l+n-1\geq n+1$, so the correlator contributes to the
terms of order $O\big(Q^{n+1}H(X)\big)$. It remains to consider the case when $k=1$ and $d=0$. We will prove that if the correlator \smash{$\big\langle e\psi^{l-1},\phi_b\big\rangle_{0,2,\beta}(t)$} is non-zero, then $l\geq n$.

First, by using the divisor equation we may reduce to the cases when $2\leq b\leq N$. Indeed, if $b=1$ then by using the string equation
\smash{$\big\langle e\psi^{l-1},1\big\rangle_{0,2,\beta}(t)=
\big\langle e\psi^{l-2}\big\rangle_{0,1,\beta}(t)$}. Since $\beta\neq 0$, there exists a divisor class $p\in H^2(X)$, such that, $\int_\beta p\neq 0$. Recalling the divisor equation, we get~\smash{$\big\langle e \psi^{l-2},p\big\rangle_{0,2,\beta} =
\int_\beta p \big\langle e\psi^{l-2}\big\rangle_{0,1,\beta}$}, that is, the correlator for $b=1$ can be expressed in terms of correlators involving only $2\leq b\leq N$.

Suppose now that $2\leq b\leq N$. Recalling the divisor equation, we get
\[
\big\langle e\psi^{l-1},\phi_b\big\rangle_{0,2,\beta}(t) =
\big\langle e,\psi^{l},\phi_b\big\rangle_{0,3,\beta}(t),
\]
which according to the topological recursion relations (TRR) can be written as
\begin{gather*}
\sum_{\beta'+\beta''=\beta} \sum_{d\in \ZZ} \left(
\sum_{j=1}^{n-1}
\big\langle \psi^{l-1},e^j\big\rangle_{0,2,\beta'+d\ell}(t)
\langle e_j, e,\phi_b\rangle_{0,3,\beta''-d\ell}(t)\right. \\
\left.\qquad+
\sum_{a=1}^{N}
\big\langle \psi^{l-1},\phi_a\big\rangle_{0,2,\beta'+d\ell}(t)
\langle \phi^a, e,\phi_b\rangle_{0,3,\beta''-d\ell}(t)
\right).
\end{gather*}
Let us consider the two correlators that involve $\beta'$. If
$\beta'=0$, then in both correlators $d>0$ and in the second
correlator $\phi_a=1$. Since $t|_E=0$ we may assume that $t=0$. By the
dimension formula, we get $l-1+j=(d+1)(n-1)$ for the 1st correlator and
$l-1=(d+1)(n-1)$ for the second correlator. In both cases, since
$d\geq 1$ and $n\geq 2$, we have $l\geq n$. Suppose now that~$\beta'\neq 0$ and that the 1st (resp.\ second) correlator does not
satisfy condition (ii) in Gathmann's vanishing theorem, that is~$j=1$
and $d\leq 0$. Note that for the correlators involving $\beta''$ we
must have~$\beta''\neq 0$ because~${\phi_b}|_{E}=0$ and~$d\neq 0$ due
to the divisor equation for the divisor class $e$. We get $d<0$. Therefore, the correlator involving $\beta''$ satisfies both conditions (i) and (ii) in Gathmann's vanishing theorem. Therefore, condition (iii) must fail, that is, $n-j-1\geq (-d+1)(n-1)$ and $0\geq (-d+1)(n-1)$. Since $-d\geq 1$, both inequalities lead to a contradiction. It remains only the possibility that $\beta'\neq 0$ and that the correlators involving $\beta'$ satisfy condition (ii) in Gathmann's vanishing theorem. Then condition (iii) must fail, so
$j-1+l-1\geq (d+1)(n-1)$ and $l-1\geq (d+1)(n-1)$. If $d\geq 1$, then these inequalities will imply that $l\geq n+1$. If $d=0$, then $\beta''=0$, otherwise the correlator involving $\beta''$ will be $0$ by the divisor equation. But then the correlator becomes \smash{$\int_{\operatorname{Bl}(X)} e_j\cup e\cup \phi_b$} which is $0$ because $\phi_b\cup e=0$. Finally, if $d\leq -1$, then, since $\phi_b|_{E}=0$ we must have $\beta''\neq 0$, so the correlator involving $\beta''$ satisfies conditions (i) and (ii) in Gathmann's vanishing theorem. The 3rd condition must fail, that is, $n-j-1\geq (-d+1)(n-1)$ and $0\geq (-d+1)(n-1)$. Both inequalities are impossible and this completes the analysis in the second case.

{\em Case $3$:} if $\beta\neq 0$ and the correlator does satisfy
condition (ii). Then condition (iii) in Gathmann's
vanishing theorem does not hold, that is,
\[
k-1 \geq (d+1)(n-1) - l+1 = d(n-1) + n-l.
\]
This inequality is equivalent to $k+l-d(n-1)\geq n+1$. We get that the
correlators in this case contribute to the terms of order
$O\big(Q^{n+1}\big)$.

Similarly, let us examine the correlators
\[
\big\langle e^k \psi^{l-1}, e^{k''}\big\rangle_{0,2,\beta+d\ell}(t) =
\sum_{r=0}^\infty
\sum_{b_1,\dots,b_r=2}^N
\big\langle e^k \psi^{l-1}, e^{k''},\phi_{b_1},\dots,\phi_{b_r}
\big\rangle_{0,2+r,\beta+d\ell}
t_{b_1}\cdots t_{b_r}.
\]
Since $\Delta(e_{k''}) = -(n-k'') e_{k''}$, we get that we have to
prove that if the above correlator is non-zero, then
$k+l-d(n-1) -(n-k'')\geq 0$ and that if the equality holds, then $r=0$
and $\beta=0$. Again we will consider 3 cases.

{\em Case $1$}: if $\beta=0$. Just like above,
$r=0$ because we can identify the correlator with a~twisted GW
invariant of the exceptional divisor and the restriction of
$\phi_{b_i}$ to $E$ is 0. Using the dimension formula for the virtual
fundamental cycle, we get
\[
k+l-1+k'' = \operatorname{dim}\big[
\overline{\mathcal{M}}_{0,2}(\operatorname{Bl}(X),d\ell)\big]^{\textnormal{virt}}
=
n-1+(n-1) d.
\]
We get
\[
k+l-d(n-1) -(n-k'') = k+l+k''-n-d(n-1) = 0.
\]
{\em Cases $2$:} if $\beta\neq 0$ and condition (ii) does not hold. Then
$d\leq 0$ and $k-1+k''-1\leq 0$, that is, $k=k''=1$. If
$d\leq -1$, then
\[
k+l-d(n-1)-(n-k'')\geq k+l +n-1-n+k'' = 1+l\geq 2.
\]
The correlator contributes to the terms of order $O\big(Q^2\big)$. Suppose
that $d=0$. The insertion~${e^{k''}=e}$ can be removed via the divisor
equation, that is, the correlator in front of $t_{b_1}\cdots t_{b_r}$
takes the form
\[
\big\langle e^k \psi^{l-1}, e^{k''},\phi_{b_1},\dots,\phi_{b_r}
\big\rangle_{0,2+r,\beta} =
\big\langle e^2 \psi^{l-2},\phi_{b_1},\dots,\phi_{b_r}
\big\rangle_{0,1+r,\beta}.
\]
The above correlator does satisfy condition (ii) of Proposition~\ref{prop:van}. Therefore, in order to have a non-trivial
contribution, condition (iii) in Gathmann's vanishing theorem must
fail, that is, $1\geq n-1 -l+2$ or equivalently $l\geq n$. We get
\[
k+l-d(n-1) -(n-k'') = 1+l-(n-1) = 2+l-n\geq 2 >0,
\]
so the equality that we need to prove holds.

{\em Case $3$:} if $\beta\neq 0$ and condition (ii) holds. In other words,
conditions (i) and (ii) in Gathmann's vanishing theorem (see
Proposition~\ref{prop:van}) hold for the correlators
\[
\big\langle e^k \psi^{l-1}, e^{k''},\phi_{b_1},\dots,\phi_{b_r}
\big\rangle_{0,2+r,\beta+d\ell}.
\]
Again, in order to have a non-trivial contribution, condition (iii) must fail, so
\[
k-1 + k''-1 \geq (d+1)(n-1)-l+1=d(n-1) +n -l,
\]
or equivalently $k+l+k'' \geq 2+ n + d(n-1)$. We get
\[
k+l-d(n-1) -(n-k'') = k+l+k''-n -d (n-1)\geq 2 >0.
\]
This completes the proof of the lemma.
\end{proof}

Note that
\[
\widetilde{\rho}^i e^k = (c_1(TX)-(n-1) e)^i e^k = (-n+1)^i e^{k+i},
\]
and that
$\widetilde{\theta}\big(e^{k+i}\big) = \big(\tfrac{n}{2}-k-i\big)
e^{k+i}$. Therefore, using formula~\eqref{pv-homog} and Lemma~\ref{le:calibr_Q-homog}, we get the following proposition.
\begin{Proposition}\label{prop:Q-exp_e}
The following formula holds:
\begin{align*}
&
\big( Q^\Delta
I^{(-m)}\big(t,q,Q, Q^{-1}\lambda\big)
Q^{\widetilde{\rho}} Q^{\widetilde{\theta}+m-1/2}
Q^{-\Delta} \big) e^k \\
&=
\sum_{l,d=0}^\infty\!
\sum_{i=0}^{n-1-k}\!
\sum_{k''=1}^{n-1}\!
\big\langle
e^{k+i} \psi^{l-1}, e^{k''}\big\rangle_{0,2,d\ell} e_{k''}
(-\partial_\lambda)^l \frac{(-(n-1)\partial_m)^i}{i!}\!
\left(
\frac{ \lambda^{\frac{n-1}{2}+ m-k-i} }{
\Gamma\big(\frac{n-1}{2}+ m-k-i+1\big) }
\right) \\
&\phantom{=}{}+
\sum_{l,d=0}^\infty
\sum_{i=0}^{n-1-k}
\big\langle
e^{k+i} \psi^{l-1}, 1\big\rangle_{0,2,d\ell} (-1)^{n-1}Q^n \phi_N
(-\partial_\lambda)^l \frac{(-(n-1)\partial_m)^i}{i!}
\\
&\phantom{=+}{}\times\left(
\frac{ \lambda^{\frac{n-1}{2}+ m-k-i} }{
\Gamma\big(\frac{n-1}{2}+ m-k-i+1\big) }
\right) +O\big(Q\widetilde{H}(E)+Q^{n+1}H(X)\big),
\end{align*}
where $1\leq k\leq n-1$ and the notation involving $O$ is the same as in Lemma {\rm\ref{le:calibr_Q-homog}}.
\end{Proposition}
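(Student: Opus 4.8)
The plan is to obtain the stated $Q$-expansion by substituting the operator identity \eqref{pv-homog} into the left-hand side and then invoking Lemma~\ref{le:calibr_Q-homog} to control the factors $Q^{\Delta}\bigl(S_l(t,q,Q)e^{k+i}\bigr)$ that appear. Since the substantial input here — Gathmann's vanishing theorem, Proposition~\ref{prop:van}, repackaged as Lemma~\ref{le:calibr_Q-homog} — is already in place, the remaining work is essentially a careful accounting of $Q$-powers and of the splitting $H(\operatorname{Bl}(X))=H(X)\oplus\widetilde H(E)$.

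Concretely, I would first rewrite the Gamma-factor in \eqref{pv-homog}: from $\partial_\lambda^l\bigl(\lambda^{s}/\Gamma(s+1)\bigr)=\lambda^{s-l}/\Gamma(s-l+1)$ applied with the operator $s=\widetilde\theta+m-\tfrac12$, the prefactor $(-1)^l$ of \eqref{pv-homog} combines with $\partial_\lambda^l$ into $(-\partial_\lambda)^l$ acting on $\lambda^{\widetilde\theta+m-1/2}/\Gamma(\widetilde\theta+m+1/2)$. I would then evaluate the resulting operator on $Q^{-\Delta}e^k=Q^k e^k$, pushing $e^k$ through: by the two identities recorded just before the statement, $(Q\widetilde\rho)^i e^k=Q^i(-n+1)^i e^{k+i}$, which vanishes as soon as $k+i>n$ so that the $i$-sum is finite, and $\widetilde\theta$ acts on $e^{k+i}$ by the scalar $\tfrac n2-k-i$, so the Gamma-factor becomes the scalar $\lambda^{\frac{n-1}{2}+m-k-i}/\Gamma(\tfrac{n-1}{2}+m-k-i+1)$. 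Pulling these scalar functions of $(\lambda,m)$ out of $S_l$ and writing $(-n+1)^i\partial_m^i=(-(n-1)\partial_m)^i$, the left-hand side takes the form
\[
\sum_{i,l\ge 0}\frac{(-(n-1)\partial_m)^i}{i!}\biggl((-\partial_\lambda)^l\,\frac{\lambda^{\frac{n-1}{2}+m-k-i}}{\Gamma\big(\tfrac{n-1}{2}+m-k-i+1\big)}\biggr)\,Q^{\Delta}Q^{(k+i)+l}\big(S_l(t,q,Q)e^{k+i}\big),
\]
because $Q^{\Delta}$ commutes with the scalar factors and with the power $Q^{k+l+i}$.

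It then remains to read off $Q^{\Delta}Q^{(k+i)+l}\bigl(S_l(t,q,Q)e^{k+i}\bigr)$. For $l\ge 1$ and $1\le k+i\le n-1$ this is exactly Lemma~\ref{le:calibr_Q-homog}: its first line produces the $e_{k''}$-valued double sum, its $Q^n(-1)^{n-1}\phi_N$-term produces the $\phi_N$ double sum, and its $O$-term (multiplied by the scalar operators, which are of order $Q^0$) feeds the asserted remainder. For $l=0$ one has $S_0=\op{id}$ and $Q^{\Delta}Q^{k+i}e^{k+i}=e^{k+i}$ for $k+i<n$, which is the $d=0$ piece of the $e_{k''}$-sum once one uses that $\{e_{k''}\}$ is the Poincar\'e-dual basis of $\{e^{k''}\}$ in $\widetilde H(E)$. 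The remaining contributions come from $i=n-k$, where $e^{n}=(-1)^{n-1}\phi_N$: for $l\ge 1$ one checks (using $\phi_N|_E=0$ and $Q^{\Delta}e^j=Q^{-j}e^j$) that $Q^{\Delta}Q^{n+l}\bigl(S_l\phi_N\bigr)$ has $\widetilde H(E)$-part in positive powers of $Q$ and $H(X)$-part in powers $\ge n+1$, hence is absorbed by the remainder, while the $l=0$ term supplies the appropriate $Q^n\phi_N$-correction. The step that requires genuine care — and where I expect the bookkeeping to be most delicate — is precisely this sorting of which contributions stay explicit and which fall into $O\bigl(Q\widetilde H(E)+Q^{n+1}H(X)\bigr)$; it is governed entirely by the $Q$-orders produced in $Q^{\Delta}\bigl(S_l e^{k+i}\bigr)$, i.e.\ by Lemma~\ref{le:calibr_Q-homog}, together with the conventions for the degree-zero correlators $\big\langle e^{k+i}\psi^{-1},\cdot\big\rangle_{0,2,0}$ fixed by $S_0=\op{id}$ and the calibration \eqref{calibration}. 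Beyond that, everything is a formal manipulation of \eqref{pv-homog} and Lemma~\ref{le:cp-homog}, with no further obstacle.
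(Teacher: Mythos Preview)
Your approach is exactly the paper's: the proof there consists of the two identities $\widetilde\rho^{\,i} e^k=(-(n-1))^i e^{k+i}$ and $\widetilde\theta(e^{k+i})=\bigl(\tfrac n2-k-i\bigr)e^{k+i}$, a pointer to formula~\eqref{pv-homog}, and an invocation of Lemma~\ref{le:calibr_Q-homog}, with nothing further spelled out.

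One small caveat on the boundary term $i=n-k$ that you single out. Lemma~\ref{le:calibr_Q-homog} is stated only for exponents in the range $1\le k\le n-1$, so bounding $Q^{\Delta}Q^{n+l}\bigl(S_l\phi_N\bigr)$ for $l\ge 1$ is not literally an instance of it; the hint ``$\phi_N|_E=0$'' disposes of the $\beta=0$ contributions, but for $\beta\neq 0$ you still need a short Gathmann estimate (essentially the $a=N$ case of the argument in the proof of Proposition~\ref{prop:Q-exp_phia}). And the $l=0$ piece, which is $(-1)^{n-1}Q^n\phi_N$ times a nonzero scalar, does \emph{not} appear among the displayed sums of the proposition as written, since those stop at $i=n-1-k$; it is a genuine extra explicit contribution at order $Q^n$ that the paper tacitly carries along when the proposition is applied downstream (compare~\eqref{base_blowup_period_be:lot}, where the calibrated period $\widetilde I^{(-m)}_{\beta_e}$ includes the $e^n$-component). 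So your phrase ``supplies the appropriate $Q^n\phi_N$-correction'' is right in spirit, but the match with the proposition's formula as literally stated is not exact.
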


\begin{Proposition}\label{prop:Q-exp_phia}
If $2\leq a\leq N$, then the following formula holds:
\begin{align*}
\big(
Q^\Delta
I^{(-m)}\big(t,q,Q, Q^{-1}\lambda\big)
Q^{\widetilde{\rho}} Q^{\widetilde{\theta}+m-1/2}
Q^{-\Delta} \big) \phi_a =
\frac{
\lambda^{\theta+m-1/2}
}{
\Gamma(\theta+m+1/2)} \phi_a + O(Q).
\end{align*}
Moreover, in the above expansion, the $H(X)$-component of the coefficient in front of $Q^m$ for $0\leq m\leq n-1$ is a Laurent polynomial in $\lambda^{1/2}$
$($with coefficients in $H(X))$.
\end{Proposition}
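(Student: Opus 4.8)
The plan is to follow the derivation of Proposition~\ref{prop:Q-exp_e} almost verbatim, reading off the $H(X)$-component in place of the exceptional classes, and then to add a dimension‑counting step that pins down the $\lambda$-dependence of the first $n$ coefficients.

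First I would substitute the expansion \eqref{pv-homog} and use the splitting $H(\operatorname{Bl}(X))=H(X)\oplus\widetilde{H}(E)$. On $H(X)=\mathrm{span}(\phi_1,\dots,\phi_N)$ the operator $\widetilde{\theta}$ restricts to $\theta$ (by \eqref{qcoh_hgo}, as $\Delta\phi_c=0$), and $\widetilde{\rho}\phi_c=(c_1(TX)-(n-1)e)\cup\phi_c=\rho\phi_c$ because $e\cup\phi_c=0$ for $2\le c\le N$ (Section~\ref{sec:coh_blowup}); since $\rho^i\phi_a$ is again of positive degree this relation iterates, so $\widetilde{\rho}^i\phi_a=\rho^i\phi_a\in H(X)$ for all $i$. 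Hence, using also $Q^{-\Delta}\phi_a=\phi_a$,
\[
\big(Q^\Delta I^{(-m)}(t,q,Q,Q^{-1}\lambda)Q^{\widetilde{\rho}}Q^{\widetilde{\theta}+m-1/2}Q^{-\Delta}\big)\phi_a=\sum_{i,l\ge0}(-1)^lQ^{l+i}\frac{\partial_m^i}{i!}\Big(Q^\Delta S_l(t,q,Q)\big(\tfrac{\lambda^{\theta+m-l-1/2}}{\Gamma(\theta+m-l+1/2)}\rho^i\phi_a\big)\Big),
\]
which reduces everything to the $Q$-expansion of $Q^\Delta S_l(t,q,Q)$ on elements of $H(X)$. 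The coefficient of $Q^0$ receives a contribution only from $l=i=0$ with $S_0=1$, giving the claimed leading term $\tfrac{\lambda^{\theta+m-1/2}}{\Gamma(\theta+m+1/2)}\phi_a$.

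Next, to show the right‑hand side is a genuine power series in $Q$, I would decompose $S_l(t,q,Q)\psi=\sum_b(S_l\psi,\phi_b)^{\operatorname{Bl}(X)}\phi^b+\sum_k(S_l\psi,e^k)^{\operatorname{Bl}(X)}e_k$ for $\psi\in H(X)$; since $\Delta$ kills $H(X)$ while $Q^\Delta e_k=Q^{-(n-k)}e_k$, a negative power of $Q$ can arise only from the $\widetilde{H}(E)$-block or from a Novikov factor $Q^{-d(n-1)}$ with $d>0$. Both are controlled by Gathmann's theorem (Proposition~\ref{prop:van}) exactly as in Lemma~\ref{le:calibr_Q-homog}: for the $\widetilde{H}(E)$-block the correlator carries an $e^k$-insertion, and for $\beta\ne0$, $k\ge2$ the failure of condition~(iii) forces $l+k-d(n-1)\ge n+1$, which outweighs the shift $-(n-k)$; the cases $k=1$ (divisor equation), $\beta=0$ (dimension formula for $\overline{\mathcal M}_{0,\bullet}(\operatorname{Bl}(X),d\ell)$) and $d\le0$ are immediate, and the $H(X)$-block, having no exceptional insertions, is handled by the same subcases. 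This establishes the formula modulo $O(Q)$.

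For the refinement I would first use Proposition~\ref{prop:van} and the dimension formula to show that in the $H(X)$-block no curve class with $d\ge1$ contributes at order $Q^j$ for $j\le n-1$: when $\beta\ne0$ the failure of condition~(iii) gives $l\ge(d+1)(n-1)+1$, hence $l+i-d(n-1)\ge n>j$; and when $\beta=0$ the virtual dimension forces the $Q$-order to be at least $n$. So only $d=0$ classes matter in that range, and for those the relevant structure constants are classical/GW numbers of $\operatorname{Bl}(X)$ (in fact, by the $\widetilde{\beta}\cdot E=0$ comparison, those of $X$), independent of $\lambda$ and $m$. Consequently, modulo $O(Q^nH(X))$, the $H(X)$-component equals $I^{(-m),X}(t,q,Q^{-1}\lambda)Q^{\rho}Q^{\theta+m-1/2}\phi_a$, the analogous rescaling of the period vector of the quantum cohomology of $X$; its $Q^j$-coefficients are built from $\lambda^{\theta+m-l-1/2}/\Gamma(\theta+m-l+1/2)$ and its $m$-derivatives applied to homogeneous classes, and since $\theta$ acts on $H(X)$ with eigenvalues in $\tfrac n2+\ZZ$ all exponents of $\lambda$ lie in $\tfrac12\ZZ$. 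A bookkeeping of which $m$-derivatives actually survive for $j\le n-1$ — using that the $\log\lambda$-terms they would produce come with $1/\Gamma$ evaluated at non‑positive integers for $m$ negative — then collapses each such coefficient to a Laurent polynomial in $\lambda^{1/2}$ with coefficients in $H(X)$.

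The main obstacle is precisely this last bookkeeping: isolating, inside the $H(X)$-block, which powers of $\widetilde{\rho}$ and which $m$-derivatives genuinely enter the coefficients of $Q^0,\dots,Q^{n-1}$, and verifying that the transcendental ($\log\lambda$, digamma) contributions they carry cancel or vanish in that range. By contrast, the power‑series statement is a direct transcription of the proof of Lemma~\ref{le:calibr_Q-homog}, and the reduction in the first paragraph is purely formal.
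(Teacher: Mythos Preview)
Your derivation of the leading term and of the $O(Q)$ estimate follows the paper's proof: the reduction $\widetilde{\rho}^i\phi_a=\rho^i\phi_a$ and the three-case analysis via Proposition~\ref{prop:van} (Case~1: $\beta=0$ gives nothing since $\phi_a|_E=0$; Case~3: condition~(iii) fails, forcing $Q$-order $\ge n$; Case~2: $d\le0$, handled directly or by the divisor equation) is exactly what the paper does, and your reference to the mechanism of Lemma~\ref{le:calibr_Q-homog} is apt.

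For the refinement, the paper's argument is simpler than yours and does not attempt what you attempt. The paper merely shows that for each power $Q^M$ with $M\le n-1$ only finitely many triples $(d,l,i)$ contribute to the $H(X)$-block (Case~3 contributes only at order $\ge n$, and Case~2 forces $-1\le d\le0$ and $0\le l<n$). Your sharper claim that only $d=0$ survives is in fact correct, and your reduction to the rescaled period of $X$ is legitimate, but it is not what the paper uses. The genuine gap is your last step: you assert that the $\log\lambda$ contributions from $\partial_m^i$ disappear because $1/\Gamma$ is evaluated at non-positive integers ``for $m$ negative''. In the setting of the proposition $m$ is a large \emph{positive} integer (this is how the period is used in Section~\ref{sec:qexp_mon}), so the argument $(n+1)/2+m-i-\deg\phi_a-l$ of $\Gamma$ is positive for all $(i,l)$ occurring at order $\le n-1$, and $1/\Gamma$ does not vanish; the $(\log\lambda)^i$ factors from $\partial_m^i$ with $i\ge1$ are genuinely present. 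The paper's proof does not eliminate them either --- what it actually establishes, and what suffices for Proposition~\ref{prop:blowup_period_Qexp} and Section~5.7, is that the coefficient of $Q^M$ is a \emph{finite} linear combination of functions $\partial_m^i\bigl(\lambda^{B}/\Gamma(B+1)\bigr)$ with half-integral $B$; such functions are single-valued along the loops $\gamma_k$ because those loops do not encircle $\lambda=0$. So replace your cancellation argument by the finiteness argument the paper gives.
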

\proof
Using formula \eqref{pv-homog}, we get that the left-hand side of the formula that we want to prove is equal to
\[
Q^\Delta
\sum_{i,l=0}^\infty
(-1)^l Q^l S_l(t,q,Q)
\frac{\partial_m^i}{i!}\left(
\frac{\lambda^{\widetilde{\theta}+m-l-1/2}}{
\Gamma(\widetilde{\theta}+m-l+1/2)}
(Q\widetilde{\rho})^i \right)
\phi_a.
\]
Since $\widetilde{\rho} \phi_a = \rho \phi_a$, the above formula takes the form
\begin{equation}\label{rho-component}
\sum_{l=0}^\infty
\sum_{i=0}^\infty
Q^{\Delta+l+i} S_l(t,q,Q) \rho^i\phi_a
(-\partial_\lambda)^l
\frac{\partial_m^i}{i!}\left(
\frac{\lambda^{(n-1)/2+m-i-\operatorname{deg}(\phi_a)}}{
\Gamma((n+1)/2+m-i-\operatorname{deg}(\phi_a))}
\right).
\end{equation}
Note that the term in the above double sum corresponding to $l=i=0$ coincides with the leading order term on the right-hand side of the formula that we have to prove. Therefore, recalling the definition of the calibration, we get that we have to prove the following two statements. First, if $l+i>0$, then the following expression
\begin{align*}
\sum_{b=1}^N\!
\big\langle \rho^i\phi_a \psi^{l-1},\phi_b\big\rangle_{0,2,\beta+d\ell} (t) \phi^b Q^{l+i-d(n-1)} +
\sum_{k=1}^{n-1}\!
\big\langle \rho^i\phi_a \psi^{l-1}, e^k\big\rangle_{0,2,\beta+d\ell} (t) e_k Q^{k+l+i-d(n-1)-n}
\end{align*}
has order at least $O(Q)$ for all $\beta+d\ell\in \operatorname{Eff}(\operatorname{Bl}(X))$. Second, there are only finitely many $d$ and~$l$, such that, in the first sum the coefficient in front of $Q^m$ for $0\leq m\leq n-1$ is non-zero. Let us consider the correlators in the first sum.

{\em Case $1$}: if $\beta=0$. Then the correlator is a twisted GW invariant of the exceptional divisor $E$ and since $\phi_a|_E=0$, we get that the correlator must be $0$.

{\em Case $2$}: if $\beta\neq 0$ and condition (ii) in Gathmann's
vanishing theorem does not hold. Then~${d\leq 0}$ and we get $l+i-d(n-1)\geq l+i>0$. In order for the correlator to contribute to the coefficient in front of $Q^m$ for some $0\leq m\leq n-1$, we must have $-1\leq d\leq 0$ and $0\leq l<n$. Clearly, there are only finitely many $d$ and $l$ satisfying these inequalities.

{\em Case $3$}: if $\beta\neq 0$ and condition (ii) holds. Then condition (iii) in Gathmann's vanishing theorem must fail, that is, $0\geq (d+1)(n-1)-l+1$ or equivalently $l-d(n-1)\geq n$. The power of $Q$ is $l+i-d(n-1)\geq n+i.$ Therefore, the correlators satisfying the conditions of this case contribute only to the coefficients in front of $Q^m$ with $m\geq n$.

The argument for the correlators in the second sum is similar.

{\em Case $1$}: if $\beta=0$. Then the correlator is a twisted GW invariant of the exceptional divisor $E$ and since $\phi_a|_E=0$, we get that the correlator must be $0$.

{\em Case $2$}: if $\beta\neq 0$ and condition (ii) in Gathmann's vanishing theorem does not hold. Then
$d\leq 0$ and $k=1$. The divisor equation implies that if $d=0$, then
the correlator vanishes. Therefore, $d\leq -1$. We get $k+l+i-d(n-1)-n=l+i-(d+1)(n-1)\geq l+i>0$.

{\em Case $3$}: if $\beta\neq 0$ and condition (ii) holds. Then condition (iii) in Gathmann's vanishing theorem must fail, that is, $k-1\geq (d+1)(n-1)-l+1$ or equivalently $k+l-d(n-1)\geq n+1$. The power of $Q$ is $k+l+i-d(n-1)-n\geq i+1\geq 1.$\qed

\begin{Proposition}\label{prop:Q-exp_phi1}
The following formula holds:
\begin{align}
&
\big(
Q^\Delta
I^{(-m)}\big(t,q,Q, Q^{-1}\lambda\big)
Q^{\widetilde{\rho}} Q^{\widetilde{\theta}+m-1/2}
Q^{-\Delta} \big) \phi_1 =
\frac{
\lambda^{\theta+m-1/2}
}{
\Gamma(\theta+m+1/2)} \phi_1\nonumber\\ &\qquad+
\sum_{d,l\geq 0}
\sum_{i=0}^{n-1}
\sum_{k=1}^{n-1}
\big\langle (-(n-1)e)^i \psi^{l-1}, e^k\big\rangle_{0,2,d\ell} e_k (-\partial_\lambda)^l
\frac{\partial_m^i}{i!} \left(
\frac{\lambda^{(n-1)/2+m-i}}{
\Gamma((n+1)/2+m-i) }\right)\!\! \label{Q:exp_phi1_l2}\\
&\qquad+
\sum_{l=1}^\infty
\sum_{\beta\in \operatorname{Eff}(X)}
\big\langle \psi^{l-1},e\big\rangle_{0,2,\beta}(t) e_1
q^\beta Q^{l-n+1}
(-\partial_\lambda)^l \left(
\frac{\lambda^{(n-1)/2+m}}{
\Gamma((n+1)/2+m) }\right)
+ O(Q),\label{Q:exp_phi1_l3}
\end{align}
where $e_k=(-1)^{n-1} e^{n-k}$ and the correlator
\[
\big\langle (-(n-1)e)^i \psi^{l-1}, e^k\big\rangle_{0,2,d\ell}=
\big\langle (-(n-1)e)^i \psi^{l}, e^k,1\big\rangle_{0,3,d\ell}
\]
can be defined also for $l=0.$
\end{Proposition}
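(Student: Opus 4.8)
The plan is to specialize formula \eqref{pv-homog} to $\phi_1=1$ and run the same $Q$-bookkeeping as in the proofs of Propositions \ref{prop:Q-exp_e} and \ref{prop:Q-exp_phia}; the only new feature is that $\phi_1$ restricts non-trivially to the exceptional divisor, and this is exactly what produces the extra leading terms \eqref{Q:exp_phi1_l2} and \eqref{Q:exp_phi1_l3}. The first step is to compute $\widetilde\rho^{\,i}(1)$. From the $K$-theoretic identity $T\operatorname{Bl}(X)=TX-n-1+n\,\O(-E)+\O(E)$ one gets $c_1(T\operatorname{Bl}(X))=\pi^*c_1(TX)-(n-1)e$, and since $\pi^*c_1(TX)$ is a linear combination of $\phi_2,\dots,\phi_{r+1}$, the ring relations of Section \ref{sec:coh_blowup} give $\pi^*c_1(TX)\cup e^k=0$ for all $k$. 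Hence the operators $\rho=\pi^*c_1(TX)\cup$ and $e\cup$ annihilate the images of one another inside $\CC\phi_1\oplus\widetilde H(E)$, so that
\[
\widetilde\rho^{\,i}(1)=c_1(TX)^i+(-(n-1)e)^i,\qquad i\ge 1,
\]
with the first summand in $H(X)$ and, for $1\le i\le n-1$, the second in $\widetilde H(E)$ (for $i\ge n$ the second summand is a multiple of $\phi_N$ or $0$). Substituting this into \eqref{pv-homog}, conjugating by $Q^\Delta(\,\cdot\,)Q^{-\Delta}$ (note $Q^{-\Delta}\phi_1=\phi_1$), and using $(-\partial_\lambda)^l\big(\lambda^a/\Gamma(a+1)\big)=(-1)^l\lambda^{a-l}/\Gamma(a-l+1)$ with $a=\tfrac{n-1}{2}+m-i$ (the $\widetilde\theta$-eigenvalue of a degree-$2i$ class shifted by $m-\tfrac12$), the left-hand side becomes a sum over $i,l\ge 0$ of the operator $(-\partial_\lambda)^l\tfrac{\partial_m^i}{i!}\big(\lambda^{(n-1)/2+m-i}/\Gamma((n+1)/2+m-i)\big)$ times $Q^\Delta Q^{l+i}S_l(t,q,Q)$ applied to $c_1(TX)^i$, plus the same with $c_1(TX)^i$ replaced by $(-(n-1))^i e^i$.

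For the $\widetilde H(E)$-family ($1\le i\le n-1$) one invokes Lemma \ref{le:calibr_Q-homog} with its ``$k$'' equal to $i$: for $l\ge 1$ it yields $Q^\Delta Q^{l+i}\big(S_l(t,q,Q)e^i\big)=\sum_{k'',d}\big\langle\psi^{l-1}e^i,e^{k''}\big\rangle_{0,2,d\ell}e_{k''}+O(Q)$ (its $Q^n\phi_N$-term being $O(Q)$), and for $l=0$ one has $S_0e^i=e^i$, so $Q^\Delta Q^{\,i}(S_0e^i)=e^i=(-1)^{n-1}e_{n-i}$, which is the $l=0$ convention-term of \eqref{Q:exp_phi1_l2} once the scalar $(-(n-1))^i$ is absorbed into the correlator. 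This reproduces exactly the $1\le i\le n-1$ part of \eqref{Q:exp_phi1_l2}; the $i=n$ contribution is $O(Q)$ because $e^n=(-1)^{n-1}\phi_N$ is picked up only at $Q$-order $\ge n$, and $e^i=0$ for $i>n$.

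For the $H(X)$-family the $i=l=0$ term is $S_0(1)=1$, giving the leading term $\tfrac{\lambda^{\theta+m-1/2}}{\Gamma(\theta+m+1/2)}\phi_1$. Every other term is handled by expanding $Q^\Delta\big(S_l(t,q,Q)c_1(TX)^i\big)$ into its $\phi^b$-components ($Q$-power $l+i-d(n-1)$) and its $e_k$-components ($Q$-power $l+i-d(n-1)+k-n$) and running the case analysis of the proofs of Lemma \ref{le:calibr_Q-homog} and Proposition \ref{prop:Q-exp_phia}. If $\beta=0$ then $c_1(TX)^i|_E=0$ kills everything for $i\ge 1$, while for $i=0$ only the $e_k$-components survive, namely the twisted invariants $\langle\psi^{l-1},e^k\rangle_{0,2,d\ell}$ of $\PP^{n-1}$, whose $Q$-power is forced to be $0$ by the dimension identity $l-1+k=(d+1)(n-1)$; these are the $i=0$ part of \eqref{Q:exp_phi1_l2}. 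If $\beta\ne 0$ then Gathmann's vanishing theorem (Proposition \ref{prop:van}) with $\phi_a=c_1(TX)^i$ (weight $0$) forces, whenever the correlator is non-zero, a lower bound on $l$ that makes every $Q$-power $\ge 1$ --- \emph{except} the single case $i=0$, $d=0$, $k=1$, where $e$ is a divisor, Gathmann's hypothesis (ii) does not apply, and the divisor equation (using $\int_\beta e=0$ and $e\cup\phi_b=0$ for $b\ge 2$) rewrites the correlator as $\langle\psi^{l-1},e\rangle_{0,2,\beta}(t)$ with $Q$-power $l-n+1$; this is \eqref{Q:exp_phi1_l3}. The $\phi^b$-components, and the $e_1$-components with $d\ne 0$, are disposed of in the same fashion, removing $e$ via the divisor equation when $d\ne 0$.

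The main obstacle is precisely this last case analysis: one has to separate cleanly the $\beta=0$ contributions (honest twisted Gromov--Witten invariants of $\PP^{n-1}$, which survive at order $Q^0$ and constitute \eqref{Q:exp_phi1_l2}) from the $\beta\ne 0$ contributions, and within the latter pin down the single surviving family $\langle\psi^{l-1},e\rangle_{0,2,\beta}(t)$, which appears only because $1|_E\ne 0$ and is the unique correlator to which Gathmann's hypothesis fails after the divisor equation. Once this dichotomy is in place the scalar/derivative bookkeeping and the reindexing $e_k=(-1)^{n-1}e^{n-k}$, together with the $l=0$ convention $\langle(-(n-1)e)^i\psi^{l-1},e^k\rangle_{0,2,d\ell}=\langle(-(n-1)e)^i\psi^{l},e^k,1\rangle_{0,3,d\ell}$, are routine.
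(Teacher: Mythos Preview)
Your proof is correct and follows essentially the same route as the paper: split $\widetilde\rho^{\,i}\phi_1=\rho^i\phi_1+(-(n-1)e)^i$ for $i\ge 1$, feed the $e$-part through the analysis of Lemma~\ref{le:calibr_Q-homog}, and for the $\rho$-part run the three-case analysis ($\beta=0$; $\beta\ne 0$ with Gathmann's condition (ii) failing; $\beta\ne 0$ with (ii) holding) to isolate the $i=0$ contribution to \eqref{Q:exp_phi1_l2} and the single surviving family \eqref{Q:exp_phi1_l3}. The paper's proof is structured identically; your invocation of Lemma~\ref{le:calibr_Q-homog} for the $e$-part is a mild streamlining over the paper, which re-runs the same case analysis inline. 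One small wording issue: in the case $i=0$, $d=0$, $k=1$, the correlator \emph{is} already $\langle\psi^{l-1},e\rangle_{0,2,\beta}(t)$, so no ``rewriting via the divisor equation'' is needed there; and for $i=0$, $\beta=0$, the $\phi^b$-component (with $b=1$) does not literally vanish but contributes at order $Q^n$, which is still $O(Q)$---your phrase ``only the $e_k$-components survive'' should be read modulo $O(Q)$.
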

\begin{proof}
Note that if $i>0$,then $\widetilde{\rho}^i=\rho^i+(-(n-1)e)^i.$
Just like in the proof of Proposition \ref{prop:Q-exp_phia}, we get that the left-hand side of the identity that we would like to prove is equal to the sum of~\eqref{rho-component} with $a=1$ and
\begin{equation}\label{e-component}
\sum_{l=0}^\infty
\sum_{i=1}^\infty
Q^{\Delta+l+i} S_l(t,q,Q) (-(n-1)e)^i
(-\partial_\lambda)^l
\frac{\partial_m^i}{i!}\left(
\frac{\lambda^{(n-1)/2+m-i}}{
\Gamma((n+1)/2+m-i)}
\right).
\end{equation}
Let us discuss first the contribution of \eqref{rho-component}. The same argument as in the proof of Proposition~\ref{prop:Q-exp_phia} yields that if $i>0$, then the corresponding terms in the sum have order at least $O(Q)$. If~$i=0$ and $l=0$, then the corresponding term in the sum becomes $\lambda^{\theta+m-1/2}/\Gamma(\theta+m+1/2)$ which is precisely the first term on the right-hand side of the formula that we would like to prove.
Finally, we are left with the case $i=0$ and $l\geq 1$. By definition, $Q^{\Delta+l}S_l(t,q,Q)\phi_1$ is
\begin{gather}\label{phi1_desc}
\sum_{\beta+d\ell }\!
\sum_{b=1}^N\!
\big\langle \psi^{l-1},\phi_b\big\rangle_{0,2,\beta+d\ell} (t) \phi^b Q^{l-d(n-1)} +
\!\sum_{k=1}^{n-1}\!
\big\langle \psi^{l-1}, e^k\big\rangle_{0,2,\beta+d\ell} (t) e_k q^\beta Q^{k+l-d(n-1)-n},\!\!\!
\end{gather}
where the first sum is over all effective curve classes $\beta+d\ell\in \operatorname{Eff}(\operatorname{Bl}(X))$. Let us consider the following 3 cases for the correlators in~\eqref{phi1_desc}.

{\em Case $1$:} if $\beta=0$. we may assume that $t=0$ because $t|_E=0$. The sum over $b$ is non-zero only if $\phi_b=1$, that is, $b=1$. Recalling the formula for the dimension of the moduli space, we get
\[
l-1= -1+n+d(n-1)\quad\Rightarrow\quad
l-d(n-1) = n.
\]
Therefore, in this case the contribution has order $O(Q^n)$. The sum over $k$ in \eqref{phi1_desc} (when~$\beta=0$) is independent of $Q$ because by matching the degree of the correlator insertion with the dimension of the virtual fundamental cycle we get $k+l-1=-1+n+d(n-1)$. Therefore, the contribution to the sum \eqref{rho-component} with $a=1$ of the terms with $i=0$, $l\geq 1$, and degree $\beta=0$ is
\[
\sum_{l=1}^\infty
\sum_{d=0}^\infty
\big\langle \psi^{l-1}, e^k\big\rangle_{0,2,d\ell}
e_k
(-\partial_\lambda)^l
\left(
\frac{\lambda^{(n-1)/2+m}}{
\Gamma((n+1)/2+m)}
\right).
\]
Note that the above sum coincides with the $i=0$ component of \eqref{Q:exp_phi1_l2}.

{\em Case $2$:} if $\beta\neq 0$ and condition (ii) in Gathmann's vanishing theorem does not hold. Since ${d\leq 0}$ and $l\geq 1$ the sum over $b$ has order at least $O(Q)$. For the sum over $k$, only for $k=1$ condition~(ii) does not hold and if $d\leq -1$ then the term has order at least $O(Q)$. Therefore, only the terms with $k=1$ and $d=0$ satisfy the conditions of this case and do not have order~$O(Q)$. The corresponding contribution to the sum \eqref{rho-component} with $a=1$ becomes
\[
\sum_{l\geq 1}
\big\langle \psi^{l-1}, e\big\rangle_{0,2,\beta} (t) e_1 q^\beta Q^{1+l-n}
(-\partial_\lambda)^l
\left(
\frac{\lambda^{(n-1)/2+m}}{
\Gamma((n+1)/2+m)}
\right).
\]
Note that the above sum coincides with the sum in \eqref{Q:exp_phi1_l3}.

{\em Case $3$:} if $\beta\neq 0$ and condition (ii) holds. Then condition (iii) does not hold. For the correlators in the sum over $b$ we get $0\geq (d+1)(n-1) -l+1 =d(n-1) +n-l$. This inequality implies that the sum over $b$ has order $O(Q^n)$. Similarly, for the correlators in the sum over~$k$, we~get
\[
k-1\geq (d+1)(n-1)-l+1 = d(n-1)+n-l\quad
\Rightarrow \quad
k+l-d(n-1) -n \geq 1.
\]
In other words, the sum over $k$ has order at least $O(Q)$.

This completes the analysis of the contributions from the sum \eqref{rho-component} with $a=1$. It remains to analyze the contributions from the sum \eqref{e-component}. This is done in a similar way. To begin with, note that the sum of the terms with $l=0$ is equal to
\[
\sum_{i=1}^n Q^{\Delta+i}(-(n-1)e)^i
(-\partial_\lambda)^l
\frac{\partial_m^i}{i!}
\left(
\frac{\lambda^{(n-1)/2+m-i}}{
\Gamma((n+1)/2+m-i)}
\right).
\]
Note that only the term with $i=n$ depends on $Q$, that is, it has order $O(Q^n)$. Therefore, up to terms of order $O(Q)$ the above sum coincides with the sum of the terms in \eqref{Q:exp_phi1_l2} with $l=0$ and~$i\geq 1$. Suppose that $l>0$. By definition, $Q^{\Delta+l+i}S_l(t,q,Q)e^i$ is equal to
\begin{gather*}
\sum_{\beta+d\ell }
\sum_{b=1}^N
\big\langle e^i\psi^{l-1},\phi_b\big\rangle_{0,2,\beta+d\ell} (t) \phi^b Q^{l+i-d(n-1)} \\
\qquad{} +
\sum_{k=1}^{n-1}
\big\langle e^i \psi^{l-1}, e^k\big\rangle_{0,2,\beta+d\ell} (t) e_k q^\beta Q^{k+l+i-d(n-1)-n}.
\end{gather*}
Let us consider the following 3 cases for the correlators in the above sum.

{\em Case $1$:} if $\beta=0$. Again since $t|_E=0$, we may assume that $t=0$. Let us consider first the correlators in the sum over $b$. Since $\phi_b|_{E}=0$ for $b>1$, the only non-trivial contribution will come from the term with $b=1$. The dimension constraint now yields
$i+l-1 = -1 + n + d(n-1)$ and hence $l+i-d(n-1)=n$. Therefore, the contribution to \eqref{e-component} has order $O(Q^n)$. Let us consider now the correlators in the sum over $k$. The dimension constraint takes the form~$i+l-1+k=-1+n+d(n-1)$ and hence $k+l+i-d(n-1)-n=0$. Therefore, the contribution of these terms to the sum~\eqref{e-component}~is
\[
\sum_{l=1}^\infty
\sum_{i,k=1}^{n-1}
\sum_{d=0}^\infty
\big\langle (-(n-1)e)^i \psi^{l-1}, e^k
\big\rangle_{0,2,d\ell} e_k
(-\partial_\lambda)^l
\frac{\partial_m^i}{i!}\left(
\frac{\lambda^{(n-1)/2+m-i}}{
\Gamma((n+1)/2+m-i)}
\right).
\]
The above sum coincides with the sum of the terms in \eqref{Q:exp_phi1_l2} with $l\geq 1$ and $i\geq 1$.

Note that at this point all terms in the formula that we would like to prove are already matched with contributions from \eqref{rho-component} and \eqref{e-component}. It remains only to check that in the remaining two cases the contributions have order $O(Q)$.

{\em Case $2$:} if $\beta\neq 0$ and condition (ii) does not hold. For the sum over $b$, since $d\leq 0$ and~$l\geq 1$, the powers of $Q$ are positive. For the sum over $k$, in addition to $d\leq 0$, we also have ${i-1+k-1=0}$, that is, $i=k=1$. If $d\leq -1$, then the power of $Q$ is positive. Suppose that $d=0$. Using the divisor equation we get \smash{$\big\langle e\psi^{l-1}, e\big\rangle_{0,2,\beta}(t) = \big\langle e^2\psi^{l-2}\big\rangle_{0,1,\beta}(t)$}. The latter satisfies both conditions (i) and~(ii) of Gathmann's vanishing theorem. In order for the correlator to be non-zero, condition~(iii) must fail $1\geq (d+1)(n-1)-l+2=d(n-1)+n-l+1$. The power of $Q$ becomes
\[
k+l+i-d(n-1)-n = 2+l-d(n-1)-n \geq 2.
\]
{\em Case $3$:} if $\beta\neq 0$ and condition (ii) holds. Then condition (iii) must fail. For the correlators in the sum over $b$, we get
$i-1\geq (d+1)(n-1)-l+1 =d(n-1) + n-l$.
Therefore, the power of $Q$ is~$l+i-d(n-1)\geq n+1$. For the correlators in the sum over $k$, we get
$i-1+k-1\geq d(n-1) + n-l$, that is, the power of $Q$ is $k+l+i-d(n-1)-n\geq 2$.
\end{proof}

\subsection{Quantum cohomology of the blowup}
Let us recall the result of Bayer \cite{Ba}. Suppose that $t\in
\widetilde{H}^*(X)\subset H^*(\operatorname{Bl}(X))$, that is,
$t_1=t_{N+1}=\cdots=t_{N+n-1}=0$. Let us denote by
$\widetilde{\Omega}_i(t,q,Q)$ ($1\leq i\leq N+n-1$) the linear operator in~$H^*(\operatorname{Bl}(X))$ defined by quantum multiplication
$\phi_i\bullet_{t,q,Q}$ for $1\leq i\leq N$ and by quantum
multiplication by~$e^k\bullet_{t,q,Q}$ for $i=N+k$, $1\leq k\leq n-1$.
Slightly
abusing the notation let us denote by the same letters
$\widetilde{\Omega}_i$ the matrices of the corresponding linear
operators with respect to the basis $\phi_i$ ($1\leq
i\leq N+n-1$), where recall that $\phi_{N+k}:=e^k$ ($1\leq k\leq n-1$). Note that the matrix of
$\Delta$ is diagonal with diagonal entries $0,\dots, 0, -1, -2,\dots,
-n+1 $ ($0$ appears $N$ times). The main observation of Bayer (see \cite[Section 3.4]{Ba}) can be stated as follows.
\begin{Proposition}\label{prop:Ba}
The matrices of the linear operators $\widetilde{\Omega}_i$ $(1\leq
i\leq N+n-1)$ with respect to the basis $Q^{-\Delta} \phi_i$ $(1\leq
i\leq N+n-1)$ have the
following Laurent series expansions at $Q=0$:
\begin{equation}\label{qm:phii}
Q^{\Delta} \widetilde{\Omega}_i(t,q,Q) Q^{-\Delta} =
\begin{bmatrix}
\Omega_i(t,q) +O\big(Q^{n-1}\big) & O(Q^n) \\
O(1) & \delta_{i,1} \operatorname{Id}_{n-1} + O(Q)
\end{bmatrix}
,\qquad 1\leq i\leq N,
\end{equation}
where $\operatorname{Id}_{n-1}$ is the identity matrix of size
$(n-1)\times (n-1)$ and
\begin{equation}\label{qm:e^k}
Q^\Delta \widetilde{\Omega}_{N+a}(t,q,Q) Q^{-\Delta} =
Q^{-a}
\begin{bmatrix}
O(Q^n)& O(Q^n) \\
O(1) & \epsilon^a + O\big(Q^2\big)
\end{bmatrix}
,\qquad 1\leq a\leq n-1,
\end{equation}
where $\Omega_i(t,q)$ is the matrix of the linear operator in $H^*(X)$
defined by quantum multiplication by $\phi_i\bullet_{t,q}$ with
respect to the basis $\phi_i$ $(1\leq i\leq N)$ and $\epsilon$ is the
following $(n-1)\times (n-1)$-matrix:
\[
\epsilon =
\begin{bmatrix}
0 & 0 & \cdots & 0 & (-1)^n \\
1 & 0 & \cdots & & 0 \\
 \vdots & \vdots & \ddots & & \vdots \\
0 & 0 & \cdots & 0 & 0 \\
0 & 0 & \cdots & 1 & 0
\end{bmatrix}.
\]
\end{Proposition}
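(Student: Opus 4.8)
The plan is to compute the matrix entries of $\widetilde\Omega_i$ and $\widetilde\Omega_{N+a}$ in the basis $\phi_1,\dots,\phi_N,e^1,\dots,e^{n-1}$ and then to separate the two sources of powers of $Q$: the Novikov factor $Q^{-d(n-1)}$ attached to a curve class $\widetilde\beta=\beta+d\ell$, and the factor $Q^{\Delta_j-\Delta_k}$ produced by conjugation with $Q^{\pm\Delta}$, where $\Delta_j$ is the eigenvalue of $\Delta$ on $\phi_j$ (so $\Delta_j=0$ for $j\le N$ and $\Delta_{N+k}=-k$). Concretely, using the Poincar\'e pairing and the dual basis described just before the statement, the $(j,k)$-entry of $\widetilde\Omega_i(t,q,Q)$ equals
\[
(\phi_i\bullet_{t,q,Q}\phi_k,\phi^j)=\sum_{m\ge 0}\sum_{\widetilde\beta=\beta+d\ell}\frac{q^\beta Q^{-d(n-1)}}{m!}\langle\phi_i,\phi_k,\phi^j,t,\dots,t\rangle_{0,3+m,\widetilde\beta},
\]
the corresponding entry of $Q^\Delta\widetilde\Omega_i Q^{-\Delta}$ being this quantity multiplied by $Q^{\Delta_j-\Delta_k}$ (and similarly for $\widetilde\Omega_{N+a}$, with $\phi_i$ replaced by $e^a$). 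Since $t\in\widetilde{H}(X)$, so that $t|_E=0$, for each entry I would split the sum according to whether $\beta=0$ or $\beta\ne 0$.

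For $\beta\ne 0$ the contribution is governed by Proposition~\ref{prop:van}. First use the divisor equation to eliminate any insertion $e^1=e$ (note $\int_{\widetilde\beta}e=-d$, and all $\psi^{-1}$-terms vanish) and the string equation to eliminate any insertion $1$; what remains is a correlator to which Gathmann's vanishing theorem applies with the insertion carrying $\psi^0$ playing the role of $\phi_a$. Whenever conditions (i)--(iii) of Proposition~\ref{prop:van} all hold the correlator vanishes, so a non-zero term forces (ii) or (iii) to fail, and each such failure gives an inequality relating $d$ to the weights and degrees of the insertions; substituting this into $\Delta_j-\Delta_k-d(n-1)$ yields exactly the stated order in $Q$ in each of the four blocks. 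For instance, for the $(1,1)$-block of $\widetilde\Omega_i$ with $i\le N$ all insertions lie in $H(X)$ and have weight $0$, so a surviving $\beta\ne 0$ class must have $d\le -1$, hence order $\ge n-1$; the $(1,2)$-block comes out $O(Q^n)$, the $(2,1)$-block $O(1)$, and the $(2,2)$-block $O(Q)$; the analogous estimates for $\widetilde\Omega_{N+a}$ produce, after extracting $Q^{-a}$, the blocks $O(Q^n),O(Q^n),O(1),O(Q^2)$.

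For $\beta=0$ the correlator is a twisted Gromov--Witten invariant of $E\cong\PP^{n-1}$, as in Section~\ref{sec:proj-tw-GW}; since $t|_E=0$ only the $m=0$ term survives, and among classes pulled back from $X$ one has $\phi_i|_E\ne 0$ (resp.\ $\phi^j|_E\ne 0$) only for $\phi_i=1$ (resp.\ $\phi^j=1$). Hence the only non-trivial $\beta=0$ contributions are the classical triple products ($d=0$) and the three-point invariants of class $\ell$; all larger $d$ are killed either by the string equation or by a dimension count, since $e(\mathbb{N}_{0,3,d})$ has degree $d-1$ which is incompatible with the dimension of $[\overline{\mathcal M}_{0,3}(\PP^{n-1},d)]^{\mathrm{virt}}$ for $d\ge 2$. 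This pins down the leading terms: the $(2,2)$-block of $\widetilde\Omega_1$ is $\operatorname{Id}_{n-1}$ (from $1\bullet e^k=e^k$), and the $(2,2)$-block of $\widetilde\Omega_{N+a}$ at order $Q^{-a}$ is assembled from $e^a\cup e^k=e^{a+k}$ when $a+k<n$ and from the invariant $\langle e^a,e^k,e_{j'}\rangle_{0,3,\ell}$ — computed from $QH^*(\PP^{n-1})=\CC[p]/(p^n-Q)$ via $e|_E=-p$ — when $a+k\ge n$; matching the indices reproduces exactly the companion-type matrix $\epsilon$ and hence $\epsilon^a$. Finally, for $1\le i\le N$ the order-$Q^0$ part of the $(1,1)$-block is the sum of the $d=0$ correlators $\langle\phi_i,\phi_k,\phi^j,t,\dots,t\rangle^{\operatorname{Bl}(X)}_{0,\beta}$ with all insertions pulled back from $X$, and by Gathmann's comparison theorem \cite{Gat} these equal the corresponding invariants of $X$, i.e.\ $\Omega_i(t,q)$.

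The main obstacle is organizational rather than conceptual: the Gathmann case analysis has to be carried out for each of the four blocks and for both families of operators, with careful bookkeeping of which insertions have been removed by the divisor and string equations before Proposition~\ref{prop:van} is invoked, and with a check in each branch that the inequality forced by the failure of (ii) or (iii) gives \emph{precisely} the claimed power of $Q$ rather than merely some lower bound. A secondary delicate point is the identification of the $\epsilon^a$-block, where one must verify that the wrap-around entries contributed by the class-$\ell$ quantum corrections carry the correct sign $(-1)^n$ and occupy the correct positions, so that the resulting matrix is literally $\epsilon$ and not some conjugate of it.
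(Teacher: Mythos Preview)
Your approach is essentially the same as the paper's: both split into $\beta=0$ (twisted invariants of $E$, computed explicitly from the dimension constraint and the quantum cohomology of $\PP^{n-1}$) and $\beta\neq 0$ (case analysis via Gathmann's vanishing criterion, Proposition~\ref{prop:van}), with the paper carrying out only \eqref{qm:e^k} in detail and declaring \eqref{qm:phii} similar. One small imprecision in your sketch: for the $(1,1)$-block of $\widetilde\Omega_i$ with $i\leq N$, a surviving $\beta\neq 0$ term need not have $d\leq -1$ --- the $d=0$ terms survive and are exactly what produces $\Omega_i(t,q)$ via Gathmann's comparison (as you yourself note in the last paragraph); it is only the $d\neq 0$ correction terms that are forced to $d\leq -1$.
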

\begin{proof}
The proof is based on Gathmann's vanishing theorem and it is very
similar to the proof of Lemma \ref{le:calibr_Q-homog}.
Since the proofs of \eqref{qm:phii} and \eqref{qm:e^k} are similar, let
us prove only \eqref{qm:e^k}. We~have
\begin{align*}
Q^\Delta \big(e^a\bullet \big(Q^{-\Delta} e^k\big) \big)={}&
\sum_{\widetilde{\beta}=\beta+d\ell}
\left(
\sum_{j=1}^N
\big\langle e^a, e^k, \phi^j\big\rangle_{0,3,\beta+ d\ell}(t)
\phi_j \right.\\
&\left.+
\sum_{l=1}^{n-1} \big\langle e^a, e^k, e_l\big\rangle_{0,3,\beta+d\ell} (t)
e^l Q^{-l}
\right) q^\beta Q^{k-d(n-1)}.
\end{align*}
Let us examine the correlators in the sum over $j$, that is,
\[
\big\langle e^a, e^k, \phi^j\big\rangle_{0,3,\beta+ d\ell}(t) Q^{k -d(n-1)}.
\]
There are 3 cases.

{\em Case $1$:} if $\beta=0$. The correlator is a twisted GW invariant
of the exceptional divisor $E$. The restriction $\phi^j|_E$ is
non-zero only if $\phi^j=1$, that is, $j=N$. Recalling the string equation, we get that
the correlator is non-zero only if $\phi^j=1$ and $d=0$. Therefore,
the contribution takes the form
\[
\int_{\operatorname{Bl}(X)} e^a\cup e^k \cup 1\ \phi_N Q^{k} =
(-1)^{n-1} Q^{n-a} \delta_{a+k,n}\ \phi_N.
\]

{\em Case $2$:} if $\beta\neq 0$ and condition (ii) in Gathmann's
vanishing theorem does not hold. Here we have in mind the correlator
\smash{$\big\langle e^a, e^k, \phi^j\big\rangle_{0,3,\beta+ d\ell}(t)$}. Note that the
weight of this correlator is~${a-1+k-1}$. If condition (ii) does not
hold, then $a-1+k-1\leq 0$ and $d\leq 0$. Since $a,k\geq 1$, this case
is possible only if $a=k=1$. Moreover, if $d=0$, then the correlator
vanishes by the divisor equation. Therefore, we may assume that $d\leq
-1$. The power of $Q$ becomes $k-d(n-1)\geq 1 + n-1=n$, that is, the
contribution in this case has order $O(Q^n)=O\big(Q^{n+1-a}\big)$.

{\em Case $3$:} if $\beta\neq 0$ and condition (ii) holds. According to
Gathmann's vanishing theorem, condition (iii) does not hold, that is,
\[
a-1+k-1\geq (d+1)(n-1)=d(n-1)+n-1\quad \Rightarrow\quad
k-d(n-1)\geq n+1-a.
\]
We get that the contribution in this case has order $O\big(Q^{n+1-a}\big)$.

Combining the results of the 3 cases, we get that the sum over $j$ has
the form
\[
Q^{-a} \big(
(-1)^{n-1} \delta_{a+k,n} Q^n\phi_N + O\big(Q^{n+1}\big)
\big).
\]
Let us examine the correlators in the sum over $l$. Just like above,
there are 3 cases.

{\em Case $1$:} if $\beta=0$. The correlator \smash{$\big\langle e^a, e^k,
e_l\big\rangle_{0,3,d\ell}(t)$} can be computed explicitly. Indeed, such a~correlator is a twisted GW invariant of the exceptional divisor, so it
is independent of~$t\in H^*(X)$, that is, we may substitute
$t=0$. Moreover, since $d\ell$ must be an effective curve class in
$E$, we have $d\geq 0$. Recall that $e_l= (-1)^{n-1} e^{n-l}$ and note that the
dimension of the virtual fundamental cycle of
$\overline{\mathcal{M}}_{0,3}(\operatorname{Bl}(X), d\ell) $ is $d
(n-1)+n$. Therefore, $a+k-l = d(n-1) $. We conclude that $d=0$ or
$d=1$, that is,
\[
\big\langle e^a, e^k, e_l \big\rangle_{0,3,d\ell}(t)=
\begin{cases}
(-1)^n & \mbox{if $d=1$ and $l=a+k-n+1$},\\
1 & \mbox{if $d=0$ and $l=a+k$},\\
0 & \mbox{otherwise.}
\end{cases}
\]
The contribution to the sum over $l$ becomes
\begin{equation}\label{(2,2)-contribution}
\begin{cases}
(-1)^n e^{a+k-n+1} Q^{-a} & \mbox{if $a+k>n-1$}, \\
e^{a+k} Q^{-a}& \mbox{if $a+k\leq n-1$}.
\end{cases}
\end{equation}
Note that the matrix $\epsilon^a$ has entries
\[
\epsilon^a_{ij}=
\begin{cases}
(-1)^n & \mbox{if $j=i+ n-1-a$}, \\
1 & \mbox{if $i=j+a$},\\
0 & \mbox{otherwise}.
\end{cases}
\]
Comparing with formula \eqref{(2,2)-contribution}, we get that the
contribution in this case to formula \eqref{qm:e^k} coincides with the
matrix $Q^{-a} \epsilon^a$.

{\em Case $2$:} if $\beta\neq 0$ and condition (ii) does not hold.
Then $d\leq 0$ and $a-1+k-1+n-l-1\leq 0$. Since~${a,k,n-l\geq 1}$, this
case is possible only if $a=k=1$ and $l=n-1$. Since $\beta\neq 0$, the
 divisor equation implies that $d\neq 0$, that is, $d\leq -1$. In
 other words, if condition (ii) in Gathmann's
 vanishing theorem does not hold, then the power of $Q$, must be
 $k-l-d(n-1)\geq 1-(n-1) + n-1 =1=-a+2$.

{\em Case $3$:} if $\beta\neq 0$ and condition (ii) holds.
Then condition (iii) must fail, that is, $a-1+k-1+n-l-1\geq
(d+1)(n-1)=d(n-1) +n-1$, or equivalently
 $k-l-d(n-1)\geq -a+2$.

Combining the results of these 3 cases, we get that the contribution
of the sum over $l$ matches the (2,2)-block of the matrix on the right-hand side
in formula \eqref{qm:e^k} with the factor $Q^{-a}$ inserted.

In order to complete the argument, we have to repeat the
above discussion by replacing $e^k$ with $\phi_i$ ($1\leq i\leq
N$), that is, we have to determine the contribution to the right-hand side of~\eqref{qm:e^k} of the following expression:
\begin{gather*}
Q^\Delta \big(e^a\bullet \big(Q^{-\Delta} \phi_i\big) \big)\\
\qquad=
\sum_{\widetilde{\beta}=\beta+d\ell}
\left(
\sum_{j=1}^N
\big\langle e^a, \phi_i, \phi^j\big\rangle_{0,3,\beta+ d\ell}(t)
\phi_j +
\sum_{l=1}^{n-1} \big\langle e^a, \phi_i, e_l\big\rangle_{0,3,\beta+d\ell} (t)
e^l Q^{-l}
\right) q^\beta Q^{-d(n-1)}.
\end{gather*}
First, let us determine the contribution of the correlators in the sum
over $j$.

{\em Case $1$:} if $\beta=0$. The correlator could be non-zero only if
$\phi^j=1$ and $d=0$. In the latter case, since $\int_{\operatorname{Bl}(X)}
e^a\cup \phi_i\cup 1 =0$, we get that the correlator still
vanishes. There is no contribution in this case.

{\em Case $2$:} if $\beta\neq 0$ and condition (ii) does not hold. Then
$d\leq 0$ and the weight $a-1\leq 0$, that is, $a=1$. Due to divisor
equation, $d\neq 0$, so $d\leq -1$ and $-d(n-1)\geq n-1=n-a$. We
get that the contribution in this case has order $O\big(Q^{n-1}\big)$.

{\em Case $3$:} if $\beta\neq 0$ and condition (ii) does hold. Then
condition (iii) does not hold, so
${a-1\geq (d+1)(n-1)}$ and $-d(n-1)\geq n-a$. We get that the
contribution in this case is still of order $O(Q^{n-a})$.

Combining the results of the 3 cases, we get that the order of the
elements in the (1,1)-block of the matrix
$Q^\Delta \widetilde{\Omega}_{N+a} Q^{-\Delta}$ is $O(Q^{n-a})$, that
is, the same as in formula \eqref{qm:e^k}.

Finally, it remains to determine the contribution of the correlators
in the sum over $l$.

{\em Case $1$:} if $\beta=0$. The correlator could be non-zero only if
$\phi_i=1$, that is, $i=1$ and $d=0$. We get that the contribution in
this case is $\delta_{i,1} e^a Q^{-a}$.

{\em Case $2$:} if $\beta\neq 0$ and condition (ii) does not hold. Then
$d\leq 0$ and the weight $a-1+n-l-1\leq 0$, that is, $a=n-l=1$. Due to divisor
equation, $d\neq 0$, so $d\leq -1$ and $-l-d(n-1)\geq
-l+n-1=0=-a+1$. The contribution in this case has order $O\big(Q^{-a+1}\big)$.

{\em Case $3$:} if $\beta\neq 0$ and condition (ii) does hold. Then
condition (iii) must fails. We get
\[
a-1+n-l-1\geq (d+1)(n-1)\quad \Rightarrow\quad
-l-d(n-1) \geq -a+1.
\]
The contribution in this case also has order $O\big(Q^{-a+1}\big)$.

Combining the results in the 3 cases we get that the elements in the
(2,1)-block of the matrix~${Q^\Delta \widetilde{\Omega}_{N+a}
Q^{-\Delta}}$ have the form $Q^{-a} (E_{a,1} +O(Q))$, where $E_{a,1}$
denotes the matrix whose~$(a,1)$-entry is $1$ and the remaining
entries are $0$.
\end{proof}

\section{The exceptional component of a reflection vector}
\label{sec:qexp_mon}

Suppose that $\alpha \in  H^* (\operatorname{Bl}(X))$ is a reflection vector. Let us decompose $\alpha=\alpha_e+\alpha_b$, where ${\alpha_e \in  \widetilde{H}^* (E)}$ and $\alpha_b\in H^*(X)$. We will refer to $\alpha_e$ and $\alpha_b$ as respectively the {\em exceptional} and the {\em base} components of $\alpha$. Using Proposition \ref{prop:Q-exp_e}, we would like to classify the exceptional components of the reflection vectors.

\subsection{Dependence on the Novikov variables}
\label{sec:rv_qdep}

Since the quantum cohomology is a Frobenius manifold depending on the parameters $q:=(q_1,\dots,q_r)$ and $q_{r+1}:=Q^{n-1}$, the reflection vectors depend on $q_i$ too. We claim that if $\alpha$ is a~reflection vector, then
\begin{gather}\label{refl:q_dep}
\alpha=q_1^{-p_1}\cdots q_r^{-p_r} q_{r+1}^{- e}\beta,
\end{gather}
where $\beta\in H^*(\operatorname{Bl}(X))$ is independent of
$q_i$ ($1\leq i\leq r+1$). To proof this fact, we will make use of the divisor equation. Suppose that the basis of divisor classes is part of the basis $\{\phi_i\}_{1\leq i\leq N+n-1}$, such that, $p_i=\phi_{i+1}$ for $1\leq i\leq r$ and $p_{r+1}= e = \phi_{N+1}$. Let $\tau_i$ ($1\leq i\leq r+1$) be the linear coordinates corresponding to the divisor classes $p_i$, that is, $\tau_i:=t_{i+1}$ for $1\leq i\leq r$ and $\tau_{r+1}=t_{N+1}$. Using the divisor equation we get that the calibration satisfies the following differential equations:
\begin{align*}
&z\frac{\partial}{\partial \tau_i}S(t,q,Q,z)=
p_i\bullet S(t,q,Q,z),\\
&zq_i\frac{\partial}{\partial q_i}S(t,q,Q,z)=
z\frac{\partial}{\partial \tau_i}S(t,q,Q,z)-S(t,q,Q,z)p_i\cup.
\end{align*}
Therefore,
\[
S(t,q,Q,z)=T(t,q,Q,z)e^{\sum^{r+1}_{i=1}\tau_i p_i\cup /z},
\]
 where
for fixed $z$ the operator series $T(t,q,Q,z)$ is a function on the variables
\begin{equation}\label{divisor_variables}
t_1, q_1e^{t_2},\dots, q_r e^{t_{r+1}}, t_{r+2}, \dots, t_N,
q_{r+1} e^{t_{N+1}}, t_{N+2},\dots, t_{N+n-1}.
\end{equation}
As we already pointed out before (see Section~\ref{sec:qcoh}), due to the divisor equation, the operators of quantum multiplication $\phi_i\bullet_{t,q,Q}$ are represented by matrices whose entries are functions in the variables~\eqref{sec:qcoh} too. Since the canonical coordinates $u_i(t,q,Q)$ are eigenvalues of $E\bullet_{t,q,Q}$, it follows that they have the same property. Moreover, using the chain rule, we get that the partial derivatives \smash{$\frac{\partial u_j}{\partial t_a}$} are also functions in~\eqref{divisor_variables}. On the other hand, if $\alpha$ is a reflection vector, then the Laurent series expansion of \smash{$I^{(-m)}_\alpha(t,q,Q,\lambda)$} at a point $\lambda=u_i(t,q,Q)$ has coefficients that are rational functions in the canonical coordinates $u_j(t,q,Q)$ and their partial derivatives
\smash{$\frac{\partial u_j}{\partial t_a}$} (see Section~\ref{sec:ai_sol}). Therefore,
\begin{equation}\label{refl_period:de}
\left(
\frac{\partial}{\partial \tau_i} - q_i\frac{\partial}{\partial q_i}
\right) I^{(-m)}_\alpha(t,q,Q,\lambda) =0.
\end{equation}
By definition,
\begin{align*}
I^{(-m)}(t,q,Q,\lambda)&=
S\big(t,q,Q,-\partial_\lambda^{-1}\big) \widetilde I^{-m}(\lambda)=T\big(t,q,Q,-\partial_\lambda^{-1}\big)
e^{-\sum^{r+1}_{i=1}\tau_i p_i\cup\partial_\lambda} \widetilde I^{(-m)}(\lambda)\\
&=T\big(t,q,Q,-\partial_\lambda^{-1}\big)
\widetilde I^{-m}(\lambda) e^{-\sum^{r+1}_{i=1}\tau_i p_i},
\end{align*}
where for the last equality we used the following relation (see also the proof of Lemma \ref{le:cp-homog}\,(a)):
\[
-p\cup \partial_\lambda
\frac{\lambda^{\theta+\alpha-1}}{\Gamma(\theta+\alpha)} =
\frac{\lambda^{\theta+\alpha-1}}{\Gamma(\theta+\alpha)} (-p).
\]
Since $I_\alpha^{(-m)}(t,q,Q,\lambda) =I^{(-m)}(t,q,Q,\lambda) \alpha$, from equation \eqref{refl_period:de} we get
\[
q_i\frac{\partial \alpha}{\partial q_i} + p_i\cup \alpha =0,\qquad
\forall 1\leq i\leq r+1.
\]
Our claim that the reflection vector has the form \eqref{refl:q_dep} follows.

\subsection{Canonical coordinates}\label{sec:can_coords}

We would like to determine the dependence of the canonical coordinates $u_i(t,q,Q)$ $(1\leq i\leq N+n-1$) on $Q$, where the parameter $t\in \widetilde{H}(X)$, that is, $t_1=t_{N+1}=\cdots= t_{N+n-1}=0$. Using the identity $u_i=\widetilde{E}(u_i)$, we get
\[
u_i(t,q,Q)=
\sum_{a=2}^N (1-\op{deg}\phi_a)t_a
\frac{\partial u_i}{\partial t_a}(t,q,Q)+
\sum_{j=1}^r \rho_j
\frac{\partial u_i}{\partial \tau_j}(t,q,Q)-
(n-1)\frac{\partial u_i}{\partial t_{N+1}}(t,q,Q),
\]
where $\rho_j$ are the coefficients in the decomposition $c_1(TX)=\sum_{j=1}^N \rho_j p_j$ and $\tau_j=t_{j+1}$.
The above formula allows us to reduce the problem to investigating the dependence on $Q$ of the partial derivatives \smash{$\tfrac{\partial u_i}{\partial t_j}$} ($1\leq i\leq N+n-1$, $1\leq j\leq N+1$). The advantage now is that the eigenvalues of the operator \smash{$\widetilde{\Omega}_j(t,q,Q) =\phi_j\bullet_{t,q,Q}$} of quantum multiplication by $\phi_j$ are precisely~\smash{$\tfrac{\partial u_i}{\partial t_j}$} ($1\leq i\leq N+n-1$).
\begin{Lemma}\label{le:eig_Q}
Suppose that $U(Q)$ is a square matrix of size $k\times k$ whose entries are functions holomorphic at $Q=0$.
\begin{itemize}\itemsep=0pt
\item[$(a)$] There exists an integer $b>0$, such that, every eigenvalue of
$U(Q)$ has an expansion of the form $\lambda_0+\sum_{i=1}^\infty \lambda_i Q^{i/b}$.
\item[$(b)$] If $\lambda_0$ is an eigenvalue of $U(0)$ of multiplicity $1$, then $U(Q)$ has a unique eigenvalue of multiplicity one of the form $\lambda_0+\sum_{i=1}^\infty \lambda_i Q^i$.
 \end{itemize}
\end{Lemma}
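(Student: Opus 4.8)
The statement is a classical fact about the Puiseux expansions of eigenvalues of a holomorphic matrix family; the plan is to deduce it from the theory of algebraic functions applied to the characteristic polynomial. First I would form the characteristic polynomial $\chi(Q,\lambda)=\det(\lambda\,\mathrm{Id}_k - U(Q))$, which is a monic polynomial in $\lambda$ of degree $k$ whose coefficients are holomorphic functions of $Q$ near $Q=0$. Since $\chi$ is monic in $\lambda$, the Weierstrass preparation theorem (or directly the Newton--Puiseux theorem for polynomials with convergent power-series coefficients) applies: over a small punctured disk $0<|Q|<\varepsilon$ the roots of $\chi(Q,\cdot)$ are given by finitely many convergent Puiseux series, i.e.\ there is an integer $b>0$ such that after the substitution $Q=s^b$ the polynomial $\chi(s^b,\lambda)$ factors completely into linear factors $\lambda - \lambda^{(j)}(s)$ with each $\lambda^{(j)}$ holomorphic at $s=0$. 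Writing $\lambda^{(j)}(s)=\lambda_0+\sum_{i\ge 1}\lambda_i s^i$ and returning to $Q$ gives exactly the expansion $\lambda_0+\sum_{i\ge 1}\lambda_i Q^{i/b}$ claimed in part (a). Here $\lambda_0=\lambda^{(j)}(0)$ is necessarily a root of $\chi(0,\lambda)$, i.e.\ an eigenvalue of $U(0)$, which is the only compatibility one needs to check.

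For part (b) the point is that a simple root cannot be permuted nontrivially by the monodromy around $Q=0$. Concretely: suppose $\lambda_0$ is a root of $\chi(0,\lambda)$ of multiplicity one. Then $\partial_\lambda\chi(0,\lambda_0)\neq 0$, so by the holomorphic implicit function theorem applied to $\chi(Q,\lambda)=0$ near $(0,\lambda_0)$ there is a unique function $\mu(Q)$, holomorphic at $Q=0$ (an honest power series in integer powers of $Q$, no fractional exponents), with $\mu(0)=\lambda_0$ and $\chi(Q,\mu(Q))\equiv 0$. This $\mu(Q)$ is an eigenvalue of $U(Q)$. It has multiplicity one for all small $Q$ because $\partial_\lambda\chi(Q,\mu(Q))$ is continuous in $Q$ and nonzero at $Q=0$, hence nonzero for $|Q|$ small; and it is the unique eigenvalue branch limiting to $\lambda_0$, since any other branch limiting to $\lambda_0$ would be a second root of $\chi(Q,\cdot)$ near $\mu(Q)$, contradicting simplicity of $\mu(Q)$ for $Q\neq 0$. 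This gives (b).

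The only genuinely nontrivial input is the Newton--Puiseux theorem (equivalently, that the integral closure of $\CC\{Q\}$ in its algebraic closure is obtained by adjoining roots $Q^{1/b}$), so I would simply cite it; everything else is the implicit function theorem and elementary bookkeeping. One minor care point worth noting in the write-up: in (a) one should allow that different eigenvalue branches may require different ramification indices, but one can take $b$ to be the least common multiple, which is why a single $b$ works for all eigenvalues simultaneously. I do not anticipate any real obstacle here; the lemma is standard and the proof is short once the characteristic polynomial is brought in.
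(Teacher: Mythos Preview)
Your proposal is correct and follows the same overall strategy as the paper: reduce to the characteristic polynomial $\chi(Q,\lambda)\in\CC\{Q\}[\lambda]$ and invoke the theory of roots of monic polynomials over $\CC\{Q\}$. The only difference is one of presentation: where you cite the Newton--Puiseux theorem as a black box and use the implicit function theorem for~(b), the paper gives a short self-contained argument, using Hensel's lemma to factor $\chi$ according to the distinct roots of $\chi(0,\lambda)$ (which immediately yields~(b) when a factor has degree one), and then handling the remaining case $\chi(0,\lambda)=(\lambda-w)^k$ by the usual Newton-polygon substitution $\lambda=Q^{\nu}\mu$ with $\nu=\min_i \operatorname{ord}(a_i)/i$, reducing inductively. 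Your approach is cleaner if the reader is happy to import Newton--Puiseux; the paper's has the virtue of being self-contained and making the ramification index~$b$ explicit.
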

\begin{proof}
The eigenvalues are roots of the characteristic polynomial $\operatorname{det}(\lambda-U(Q))$. This is a~monic polynomial in $\lambda$ of degree $k$ with coefficients in $C\{Q\}$ --- the ring of convergent power series in~$Q$. Therefore, in order to prove~(a), it is sufficient to prove the following statement. Let~${f(Q,\lambda)\in C\{Q\}[\lambda]}$ be a monic polynomial. Then the roots of $f(Q,\lambda)$ have the expansion stated in the lemma. Let us decompose
\[
f(0,\lambda)=(\lambda -w_1)^{b_1}\cdots (\lambda -w_s)^{b_s},
\]
where $w_i\neq w_j$ for $i\neq j$. Recalling Hensel's lemma (see \cite[Chapter 2, Section 2]{GR}), we get that~${f(Q,\lambda) = f_1(Q,\lambda)\cdots f_r(Q,\lambda)}$, where $f_i(Q,\lambda)\in C\{Q\}[\lambda]$ is a monic polynomials of degree~$b_i$, such that, $f_i(0,\lambda)=(\lambda -w_i)^{b_i}$. Note that if $b_i=1$ for some $i$, then the unique zero of $f_i(Q,\lambda)=0$ is a holomorphic at $Q=0$ and its value at $Q=0$ is $w_i$. Therefore, part~(b) is an elementary consequence of Hensel's lemma. If $s>1$, then the lemma follows from the inductive assumption. Suppose that $s=1$, that is,
\[
f(Q,\lambda)=\lambda^k+a_1(Q)\lambda^{k-1}+\cdots + a_k(Q),
\]
where $a_i(0)=0$. We may assume that the sub-leading coefficient $a_1(Q)=0$. Indeed, using the substitution $\lambda\mapsto \lambda - a_1(Q)/k$ we can transform the polynomial to one for which the sub-leading coefficient is~$0$. The roots of the two polynomials are related by a shift of $a_1(Q)/k$, so it is sufficient to prove our claim for one of them. Let $\operatorname{ord}(a_i)$ be the order of vanishing of $a_i$ at~${Q=0}$. If $a_i(Q)=0$, then we define the order of vanishing to be $+\infty$. Put
\smash{$\nu:=\operatorname{min}_{1\leq i\leq k}
\frac{\operatorname{ord}(a_i)}{i}$}.
Substituting $\lambda=Q^\nu \mu$ in the equation $f(Q,\lambda)=0$ and dividing by $Q^{\nu k}$, we get
\[
\mu^k+\sum_{i=2}^k a_i(Q)Q^{-\nu i} \mu^{k-i}=0.
\]
Since $\operatorname{ord}(a_i) \geq \nu i$ with equality for at leats one~$i$, we get that the left-hand side of the above equation is a monic polynomial $g\big(Q^{1/b},\mu\big)$ in $\CC\big\{Q^{1/b}\big\} [\mu]$ for some integer $b>0$. Note that~$g(0,\mu)$ has at least two different zeroes because its sub-leading coefficient is $0$. Therefore, just like above, we can use Hensel's lemma to reduce the proof to a~case in which the inductive assumption can be applied. This completes the proof.
\end{proof}

\begin{Proposition}\label{prop:can-Q_exp}
Let $u_j(t,q,Q)$ $(1\leq j\leq N+n-1)$ be the canonical coordinates of the quantum cohomology of $\operatorname{Bl}(X)$, where the parameter $t\in \widetilde{H}(X)$. After renumbering, the canonical coordinates split into two groups
\[
u_j(t,q,Q)\in \CC\{Q\},\qquad 1\leq j\leq N,
\]
and
\[
u_j(t,q,Q)=-(n-1)v_k Q^{-1}+O(1),\qquad j=N+k,\qquad 1\leq k\leq n-1,
\]
where $v_k$ $(1\leq k\leq n-1)$ are the solutions of the equation $\lambda^{n-1}=(-1)^n$.
\end{Proposition}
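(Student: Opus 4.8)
The plan is to identify the canonical coordinates with the eigenvalues of the Euler multiplication operator, conjugate by $Q^{\Delta}$, and read off the $Q$-expansions from Bayer's Proposition~\ref{prop:Ba}. Since the $u_j(t,q,Q)$ are the eigenvalues of $\widetilde E\bullet_{t,q,Q}$ and eigenvalues are invariant under conjugation, I work with $V(Q):=Q^{\Delta}\big(\widetilde E\bullet_{t,q,Q}\big)Q^{-\Delta}$. On $t\in\widetilde H(X)$ (so $t_1=t_{N+1}=\dots=t_{N+n-1}=0$) the formula for $\widetilde E$ recalled before the proposition gives $\widetilde E\bullet_{t,q,Q}=\sum_{a=2}^{N}c_a(t)\widetilde\Omega_a(t,q,Q)-(n-1)\widetilde\Omega_{N+1}(t,q,Q)$ for suitable coefficients $c_a(t)$ holomorphic in $t$ and independent of $Q$; the crucial points are that the term $\widetilde\Omega_1$ does not occur and that the coefficient of $\widetilde\Omega_{N+1}=e\bullet_{t,q,Q}$ is exactly $-(n-1)$, which matches $\big(c_1(T\operatorname{Bl}(X)),e_1\big)=-(n-1)$ by Lemma~\ref{le:pp}. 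Substituting \eqref{qm:phii} (with $a\ge 2$, so the $\delta_{a,1}\operatorname{Id}_{n-1}$ term never appears) and \eqref{qm:e^k} and collecting terms I obtain the block decomposition
\[
V(Q)=\begin{bmatrix}\Omega_E+O\big(Q^{n-1}\big) & O\big(Q^{n-1}\big)\\ O\big(Q^{-1}\big) & -(n-1)Q^{-1}\epsilon+O(Q)\end{bmatrix},
\]
where $\Omega_E=\sum_{a=2}^{N}c_a(t)\Omega_a(t,q)$ is the matrix of the Euler multiplication of the quantum cohomology of $X$; for generic $t,q$ its eigenvalues are the $N$ pairwise distinct canonical coordinates $u^X_j(t,q)$ of $X$, by the semisimplicity hypothesis.

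For the unbounded group I would pass to $W(Q):=Q\,V(Q)$, which is holomorphic at $Q=0$ with $W(0)=\left[\begin{smallmatrix}0&0\\ \ast&-(n-1)\epsilon\end{smallmatrix}\right]$. An elementary computation with the companion-type matrix $\epsilon$ shows $\epsilon^{n-1}=(-1)^n\operatorname{Id}_{n-1}$ and that the first basis vector is cyclic, so the characteristic polynomial of $\epsilon$ is $\lambda^{n-1}-(-1)^n$; hence the spectrum of $W(0)$ consists of $0$ with multiplicity $N$ together with the $n-1$ simple eigenvalues $-(n-1)v_k$, where $v_k^{\,n-1}=(-1)^n$. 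By Lemma~\ref{le:eig_Q}(b), each simple eigenvalue $-(n-1)v_k$ of $W(0)$ lifts to a unique eigenvalue $\mu_{N+k}(Q)=-(n-1)v_k+O(Q)$ of $W(Q)$ holomorphic at $Q=0$; dividing by $Q$, the corresponding eigenvalues of $V(Q)$ are $u_{N+k}=-(n-1)v_kQ^{-1}+O(1)$ for $1\le k\le n-1$, after renumbering.

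For the bounded group I would use the Schur-complement factorization of $\det\big(\lambda\operatorname{Id}_{\widetilde N}-V(Q)\big)$ with respect to the lower-right block. Writing $V(Q)=\left[\begin{smallmatrix}A&B\\ C&D\end{smallmatrix}\right]$, the eigenvalues of $D(Q)$ are $\approx-(n-1)v_kQ^{-1}$, so for $\lambda$ in any fixed disc and $Q$ small enough $\lambda\operatorname{Id}_{n-1}-D(Q)$ is invertible and $\big(\lambda\operatorname{Id}_{n-1}-D(Q)\big)^{-1}=O(Q)$, whence
\[
\det\big(\lambda\operatorname{Id}_{\widetilde N}-V(Q)\big)=\det\big(\lambda\operatorname{Id}_{n-1}-D(Q)\big)\cdot g(\lambda,Q),\qquad g(\lambda,Q):=\det\big(\lambda\operatorname{Id}_N-A(Q)-B(Q)(\lambda\operatorname{Id}_{n-1}-D(Q))^{-1}C(Q)\big).
\]
Since $B=O(Q^{n-1})$, $C=O(Q^{-1})$ and the resolvent is $O(Q)$, the correction term is $O(Q^{n-1})$ and jointly holomorphic in $(\lambda,Q)$, so $g$ is holomorphic near $\{(\lambda,0)\}$ with $g(\lambda,0)=\det(\lambda\operatorname{Id}_N-\Omega_E)=\prod_{j=1}^{N}(\lambda-u^X_j)$. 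Because the $u^X_j$ are pairwise distinct, $\partial_\lambda g(u^X_j,0)\ne 0$, and the implicit function theorem produces a unique holomorphic root $u_j(Q)$ of $g(\cdot,Q)=0$ with $u_j(0)=u^X_j$; by Rouch\'e's theorem these exhaust the roots of $g(\cdot,Q)$ in the disc and hence account for the remaining $N$ eigenvalues of $V(Q)$. Therefore $u_j(t,q,Q)\in\CC\{Q\}$ for $1\le j\le N$, which completes the proof.

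The main obstacle is exactly this last step: Lemma~\ref{le:eig_Q}(a) alone only yields Puiseux expansions, and a plain block-triangularization of $V(Q)$ (or of $W(Q)$) does not by itself rule out fractional exponents for the $N$ ``small'' eigenvalues; obtaining genuine power series forces the use of the distinctness of the $u^X_j$ --- i.e.\ the semisimplicity of the quantum cohomology of $X$ --- via the implicit function theorem applied to the Schur complement. Extra care is also needed because $V(Q)$ genuinely has a simple pole at $Q=0$, which is why the Schur complement must be taken with respect to the lower-right block, whose spectrum runs off to infinity, rather than the upper-left one.
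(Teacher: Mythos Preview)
Your proposal is correct, and the route is genuinely different from the paper's.

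The paper avoids the simple pole of $V(Q)=Q^{\Delta}(\widetilde E\bullet)Q^{-\Delta}$ altogether by replacing the Euler operator with an auxiliary one: instead of the term $-(n-1)\widetilde\Omega_{N+1}$ it uses $+Q\,\widetilde\Omega_{N+1}$, so that after conjugation by $Q^{\Delta}$ the matrix is \emph{holomorphic} at $Q=0$ with limit $\left[\begin{smallmatrix}E\bullet^X_{t,q}&0\\ \ast&\epsilon\end{smallmatrix}\right]$. For generic $t$ this limit already has $N+n-1$ simple eigenvalues, so Lemma~\ref{le:eig_Q}(b) applies to all of them at once. The eigenvalues of this auxiliary operator are $E(u_j)+Q\,\partial u_j/\partial t_{N+1}$; the paper then studies the two summands $E(u_j)$ and $Q\,\partial u_j/\partial t_{N+1}$ separately (as eigenvalues of two further conjugated matrices coming from Proposition~\ref{prop:Ba}) and recombines them via $u_j=E(u_j)-(n-1)\,\partial u_j/\partial t_{N+1}$, with a short case analysis to rule out fractional exponents.

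You instead attack $V(Q)$ head-on: the rescaling $W(Q)=QV(Q)$ isolates the $n-1$ large eigenvalues as simple eigenvalues of $W(0)$, and the Schur complement with respect to the lower-right block produces a function $g(\lambda,Q)$ jointly holomorphic near $Q=0$ whose roots are the bounded eigenvalues, to which the implicit function theorem applies. This is more self-contained --- it does not use the auxiliary fact that the $\partial u_j/\partial t_a$ are eigenvalues of $\widetilde\Omega_a$ --- and the Schur-complement step is precisely what upgrades the Puiseux expansions of Lemma~\ref{le:eig_Q}(a) to honest power series for the bounded group. Both arguments ultimately hinge on the same semisimplicity input: the distinctness of the $u^X_j(t,q)$ for generic $t$. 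Your verification that $B(\lambda I-D)^{-1}C$ is jointly holomorphic (writing it as $B\cdot\big[Q^{-1}(\lambda I-D)^{-1}\big]\cdot\big[QC\big]$, a product of three holomorphic factors) is the one point worth spelling out in full, and you have identified it correctly.
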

\begin{proof}
Let us apply the above lemma to the matrix of the linear operator
\begin{equation}\label{aux_euler}
\sum_{a=2}^N (1-\operatorname{deg}\phi_a)
t_a \widetilde{\Omega}_a(t,q,Q) +
\sum_{j=1}^r \rho_j \widetilde{\Omega}_{j+1}(t,q,Q)+
Q\widetilde{\Omega}_{N+1}(t,q,Q)
\end{equation}
with respect to the basis $Q^{-\Delta }\phi_i$ ($1\leq i\leq N+n-1$). Recalling Proposition \ref{prop:Ba}, we get that the entries of the matrix of the operator \eqref{aux_euler} are holomorphic at $Q=0$ and that its specialization to $Q=0$ has the form
\[
\begin{bmatrix}
E\bullet_{t,q} & 0 \\
* & \epsilon
\end{bmatrix}.
\]
The eigenvalues of the above matrix are the canonical coordinates $u_i^X(t,q)$ $(1\leq i\leq N)$ of the quantum cohomology of $X$ and the solutions $v_k$ ($1\leq k\leq n-1$) of the equation $\lambda^{n-1}=(-1)^n$. Note that for a generic choice of $t$ the eigenvalues are pairwise distinct. On the other hand, the canonical vector fields \smash{$\frac{\partial}{\partial u_j}$} ($1\leq j\leq N+n-1$) form an eigenbasis for the operator \eqref{aux_euler}. Let us enumerate the canonical coordinates in such a way that the eigenvalues corresponding to \smash{$\frac{\partial}{\partial u_j}$} for~$1\leq j\leq N$ and $j=N+k$ with $1\leq k\leq n-1$ are respectively $u_j^X(t,q)+O(Q)$ and $v_k+O(Q)$. Recall that the eigenvalues of the operators \smash{$\widetilde{\Omega}_a(t,q,Q)$} are \smash{$\tfrac{\partial u_j}{\partial t_a}(t,q,Q)$} ($1\leq j\leq N+n-1$). Recalling Lemma \ref{le:eig_Q}\,(b), we get that the functions
\[
E(u_j) + Q\frac{\partial u_j}{\partial t_{N+1}}, \qquad 1\leq j\leq N+n-1
\]
are holomorphic at $Q=0$, where
$E:=
\sum_{a=2}^N (1-\operatorname{deg}(\phi_a)) t_a \partial/\partial {t_a} +
\sum_{j=1}^r \rho_j \partial/\partial t_{j+1}$. Moreover, the restriction to $Q=0$ satisfies
\[
\left.
\left(
E(u_j) + Q\frac{\partial u_j}{\partial t_{N+1}}
\right)
\right|_{Q=0} =
\begin{cases}
u_j^X(t,q) & \mbox{if } 1\leq j\leq N,\\
v_k & \mbox{if } j=N+k.
\end{cases}
\]
On the other hand, note that $E(u_j)$ are the eigenvalues of the matrix
\[
\sum_{a=2}^N (1-\operatorname{deg}\phi_a)
t_a \widetilde{\Omega}_a(t,q,Q) +
\sum_{j=1}^r \rho_j \widetilde{\Omega}_{j+1}(t,q,Q)
\]
and that the restriction of the above matrix at $Q=0$ is
\[
\begin{bmatrix}
E\bullet_{t,q} & 0 \\
* & 0
\end{bmatrix}.
\]
Recalling Lemma \ref{le:eig_Q}, we get that $N$ of the eigenvalues $E(u_j)$ ($1\leq j\leq N+n-1$) are holomorphic at $Q=0$ and have the form $u_i^X(t,q)+O(Q)$ ($1\leq i\leq N$), while the remaining $n-1$ ones have order~$O(Q^\alpha)$ for some rational number $\alpha>0$. Similarly, by applying Lemma \ref{le:eig_Q} to the matrix~\smash{$Q\widetilde{\Omega}_{N+1}$}, we get that its eigenvalues \smash{$Q\frac{\partial u_j}{\partial t_{N+1}}$} split into two groups. The first group consist of $n-1$ functions holomorphic at $Q=0$ with an expansion of the form $v_k + O(Q)$, while the second group consist of $N$ functions that have an expansion in possibly fractional powers of~$Q$ of order $O\big(Q^\beta\big)$ for some $\beta>0$. Let $(t,q)$ be generic, such that, the canonical coordinates~${u_i^X(t,q)}$~($1\leq i\leq N$) are pairwise distinct and non-zero. Then for every $1\leq j\leq N+n-1$, the sum \smash{$E(u_j)+ Q\frac{\partial u_j}{\partial t_{N+1}}\neq 0$}. Therefore, the two numbers $E(u_j)$ and \smash{$Q\frac{\partial u_j}{\partial t_{N+1}}$} can not be vanishing at $Q=0$, that is, either $E(u_j)$ is holomorphic at $Q=0$ of the form $u_i^X(t,q)+O(Q)$ or \smash{$Q\frac{\partial u_j}{\partial t_{N+1}}$} is holomorphic at $Q=0$ of the form $v_k+O(Q)$. In the first case, since $E(u_j)$ is holomorphic at $Q=0$ and the sum \smash{$E(u_j)+ Q\frac{\partial u_j}{\partial t_{N+1}}$} is also holomorphic at $Q=0$, we get that~\smash{$Q\frac{\partial u_j}{\partial t_{N+1}}$} is holomorphic at $Q=0$. Similarly, the holomorphicity of \smash{$Q\frac{\partial u_j}{\partial t_{N+1}}$} implies that~$E(u_j)$ is holomorphic. Therefore, $E(u_j)$ and \smash{$Q\frac{\partial u_j}{\partial t_{N+1}}$} are holomorphic at $Q=0$ for all $j$. In particular, the numbers $\alpha$ and $\beta$ must be integral. Note that since $E(u_j)$ and \smash{$Q\frac{\partial u_j}{\partial t_{N+1}}$} can not vanish simultaneously at $Q=0$, we get that for every $1\leq j\leq N+n-1$ either
\begin{align*}
E(u_j) & = u_i^X(t,q) + O(Q), \qquad
Q\frac{\partial u_j}{\partial t_{N+1}} = O(Q)
\end{align*}
for some $i$ or
\begin{align*}
E(u_j) & = O(Q),\qquad
Q\frac{\partial u_j}{\partial t_{N+1}}= v_k+ O(Q)
\end{align*}
for some $k$. In the first case, we will get that
\[
u_j(t,q,Q)= E(u_j)-(n-1) \frac{\partial u_j}{\partial t_{N+1}} \in \CC\{Q\},
\]
while in the second case
\[
u_j(t,q,Q)= E(u_j)-(n-1) \frac{\partial u_j}{\partial t_{N+1}}= -(n-1) v_k Q^{-1} + O(1).\tag*{\qed}
\] \renewcommand{\qed}{}
\end{proof}

\subsection[Twisted periods of P\^\{n-1\}]{Twisted periods of $\boldsymbol{\PP^{n-1}}$}
\label{sec:tw_periods}
Let us recall the reduced cohomology $\widetilde{H}(E)$ of the
exceptional divisor. It has a basis given by $e^i$ ($1\leq i\leq
n-1$). The Poincar\'e pairing on $H(\operatorname{Bl}(X))$ induces a non-degenerate pairing on $\widetilde{H}(E)$:
\[
\big(e^i,e^j\big) = (-1)^{n-1} \delta_{i+j,n},\qquad 1\leq i,j\leq n-1.
\]
The {\em twisted periods} will be multi-valued analytic functions with values in $\widetilde{H}(E)$.
Let us define the following linear operators on $\widetilde{H}(E)$:
\begin{align*}
\leftexp{tw}{\theta} \big(e^i\big) & := \left(\frac{n}{2} -i\right) e^i,\qquad
\leftexp{tw}{\rho} \big(e^i\big) :=
 \begin{cases}
 -(n-1) e^{i+1} & \mbox{if } 1\leq i<n-1, \\
 0 & \mbox{if } i=n-1.
 \end{cases}
\end{align*}
Let us define first the calibrated twisted periods:
\[
\leftexp{ tw}{\widetilde{I}}^{(-m)}_\beta(\lambda) =
e^{\leftexp{ tw}{\rho} \partial_\lambda \partial_m}
\left(
 \frac{
 \lambda^{ \leftexp{tw}{\theta}+m-1/2}}{
 \Gamma\big(\leftexp{tw}{\theta}+m+1/2\big)}
 \right)
 \beta,\qquad \beta\in \widetilde{H}(E),
\]
 and the twisted calibration
 \[
 \leftexp{tw}{S}(Q,z) =
\sum_{k=0}^\infty
\leftexp{tw}{S}_k(Q) z^{-k}
 \in \operatorname{End}\big(\widetilde{H}(E)\big)\big[\big[z^{-1}\big]\big],
 \]
 where $\leftexp{ tw}{S}_0(Q)=1$ and
 \[
 \big(\leftexp{tw}{S}_k(Q) e^i,e^j\big) =
 \sum_{d=0}^\infty
 \big\langle e^i\psi^{k-1},e^j\big\rangle_{0,2,d\ell} Q^{-d(n-1)},\qquad
 1\leq i,j\leq n-1.
 \]
Note that in the above sum only one value of $d$ contributes, because the degree of the cohomology class in the correlator, that is, $k-1+i+j$ must be equal to the dimension of the virtual fundamental cycle of $\overline{\mathcal{M}}_{0,2}(\operatorname{Bl}(X),d\ell) $ which is $(n-1)(d+1)$ and we get $d(n-1) = k+i+j-n$.
 The twisted periods are defined by
 \[
 \leftexp{tw}{I}^{(-m)}_\beta(Q,\lambda) :=
 \sum_{l=0}^\infty
\leftexp{tw}{S}_l(Q) (-\partial_\lambda)^l\
 \leftexp{tw}{\widetilde{I}}^{(-m)}_\beta(\lambda),\qquad
 \beta\in \widetilde{H}(E).
 \]
The twisted periods satisfy a system of ODEs with respect to $Q$ and
$\lambda$. Let us derive these differential equations.
\begin{Lemma}\label{le:de-tw-calibr-period}
We have
\[
\big(\lambda-\leftexp{tw}{\rho}\big)\partial_\lambda
\leftexp{tw}{\widetilde{I}_\beta^{(-m)}}(\lambda)=
\left(
\leftexp{tw}{\theta}+m-\frac{1}{2}\right)
\leftexp{tw}{\widetilde{I}_\beta^{(-m)}}(\lambda).
\]
\end{Lemma}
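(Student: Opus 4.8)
The plan is to verify the identity by expanding the exponential. Since $\leftexp{tw}{\rho}$ is nilpotent (explicitly $\leftexp{tw}{\rho}^{\,n-1}=0$), the operator $e^{\leftexp{tw}{\rho}\partial_\lambda\partial_m}$ is a finite sum and all the manipulations below are legitimate. Write $B(\lambda,m):=\lambda^{\leftexp{tw}{\theta}+m-1/2}/\Gamma\big(\leftexp{tw}{\theta}+m+1/2\big)$, so that $\leftexp{tw}{\widetilde I}^{(-m)}_\beta(\lambda)=G(\lambda,m)\beta$ with $G:=e^{\leftexp{tw}{\rho}\partial_\lambda\partial_m}B=\sum_{k\ge0}\tfrac1{k!}\leftexp{tw}{\rho}^k\partial_\lambda^k\partial_m^kB$; it suffices to prove the $\op{End}\big(\widetilde H(E)\big)$-valued identity $(\lambda-\leftexp{tw}{\rho})\partial_\lambda G=\big(\leftexp{tw}{\theta}+m-\tfrac12\big)G$. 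Two structural inputs are needed: the commutation relation $\big[\leftexp{tw}{\theta},\leftexp{tw}{\rho}\big]=-\leftexp{tw}{\rho}$, which is immediate from the definitions since both sides send $e^i\mapsto(n-1)e^{i+1}$ for $1\le i<n-1$ and $e^{n-1}\mapsto0$ (equivalently $\leftexp{tw}{\rho}^k\,f\big(\leftexp{tw}{\theta}\big)=f\big(\leftexp{tw}{\theta}+k\big)\,\leftexp{tw}{\rho}^k$ for any function $f$); and the scalar-type ODE $\lambda\,\partial_\lambda B=\big(\leftexp{tw}{\theta}+m-\tfrac12\big)B$, which holds because $\Gamma$ is evaluated on the operator $\leftexp{tw}{\theta}$ (which commutes with $B$) and $\lambda\partial_\lambda\lambda^a=a\lambda^a$.

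The main step is to compute $\lambda\,\partial_\lambda G$ termwise. Using $\big[\partial_\lambda^k,\lambda\big]=k\partial_\lambda^{k-1}$ one has $\lambda\partial_\lambda^{k+1}=\partial_\lambda^k(\lambda\partial_\lambda)-k\partial_\lambda^k$; feeding in the scalar ODE $\lambda\partial_\lambda B=\big(\leftexp{tw}{\theta}+m-\tfrac12\big)B$, the Leibniz identity $\partial_m^k\big((\leftexp{tw}{\theta}+m-\tfrac12)B\big)=\big(\leftexp{tw}{\theta}+m-\tfrac12\big)\partial_m^kB+k\,\partial_m^{k-1}B$, and the fact that $\leftexp{tw}{\theta}$ commutes with $\partial_\lambda$, one obtains for each $k\ge0$
\[
\lambda\,\partial_\lambda^{k+1}\partial_m^kB=\big(\leftexp{tw}{\theta}+m-\tfrac12\big)\partial_\lambda^k\partial_m^kB+k\,\partial_\lambda^k\partial_m^{k-1}B-k\,\partial_\lambda^k\partial_m^kB.
\]
Multiplying by $\tfrac1{k!}\leftexp{tw}{\rho}^k$, summing over $k$, and moving $\leftexp{tw}{\rho}^k$ past $\big(\leftexp{tw}{\theta}+m-\tfrac12\big)$ via $\leftexp{tw}{\rho}^k\big(\leftexp{tw}{\theta}+m-\tfrac12\big)=\big(\leftexp{tw}{\theta}+m-\tfrac12\big)\leftexp{tw}{\rho}^k+k\,\leftexp{tw}{\rho}^k$: the first and third terms combine (the $+k$ produced by the commutator cancels the $-k$), leaving $\big(\leftexp{tw}{\theta}+m-\tfrac12\big)G$; the middle term, after reindexing $k\mapsto k+1$, becomes $\leftexp{tw}{\rho}\,\partial_\lambda G$. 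Hence $\lambda\,\partial_\lambda G=\big(\leftexp{tw}{\theta}+m-\tfrac12\big)G+\leftexp{tw}{\rho}\,\partial_\lambda G$, which rearranges to $(\lambda-\leftexp{tw}{\rho})\partial_\lambda G=\big(\leftexp{tw}{\theta}+m-\tfrac12\big)G$; applying both sides to $\beta$ yields the lemma.

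There is no genuine obstacle here — the argument is a routine computation of the same flavour as the proof of Lemma~\ref{le:cp-homog} — and the only point demanding care is bookkeeping the non-commutativity of $\leftexp{tw}{\rho}$ with functions of $\leftexp{tw}{\theta}$, which is controlled entirely by $\big[\leftexp{tw}{\theta},\leftexp{tw}{\rho}\big]=-\leftexp{tw}{\rho}$. Conceptually, the statement is nothing but the specialization of equation~\eqref{2nd_str_conn:2} (with $m$ replaced by $-m$) to the ``model'' second structure connection in which $E\bullet$ is replaced by the nilpotent operator $\leftexp{tw}{\rho}$ and $\theta$ by $\leftexp{tw}{\theta}$; thus the lemma records that $\leftexp{tw}{\widetilde I}^{(-m)}_\beta$ is the calibration-free part of a solution of the twisted second structure connection, before the twisted calibration $\leftexp{tw}{S}$ is applied.
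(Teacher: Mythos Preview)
Your proof is correct. The paper does not give a proof of this lemma at all --- it simply states ``The proof is straightforward and it is left as an exercise'' --- so your direct termwise computation using the commutation relation $\big[\leftexp{tw}{\theta},\leftexp{tw}{\rho}\big]=-\leftexp{tw}{\rho}$ and the scalar ODE for $B$ is exactly the kind of verification the authors had in mind.
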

The proof is straightforward and it is left as an exercise.
\begin{Lemma}\label{le:homog-tw-caibr}
We have
\[
Q\partial_Q \leftexp{tw}{S}_l +
\leftexp{tw}{\theta} \leftexp{tw}{S}_l -
\leftexp{tw}{S}_l \leftexp{tw}{\theta} = -l \leftexp{tw}{S}_l.
\]
\end{Lemma}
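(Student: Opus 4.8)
The plan is to prove the identity entrywise with respect to the Poincar\'e pairing on $\widetilde{H}(E)$, exploiting the fact that each matrix coefficient of $\leftexp{tw}{S}_l(Q)$ is a single monomial in $Q$ whose degree is forced by the dimension axiom.

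First I would record two elementary facts. Since $\big(e^i,e^j\big)\neq 0$ only when $i+j=n$, and then $\big(\tfrac n2-i\big)+\big(\tfrac n2-j\big)=0$, the operator $\leftexp{tw}{\theta}$ is skew-symmetric for the pairing, i.e.\ $\big(\leftexp{tw}{\theta}a,b\big)=-\big(a,\leftexp{tw}{\theta}b\big)$ for all $a,b\in\widetilde{H}(E)$. Next, as already observed in Section~\ref{sec:tw_periods}, the only $d$ contributing to $\big\langle e^i\psi^{l-1},e^j\big\rangle_{0,2,d\ell}$ satisfies $d(n-1)=l+i+j-n$; together with the Novikov convention $q_{r+1}=Q^{n-1}$ this yields
\[
\big(\leftexp{tw}{S}_l(Q)e^i,e^j\big)=\big\langle e^i\psi^{l-1},e^j\big\rangle_{0,2,d\ell}\,Q^{n-l-i-j},
\]
a monomial of $Q$-degree $n-l-i-j$ (the coefficient being understood as $0$ unless $d$ is a non-negative integer). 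Hence $Q\partial_Q\big(\leftexp{tw}{S}_l(Q)e^i,e^j\big)=(n-l-i-j)\big(\leftexp{tw}{S}_l(Q)e^i,e^j\big)$.

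Then I would compute the commutator term, using skew-symmetry of $\leftexp{tw}{\theta}$ and $\leftexp{tw}{\theta}(e^i)=\big(\tfrac n2-i\big)e^i$:
\begin{align*}
\big(\big(\leftexp{tw}{\theta}\leftexp{tw}{S}_l-\leftexp{tw}{S}_l\leftexp{tw}{\theta}\big)e^i,e^j\big)
&=-\big(\leftexp{tw}{S}_l e^i,\leftexp{tw}{\theta}e^j\big)-\Big(\tfrac n2-i\Big)\big(\leftexp{tw}{S}_l e^i,e^j\big)\\
&=-(n-i-j)\big(\leftexp{tw}{S}_l e^i,e^j\big).
\end{align*}
Adding this to $Q\partial_Q\big(\leftexp{tw}{S}_l(Q)e^i,e^j\big)=(n-l-i-j)\big(\leftexp{tw}{S}_l(Q)e^i,e^j\big)$ produces exactly $-l\,\big(\leftexp{tw}{S}_l(Q)e^i,e^j\big)$ for every $i,j$, and non-degeneracy of the pairing on $\widetilde{H}(E)$ upgrades this to the claimed operator identity.

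There is essentially no genuine obstacle; the only point requiring care is the bookkeeping of the $Q$-exponent, namely that $e\cdot\ell=-1$ and the virtual dimension $(n-1)(d+1)$ of $\overline{\M}_{0,2}(\operatorname{Bl}(X),d\ell)$ together force $\big(\leftexp{tw}{S}_l(Q)e^i,e^j\big)$ to be $Q$-homogeneous of degree $n-l-i-j$. Alternatively, the lemma can be read off from a $z$-homogeneity property of the full twisted calibration $\leftexp{tw}{S}(Q,z)$ --- an analogue of Dubrovin's grading equation for $S(t,q,z)$ --- but the entrywise computation above is shorter and self-contained.
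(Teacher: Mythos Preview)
Your proof is correct and follows essentially the same route as the paper's own argument: both compute the identity entrywise via the Poincar\'e pairing, use skew-symmetry of $\leftexp{tw}{\theta}$ to reduce the commutator to the scalar $-(n-i-j)$, and then invoke the $Q$-homogeneity $\big(\leftexp{tw}{S}_l e^i,e^j\big)\propto Q^{n-l-i-j}$ coming from the dimension constraint $d(n-1)=l+i+j-n$. The only cosmetic difference is that you spell out skew-symmetry and non-degeneracy explicitly, whereas the paper leaves these as understood.
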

\begin{proof}
Let us apply the operator on the left-hand side to $e^i$ and compute the pairing
with $e^j$ for an arbitrary $1\leq i,j\leq n-1$. We get
\[
Q\partial_Q \big(\leftexp{tw}{S}_l e^i,e^j\big)+
\big(\leftexp{tw}{\theta} \leftexp{tw}{S}_l e^i,e^j\big)-
\big(\leftexp{tw}{S}_l \leftexp{tw}{\theta} e^i, e^j\big).
\]
Since $\leftexp{tw}{\theta} e^i= \left(\frac{n}{2}-i\right) e^i$ and
$\leftexp{tw}{\theta} $ is skew-symmetric with respect to the pairing
$(\ ,\ )$, the above expression becomes
\[
Q\partial_Q \big(\leftexp{tw}{S}_l e^i,e^j\big) -(n-i-j) \big(\leftexp{tw}{S}_l e^i,e^j\big).
\]
We saw above that the expression $\big(\leftexp{tw}{S}_l e^i,e^j\big)$ is
proportional to $Q^{-d(n-1)}$ where $d(n-1)=l+i+j-n$. Therefore, the
above expression becomes $-l \big(\leftexp{tw}{S}_l e^i,e^j\big)$.
\end{proof}

In order to state the next result we need to introduce the linear
operator
\[
e\bullet_{tw}\colon\ \widetilde{H}(E)\to \widetilde{H}(E),\qquad
e^i\mapsto e\bullet_{tw} e^i,
\]
where the quantum product is defined by
\[
\big(e\bullet_{tw} e^i,e^j\big) =
\sum_{d=0}^\infty
\big\langle e, e^i, e^j\big\rangle_{0,3,d\ell}
Q^{-d(n-1)}.
\]
For dimensional reasons, that is, $1+i+j= n+d(n-1)$, we get that the
contributions to the quantum product could be non-trivial only in
degree $d=0$ and $d=1$. Recalling our computations from Section~\ref{sec:proj-tw-GW}, we get the following formulas:
\[
e\bullet_{tw} e^i =
\begin{cases}
e^{i+1} & \mbox{if } 1\leq i\leq n-2, \\
(-1)^n Q^{-(n-1)} e & \mbox{if } i=n-1.
\end{cases}
\]
In other words, the matrix of $e\bullet_{tw}$ with respect to the
basis $e,e^2,\dots,e^{n-1}$ is
\[
e\bullet_{tw} =
\begin{bmatrix}
0 & 0 & \cdots & 0 & (-1)^n Q^{-(n-1)} \\
1 & 0 & \cdots & 0 & 0 \\
\vdots & \vdots &\vdots & & \vdots \\
0 & 0 & \cdots & 1 & 0
\end{bmatrix}.
\]
\begin{Lemma}\label{le:tw-period-div_eq}
We have
\[
Q\partial_Q
\leftexp{tw}{S}_l = (n-1) e\bullet_{tw}
\leftexp{tw}{S}_{l-1}+
\leftexp{tw}{S}_{l-1} \
\leftexp{tw}{\rho}, \qquad \forall l\geq 1.
\]
\end{Lemma}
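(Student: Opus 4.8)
The plan is not to redo a Gromov--Witten computation from scratch, but to deduce the identity from the divisor equations for the calibration of $\operatorname{Bl}(X)$ already recorded in Section~\ref{sec:rv_qdep}. Recall from there that the full calibration $S(t,q,Q,z)$ of $\operatorname{Bl}(X)$ satisfies $z\,\partial_{\tau_{r+1}}S=(e\bullet)\,S$ and $zq_{r+1}\partial_{q_{r+1}}S=z\,\partial_{\tau_{r+1}}S-S\,(e\cup)$, where $\tau_{r+1}=t_{N+1}$ is the coordinate dual to $p_{r+1}=e$, and $e\bullet=e\bullet_{t,q,Q}$ is quantum multiplication on $\operatorname{Bl}(X)$. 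Since $q_{r+1}=Q^{n-1}$, so that $q_{r+1}\partial_{q_{r+1}}=\tfrac1{n-1}Q\partial_Q$, combining the two identities gives
\[
\tfrac{z}{n-1}\,Q\partial_Q\,S=(e\bullet)\,S-S\,(e\cup).
\]
Writing $S=1+\sum_{k\ge1}S_k z^{-k}$ and extracting the coefficient of $z^{-l}$ yields the operator identity $Q\partial_Q S_l=(n-1)\big((e\bullet)S_{l-1}-S_{l-1}(e\cup)\big)$ on $H^*(\operatorname{Bl}(X))$, valid for all $l\ge1$ (with $S_0=1$). This already has the exact shape of the claimed formula.

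Next I would restrict to $t\in\widetilde H(X)$, pair with the basis vectors $e^i,e^j$ $(1\le i,j\le n-1)$, and keep only the part of degree $0$ in the Novikov variables $q_1,\dots,q_r$ (retaining all powers of $Q$). Because $\operatorname{Eff}(\operatorname{Bl}(X))\subset\operatorname{Eff}(X)\oplus\mathbb Z\ell$ and $\operatorname{Eff}(X)$ has no units, the degree-$0$ part of a product is the product of the degree-$0$ parts, so the restriction is compatible with composition. Moreover, with $t|_E=0$ every insertion $\phi_b$ $(2\le b\le N)$ restricts to $0$ on $E$, hence any Gromov--Witten invariant of $\operatorname{Bl}(X)$ of curve class $d\ell$ with such an insertion vanishes; consequently the degree-$0$ parts of $(S_l\,\cdot\,,\cdot)$, of the $\widetilde H(E)\!\to\!\widetilde H(E)$ block of $e\bullet$, and of the corresponding block of $S_{l-1}$, are exactly $(\leftexp{tw}{S}_l\,\cdot\,,\cdot)$, $e\bullet_{tw}$, and $\leftexp{tw}{S}_{l-1}$, respectively. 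On the left-hand side the restriction produces $Q\partial_Q(\leftexp{tw}{S}_l e^i,e^j)$.

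For the term $(n-1)(e\bullet)S_{l-1}$: writing the degree-$0$ part of $S_{l-1}e^i$ as $w_b+\leftexp{tw}{S}_{l-1}e^i$ with $w_b\in H^*(X)$, the contribution of $w_b$ drops out after pairing with $e^j$, since $\big((e\bullet)_{|\beta=0}\phi_\mu,e^j\big)=\sum_d\langle e,\phi_\mu,e^j\rangle_{0,3,d\ell}Q^{-d(n-1)}$ vanishes for every $\mu$ — by $\phi_\mu|_E=0$ when $\mu\ne1$, and by the string equation when $\phi_\mu=\mathbf 1$. Hence this term contributes $(n-1)\big(e\bullet_{tw}\leftexp{tw}{S}_{l-1}e^i,e^j\big)$. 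For the term $-(n-1)S_{l-1}(e\cup)$: if $i\le n-2$ then $e\cup e^i=e^{i+1}\in\widetilde H(E)$, which matches $\leftexp{tw}{\rho}e^i=-(n-1)e^{i+1}$; if $i=n-1$ then $e\cup e^{n-1}=(-1)^{n-1}\phi_N$, but the degree-$0$ part of $(S_{l-1}\phi_N,e^j)$ is $(-1)^{n-1}\sum_d\langle\phi_N\psi^{l-2},e^j\rangle_{0,2,d\ell}Q^{-d(n-1)}=0$, because $\phi_N|_E=e^n|_E=(-p)^n=0$ (orthogonality of $H^*(X)$ and $\widetilde H(E)$, Lemma~\ref{le:pp}), which matches $\leftexp{tw}{\rho}e^{n-1}=0$. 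Thus this term contributes $(\leftexp{tw}{S}_{l-1}\leftexp{tw}{\rho}e^i,e^j)$, and assembling the three pieces gives the asserted formula for all $l\ge1$ (in particular the case $l=1$, where $\leftexp{tw}{S}_0=1$, is not special).

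The only genuinely delicate point — and the part I would write out in full detail — is this restriction bookkeeping: verifying that the various $H^*(X)$-components that unavoidably appear (from $e^n=(-1)^{n-1}\phi_N$ in $e\cup e^{n-1}$, and from the fact that $e\bullet$ and $S_{l-1}$ do not preserve $\widetilde H(E)$) are annihilated once paired against $\widetilde H(E)$. This is exactly what the combination of Lemma~\ref{le:pp} (block-diagonality of the Poincar\'e pairing) and the dependence of class-$d\ell$ invariants only on restrictions to $E$ provides; everything else is a routine unwinding of the definitions of $\leftexp{tw}{S}$, $e\bullet_{tw}$ and $\leftexp{tw}{\rho}$.
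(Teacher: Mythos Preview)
Your route is close in spirit to the paper's: both arguments are instances of ``divisor equation plus topological recursion'' for the calibration. The paper applies these identities directly to the correlators $\langle e, e^i\psi^l, e^j\rangle_{0,3,d\ell}$ defining $\leftexp{tw}{S}$, while you first package them as the quantum differential equation for the full $S$ of $\operatorname{Bl}(X)$ (already recorded in Section~\ref{sec:rv_qdep}) and then restrict to $\widetilde{H}(E)$ and to degree~$0$ in $q_1,\dots,q_r$. The paper's route is shorter precisely because it sidesteps most of your restriction bookkeeping; your route is more conceptual but pays for it with that bookkeeping.

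There is one actual error in the bookkeeping. Your claim that $\big((e\bullet)_{|\beta=0}\,\phi_\mu,e^j\big)$ vanishes for $\phi_\mu=\mathbf 1$ ``by the string equation'' is false: the string equation does not apply to $\langle e,1,e^j\rangle_{0,3,0}$ (here $g=0$, $\beta=0$, and the remaining two-point moduli is unstable), and indeed that invariant equals $\int_{\operatorname{Bl}(X)} e^{j+1}=(-1)^{n-1}\delta_{j,n-1}\neq 0$. Fortunately this case never occurs: the $H^*(X)$-component $w_b$ of the degree-$0$ part of $S_{l-1}e^i$ lies in $\CC\,\phi_N$, because $(S_{l-1}e^i,\phi^a)|_{q^0}=\sum_d\langle e^i\psi^{l-2},\phi^a\rangle_{0,2,d\ell}Q^{-d(n-1)}$ vanishes unless $\phi^a|_E\neq 0$, which forces $\phi^a=\mathbf 1=\phi^N$. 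For $\phi_N$ your own argument via $\phi_N|_E=0$ (and, for $d=0$, $\phi_N\cup e=0$) goes through. So the conclusion stands once you replace the bad string-equation step by this observation.
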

\begin{proof}
The lemma is an easy consequence of the divisor equation and the
topological recursion relations for the GW invariants of the blowup $\operatorname{Bl}(X)$. We have, by the divisor equation,
\[
\big\langle e, e^i \psi^l, e^j\big\rangle_{0,3,d\ell} =
-d \big\langle e^i \psi^l, e^j\big\rangle_{0,2,d\ell} +
\big\langle e\cup e^{i} \psi^{l-1}, e^j\big\rangle_{0,3,d\ell}.
\]
The left-hand side, according to the topological recursion relations is equal to
\[
\sum_{d'+d''=d}
\sum_{k=1}^{n-1}
\big\langle e^i \psi^{l-1}, e_k\big\rangle_{0,2,d'\ell}
\big\langle e^k, e,e^j\big\rangle_{0,3,d''\ell}.
\]
Multiplying the above identity by $(n-1) Q^{-d(n-1)}$ and summing over
all $d\geq 0$, we get
\[
\big(S_l e^i, (n-1)e\bullet_{tw} e^j\big) = Q\partial_Q \big(S_{l+1} e^i,e^j\big)+
\big(S_l (n-1)e\cup e^i, e^j\big).
\]
Note that the above expression is 0 for $i=n-1$ because
$e^n=(-1)^{n-1}\phi_N$ is a cohomology class on~$\operatorname{Bl}(X)$ whose
restriction to the exceptional divisor $E$ is $0$. Therefore, we may
replace~${(n-1)e\cup e^i}$ with $-\leftexp{tw}{\rho}(e^i)$. The lemma
follows. \end{proof}

Using Lemmas \ref{le:de-tw-calibr-period}, \ref{le:homog-tw-caibr} and~\ref{le:tw-period-div_eq}, we get that the twisted periods satisfy
the following system of differential equations
\begin{align}
\label{tw-de_1}
&(\lambda+(n-1) e\bullet_{tw}) \partial_\lambda \,
\leftexp{tw}{I}^{(-m)}_\alpha(Q,\lambda) =
\left(
\leftexp{tw}{\theta}+m-\frac{1}{2}\right)\,
\leftexp{tw}{I}^{(-m)}_\alpha(Q,\lambda), \\
\label{tw-de_2}
&Q\partial_Q \,
\leftexp{tw}{I}^{(-m)}_\alpha(Q,\lambda) =
-(n-1)e\bullet_{tw}\partial_\lambda\,
\leftexp{tw}{I}^{(-m)}_\alpha(Q,\lambda),
\end{align}
where $\alpha= Q^{\leftexp{tw}{\rho}} \beta$ with $\beta\in
\widetilde{H}(E)$ independent of $Q$ and $\lambda$. Note that the
determinant
\[
\operatorname{det}(\lambda+(n-1) e\bullet_{tw}) =
\lambda^{n-1} + \big( (n-1)Q^{-1}\big) ^{n-1}.
\]
We get that the twisted periods are multivalued analytic functions on
the complement of the hypersurface in $\CC^*\times\CC$ defined by the
equation $(Q\lambda)^{n-1}+(n-1)^{n-1}=0$.

\subsection[Periods of P\^\{n-2\}]{Periods of $\boldsymbol{\PP^{n-2}}$}
\label{sec:proj_periods}

We would like to compute the monodromy of the system of differential
equations \eqref{tw-de_1}--\eqref{tw-de_2}. We will do this by
identifying the twisted periods with the periods of $\PP^{n-2}$. To
begin with, let us recall the definition of the periods of
$\PP^{n-2}$. We have $H^*\big(\PP^{n-2}\big) = \CC[p]/p^{n-1}$, where
$p=c_1(\O(1))$ is the hyperplane class. We have an isomorphism of
vector spaces
\[
\widetilde{H}(E)\cong H\big(\PP^{n-2}\big),\qquad e^i \mapsto p^{i-1}.
\]
Note that under this isomorphism $\leftexp{tw}{\theta}$ coincides with
the grading operator $\theta_{\PP^{n-2}}$ and $\leftexp{tw}{\rho}$
coincides with $-c_1\big(T\PP^{n-2}\big)\cup$. Therefore, the calibrated
periods in the twisted GW theory of $\PP^{n-1}$ and the GW theory of
$\PP^{n-2}$ are related by \smash{$e^{\leftexp{tw}{\theta} \pi\ii }
\leftexp{tw}{ \widetilde{I}_\beta^{(-m)} } (\lambda) =
\widetilde{I}^{(-m)}_{\sigma(\beta)}(\lambda)$},
where $\sigma(\beta) := e^{\pi\ii \theta } \beta$, where~$\theta$ is
the grading operator of $\PP^{n-2}$.

Let us compare the $S$-matrices. In the GW theory of $\PP^{n-2}$, we
have
\[
S(q,z)^{-1} 1 = 1 + \sum_{d=1}^\infty
\frac{q^d}{\prod_{m=1}^d (p- m z)^{n-1}},
\]
where $q$ is the Novikov variable corresponding to $\O(1)$.
Using the divisor equation $(-z q\partial_q + p\cup) S(q,z)^{-1} =
S(q,z)^{-1} p\bullet$, where $p\bullet$ is the operator of quantum
multiplication by $p$, we get
\begin{equation}\label{S-proj}
S(q,z)^{-1} p^i = p^i + \sum_{d=1}^\infty
\frac{q^d (p-d z)^i }{\prod_{m=1}^d (p-m z)^{n-1}},
\qquad 0\leq i\leq n-2.
\end{equation}
On the other hand, the twisted $S$-matrix $\leftexp{tw}{S}(Q,z)$ can
be computed from the $S$-matrix of the blowup
$\operatorname{Bl}(\PP^n)$ of $\PP^n$ at one point
which is known explicitly. Namely, let us recall that~$\operatorname{Bl}(\PP^n)$ is the submanifold of $\PP^{n-1}\times
\PP^n$ defined by the quadratic equations $x_i y_j=x_j y_i$
($0\leq i,j\leq n-1$), where~${x=[x_0,\dots, x_{n-1}]}$ and
$y=[y_0,\dots,y_{n}]$ are the homogeneous coordinate systems on
respectively $\PP^{n-1}$ and $\PP^{n}$. We have two projection maps
$\pi_1\colon \operatorname{Bl}(\PP^n)\to \PP^{n-1}$ and
${\pi_2\colon \operatorname{Bl}(\PP^n)\to \PP^{n}}$. Note that $\pi_2$ is the
projection of the blowup --- the exceptional divisor $E$ is the fiber
over $[0,0,\dots,0,1]\in \PP^n$. Let $L_1$ and $L_2$ be the pullbacks
of the hyperplane bundles $\O(1)$ on respectively $\PP^{n-1}$ and
$\PP^{n}$. Let us denote by $\leftexp{bl}{S}(q_1,q_2,z)$ the S-matrix
in the GW theory of $\operatorname{Bl}(\PP^n)$, where $q_1$ and $q_2$
are the Novikov variables corresponding to the line bundles $L_1$ and
$L_2$. Then we have
\[
\leftexp{bl}{S}(q_1,q_2,z)^{-1} 1 =
\sum_{d_1,d_2\geq 0}
\frac{q_1^{d_1} q_2^{d_2} \prod_{m=-\infty}^0 (p_2-p_1 -mz) }{
\prod_{m=1}^{d_1} (p_1-mz)^n
\prod_{m=1}^{d_2} (p_2-mz)
\prod_{m=-\infty}^{d_2-d_1} (p_2-p_1 -mz)},
\]
where $q_1$ and $q_2$ are the Novikov variables. The degree class in
$\operatorname{Bl}(\PP^n) $ corresponding to a~pair~${(d_1,d_2)}$ is
$d_1 e_1+ d_2 e_2$, where $e_1$ is the class of a line in $E$ and
$e_2=\pi_2^{-1}($line in $\PP^n$ avoiding $[0,0,\dots,0,1])$. It can
be checked that the cohomology ring of the blowup is
\[
H(\operatorname{Bl}(\PP^n))=\CC[p_1,p_2]/
\langle p_2(p_2-p_1)=0, p_1^n=0\rangle
\]
and that $\O(E)=L_2L_1^{-1}$, that is, the Poincar\'e dual of the
exceptional divisor $E$ is $e=p_2-p_1$. In order to compute the
twisted S-matrix $\leftexp{tw}{S}$, we have to restrict
$\leftexp{bl}{S}$ to $q_2=0$ and substitute $q_1=Q^{-(n-1)}$. We get
\begin{equation}\label{blS}
\leftexp{bl}{S}\big(Q^{-(n-1)},0,z\big)^{-1} 1 = 1+
\sum_{d=1}^\infty
\frac{Q^{-d(n-1)} \prod_{m=-d+1}^0 (p_2-p_1 -mz) }{
\prod_{m=1}^{d} (p_1-mz)^n }.
\end{equation}
Note that the numerator is proportional to $p_2-p_1$. Using the
relation $p_2(p_2-p_1)=0$, we get that $p_1-mz$ can be replaced by
$p_1-p_2-mz= -e-mz$. The above formula takes the form
\begin{equation}\label{blS2}
\leftexp{bl}{S}\big(Q^{-(n-1)},0,z\big)^{-1} 1 = 1+
\sum_{d=1}^\infty
\frac{(-1)^{dn} Q^{-d(n-1)} e }{
(e+dz)^n\prod_{m=1}^{d-1} (e+mz)^{n-1} }.
\end{equation}
Using the above formula and the divisor equation
\[
\left( -\frac{1}{n-1} zQ\partial_Q+ e\cup\right)
\leftexp{bl}{S}\big(Q^{-(n-1)},0,z\big)^{-1} =
\leftexp{bl}{S}\big(Q^{-(n-1)},0,z\big)^{-1} e\bullet,
\]
whose proof is the same as the proof of Lemma~\ref{le:tw-period-div_eq}, we get
\begin{equation}\label{S-tw_proj}
\leftexp{tw}{S}(Q,z)^{-1} e^i =
e^i +
\sum_{d=1}^\infty
\frac{(-1)^{dn} Q^{-d(n-1)} e }{
(e+dz)^{n-i}\prod_{m=1}^{d-1} (e+mz)^{n-1} },\qquad 1\leq i\leq n-1,
\end{equation}
where the right-hand side should be expanded into a power series in $z^{-1}$ and
$e$ should be identified with the linear operator
\[
e\cup_{tw}\colon\ \widetilde{H}(E)\to \widetilde{H}(E),\qquad
e\cup_{tw} e^i :=
\begin{cases}
e^{i+1} & \mbox{if } 1\leq i\leq n-2,\\
0 & \mbox{if } i=n-1.
\end{cases}
\]
Comparing formulas \eqref{S-proj} and \eqref{S-tw_proj}, we get that
if we put $q=(-1)^n Q^{-(n-1)}$, then the matrices of $S(q,z)$ and
$\leftexp{tw}{S}(Q,-z)$ with respect to respectively the bases
$1,p,\dots,p^{n-2}$ and $e,e^2,\dots,e^{n-1}$ coincide. Now we are in
position to prove the following key formula.
\begin{Proposition}\label{prop:tw=proj}
Under the isomorphism
$\widetilde{H}(E)\cong H\big(\PP^{n-2}\big)$ the following identity holds:
\[
\leftexp{tw}{I}^{(-m)}_\beta(Q,\lambda) =
e^{-\pi\ii\theta}\,
I^{(-m)}_{\sigma(\beta)}\bigl(-Q^{-(n-1)},\lambda\bigr),
\]
where $\sigma=e^{\pi\ii\theta}$ and $\theta$ is the grading operator
of $\PP^{n-2}$.
\end{Proposition}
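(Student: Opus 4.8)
The plan is to reduce the identity to the two comparisons just established — the equality $e^{\pi\ii\leftexp{tw}{\theta}}\leftexp{tw}{\widetilde{I}}^{(-m)}_\beta(\lambda)=\widetilde{I}^{(-m)}_{\sigma(\beta)}(\lambda)$ of calibrated building blocks, and the coincidence of the matrices of $S(q,z)$ and $\leftexp{tw}{S}(Q,-z)$ when $q_0:=(-1)^nQ^{-(n-1)}$ — together with one elementary symmetry of the $\PP^{n-2}$ calibration. Throughout I identify $\widetilde{H}(E)$ with $H^*(\PP^{n-2})$ via $e^i\mapsto p^{i-1}$, so that $\leftexp{tw}{\theta}$ becomes $\theta:=\theta_{\PP^{n-2}}$. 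Exactly as in \eqref{fundamental_period} and the discussion of Section~\ref{sec:rv_qdep}, both the twisted periods and the periods of $\PP^{n-2}$ are obtained from the respective calibrations by the substitution $z\mapsto-\partial_\lambda^{-1}$, i.e.\ $\leftexp{tw}{I}^{(-m)}_\beta(Q,\lambda)=\leftexp{tw}{S}\big(Q,-\partial_\lambda^{-1}\big)\,\leftexp{tw}{\widetilde{I}}^{(-m)}_\beta(\lambda)$ and $I^{(-m)}_a(q,\lambda)=S\big(q,-\partial_\lambda^{-1}\big)\,\widetilde{I}^{(-m)}_a(\lambda)$. Feeding in the matrix comparison $\leftexp{tw}{S}(Q,-z)=S(q_0,z)$ with $z=\partial_\lambda^{-1}$ gives $\leftexp{tw}{S}\big(Q,-\partial_\lambda^{-1}\big)=S\big(q_0,\partial_\lambda^{-1}\big)$ (the sign change on $\partial_\lambda^{-1}$ is forced by the $z\mapsto-z$ in the comparison), and feeding in the building-block comparison gives $\leftexp{tw}{\widetilde{I}}^{(-m)}_\beta(\lambda)=e^{-\pi\ii\theta}\widetilde{I}^{(-m)}_{\sigma(\beta)}(\lambda)$. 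Hence $\leftexp{tw}{I}^{(-m)}_\beta(Q,\lambda)=S\big(q_0,\partial_\lambda^{-1}\big)\,e^{-\pi\ii\theta}\,\widetilde{I}^{(-m)}_{\sigma(\beta)}(\lambda)$.

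The remaining — and essential — step is to trade $S\big(q_0,\partial_\lambda^{-1}\big)$ for $S\big(\,\cdot\,,-\partial_\lambda^{-1}\big)$ at a shifted Novikov parameter. For this I would use the explicit formula \eqref{S-proj}: replacing $z$ by $-z$ in $S(q,z)^{-1}p^i$ and extracting a sign from every factor $p-mz=-\big((-p)-mz\big)$, then comparing with \eqref{S-proj} again, shows that $S(q,-z)^{-1}=\iota\circ S\big((-1)^{n-1}q,z\big)^{-1}\circ\iota^{-1}$, where $\iota\colon p\mapsto-p$ is the ring automorphism of $\CC[p]/p^{n-1}$. Since $\iota$ agrees with $e^{\pi\ii\theta}$ up to an overall scalar, that scalar cancels under conjugation and we obtain $S(q,-z)=e^{\pi\ii\theta}\,S\big((-1)^{n-1}q,z\big)\,e^{-\pi\ii\theta}$, equivalently $S\big(q,\partial_\lambda^{-1}\big)=e^{\pi\ii\theta}\,S\big((-1)^{n-1}q,-\partial_\lambda^{-1}\big)\,e^{-\pi\ii\theta}$. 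Substituting this with $q=q_0$ into $\leftexp{tw}{I}^{(-m)}_\beta(Q,\lambda)=S\big(q_0,\partial_\lambda^{-1}\big)\,e^{-\pi\ii\theta}\,\widetilde{I}^{(-m)}_{\sigma(\beta)}(\lambda)$ and merging the two factors $e^{-\pi\ii\theta}$ leaves $\leftexp{tw}{I}^{(-m)}_\beta(Q,\lambda)=e^{\pi\ii\theta}\,S\big((-1)^{n-1}q_0,-\partial_\lambda^{-1}\big)\,e^{-2\pi\ii\theta}\,\widetilde{I}^{(-m)}_{\sigma(\beta)}(\lambda)$.

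To finish, I use two elementary facts about $H^*(\PP^{n-2})$, which has complex dimension $n-2$. Since $\theta(p^k)=\big(\tfrac{n-2}{2}-k\big)p^k$, we get $e^{-2\pi\ii\theta}=(-1)^n\op{id}$; in particular $e^{-2\pi\ii\theta}$ is central, pulls out as the scalar $(-1)^n$, and the last display becomes $(-1)^n e^{\pi\ii\theta}\,S\big((-1)^{n-1}q_0,-\partial_\lambda^{-1}\big)\,\widetilde{I}^{(-m)}_{\sigma(\beta)}(\lambda)=(-1)^n e^{\pi\ii\theta}\,I^{(-m)}_{\sigma(\beta)}\big((-1)^{n-1}q_0,\lambda\big)$. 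Finally $(-1)^{n-1}q_0=(-1)^{n-1}(-1)^nQ^{-(n-1)}=-Q^{-(n-1)}$ and $(-1)^n e^{\pi\ii\theta}=e^{-2\pi\ii\theta}e^{\pi\ii\theta}=e^{-\pi\ii\theta}$, which yields precisely $\leftexp{tw}{I}^{(-m)}_\beta(Q,\lambda)=e^{-\pi\ii\theta}\,I^{(-m)}_{\sigma(\beta)}\big(-Q^{-(n-1)},\lambda\big)$.

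I expect the genuinely delicate point to be exactly this sign bookkeeping: three independent sign flips intervene — the $z\mapsto-z$ coming from the $S$-matrix comparison, the $p\mapsto-p$ symmetry of the $\PP^{n-2}$ calibration, and the $e^{\pm\pi\ii\theta}$ attached to $\sigma$ — and one must verify that they conspire, through $e^{-2\pi\ii\theta}=(-1)^n$, to produce the precise substitution $-Q^{-(n-1)}$ (rather than $q_0$) and the precise operator $e^{-\pi\ii\theta}$ (rather than $e^{+\pi\ii\theta}$) of the statement. A completely different route, should the explicit symmetry of \eqref{S-proj} be deemed unwieldy, is to check directly that $e^{-\pi\ii\theta}I^{(-m)}_{\sigma(\beta)}\big(-Q^{-(n-1)},\lambda\big)$ solves the system \eqref{tw-de_1}--\eqref{tw-de_2}, using that $I^{(-m)}_{\sigma(\beta)}$ solves the second structure connection of $\PP^{n-2}$ and that, under $q=-Q^{-(n-1)}$ together with conjugation by $e^{-\pi\ii\theta}$, that connection becomes \eqref{tw-de_1}--\eqref{tw-de_2}; one then matches the two solutions via their leading asymptotics as $\lambda\to\infty$.
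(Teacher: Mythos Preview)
Your proof is correct and follows essentially the same route as the paper's: both arguments combine (i) the calibrated-period identification $\leftexp{tw}{\widetilde{I}}^{(-m)}_\beta=e^{-\pi\ii\theta}\widetilde{I}^{(-m)}_{\sigma(\beta)}$, (ii) the $S$-matrix comparison $\leftexp{tw}{S}(Q,-z)=S\big((-1)^nQ^{-(n-1)},z\big)$, and (iii) the $p\mapsto -p$ symmetry of the $\PP^{n-2}$ calibration, which the paper records componentwise as $e^{\pi\ii\theta}S(q,z)^{-1}p^{i-1}=e^{\pi\ii(\frac{n}{2}-i)}S\big((-1)^{n-1}q,-z\big)^{-1}p^{i-1}$ and you record as the operator identity $S(q,-z)=e^{\pi\ii\theta}S\big((-1)^{n-1}q,z\big)e^{-\pi\ii\theta}$. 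The only stylistic difference is that the paper unpacks $\leftexp{tw}{I}^{(-m)}_\beta$ via a residue/dual-basis expansion and tracks the sign $e^{\pi\ii(\frac{n}{2}-i)}$ through the basis vectors $\sigma^{-1}(p^{n-1-i})$, whereas you stay at the operator level and absorb that same sign into the scalar $e^{-2\pi\ii\theta}=(-1)^n$; the bookkeeping is identical.
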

\begin{proof}
By definition,
\[
\leftexp{tw}{I}^{(-m)}_\beta(Q,\lambda) =
\sum_{l\in \ZZ} \sum_{i=1}^{n-1}
\operatorname{Res} dz z^{l-1} (-\partial_\lambda)^l
\big(
\leftexp{tw}{\widetilde{I}}^{(-m)}_\beta(\lambda),
\leftexp{tw}{S}(Q,-z)^{-1} e^i
\big) e_i,
\]
where the residue is defined formally as the coefficient in front of
$dz/z$. Under the isomorphism $\widetilde{H}(E)\cong H(\PP^{n-2})$ the
period
\begin{gather*}
\leftexp{tw}{\widetilde{I}}^{(-m)}_\beta(\lambda) =
e^{-\pi \ii \theta} \widetilde{I}^{(-m)}_{\sigma(\beta)}(\lambda),
\qquad
\leftexp{tw}{S}(Q,-z)^{-1} e^i = S\big((-1)^n Q^{-(n-1)}, z\big)^{-1} p^{i-1},
\end{gather*}
and $e_i= (-1)^{n-1} p^{n-1-i}$. Note that the Poincar\'e pairing on
$H\big(\PP^{n-2}\big)$ differs from the pairing on~\smash{$\widetilde{H}(E)$} by the
sign $(-1)^{n-1}$. The above formula for the period takes the form
\begin{align*}
\leftexp{tw}{I}^{(-m)}_\beta(Q,\lambda) ={}&
\sum_{l\in \ZZ} \sum_{i=1}^{n-1}
\operatorname{Res} dz z^{l-1} (-\partial_\lambda)^l\\
&\times
\big(
 e^{-\pi \ii \theta} \widetilde{I}^{(-m)}_{\sigma(\beta)}(\lambda),
S\big((-1)^n Q^{-(n-1)}, z\big)^{-1} p^{i-1}
\big) p^{n-1-i}.
\end{align*}
Since $e^{\pi \ii \theta} p =-p e^{\pi\ii\theta}$, using formula
\eqref{S-proj}, we get
\[
e^{\pi\ii\theta} S(q,z)^{-1} p^{i-1} =
e^{\pi\ii (\frac{n}{2}-i )}
S\big((-1)^{n-1}q,-z\big)^{-1} p^{i-1}.
\]
The formula for the period takes the form
\begin{align*}
\leftexp{tw}{I}^{(-m)}_\beta(Q,\lambda) ={}&
\sum_{l\in \ZZ} \sum_{i=1}^{n-1}
\operatorname{Res} dz z^{l-1} (-\partial_\lambda)^l\\&\times
\big(
\widetilde{I}^{(-m)}_{\sigma(\beta)}(\lambda),
S\big(-Q^{-(n-1)}, -z\big)^{-1} p^{i-1}
\big) \sigma^{-1}\big(p^{n-1-i}\big),
\end{align*}
where we used that
$
\sigma^{-1}\big(p^{n-1-i}\big)=
e^{\pi\ii (\frac{n}{2}-i )} p^{n-1-i}.
$
Clearly, the right-hand side of the above identity coincides with
\smash{$\sigma^{-1}\big(I^{(-m)}_{\sigma(\beta)} \bigl(-Q^{-(n-1)},\lambda\bigr)\big)$}. The
lemma follows.
\end{proof}

\subsection[Monodromy of the twisted periods of P\^\{n-1\}]{Monodromy of the twisted periods of $\boldsymbol{\PP^{n-1}}$}

Let us describe the monodromy group of the system of differential
equations \eqref{tw-de_1}--\eqref{tw-de_2}, that is, the monodromy of
the twisted periods of $\PP^{n-1}$. According to Proposition
\ref{prop:tw=proj}, it is sufficient to recall the monodromy group for
the periods of $\PP^{n-2}$. Let us first fix $q=1$ and
$\lambda^\circ\in \RR_{>0}$ sufficiently large --- any
$\lambda^\circ>n-1$ works. The value of the period
$I^{(-m)}(q,\lambda)$ depends on the choice of a path from
$(1,\lambda^\circ)$ to $(q,\lambda)$ avoiding the discriminant
\[
\{(q,\lambda)\mid  \operatorname{det}(\lambda-(n-1)p\bullet) = 0\}.
\]
For fixed $q$, the equation of the discriminant has $n-1$ solutions
\[
u_k(q):=(n-1)\eta^{-2k} q^{1/(n-1)}, \qquad 0\leq k\leq n-2,
\]
where $\eta=e^{\pi \ii/(n-1)}$. Let us focus first on the monodromy of
the twisted periods for $q=1$. The fundamental group
\[
\pi_1(\CC\setminus{\{u_0(1),\dots,u_{n-2}(1)\}},\lambda^\circ)
\]
is a free group generated by the simple loops $\gamma_k^\circ$
corresponding to the paths $C_k^\circ$ from $\lambda^\circ$ to
$u_k(1)$ defined as follows. $C_k^\circ$ consists of two pieces. First,
an arc on the
circle with center $0$ and radius $\lambda^\circ$ starting at
$\lambda^\circ$ and rotating clockwise on angle $2\pi\ii k/(n-1)$. The
second piece is the straight line segment from $\lambda^\circ
\eta^{-2k}$ to $u_k(1)= (n-1) \eta^{-2k}$. It turns out that the
reflection vector corresponding to the simple loop $\gamma_k^\circ$ is
precisely $\Psi(\O(k))$, where $\Psi$ is the Iritani's map for the
integral structure of the quantum cohomology of $\PP^{n-2}$ (see
formula \eqref{psi}), that is,
\[
\Psi(\O(k)) = (2\pi)^{\frac{3-n}{2}} \Gamma(1+p)^{n-1} e^{2\pi\ii k p}.
\]
If $q\in \CC^*:=\CC\setminus{\{0\}}$ is arbitrary, then we construct
the path $\big(q,\lambda^\circ q^{1/(n-1)}\big)$ by letting $q$ vary
continuously along some reference path. This path allows us to
determine the value of \smash{$I^{(-m)}_\alpha(q,\lambda)$} at
$\lambda=\lambda^\circ q^{1/(n-1)}$ which we declare to be the base
point of $\CC\setminus{\{u_0(q),\dots,u_{n-2}(q)\}}$. Let
$\gamma_k(q)$ be the simple loop obtained from $\gamma_k^\circ$ by
rescaling $\lambda\in \gamma_k^\circ\mapsto \lambda q^{1/(n-1)}$. The
reflection vectors corresponding to $\gamma_k(q)$ are precisely
\[
\Psi_q(\O(k)) =
(2\pi)^{\frac{3-n}{2}} \Gamma(1+p)^{n-1} q^{-p} e^{2\pi \ii k p}.
\]
The proof of the above facts in the case of $\PP^2$ (that is $n=4$)
can be found in \cite{Milanov:p2}. In general, the argument is
straightforward to generalize. Now let us apply the above construction
and Proposition \ref{prop:tw=proj} in order to describe the monodromy
of the twisted periods of $\PP^{n-1}$. We have~${q=-Q^{-(n-1)}}$. Let us
assume that $Q\in \RR_{>0}$ is a real number. We pick a reference path
from~$1$ to $q$ consisting of the interval \smash{$\big[1,Q^{-(n-1)}\big]$} and the arc in
the upper half-plane from~${Q^{-(n-1)}}$ to $q = - Q^{-(n-1)}.$
Note that with such a choice of the reference path $q^{1/(n-1)}=\eta Q^{-1}$. Therefore, $\gamma_k(q)$ becomes a simple loop around $u_k(q)=(n-1)
\eta^{-2k+1} Q^{-1}$ which are precisely the singularities of the
differential equation \eqref{tw-de_1}. We get the following corollary.
\begin{Corollary}\label{cor:tw_refl}
If $\beta\in \widetilde{H}(E)$ is such that the analytic continuation
of \smash{$\leftexp{tw}{I}^{(-m)}_\beta(Q,\lambda)$} along $\gamma_k(q)$ is
\smash{$\leftexp{tw}{I}^{(-m)}_{-\beta}(Q,\lambda)$}, then $\beta$ must be
proportional to
\[
\Psi(\O_E(-k+1)) = (2\pi)^{\frac{1-n}{2}}
\Gamma(\operatorname{Bl}(X)) Q^{-e(n-1)}
(2\pi\ii)^{\operatorname{deg}}
\operatorname{ch}(\O_E(-k+1)),
\]
where $\O_E(-k+1):=\O(-(k-1) E)-\O(-kE)$.
\end{Corollary}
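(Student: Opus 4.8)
The plan is to transport the statement to the quantum cohomology of $\PP^{n-2}$, where the reflection vectors have already been computed, and then to rewrite the answer inside $\widetilde{H}(E)$ by a short $\Gamma$-class computation. First I would use Proposition~\ref{prop:tw=proj}, which reads $\leftexp{tw}{I}^{(-m)}_\beta(Q,\lambda)=e^{-\pi\ii\theta}\,I^{(-m)}_{\sigma(\beta)}\big(-Q^{-(n-1)},\lambda\big)$ with $\sigma=e^{\pi\ii\theta}$ and $\theta$ the grading operator of $\PP^{n-2}$. Since $e^{-\pi\ii\theta}$ is a constant operator, the analytic continuation of the left-hand side along $\gamma_k(q)$ equals $e^{-\pi\ii\theta}$ composed with the analytic continuation of $I^{(-m)}_{\sigma(\beta)}(q,\lambda)$ along the same path; and by the construction of $\gamma_k(q)$ recalled above (rescale $\gamma_k^\circ$ by $q^{1/(n-1)}=\eta Q^{-1}$, with $q=-Q^{-(n-1)}$) this path is precisely the simple loop $\gamma_k(q)$ in the $\PP^{n-2}$-picture, encircling the puncture $u_k(q)=(n-1)\eta^{-2k+1}Q^{-1}$. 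Hence the hypothesis is equivalent to the statement that the continuation of $I^{(-m)}_{\sigma(\beta)}(q,\lambda)$ along $\gamma_k(q)$ is $-I^{(-m)}_{\sigma(\beta)}(q,\lambda)$.

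By Lemma~\ref{le:local_mon}(b) a vector $\sigma(\beta)\in H^*(\PP^{n-2})$ with this sign-change property is unique up to a scalar, and by the discussion preceding the corollary (proved for $n=4$ in~\cite{Milanov:p2}, the general case being analogous; see also~\cite{Iritani:gamma_structure}) it is proportional to the Iritani class $\Psi_q(\O(k))=(2\pi)^{\frac{3-n}{2}}\Gamma(1+p)^{n-1}q^{-p}e^{2\pi\ii kp}$ evaluated at $q=-Q^{-(n-1)}$. Thus $\beta$ is proportional to $e^{-\pi\ii\theta}\big(\Psi_q(\O(k))\big)\big|_{q=-Q^{-(n-1)}}$. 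Now $e^{-\pi\ii\theta}$ substitutes $-p$ for $p$ up to the constant $e^{-\pi\ii(n-2)/2}$, and evaluating at $q=-Q^{-(n-1)}$ along the reference path fixed above (the arc in the upper half-plane, so that $\log q=-(n-1)\log Q+\pi\ii$) turns $q^{p}$ into $Q^{-(n-1)p}e^{\pi\ii p}$; transporting through the isomorphism $\widetilde{H}(E)\cong H^*(\PP^{n-2})$, $e^i\mapsto p^{i-1}$ (equivalently: substitute $e$ for $p$, then multiply by $e$), one obtains, up to a nonzero scalar,
\[
\beta\ \propto\ e\cup\big(\Gamma(1-e)^{n-1}\,Q^{-(n-1)e}\,e^{\pi\ii e}\,e^{-2\pi\ii ke}\big).
\]

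Finally I would recognise the right-hand side as a multiple of $\Psi(\O_E(-k+1))$. From $\operatorname{ch}(\O_E(-k+1))=e^{-(k-1)e}-e^{-ke}$ one gets $(2\pi\ii)^{\operatorname{deg}}\operatorname{ch}(\O_E(-k+1))=e^{-2\pi\ii(k-1)e}\big(1-e^{-2\pi\ii e}\big)=2\ii\,e^{\pi\ii e}\,e^{-2\pi\ii ke}\,\sin(\pi e)$, while the $K$-theoretic identity $T\operatorname{Bl}(X)=TX-n-1+n\,\O(-E)+\O(E)$ gives $\widehat{\Gamma}(\operatorname{Bl}(X))=\widehat{\Gamma}(TX)\cup\Gamma(1-e)^{n}\Gamma(1+e)$; since $H^{>0}(X)$ annihilates every positive power of $e$, the factor $\widehat{\Gamma}(TX)$ acts as the identity on the subalgebra generated by $e$, so by the reflection formula $\Gamma(1-e)\Gamma(1+e)=\pi e/\sin(\pi e)$ the two occurrences of $\sin(\pi e)$ in $\Psi(\O_E(-k+1))$ cancel, the surviving $\pi e$ produces the prefactor $e\cup(\cdots)$, and a short tally of the remaining powers of $2\pi$, $\ii$ and $e^{\pm\pi\ii e}$ identifies $\Psi(\O_E(-k+1))$ — or rather its $\widetilde{H}(E)$-component, the only part relevant to $\beta$ — with the displayed expression up to a nonzero constant depending only on $n$. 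The only delicate point is the bookkeeping of the branch of $q^{p}$, equivalently of $Q^{-(n-1)e}$, along the chosen reference path, together with the telescoping of the $\Gamma$-factors; once these are set up the argument is formal, so I do not expect a genuine obstacle — the substantive input, Proposition~\ref{prop:tw=proj}, is already available.
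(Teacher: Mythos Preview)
Your proof is correct and follows essentially the same route as the paper: use Proposition~\ref{prop:tw=proj} to transport the anti-invariance condition to the known $\PP^{n-2}$ reflection vector $\Psi_q(\O(k))$, apply $\sigma^{-1}=e^{-\pi\ii\theta}$, pass back through the isomorphism $\widetilde{H}(E)\cong H^*(\PP^{n-2})$, and then match with $\Psi(\O_E(-k+1))$ via the $\Gamma$-class identity $\Gamma(1-e)\Gamma(1+e)=\pi e/\sin(\pi e)$ and the formula for $(2\pi\ii)^{\operatorname{deg}}\operatorname{ch}(\O_E(-k+1))$. The paper carries out the same computation with the constants tracked explicitly, but the structure and all key ingredients are identical.
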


\begin{proof}
According to the above discussion and Proposition \ref{prop:tw=proj},
under the isomorphism $\widetilde{H}(E)\cong H\big(\PP^{n-2}\big)$,
$\sigma(\beta)$ must be proportional to $\Psi_q(\O(k))$, that is,
$\beta$ is proportional to
\[
e^{-\pi\ii \theta} \
(2\pi)^{\frac{3-n}{2}} \Gamma(1+p)^{n-1} q^{-p} e^{2\pi k p}=
(2\pi)^{(3-n)/2} \ii^{2-n}
\Gamma(1-p)^{n-1} q^p e^{-2\pi\ii k p}.
\]
Note that $q^p = e^{\pi\ii p} Q^{-p(n-1)}$. Therefore, under the
isomorphism $\widetilde{H}(E)\cong H\big(\PP^{n-2}\big)$, the above
expression becomes
\[
(2\pi)^{(3-n)/2} \ii^{2-n}
\Gamma(1-e)^{n-1} Q^{-e(n-1)} e^{(-2k+1)\pi\ii e} e.
\]
We have to check that the above expression is proportional to the
image of the Iritani map for~$\operatorname{Bl}(X)$ of the exceptional object
$\O((-k+1)E)-\O(-kE).$ We have
\begin{gather*}
\Gamma(\operatorname{Bl}(X))= \Gamma(X) \Gamma(1-e)^n \Gamma(1+e),\\
\Gamma(1-e)\Gamma(1+e) = \frac{2\pi\ii e}{e^{\pi\ii e}-e^{-\pi\ii
 e}} =
\frac{2\pi\ii e}{e^{2\pi\ii e}-1} e^{\pi\ii e},
\end{gather*}
and
\[
(2\pi\ii)^{\operatorname{deg}}
\operatorname{ch}(\O((-k+1)E)-\O(-kE))=
e^{-2\pi\ii k e }\big(e^{2\pi\ii e}-1\big).
\]
Since $\Gamma(X)\cup e= e$, the image of the Iritani map becomes
\[
(2\pi)^{(3-n)/2} \ii \Gamma(1-e)^{n-1} Q^{-e(n-1)}
e^{(-2k+1)\pi\ii e} e.
\]
The claim of the lemma follows.
\end{proof}

\subsection{Isomonodromic analytic continuation}\label{sec:iso_ac}

Let $D(u,r)$ be the open disk in $\CC$ with center $u$ and radius $r$. Put $\mathbb{D}_r:= D(0,r)$. Let $\epsilon>0$ be a~real number, $V\subset \CC$ an open subset, and $u_i\colon\mathbb{D}_\epsilon \to V$ ($1\leq i\leq m$) be $m$ holomorphic functions, such that, there exists a positive real number $\delta>0$ satisfying
\begin{enumerate}\itemsep=0pt
\item[(i)]
The $m$ disks $D(u_i(0),\delta)$ ($1\leq i\leq m$) are pairwise disjoint and contained in $V$.
\item[(ii)]
We have $u_i(Q)\in D(u_i(0),\delta)$ for all $Q\in \mathbb{D}_\epsilon$.
\end{enumerate}
Suppose that $I$ is a multi-valued analytic function on $\mathbb{D}_\epsilon\times V\setminus{\Sigma}$ with values in a finite dimensional vector space $H$, where
\[
\Sigma:=\{
(Q,\lambda)\in \mathbb{D}_\epsilon\times V\mid
\lambda=u_i(Q)\ \mbox{for some $i$}
\}.
\]
Let us fix $\lambda^\circ\in V$, such that, $\mathbb{D}_\epsilon\times \{\lambda^\circ\}$ is disjoint from $\Sigma$. Then $I$ is analytic at $(Q,\lambda)=(0,\lambda^\circ)$ and $I$ extends analytically along any path in
$\mathbb{D}_\epsilon\times V\setminus{\Sigma}$ starting at $(0,\lambda^\circ)$. In particular, we can extend uniquely $I(Q,\lambda)$ for all $Q\in \mathbb{D}_\epsilon$ and $\lambda$ sufficiently close to $\lambda^\circ$.
Let us expand~${I(Q,\lambda)=\sum_{d=0}^\infty I_d(\lambda) Q^d}$, where each coefficient $I_d$ is an $H$-valued analytic function at $\lambda=\lambda^\circ$. Clearly, $I_d(\lambda)$ extends analytically along any path in $V\setminus{\{u_1(0),\dots,u_m(0)\}}$.
\begin{Lemma}\label{le:iso_ac}
Suppose that $\gamma$ is a closed loop based at $\lambda^\circ$ in
\[
V\setminus{D(u_1(0),\delta)\sqcup\cdots \sqcup D(u_m(0),\delta)},
\]
such that, for every fixed $Q\neq 0$, the analytic extension of $I(Q,\lambda)$ along the path $\{Q\}\times \gamma$ transforms $I(Q,\lambda)$ into $A(I(Q,\lambda))$, where $\lambda$ is sufficiently close to $\lambda^\circ$ and $A\in \operatorname{GL}(H)$ is a~linear operator. If the operator $A$ is independent of $Q$, then the analytic continuation along $\gamma$ transforms the coefficient $I_d(\lambda)$ into $A(I_d(\lambda))$.
\end{Lemma}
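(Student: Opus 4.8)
The plan is to use that the loop $\gamma$ stays away, \emph{uniformly in} $Q\in\mathbb{D}_\epsilon$, from all the curves $\lambda=u_i(Q)$, and then to reduce the statement to the interchange of two operations: analytic continuation in $\lambda$ along $\gamma$, and extraction of the Taylor coefficients in $Q$ at $Q=0$.

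First I would make the geometric observation precise. Since $\gamma$ avoids each disk $D(u_i(0),\delta)$ whereas $u_i(Q)\in D(u_i(0),\delta)$ for every $Q\in\mathbb{D}_\epsilon$, we have $\lambda\neq u_i(Q)$ for all $\lambda\in\gamma$, all $1\leq i\leq m$ and all $Q\in\mathbb{D}_\epsilon$; hence $\{Q\}\times\gamma$ is disjoint from $\Sigma$ for every $Q\in\mathbb{D}_\epsilon$. Moreover, for every $\rho<\epsilon$ the compact set $\overline{\mathbb{D}}_\rho\times\gamma$ is contained in the open set $(\mathbb{D}_\epsilon\times V)\setminus\Sigma$, so the analytic continuation of $I(Q,\cdot)$ along $\gamma$ can be performed simultaneously for all $Q\in\overline{\mathbb{D}}_\rho$. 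Denoting by $I^\gamma(Q,\lambda)$ the resulting continuation of $I(Q,\cdot)$ along $\gamma$, the function $I^\gamma$ is analytic on $\mathbb{D}_\epsilon\times D(\lambda^\circ,r')$ for some $r'>0$, so it admits a Taylor expansion $I^\gamma(Q,\lambda)=\sum_{d\geq 0}I^\gamma_d(\lambda)Q^d$; I would then only need to show that $I^\gamma_d=A(I_d)$ for each $d$.

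The next step is to identify $I^\gamma_d$ with the analytic continuation of $I_d$ along $\gamma$ and then to plug in the hypothesis. Fix $0<r<\epsilon$. By the Cauchy formula for Taylor coefficients,
\[
I_d(\lambda)=\frac{1}{2\pi\ii}\oint_{|Q|=r}\frac{I(Q,\lambda)}{Q^{d+1}}\,dQ,
\qquad
I^\gamma_d(\lambda)=\frac{1}{2\pi\ii}\oint_{|Q|=r}\frac{I^\gamma(Q,\lambda)}{Q^{d+1}}\,dQ.
\]
Since $\overline{\mathbb{D}}_r\times\gamma$ misses $\Sigma$, for each $Q$ on the circle $|Q|=r$ the integrand on the right is the analytic continuation along $\gamma$ of the integrand on the left, and this continuation depends analytically on $Q$; hence the second integral is the analytic continuation along $\gamma$ of the first, i.e.\ $I^\gamma_d$ is the continuation of $I_d$. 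On the other hand the hypothesis gives $I^\gamma(Q,\lambda)=A\bigl(I(Q,\lambda)\bigr)$ for every $Q\neq0$, in particular on $|Q|=r$. As $A\in\operatorname{GL}(H)$ is a fixed linear operator it commutes with the $Q$-contour integral, whence
\[
I^\gamma_d(\lambda)=\frac{1}{2\pi\ii}\oint_{|Q|=r}\frac{A\bigl(I(Q,\lambda)\bigr)}{Q^{d+1}}\,dQ
=A\!\left(\frac{1}{2\pi\ii}\oint_{|Q|=r}\frac{I(Q,\lambda)}{Q^{d+1}}\,dQ\right)=A\bigl(I_d(\lambda)\bigr),
\]
which is exactly the assertion of the lemma.

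The only point requiring genuine care is the interchange used above — that the monodromy operation in $\lambda$ commutes with the contour integral in $Q$, equivalently that $\partial_Q^d|_{Q=0}$ commutes with analytic continuation along $\gamma$. This is where the compactness of $\overline{\mathbb{D}}_r\times\gamma$ and the joint analyticity of $I^\gamma$ are essential; once those are set up the interchange is routine, so I expect no serious obstacle beyond arranging this uniformity correctly.
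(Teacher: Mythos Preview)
Your proof is correct and takes a somewhat different route from the paper's. The paper first reduces to $d=0$ by replacing $I$ with $\frac{1}{d!}\partial_Q^d I$, then covers $\gamma$ by finitely many closed disks $D_j$, uses uniform continuity of $I$ on each $\mathbb{D}_{\rho_j}\times D_j$, and finishes with an $\epsilon$--$\delta$ triangle inequality argument to pass from the hypothesis at $Q\neq 0$ to the conclusion at $Q=0$. You instead invoke the Cauchy integral formula on a circle $|Q|=r$ bounded away from $0$, which handles all $d$ at once and replaces the limiting argument by the purely algebraic observation that a constant linear operator commutes with a contour integral. Your approach is cleaner and more conceptual; the paper's is more elementary in that it uses only continuity and avoids Cauchy's formula. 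The one point where the two arguments genuinely coincide is the need for a uniform tube $\overline{\mathbb{D}}_r\times\gamma\subset(\mathbb{D}_\epsilon\times V)\setminus\Sigma$: the paper realizes this via the finite cover by $D_j$'s, you via the compactness remark in your last paragraph. Both are adequate, and the interchange of the $Q$-integral with continuation in $\lambda$ that you flag as ``the only point requiring care'' is indeed justified by covering $\gamma$ with a finite chain of disks on which the two-variable continuation is single-valued.
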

\begin{proof}
Since \smash{$I_d(\lambda)=\frac{1}{d!}\frac{\partial^d I}{\partial Q^d}(0,\lambda)$} by replacing the function $I(Q,\lambda)$ with its partial derivative \smash{$\frac{1}{d!}\frac{\partial^d I}{\partial Q^d}(Q,\lambda)$}, we can reduce the general case to the case when $d=0$.

Let us cover the path $\gamma$ with small {\em closed} disks $D_j$ ($1\leq j\leq N$), such that,
\begin{enumerate}\itemsep=0pt
\item[(i)]
$D_j$ is disjoint from $D(u_i(0),\delta)$ for all $i$.
\item[(ii)]
$D_j\cap D_{j+1}\neq \varnothing$.
\item[(iii)]
$D_N=D_1$.
\end{enumerate}
In other words, the union of the disks $D_j$ give a fattening of the
path $\gamma$. Let $I(Q,\lambda_j)$ $\forall \lambda_j\in D_j$ be the
analytic extension of $I(Q,\lambda)$ along $\gamma$. Let us fix an
arbitrary $\epsilon'>0$. There exists a small~${\rho_j>0}$, such that,
$I(Q,\lambda_j)$ is a uniformly continuous function in $(Q,\lambda)\in
\mathbb{D}_{\rho_j}\times D_j$. Therefore, there exists $0<\delta_j'<\rho_j$, such that,
\[
\|I(Q,\lambda_j)-I(0,\lambda_j)\|<\epsilon',\qquad \forall
(Q,\lambda_j)\in \mathbb{D}_{\delta_j'}\times D_j,
\]
where $\|\ \|$ is any norm on $H$ --- for example fix an
isomorphism $H\cong \RR^{\operatorname{dim}(H)}$ and choose the
standard Euclidean metric.
Since there are only finitely many disks $D_j$, we can choose $\rho$
and~$\delta'$ that work for all $j$ simultaneously, that is,
$\rho_j=\rho$ and $\delta_j'=\delta'$. Using the triangle inequality,
we~get
\begin{align*}
\| I(0,\lambda_N)-AI(0,\lambda_1)\| \leq{}&
\| I(0,\lambda_N)-I(Q,\lambda_N)\| +
\| I(Q,\lambda_N)-AI(Q,\lambda_1)\|\\
& +
\|A\| \| I(Q,\lambda_1)-I(0,\lambda_1)\|.
\end{align*}
Note that the middle term on the right-hand side of the inequality is $0$ by definition. Choosing~${|Q|<\delta'}$, we get that the right-hand side of the above inequality is bounded by $\epsilon'(1+\|A\|)$. Since $\epsilon'$ can be chosen arbitrary small, we get $I(0,\lambda_N)=A(I(0,\lambda_1))$ which is exactly what we had to prove.
\end{proof}

We will need a result which is slightly more general then Lemma \ref{le:iso_ac}. Namely, suppose that~$I$ is a multi-valued analytic function on $\mathbb{D}_\epsilon^*\times V\setminus{\Sigma}$, where $\mathbb{D}_\epsilon^*:=\mathbb{D}_\epsilon\setminus{\{0\}}$ is the punctured disk.
\begin{Definition}\label{def:log_sing}
The singularity of $I(Q,\lambda)$ at $Q=0$ is said to be {\em at most logarithmic} if the following two conditions hold:
\begin{enumerate}\itemsep=0pt
\item[(i)] The function has an expansion of the form
\[
I(Q,\lambda) =\sum_{s=0}^n \sum_{d=0}^\infty I_{s,d}(\lambda) Q^d (\log Q)^s,
\]
where $\lambda$ is sufficiently close to $\lambda^\circ$ and $Q\in \mathbb{D}_\epsilon\setminus{(-\epsilon,0]}$.
\item[(ii)] The functions $I_s(Q,\lambda):=\sum_{d=0}^\infty I_{s,d}(\lambda) Q^d$ ($0\leq s\leq n$) extend analytically along any path in~$\mathbb{D}_\epsilon\times V\setminus{\Sigma}$.
\end{enumerate}
\end{Definition}
\begin{Remark}
Condition (ii) might be redundant but we could not prove it in this generality. For our purposes, both conditions are easy to verify, because $I(Q,\lambda)$ will be a solution to an ODE in $Q$ that has a Fuchsian singularity at $Q=0$.
\end{Remark}
\begin{Proposition}\label{prop:iso_ac}
Suppose that $I$ is a multi-valued analytic function on $\mathbb{D}_\epsilon^*\times V\setminus{\Sigma}$ and that it has at most a logarithmic singularity at $Q=0$. Furthermore, suppose that $\gamma$ is a closed loop based at $\lambda^\circ$ in
\[
V\setminus{D(u_1(0),\delta)\sqcup\cdots \sqcup D(u_m(0),\delta)},
\]
such that, for every fixed $Q\in \mathbb{D}_\epsilon\setminus{(-\epsilon,0]}$, the analytic extension of $I(Q,\lambda)$ along the path~${\{Q\}\times \gamma}$ transforms $I(Q,\lambda)$ into $A(I(Q,\lambda))$, where $\lambda$ is sufficiently close to $\lambda^\circ$ and $A\in \operatorname{GL}(H)$ is a~linear operator. If the operator $A$ is independent of $Q$, then the analytic continuation along $\gamma$ transforms the coefficient $I_{s,d}(\lambda)$ into $A(I_{s,d}(\lambda))$.
\end{Proposition}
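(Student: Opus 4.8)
\emph{Proof proposal.} The plan is to reduce Proposition~\ref{prop:iso_ac} to Lemma~\ref{le:iso_ac} by isolating each power of $\log Q$. Write $I(Q,\lambda)=\sum_{s=0}^{n}(\log Q)^{s}I_{s}(Q,\lambda)$ with $I_{s}(Q,\lambda)=\sum_{d\ge 0}I_{s,d}(\lambda)Q^{d}$; by Definition~\ref{def:log_sing} each $I_{s}$ is holomorphic at $Q=0$ and extends analytically along every path in $\mathbb{D}_{\epsilon}\times V\setminus{\Sigma}$. Since $\gamma$ is a loop in the $\lambda$-variable only and lies in $V\setminus{D(u_{1}(0),\delta)\sqcup\cdots\sqcup D(u_{m}(0),\delta)}$, and $u_{i}(Q)\in D(u_{i}(0),\delta)$ for all $Q\in\mathbb{D}_{\epsilon}$, the loop $\{Q\}\times\gamma$ misses $\Sigma$ for every fixed $Q$; hence the continuation of each $I_{s}$ along $\{Q\}\times\gamma$ is defined. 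Denote it $\widetilde{I}_{s}(Q,\lambda)$, and note (covering $\gamma$ by small disks exactly as in the proof of Lemma~\ref{le:iso_ac}) that $\widetilde{I}_{s}$ depends holomorphically on $Q$.

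The first step is to show $\widetilde{I}_{s}=A(I_{s})$ for every $s$. Fix $Q\in\mathbb{D}_{\epsilon}\setminus{(-\epsilon,0]}$ and continue the expansion of $I$ along $\{Q\}\times\gamma$: the hypothesis gives $A(I(Q,\lambda))=\sum_{s}(\log Q)^{s}\widetilde{I}_{s}(Q,\lambda)$, while linearity of $A$ and its independence of $Q$ give $A(I(Q,\lambda))=\sum_{s}(\log Q)^{s}A(I_{s}(Q,\lambda))$. Putting $h_{s}:=\widetilde{I}_{s}-A(I_{s})$, which is holomorphic at $Q=0$, we get $\sum_{s=0}^{n}(\log Q)^{s}h_{s}(Q)=0$. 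Analytically continuing this identity in $Q$ around the origin $k$ times replaces $\log Q$ by $\log Q+2\pi\ii k$ while leaving each $h_{s}$ unchanged (the $h_{s}$ are single-valued near $Q=0$), so $\sum_{s=0}^{n}(\log Q+2\pi\ii k)^{s}h_{s}(Q)=0$ for $k=0,1,\dots,n$. The coefficient matrix $\big[(\log Q+2\pi\ii k)^{s}\big]_{0\le k,s\le n}$ is Vandermonde in the pairwise distinct numbers $\log Q+2\pi\ii k$, hence invertible, so $h_{s}(Q)=0$ for all such $Q$ and, by holomorphy in $Q$, for all $Q\in\mathbb{D}_{\epsilon}$. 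Thus the continuation of $I_{s}$ along $\{Q\}\times\gamma$ equals $A(I_{s}(Q,\lambda))$ for every fixed $Q$.

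With this in hand, each $I_{s}(Q,\lambda)$ satisfies precisely the hypotheses of Lemma~\ref{le:iso_ac}: it is holomorphic at $Q=0$, extends along paths in $\mathbb{D}_{\epsilon}\times V\setminus{\Sigma}$, and its continuation along $\{Q\}\times\gamma$ is $A(I_{s}(Q,\lambda))$ with $A$ independent of $Q$. Applying Lemma~\ref{le:iso_ac} to $I_{s}$ shows that the continuation of the coefficient $I_{s,d}(\lambda)$ along $\gamma$ is $A(I_{s,d}(\lambda))$, which is the claim. The only genuine obstacle is disentangling the powers of $\log Q$ so as to pass from the single relation $\sum_{s}(\log Q)^{s}h_{s}=0$ to $h_{s}\equiv 0$; the monodromy-plus-Vandermonde device handles this and simultaneously lets the identity propagate holomorphically across the slit $(-\epsilon,0]$. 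Everything else is a direct appeal to Lemma~\ref{le:iso_ac}.
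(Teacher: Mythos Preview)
Your proof is correct and follows the paper's strategy exactly: show that analytic continuation along $\gamma$ sends each $I_s$ to $A(I_s)$, then invoke Lemma~\ref{le:iso_ac}. The only difference is in how you separate the powers of $\log Q$: the paper dismisses this step with ``by letting $Q\to 0$'' (an asymptotic argument, dividing by the top power of $\log Q$ and iterating), whereas you continue the identity $\sum_s(\log Q)^s h_s(Q)=0$ around $Q=0$ and use the Vandermonde determinant in the nodes $\log Q+2\pi\ii k$. Your device is cleaner and more self-contained --- it gives $h_s(Q)=0$ for every $Q$ in one stroke, without having to organize an induction on the order of vanishing at $Q=0$ --- while the paper's asymptotic hint requires the reader to supply that induction. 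Either way the reduction to Lemma~\ref{le:iso_ac} is identical.
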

\begin{proof}
Let $I_s(Q,\lambda)$ be as in condition (ii) in Definition \ref{def:log_sing}. We have $I(Q,\lambda) = \sum_{s=0}^n I_s(Q,\lambda) \allowbreak\times(\log Q)^s$. The analytic continuation along $\gamma$ yields $A(I(Q,\lambda)) = \sum_{s=0}^n \widetilde{I}_s(Q,\lambda) (\log Q)^s$, where $\widetilde{I}_s(Q,\lambda)$ is the analytic extension of $I_s(Q,\lambda)$ along $\gamma$. It is easy to prove by letting $Q\to 0$ that such an identity is possible only if the coefficients in front of the powers of $\log Q$ are equal, that is, $A(I_s(Q,\lambda)) = \widetilde{I}_s(Q,\lambda)$. It remains only to recall Lemma \ref{le:iso_ac}.
\end{proof}

\subsection{Vanishing of the base component}
Let $\alpha=Q^{-(n-1)e}\beta$, where $\beta \in H^*(\operatorname{Bl}(X))$ is a vector independent of $Q$ and $t$. Let $\beta=\beta_e+\beta_b$. We would like to extract the leading order terms in the power series expansion at $Q=0$ of
\begin{equation}\label{blowup_period}
Q^{\Delta+m+(n-1)/2} I^{(-m)}\left(t,q,Q,Q^{-1}\lambda\right)\alpha,
\end{equation}
where $m>0$ is a sufficiently large integer, that is, we choose $m$ so
big that the operator $\widetilde{\theta}+m+1/2$ has only positive
eigenvalues.
Moreover, we would like to determine the structure of the following terms in the expansion up to order $Q^n$. Note that
\[
Q^{\Delta} Q^{-\widetilde\theta-m+\frac{1}{2}}Q^{-\widetilde\rho}\alpha =
Q^{-m-(n-1)/2} \big(
\beta_e + Q^{\operatorname{deg}} (Q^{-\rho} \beta_b)
\big).
\]
Therefore, we have
\begin{align*}
Q^{\Delta+m+(n-1)/2} I^{(-m)}\big(t,q,Q,Q^{-1}\lambda\big)\alpha={}&
\big(Q^\Delta I^{(-m)}\big(t,q,Q,Q^{-1}\lambda\big)Q^{\widetilde\rho}Q^{\widetilde\theta+m-\frac{1}{2}}Q^{-\Delta}\big)\\
&\times
\big(
\beta_e + Q^{\operatorname{deg}} (Q^{-\rho} \beta_b)
\big).
\end{align*}
Let us look at the contribution of $\beta_e$ to \eqref{blowup_period}, that is, the expression
\begin{equation}\label{blowup_period_e}
\big(Q^\Delta I^{(-m)}\big(t,q,Q,Q^{-1}\lambda\big)Q^{\widetilde\rho}Q^{\widetilde\theta+m-\frac{1}{2}}Q^{-\Delta}\big)
\beta_e.
\end{equation}
According to Proposition \ref{prop:Q-exp_e}, the leading order term of the $\widetilde{H}(E)$-component of \eqref{blowup_period_e} is at degree 0 and it is precisely \smash{$\leftexp{tw}{I}_{\beta_e}^{(-m)}(1,\lambda)$}, that is, the twisted period of $\PP^{n-1}$ at $Q=1$. The leading order term of the $H(X)$-component of \eqref{blowup_period_e} is at degree $n$ and the corresponding coefficient in front of $Q^n$ is given by
\begin{equation}\label{base_blowup_period_be:lot}
\sum_{l,d=0}^\infty
(-\partial_\lambda)^l \big\langle
\psi^{l-1} \widetilde{I}^{(-m)}_{\beta_e}(\lambda), 1\big\rangle_{0,2,d\ell} \
(-1)^{n-1}\phi_N,
\end{equation}
where
\[
\widetilde{I}^{(-m)}_{\beta_e}(\lambda)=
e^{-(n-1)e \partial_\lambda\partial_m} \left(
\frac{\lambda^{\widetilde{\theta}+m-1/2} }{
\Gamma(\widetilde{\theta}+m+1/2)}\right)\beta_e
\]
is the calibrated period of $\operatorname{Bl}(X)$ and for $l=0$ the correlator should be understood via the string equation as \smash{$\big\langle\psi^l \widetilde{I}^{(-m)}_{\beta_e}(\lambda), 1,1\big\rangle_{0,3,d\ell}$}.

Let us look at the contribution to \eqref{blowup_period} corresponding to $\beta_b$, that is, the expression
\begin{equation}\label{blowup_period_b}
\big(Q^\Delta I^{(-m)}\big(t,q,Q,Q^{-1}\lambda\big)Q^{\widetilde\rho}Q^{\widetilde\theta+m-\frac{1}{2}}Q^{-\Delta}\big)
 Q^{\operatorname{deg}} (Q^{-\rho} \beta_b).
\end{equation}
Let us decompose $\beta_b=\sum_{a=1}^N \beta_{b,a} \phi_a$. The $H(X)$-component of \eqref{blowup_period_b} is a power series in~$Q$ whose coefficients are polynomials in $\log Q$ whose coefficients are in $H(X)$. According to Propositions~\eqref{prop:Q-exp_phi1} and
\eqref{prop:Q-exp_phia}, the coefficient in front of $Q^M (\log Q)^0$ with $0\leq M\leq n$ has the form
\begin{gather}\label{base_blowup_period:degree_m}
\sum_{a\colon \operatorname{deg}(\phi_a)=M}
\frac{\lambda^{\theta+m-1/2}}{\Gamma(\theta+m+1/2)} \beta_{b,a}\phi_a +
\sum_{a'\colon \operatorname{deg}(\phi_{a'})<M} \beta_{b,a'} f_{M,a'}(\lambda),
\end{gather}
where $f_{M,a'}(\lambda)$ is the $H(X)$-component of the coefficient in front of $Q^{M-\operatorname{deg}(\phi_{a'})}$ in the expansion at $Q=0$ of
\[
\big(Q^\Delta I^{(-m)}\left(t,q,Q,Q^{-1}\lambda\right)Q^{\widetilde\rho}Q^{\widetilde\theta+m-\frac{1}{2}}Q^{-\Delta}\big)
 \phi_{a'}.
\]
Let us summarize our analysis.
\begin{Proposition}\label{prop:blowup_period_Qexp}
Let $\alpha=Q^{-(n-1)e}\beta$, where $\beta\in H(\operatorname{Bl}(X))$ is independent of $t$ and $Q$. Then
\begin{itemize}\itemsep=0pt
\item[$(a)$] The $H(X)$-component of \eqref{blowup_period} expands as a power series in $Q$ whose coefficients are polynomials in $\log Q$. The coefficient in front of $(\log Q)^0 Q^M$ for $0\leq M\leq n-1$ is given by~\eqref{base_blowup_period:degree_m}, while for $M=n$ it is given by the sum of \eqref{base_blowup_period:degree_m} $($with $M=n)$ and \eqref{base_blowup_period_be:lot}.
\item[$(b)$] If $\beta_{b,1}=0$, then the $\widetilde{H}(E)$-component of \eqref{blowup_period} expands as a power series in $Q$. The corresponding leading order term is \smash{$\leftexp{tw}{I}_{\beta_e}^{(-m)}(1,\lambda)$}.
\end{itemize}
\end{Proposition}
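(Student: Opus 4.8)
This is a summary of the analysis carried out in the preceding subsections, so the plan is only to record how the pieces fit together. I would start from the factorisation obtained just before the statement, by which the function \eqref{blowup_period} equals the sum of the $\beta_e$-contribution \eqref{blowup_period_e} and the $\beta_b$-contribution \eqref{blowup_period_b}, where $\beta=\beta_e+\beta_b$ with $\beta_e\in\widetilde H(E)$ and $\beta_b=\sum_{a=1}^N\beta_{b,a}\phi_a\in H(X)$. The one point to keep in mind throughout is that the operator $Q^{\Delta}I^{(-m)}(t,q,Q,Q^{-1}\lambda)Q^{\widetilde\rho}Q^{\widetilde\theta+m-1/2}Q^{-\Delta}$ carries no powers of $\log Q$ (this is exactly the content of formula \eqref{pv-homog}), so the $\log Q$'s in \eqref{blowup_period} can only come from $Q^{-\rho}\beta_b$, $\rho$ being nilpotent.

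For part (a) I would feed \eqref{blowup_period_e} into Proposition \ref{prop:Q-exp_e}: its $H(X)$-component vanishes in every order below $Q^n$, and, after recollecting the sums over $i$ and comparing with the definition of $\widetilde I^{(-m)}_{\beta_e}$, the coefficient of $Q^n$ is exactly \eqref{base_blowup_period_be:lot}. For the $\beta_b$-contribution \eqref{blowup_period_b}, since the operator carries no $\log Q$ and each $\phi_a$ is homogeneous, the coefficient of $(\log Q)^0Q^M$ is obtained by replacing $Q^{-\rho}\beta_b$ with $\beta_b$ and extracting the $Q^M$-coefficient of $\sum_a\beta_{b,a}Q^{\operatorname{deg}\phi_a}\big(Q^{\Delta}I^{(-m)}(\dots)Q^{-\Delta}\big)\phi_a$; Proposition \ref{prop:Q-exp_phia} for $2\le a\le N$ and Proposition \ref{prop:Q-exp_phi1} for $\phi_1$ then give, respectively, the leading term $\tfrac{\lambda^{\theta+m-1/2}}{\Gamma(\theta+m+1/2)}\phi_a$ when $\operatorname{deg}\phi_a=M$ and the $H(X)$-component $f_{M,a'}(\lambda)$ when $\operatorname{deg}\phi_{a'}<M$, so that the sum is \eqref{base_blowup_period:degree_m}. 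Adding the $\beta_e$-contribution \eqref{base_blowup_period_be:lot} at $M=n$ completes part (a).

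For part (b) I would run the same arguments on the $\widetilde H(E)$-components. Proposition \ref{prop:Q-exp_e} shows that the $\widetilde H(E)$-component of \eqref{blowup_period_e} is a power series in $Q$ whose $Q^0$-coefficient, after comparing with the definitions of $\leftexp{tw}{S}$, $\leftexp{tw}{\theta}$, $\leftexp{tw}{\rho}$ and $\leftexp{tw}{I}^{(-m)}_{\beta_e}$ from Section \ref{sec:tw_periods}, equals the twisted period $\leftexp{tw}{I}^{(-m)}_{\beta_e}(1,\lambda)$. By Proposition \ref{prop:Q-exp_phia}, the $\widetilde H(E)$-component of the operator applied to $\phi_a$ is $O(Q)$ for $2\le a\le N$, and hence (after the shift by $Q^{\operatorname{deg}\phi_a}$) contributes only strictly positive powers of $Q$; by Proposition \ref{prop:Q-exp_phi1}, the only $\widetilde H(E)$-valued summand that can produce a negative power of $Q$ is \eqref{Q:exp_phi1_l3}, which enters weighted by $\beta_{b,1}$. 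Thus once $\beta_{b,1}=0$ the $\widetilde H(E)$-component of \eqref{blowup_period} has no negative powers of $Q$, i.e.\ it is a power series in $Q$, whose $Q^0$-coefficient is $\leftexp{tw}{I}^{(-m)}_{\beta_e}(1,\lambda)$.

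The whole argument is bookkeeping: reconciling the three $O(\cdot)$-conventions of Propositions \ref{prop:Q-exp_e}, \ref{prop:Q-exp_phia} and \ref{prop:Q-exp_phi1} and tracking the powers of $\log Q$ coming from $Q^{-\rho}$. What I expect to be the only real subtlety — and the reason this deserves to be stated separately — is that the $Q^n$-coefficient \eqref{base_blowup_period_be:lot} is an infinite series in the descendant order $l$ rather than a Laurent polynomial in $\lambda^{1/2}$ like the lower-order coefficients; this does not affect the present structural statement, but it is precisely why the monodromy of that coefficient will have to be computed separately, as a Mellin--Barnes integral, in Section \ref{sec:mirror}.
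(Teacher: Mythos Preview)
Your proposal is correct and follows the same route as the paper: the proposition is stated there as a summary of the preceding analysis, and you have accurately reconstructed that analysis by splitting into the $\beta_e$- and $\beta_b$-contributions \eqref{blowup_period_e}--\eqref{blowup_period_b}, invoking Propositions \ref{prop:Q-exp_e}, \ref{prop:Q-exp_phia}, \ref{prop:Q-exp_phi1} for the respective pieces, and tracking the $\log Q$'s back to $Q^{-\rho}$. Your closing remark about \eqref{base_blowup_period_be:lot} being an infinite series (and hence requiring the Mellin--Barnes analysis of Section~\ref{sec:mirror}) is exactly the point the paper makes as well.
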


Let us discuss now the analytic properties of the series \eqref{base_blowup_period_be:lot}. It is convenient to introduce the following series:
\[
\Phi_\beta(Q,\lambda) :=
\sum_{l,d=0}^\infty
(-\partial_\lambda)^l \big\langle
\psi^{l-1} \widetilde{I}^{(-m)}(\lambda)Q^{-(n-1)e}\beta, 1
\big\rangle_{0,2,d\ell}
 Q^{-d(n-1)},\qquad \beta\in\widetilde{H}(E).
\]
Note that \eqref{base_blowup_period_be:lot} coincides with $\Phi_{\beta_e}(1,\lambda) (-1)^{n-1}\phi_N$. Recalling the definition of the period vector~\smash{$I^{(-m)}_{\alpha}(t,q,Q,\lambda)$}, we get
\[
\Phi_\beta (Q,\lambda)=\big(I^{(-m)}_{Q^{-(n-1)e}\beta}(t,0,Q,\lambda),1\big),
\qquad
\forall \beta \in \widetilde{H}(E).
\]
\begin{Proposition}\label{prop:mon_Phi}
Let $Q$ be a positive real number and $\gamma_k(q)$ with $q=-Q^{-(n-1)}$ be the same simple loop as in Corollary~{\rm\ref{cor:tw_refl}}. If
\[
\beta=\Psi(\O_E(-k+1))=
(2\pi)^{(1-n)/2}
\Gamma(1-e)^{n-1}
Q^{-(n-1) e}
e^{(-2k+1)\pi\ii e}
2\pi \ii e,
\]
then the analytic continuation of $\Phi_\beta(Q,\lambda)$ along $\gamma_k(q)$ is $-\Phi_\beta(Q,\lambda)$.
\end{Proposition}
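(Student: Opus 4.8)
The starting observation is that, since $\mathbf 1$ is Poincaré dual to $\phi_N$ in $H^*(\operatorname{Bl}(X))$, the quantity $\Phi_\beta(Q,\lambda)$ is just the $\phi_N$-component of the period vector $I^{(-m)}_{Q^{-(n-1)e}\beta}(0,0,Q,\lambda)$ in which the Novikov parameters $q_1,\dots,q_r$ of $X$ and all coordinates $t$ have been set to $0$. With these specializations only the effective classes $d\ell$ contribute, so, by Section~\ref{sec:proj-tw-GW}, every Gromov--Witten invariant entering $\Phi_\beta$ is a twisted Gromov--Witten invariant of $E\cong\PP^{n-1}$. The first step I would carry out is to evaluate $\Phi_\beta$ in closed form. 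Using the explicit twisted calibration \eqref{blS2}--\eqref{S-tw_proj} (equivalently the identification with periods of $\PP^{n-2}$ of Proposition~\ref{prop:tw=proj}), the relations $\widetilde\rho^{\,i}e^j=(-(n-1))^i e^{j+i}$, $\widetilde\theta(e^j)=(\tfrac n2-j)e^j$, $e^n=(-1)^{n-1}\phi_N$, and the explicit form of $\beta=\Psi(\O_E(-k+1))$ recorded in the statement, one should arrive at an expression $\Phi_\beta(Q,\lambda)=c\,\lambda^{\,m-\frac{n+1}2}\,F(Q,\lambda)$ with $c$ an explicit constant and $F$ a generalized hypergeometric function of $(\lambda Q)^{\,n-1}$ whose coefficients are ratios of $\Gamma$-values; the phase factor $e^{(-2k+1)\pi\ii e}$ inside $\beta$ is exactly what places the relevant finite branch point of $F$ at $\lambda=u_k(q)=(n-1)\eta^{-2k+1}Q^{-1}$ rather than at $u_0(q)$.

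The second step is to pass to a Mellin--Barnes representation $\Phi_\beta(Q,\lambda)=\frac{c\,\lambda^{\,m-\frac{n+1}2}}{2\pi\ii}\int_{\mathcal C}\varphi(s)\,z(Q,\lambda)^{-s}\,ds$, where $z$ is a rational function of $(\lambda Q)^{\,n-1}$ vanishing at $\lambda=\infty$ and with a pole at $\lambda=0$, $\varphi$ is an explicit product and ratio of $\Gamma$-factors whose poles split into two arithmetic progressions, and $\mathcal C$ is a vertical line separating them. By Stirling's formula the integral converges for $\lambda$ near the base point, and closing $\mathcal C$ towards the pole family governing $\lambda=\infty$ reproduces the defining series of $\Phi_\beta$, so that the two expressions agree as analytic functions. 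The pole family governing $\lambda=0$ has order $n-1$ (this is the maximally unipotent point, so logarithms appear there), whereas the family governing $\lambda=u_k(q)$ is simple; moreover, the half-integer shift built into the exponent of $\widetilde I^{(-m)}$ places this last family at $s\in\tfrac12+\ZZ$ up to a fixed integer shift. Carrying out the standard contour deformation that yields the expansion of $F$ near its finite branch point then produces a convergent expansion $\Phi_\beta(Q,\lambda)=\sum_{j\ge0}c_j\,(\lambda-u_k(q))^{\,j+1/2}$ near $\lambda=u_k(q)$, with no logarithmic term; hence analytic continuation along the simple loop $\gamma_k(q)$ around $u_k(q)$ multiplies $\Phi_\beta$ by $e^{2\pi\ii(j+1/2)}=-1$, which is the assertion.

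The conceptual reason this works out cleanly is the block structure of the second structure connection at $q=t=0$ on the $n$-dimensional $\nabla^{(-m)}$-invariant subspace spanned by $e,e^2,\dots,e^{n-1}$ and $\phi_N=(-1)^{n-1}e^n$: there $\CC\phi_N$ is a sub-connection whose quotient is the twisted period system of $\PP^{n-1}$, so $\Phi_\beta$ solves the inhomogeneous Fuchsian equation $\bigl(\lambda\partial_\lambda-(m-\tfrac{n+1}2)\bigr)\Phi_\beta=g(\lambda)$ whose right-hand side is built from the twisted period $\leftexp{tw}{I}^{(-m)}_{\beta}$; the finite singular points $u_k(q)$ are regular singular with local exponents $\{0,\tfrac12\}$, and variation of parameters presents $\Phi_\beta$ as exactly the Euler-type integral made explicit by the Mellin--Barnes manipulation. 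The routine ingredients are the closed-form evaluation of $\Phi_\beta$ and the $\Gamma$-function bookkeeping forced by $\beta=\Psi(\O_E(-k+1))$. The genuine obstacle --- the reason this coefficient must be treated separately from the \emph{easy} ones handled in Section~\ref{sec:qexp_mon} --- is that $\Phi_\beta$ is an infinite series, so one has to establish the Mellin--Barnes representation rigorously (correct contour, convergence, agreement with the series) and then perform the residue bookkeeping near $\lambda=u_k(q)$ carefully enough to be sure the local exponents there are exactly $\tfrac12+\ZZ_{\ge0}$, i.e.\ that the monodromy eigenvalue is $-1$ and carries no logarithm.
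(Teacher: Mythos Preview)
Your strategy and the paper's share the first half: both reduce to $k=0$, use the explicit blowup $S$-matrix \eqref{blS2} to rewrite $\Phi_\beta$, and arrive at a Mellin--Barnes integral (the paper's is formula \eqref{Phi:2}, with integrand $q^{-x}\Gamma(x)^{n-1}\lambda^{(n-1)x-\frac{n+1}{2}+m}/\bigl(x\,\Gamma((n-1)x-\tfrac{n}{2}+m+\tfrac12)\bigr)$). Closing the contour to the left recovers the defining series, as you say.

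The divergence is in how the local behaviour at $\lambda=u_k(q)$ is extracted. You propose a ``standard contour deformation'' picking up a simple pole family at $s\in\tfrac12+\ZZ$. But in the integrand of \eqref{Phi:2} there is no such family: the only poles are those of $\Gamma(x)^{n-1}$ at $x\in\ZZ_{\le0}$ (order $n-1$, giving the $\lambda=\infty$ expansion) and the simple pole at $x=0$; the reciprocal $\Gamma$ in the denominator contributes zeros, not poles. The finite singularity $\lambda=u_k(q)$ is the hypergeometric point $z=1$, and its local expansion is \emph{not} obtained by closing the Mellin--Barnes contour to one side --- that only sees $z=0$ and $z=\infty$. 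So the step you label ``standard'' is precisely the nontrivial one, and your heuristic that the half-integer in $\widetilde I^{(-m)}$ forces half-integer Mellin poles is not how the $\tfrac12$ arises.

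The paper takes a genuinely different route here. It first proves (Lemma~\ref{le:ac_G}, by bending the contour to the right) that the Mellin--Barnes integral $G(q,\lambda)$ vanishes identically for $0<\lambda\le u(q)$. It then identifies the Laplace transform of $G$ with an oscillatory integral $\int_\Gamma e^{-sf(x,q)}\omega$ for an explicit Landau--Ginzburg potential $f$ on $(\CC^*)^{n-2}\times\CC^2$ (Proposition~\ref{prop:oscil_int}), and inverts the Laplace transform to get $G(q,\lambda)=2\ii(2\pi)^{(n-1)/2}\mathcal I(q,\lambda)$ where $\mathcal I$ is an integral of $(\lambda-f)^{m-\frac{n+1}{2}}\omega$ over the real sublevel set $\{f\le\lambda\}$. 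The half-integer exponent now has a transparent origin: $u(q)$ is a nondegenerate Morse critical value of $f$ on the real cycle, and a local Morse-coordinate computation gives $\mathcal I(q,\lambda)=(\lambda-u(q))^{m-1/2}\cdot(\text{holomorphic})$, hence monodromy $-1$. What this buys is that the $\tfrac12$ comes from the geometry of a vanishing cycle rather than from any pole bookkeeping, and the vanishing lemma for $\lambda\le u(q)$ makes the Laplace inversion clean. Your purely analytic route via connection formulas at $z=1$ could in principle be made to work (N{\o}rlund-type formulas exist), but it is not the short argument you sketch, and the paper's mirror detour is what actually closes the gap.
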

The proof of Proposition \ref{prop:mon_Phi} will be given in Section
\ref{sec:mirror}.
\subsection{Proof of Theorem \ref{thm}}
Now we are in position to prove the main result of this paper. Let us fix $t\in \widetilde{H}(X)$ and the Novikov variables $q=(q_1,\dots,q_r)$ of $X$ to be generic, such that, the quantum cohomology of $X$ is semisimple and the conclusions of Proposition \ref{prop:can-Q_exp} hold. Let us pick a real number $R>0$, such that, $u_j(t,q,0)\in \mathbb{D}_R$ for all $1\leq j\leq N$, where recall that $\mathbb{D}_R$ denotes the circle with center~$0$ and radius $R$. Let us choose a real number $\epsilon>0$ so small that the quantum cup product of the blowup~$\operatorname{Bl}(X)$ at $(t,q,Q)$ is convergent for all $|Q|<\epsilon$, $u_j(t,q,Q)\in \mathbb{D}_R$ for all $|Q|<\epsilon$ and~$1\leq j\leq N$, and~$R<(n-1)\epsilon^{-1}$. We would like to use the results from Section \eqref{sec:iso_ac} in the following settings: the domain $V:= \{\lambda\mid  |\lambda|>R\epsilon \}$, $m:=n-1$, and the $(n-1)$ holomorphic functions (denoted by~$u_i$ in Section \ref{sec:iso_ac}) will be given by $Qu_{N+k}(t,q,Q)$, $1\leq k\leq n-1$. Here we are using Proposition~\ref{prop:can-Q_exp} to conclude that $Q u_{N+k}(t,q,Q)=-(n-1)v_k + O(Q)$ is analytic at~${Q=0}$. Let us choose $\delta>0$, such that, the disks $D(-(n-1)v_k,2\delta)$ ($1\leq k\leq n-1$) are pairwise disjoint. If necessary, we decrease $\epsilon$ even further so that condition (ii) given in the beginning of Section \ref{sec:iso_ac} is satisfied. Note that condition (i) is satisfied according to our choice of $\delta$. Before we continue further let us fix the solutions $v_k$ of $\lambda^{n-1}=(-1)^n$ to be given by $v_k=-\eta^{-2k+1}$, where~${\eta:=e^{\pi\ii/(n-1)}}$. Then $-(n-1)v_k =(n-1) \eta^{-2k+1}$. Finally, for a reference point $\lambda^\circ\in V$ we pick any positive real~${\lambda^\circ>(n-1)>R\epsilon}$.

Let us define the loop $\gamma_k$ in \smash{$V\setminus{D\big((n-1)\eta^{-1},\delta\big)\sqcup\cdots \sqcup D\big((n-1) \eta^{-2n+3},\delta\big)}$} to be the simple loop around $(n-1)\eta^{-2k+1}$ based at $\lambda^\circ$ corresponding to the path from $\lambda^\circ$ to $(n-1)\eta^{-2k+1}$ consisting of the following two pieces: an arc along the circle $|\lambda|=\lambda^\circ$ obtained by rotating from~$\lambda^\circ$ clock-wise on angle $(2k-1)\pi/(n-1)$ and the second piece is the straight segment from~$\lambda^\circ \eta^{-2k+1}$ to $(n-1)\eta^{-2k+1}$.

Suppose now that $Q\in \mathbb{D}_\epsilon$ is a positive real number. Note that by re-scaling the path~$\gamma_k$, we obtain a path $\gamma_k\cdot Q^{-1}$ which is a simple loop around $(n-1)\eta^{-2k+1}Q^{-1}$. The simple loop $\gamma_k$ goes around $(n-1)\eta^{-2k+1}$ along a circle with center $(n-1)\eta^{-2k+1}$ and radius $r$, where $\delta<r<2\delta$. We claim that by decreasing $\epsilon$ if necessary, we can arrange that the circle with center $(n-1)\eta^{-2k+1} Q^{-1}$ and radius $r Q^{-1}$ contains the canonical coordinate $u_{N+k}(t,q,Q)$. Indeed, we have
\[
\big|u_{N+k}(t,q,Q)-(n-1) \eta^{-2k+1} Q^{-1}\big|=
\big|Qu_{N+k}(t,q,Q)-(n-1) \eta^{-2k+1}\big| Q^{-1}
\]
and since \smash{$\big|Qu_{N+k}(t,q,Q)-(n-1) \eta^{-2k+1}\big|$} has order $O(Q)$, by choosing $\epsilon$ small enough we can arrange that \smash{$\big|Qu_{N+k}(t,q,Q)-(n-1) \eta^{-2k+1}\big|<r$} for all $|Q|<\epsilon$. In other words, the re-scaled loop $\gamma_k \cdot Q^{-1}$ is a simple loop around the canonical coordinate $u_{N+k}(t,q,Q)$. Let us denote by~${\alpha\in H(\operatorname{Bl}(X))}$ the reflection vector corresponding to the simple loop $\gamma_k \cdot Q^{-1}$. Let us recall Proposition \ref{prop:iso_ac} for the series \eqref{blowup_period}, that is,
\[
I(Q,\lambda):= Q^{\Delta+m+(n-1)/2} I^{(-m)}\big(t,q,Q,Q^{-1}\lambda\big)\alpha.
\]
The singularities of $I(Q,\lambda)$ are precisely at $Q^{-1}\lambda = u_j(t,q,Q)$ for $1\leq j\leq N+k$, that is, $\lambda=Q u_j(t,q,Q)$. Note that by definition of $R$, the first $N$ singularities $Qu_j(t,q,Q)$ (${1\leq j\leq N}$) are in $\mathbb{D}_{R\epsilon}$. Therefore, $I(Q,\lambda)$ is a multi-valued analytic function in $(Q,\lambda)\in \mathbb{D}_\epsilon \times V\setminus{\Sigma}$. Although we are not going to give a complete proof, let us outline how to prove that $I(Q,\lambda)$ has at most logarithmic singularity at $Q=0$ (see Definition \ref{def:log_sing}). Recall the divisor equation~\eqref{refl_period:de} with~${i=r+1}$. Note that \smash{$q_{r+1}\partial_{q_{r+1}}=\tfrac{1}{n-1} Q\partial_Q$}. Combining the divisor equation and the differential equation of the second structure connection with respect to $\tau_{r+1}=t_{N+1}$, it is easy to prove that for every $\lambda\in V\setminus{D\big((n-1)\eta^{-1},\delta\big)\sqcup\dots\sqcup D\big((n-1)\eta^{-2n+3},\delta\big)}$ the function $I(Q,\lambda)$ is a solution to a differential equation that has a Fuchsian singularity at $Q=0$. Now the conclusion follows from the theory of Fuchsian singularities.

The analytic continuation of $I(Q,\lambda)$ along $\gamma_k$ transforms $I(Q,\lambda)$ into $-I(Q,\lambda)$ because when~$\lambda$ changes along $\gamma_k$, $Q^{-1}\lambda$ changes along $\gamma_k\cdot Q^{-1}$ which is the simple loop used to define the reflection vector $\alpha$. Let us look at the expansion of $I(Q,\lambda)$ at $Q=0$ in the powers of $Q$ and~$\log Q$. To begin with, we know that $\alpha= Q^{-(n-1)e}\beta$ where $\beta\in H(\operatorname{Bl}(X))$ is independent of $t$ and $Q$ (it could depend on~$q$). Let us decompose $\beta=\beta_e+\beta_b$, where $\beta_e\in \widetilde{H}(E)$ and $\beta_b\in H(X)$. Put $\beta_b=:\sum_{i=1}^N \beta_{b,i} \phi_i$. We claim that $\beta_b=0$. According to Proposition \ref{prop:blowup_period_Qexp}\,(a), the coefficient in front of $Q^0(\log Q)^0$ in the expansion of the $H(X)$-component of $I(Q,\lambda)$ is
\[
\frac{\lambda^{\theta+m-1/2}}{\Gamma(\theta+m+1/2)} \beta_{b,1}\phi_1 =
\frac{\lambda^{m+(n-1)/2}}{\Gamma(m+(n+1)/2)} \beta_{b,1}\phi_1.
\]
According to Proposition \ref{prop:iso_ac}, the analytic continuation along $\gamma_k$ of the above expression should change the sign. However, the function $\lambda^{m+(n-1)/2}$ is invariant under the analytic continuation along $\gamma_k$. Therefore, the only possibility is that $\beta_{b,1}=0$. Suppose that $M$ is the smallest number, such that, $\beta_{b,a}\neq 0$ for some $\phi_a$ of degree $M$. If $M\leq n-1$, then since $\beta_{b,a'}=0$ for all $a'$, such that, $\operatorname{deg}(\phi_{a'})<M$, Proposition \ref{prop:blowup_period_Qexp}\,(a) yields that the coefficient in front of $Q^M (\log Q)^0$ in the expansion of the $H(X)$-component of $I(Q,\lambda)$ is
\[
\sum_{a: \operatorname{deg}(\phi_a)=M}
\frac{\lambda^{\theta+m-1/2}}{\Gamma(\theta+m+1/2)} \beta_{b,a}\phi_a.
\]
Just like before, the above expression is invariant under the analytic continuation along $\gamma_k$, while Proposition~\ref{prop:iso_ac} implies that the analytic continuation must change the sign. The conclusion is again that $\beta_{b,a}=0$ for all $a$ for which $\phi_a$ has degree~$M$. We get that all $\beta_{b,a}=0$ except possibly for~$\beta_{b,N}$. Let us postpone the analysis of $\beta_{b,N}$ and consider $\beta_{e}$ first. Recalling Proposition~\ref{prop:blowup_period_Qexp}\,(b), we get that the coefficient in front of $Q^0$ in the expansion of the $\widetilde{H}(E)$-component of $I(Q,\lambda)$ is the twisted period
\smash{$\leftexp{tw}{I}^{(-m)}_{\beta_e}(1,\lambda)$}.
Therefore, the analytic continuation of
\smash{$\leftexp{tw}{I}^{(-m)}_{\beta_e}(1,\lambda)$} along~$\gamma_k$ must be
\smash{$\leftexp{tw}{I}^{(-m)}_{-\beta_e}(1,\lambda)$}. Recalling Corollary \ref{cor:tw_refl}, we get that $\beta_e$ must be proportional to
\[
\Psi(\O_E(-k+1))=
(2\pi)^{\frac{1-n}{2}} \Gamma(\operatorname{Bl}(X)) (2\pi\ii)^{\rm deg}
\operatorname{ch}(\O_E(-k+1)).
\]
Let us prove that $\beta_{b,N}=0$.
According to Proposition \ref{prop:blowup_period_Qexp}, the coefficient in front of $Q^n (\log Q)^0$ in the expansion of the $H(X)$-component of $I(Q,\lambda)$ is
\[
\left( \frac{\lambda^{m-(n+1)/2}}{\Gamma(m+(1-n)/2)} \beta_{b,N} +
\Phi_{\beta_e}(1,\lambda) (-1)^{n-1} \right) \phi_N.
\]
Let us analytically continue the above expression along $\gamma_k$. Just like above, the analytic continuation should change the sign. However, recalling Proposition \ref{prop:mon_Phi}, we get
\[
\left( \frac{\lambda^{m-(n+1)/2}}{\Gamma(m+(1-n)/2)} \beta_{b,N} -
\Phi_{\beta_e}(1,\lambda) (-1)^{n-1} \right) \phi_N.
\]
Therefore, $\beta_{b,N}=0$ and this completes the proof of our claim that $\beta_b=0$. Moreover, we proved that $\beta=\beta_e$ is proportional to $\Psi(\O_E(-k+1))$. In order to conclude that the proportionality coefficient is $\pm 1$, we need only to check that the Euler pairing
${\langle \Psi(\O_E(-k+1)), \Psi(\O_E(-k+1))\rangle =1}$. For simplicity, let us consider only the case when $k=1$. In fact, the general case follows easily by using analytic continuation with respect to $q$ around $q=0$: the clock-wise analytic continuation transforms $\Psi_q(\O_E(-k+1))$ to $\Psi_q(\O_E(-k+1)\otimes \O(-E))=\Psi_q(\O_E(-k))$. We have
\[
\Psi_q(\O_E)=
(2\pi)^{(1-n)/2}
\Gamma(1-e)^{n-1}
q^{e} (2\pi \ii e),
\]
where $q=-Q^{-(n-1)}$ and the branch of $\log q$ is fixed in such a way that $q^{e}= e^{\pi\ii e} Q^{-(n-1)e}$. Recalling formula \eqref{euler-pairing}, after a straightforward computation, we get $\langle\Psi_q(\O_E),\Psi_q(\O_E)\rangle=1$.

\section{Mirror model for the twisted periods}\label{sec:mirror}

The goal in this section is to prove Proposition
\ref{prop:mon_Phi}. The idea is to prove that the Laplace transform of
$\Phi_\beta(Q,\lambda)$ with respect to $\lambda$ can be identified
with an appropriate oscillatory integral whose integration cycle is
swept out by a family of vanishing cycles. Once this is done, the
statement of the proposition follows easily by an elementary local
computation. It is more convenient to construct an oscillatory integral when $q:=-Q^{-(n-1)}$ is a positive real number. Therefore, let us reformulate the statement of Proposition \ref{prop:mon_Phi} by analytically continuing~$\Phi_\beta(Q,\lambda)$ with respect to $Q$ along an arc in the counter-clockwise direction connecting the rays $\RR_{>0}$ and~$\eta \RR_{>0}$, where~$\eta:=e^{\pi\ii/(n-1)}$. Note that the value of $\log Q$ will change to $\log |Q| + \tfrac{\pi\ii}{n-1}$. In other words, we will assume that $Q= \eta q^{-1/(n-1)}$ where $q\in \RR_{>0}$ is a positive real number. Note that $Q^{-(n-1) e}= e^{-\pi\ii e} q^{e}$ and that the formula for $\Phi_\beta(Q,\lambda)$ takes the form
\begin{equation}\label{phi_infty}
\Phi_\beta(q,\lambda)=
\sum_{l,d=0}^\infty
(-\partial_\lambda)^l
\big\langle \psi^{l-1}
\widetilde{I}^{(-m)}(\lambda) q^{e} e^{-\pi\ii e}\beta, 1\big\rangle_{0,2,d\ell} (-q)^d.
\end{equation}
Furthermore, it is sufficient to prove Proposition \ref{prop:mon_Phi} only in the case when $k=0$, because the general case will follow from that one by taking an appropriate analytic continuation with respect to $q$ around $q=0$. Let us assume $k=0$, so that
\[
e^{-\pi\ii e}\beta=
(2\pi)^{(1-n)/2}
\Gamma(1-e)^{n-1}
(2\pi\ii e).
\]
Note that $\gamma_0(q)$ is a simple loop approaching
$u_0(q)=(n-1)q^{1/(n-1)}\in \RR_{>0}$ along the positive real
axis. From now on we assume the above settings and denote
$\Phi_\beta(Q,\lambda)$ and $u_0(q)$ simply by respectively
$\Phi(q,\lambda)$ and $u(q)$. We have to prove that the analytic continuation of $\Phi(q,\lambda)$ along $\gamma_0(q)$ is $-\Phi(q,\lambda)$. Finally, let us point out that in the previous sections we denoted by~$q$ the sequence of Novikov variables $(q_1,\dots,q_r)$ of $X$, while in this section we denote by $q$ just a~positive real number. We will never have to deal with $X$, so there will be no confusion in doing~so.

\subsection{Contour integral}\label{sec:ci}
The Gromov--Witten invariants involved in the definition of the series $\Phi(q,\lambda)$ can be extracted from formula \eqref{blS}. Indeed, we have
\begin{align*}
\Phi(q,\lambda)={}&
\sum_{l=0}^\infty
(-\partial_\lambda)^l
\big(\leftexp{bl}{S}_l(-q,0) \widetilde{I}^{(-m)}(\lambda) q^{e} e^{-\pi\ii e}\beta,1\big)\\
 ={}&
\sum_{l=0}^\infty
(-\partial_\lambda)^l
\big( \widetilde{I}^{(-m)}(\lambda) q^{e} e^{-\pi\ii e}\beta, \leftexp{bl}{S}_l(-q,0)^T 1\big).
\end{align*}
Recall that $
\leftexp{bl}{S}(-q,0,-\partial_\lambda)^T=
\leftexp{bl}{S}(-q,0,\partial_\lambda)^{-1}$. Using formula \eqref{blS2}, we get
\begin{equation}\label{Phi:1}
\Phi(q,\lambda)=\int_{\operatorname{Bl}(X)}
\sum_{d=0}^\infty
\frac{(-1)^{dn} (-q)^d e \partial_\lambda }{
(e\partial_\lambda+d)^n
\prod_{i=1}^{d-1}(e\partial_\lambda +i)^{n-1} } \
\partial_\lambda^{d(n-1)} \
\widetilde{I}^{(-m)}(\lambda) q^{e} e^{-\pi\ii e}\beta.
\end{equation}
Using that $\theta e= e (\theta-1)$, we get the relation
$\widetilde{I}^{(-m)}(\lambda) e = e\partial_\lambda
\widetilde{I}^{(-m)}(\lambda) $ (see Lemma \ref{le:cp-homog}\,(a)),
where slightly abusing the notation we denoted by $e$ the operator of
classical cup product multiplication by $e$. Therefore,
\begin{align*}
\widetilde{I}^{(-m)}(\lambda) q^{e} e^{-\pi\ii e}\beta & =
q^{e\partial_\lambda}
(2\pi)^{(1-n)/2}
\Gamma(1-e\partial_\lambda )^{n-1}
(2\pi\ii e\partial_\lambda )
\widetilde{I}^{(-m)}(\lambda) 1\\
& =
q^{e\partial_\lambda}
(2\pi)^{(1-n)/2}
\Gamma(1-e\partial_\lambda )^{n-1}
(2\pi\ii e\partial_\lambda )
e^{-(n-1) e \partial_\lambda \partial_m}\!
\left(
\frac{\lambda^{\frac{n}{2}+m-\frac{1}{2} } }{
\Gamma\big( \frac{n}{2}+m+\frac{1}{2} \big) }
\right).
\end{align*}
Let us substitute the above formula for \smash{$\widetilde{I}^{(-m)}(\lambda)
q^{e} e^{-\pi\ii e}\beta$} in \eqref{Phi:1}. Note that everywhere the
operator $e$ comes together with the differentiation operator
$\partial_\lambda$. On the other hand, since in the entire expression only
the coefficient in front of $e^n$ contributes, we may remove
$\partial_\lambda$ from $e\partial_\lambda$ and apply to the entire
expression the differential operator $\partial_\lambda^n$, that is,
change \smash{$\partial_\lambda^{d(n-1)}$} to \smash{$\partial_\lambda^{d(n-1)
 +n}$}. We get the following formula for $\Phi(q,\lambda)$
\begin{gather*}
(2\pi)^{(1-n)/2}
2\pi\ii
\sum_{d=0}^\infty
\int_{\operatorname{Bl}(X)}
\frac{(-1)^{dn+d} q^{d+e} e^2 }{
(e+d)^n
\prod_{i=1}^{d-1}(e +i)^{n-1} }
\Gamma(1-e )^{n-1}\\
\qquad\times
\partial_\lambda^{d(n-1)+n}
e^{-(n-1) e \partial_m}
\left(
\frac{\lambda^{\frac{n}{2}+m-\frac{1}{2} } }{
\Gamma\big( \frac{n}{2}+m+\frac{1}{2} \big) }
\right).
\end{gather*}
Note that
\[
\partial_\lambda^{d(n-1)+n} \,
e^{-(n-1) e \partial_m}
\left(
\frac{\lambda^{\frac{n}{2}+m-\frac{1}{2} } }{
\Gamma\big( \frac{n}{2}+m+\frac{1}{2} \big) }
\right) =
\frac{\lambda^{-\frac{n}{2}-(n-1)(d+e)+m-\frac{1}{2} } }{
\Gamma\big(-\frac{n}{2}-(n-1)(d+e)+m+\frac{1}{2} \big) }
\]
and
\[
\Gamma(1-e) = (-e)(-e-1)\cdots (-e-d)\Gamma(-e-d) = (-1)^{d+1}
e(e+1)\cdots (e+d) \Gamma(-e-d).
\]
Since $\int_{\operatorname{Bl}(X)} e^n = (-1)^{n-1}$, we can replace
$\int_{\operatorname{Bl}(X)}$ with $(-1)^{n-1}\operatorname{Res}_{e=0}
\frac{de}{e^{n+1}}$. Note that $dn+d +(d+1)(n-1) +n-1=2dn+2n-2$ is an
even number, so that the signs that appear in our formula cancel out
exactly. We get
\begin{align*}
\Phi(q,\lambda)={}&
(2\pi)^{(1-n)/2}
2\pi\ii
\sum_{d=0}^\infty
\operatorname{Res}_{e=0}
\frac{de}{d+e}
q^{d+e}\Gamma(-d-e)^{n-1}\\
&\times
\frac{\lambda^{-\frac{n}{2}-(n-1)(d+e)+m-\frac{1}{2} } }{
\Gamma(-\frac{n}{2}-(n-1)(d+e)+m+\frac{1}{2} ) }.
\end{align*}
Let us substitute $x:=-e-d$, then the above formula becomes
\[
\Phi(q,\lambda)=
(2\pi)^{(1-n)/2}
2\pi\ii
\sum_{d=0}^\infty
\operatorname{Res}_{x=-d}
\frac{{\rm d}x}{x}
q^{-x}\Gamma(x)^{n-1}
\frac{\lambda^{-\frac{n}{2}+(n-1)x+m-\frac{1}{2} } }{
\Gamma\bigl(-\frac{n}{2}+(n-1)x+m+\frac{1}{2} \bigr) }.
\]
The sum of infinitely many residues can be replaced with an integral
of the form $\int_{\epsilon-\ii \infty}^{\epsilon+\ii \infty} {\rm d}x$, where $\epsilon >0$ is a positive real number. Let us sketch the proof of this claim. Let us fix a real number
$\delta\in \big(\tfrac{1}{2},1\big)$, such that,
$\mu:=(n-1)\big(\delta-1/2\big)\in \big(\tfrac{1}{2},1\big)$. Suppose that $K\geq 1$ is an integer. Let us consider the rectangular contour given by the boundary of the rectangle with vertices $\epsilon-\ii K, \delta-K-\ii K, \delta-K+\ii K, $ and $\epsilon+\ii K$. The contour is divided into two parts: the straight line segment $[\epsilon-\ii K,\epsilon+\ii K]$ and its complement which we denote by $C_K$ --- see Figure~\ref{Phi:contour} where these two pieces are colored respectively with blue and red.
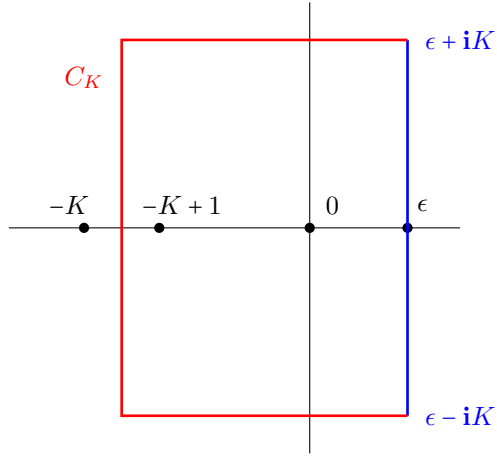
\begin{figure}[t]\centering
\begin{tikzpicture}[line width=1pt]
\filldraw (0,0) circle [fill=black, radius=0.05cm];
\draw (0.3,0.3) node {\small $0$};
\filldraw (1.3,0) circle [fill=black, radius=0.05cm];
\draw (1.5,0.3) node {\small $\epsilon$};
\filldraw (-2,0) circle [fill=black, radius=0.05cm];
\draw (-1.7,0.3) node {\small $-K+1$};
\filldraw (-3,0) circle [fill=black, radius=0.05cm];
\draw (-3.2,0.3) node {\small $-K$};
\draw[thin] (-4,0) -- (2,0);
\draw[thin] (0,-3) -- (0,3);
\draw[blue] (1.3,-2.5)--(1.3,2.5cm);
\draw[blue] (2,-2.5cm) node {\small $\epsilon-\ii K$};
\draw[blue] (2,2.5cm) node {\small $\epsilon+\ii K$};
\draw[red] (1.3,2.5cm) -- (-2.5,2.5cm)-- (-2.5,-2.5)--(1.3,-2.5);
\draw[red] (-3,2.0 cm) node {\small $C_K$};
\end{tikzpicture}

\caption{Integration contours.}\label{Phi:contour}
\end{figure}
By the Cauchy
residue formula, the integral along this contour coincides with the
partial sum $2\pi \ii \sum_{d=0}^{K-1} \operatorname{Res}_{x=-d}$. On the
other hand, using the standard asymptotic estimates for the
$\Gamma$-function (see Appendix \ref{app:bc}), one can prove
that if $\lambda>(n-1) q^{1/(n-1)}$
is a real number then the integral along $C_K$
tends to $0$ when $K\to \infty$. We get
\begin{equation}\label{Phi:2}
\Phi(q,\lambda) =
(2\pi)^{(1-n)/2}
\int_{\epsilon -\ii\infty}^{\epsilon + \ii \infty}
q^{-x}\Gamma(x)^{n-1}
\frac{\lambda^{-\frac{n}{2}+(n-1)x+m-\frac{1}{2} } }{
\Gamma\big(-\frac{n}{2}+(n-1)x+m+\frac{1}{2} \big) }
\frac{{\rm d}x}{x}.
\end{equation}
Let us denote by $G(q,\lambda)$ the right-hand side of \eqref{Phi:2}. Note that $G(q,\lambda)$, after replacing $1/x$ with $\Gamma(x)/\Gamma(x+1)$, becomes a {\em Mellin--Barnes} integral. The analytic properties of such integrals are well known (see \cite{Bat,PK2001}). Using the standard asymptotic estimates for the $\Gamma$-function it is easy to prove that the integral is convergent for all positive real $\lambda$ and that it is divergent for $\operatorname{Im}(\lambda)\neq 0$. Since the series \eqref{phi_infty}, viewed as a Laurent series in $\lambda^{-1}$, is convergent for~${|\lambda|>u(q)=(n-1) q^{1/(n-1)}}$, we get that $\Phi(q,\lambda)$ is the analytic continuation of the restriction of $G(q,\lambda)$ to the interval $\big[(n-1) q^{1/{(n-1)}}, +\infty\big)$.
\begin{Lemma}\label{le:ac_G}
The Mellin--Barnes integral $G(q,\lambda)$ is $0$ for all $0<\lambda \leq (n-1) q^{1/(n-1)}$.
\end{Lemma}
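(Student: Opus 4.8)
The plan is to analyze the Mellin--Barnes integral
\[
G(q,\lambda)=(2\pi)^{(1-n)/2}\int_{\epsilon-\ii\infty}^{\epsilon+\ii\infty}
q^{-x}\Gamma(x)^{n-1}
\frac{\lambda^{-\frac{n}{2}+(n-1)x+m-\frac12}}{\Gamma\bigl(-\frac{n}{2}+(n-1)x+m+\frac12\bigr)}\,\frac{\mathrm{d}x}{x}
\]
by shifting the vertical contour $\operatorname{Re}(x)=\epsilon$ to the \emph{right}, i.e.\ toward $\operatorname{Re}(x)=+\infty$, rather than to the left. When we move the contour from $\operatorname{Re}(x)=\epsilon$ to $\operatorname{Re}(x)=R$ with $R\to+\infty$, we pick up residues at the poles of the integrand lying in the strip $\epsilon<\operatorname{Re}(x)<R$. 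The integrand is $q^{-x}\Gamma(x)^{n-1}\lambda^{(n-1)x}\cdot(\text{entire, nonvanishing})/\bigl(x\,\Gamma(-\tfrac n2+(n-1)x+m+\tfrac12)\bigr)$; the factor $\Gamma(x)^{n-1}$ has poles only at $x\le 0$, the factor $1/x$ has a pole only at $x=0$, and $1/\Gamma(-\tfrac n2+(n-1)x+m+\tfrac12)$ is entire. Hence the integrand has \emph{no poles} in the half-plane $\operatorname{Re}(x)>0$, so each shifted contour integral equals $G(q,\lambda)$ exactly, with no residue contributions.

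\textbf{Key steps.} First I would justify that the contour $\operatorname{Re}(x)=\epsilon$ may be replaced by $\operatorname{Re}(x)=R$ for any $R>0$: since the integrand is holomorphic in the closed strip $\epsilon\le\operatorname{Re}(x)\le R$, it suffices to show the integrals over the horizontal segments $[\epsilon\pm\ii T,\ R\pm\ii T]$ tend to $0$ as $T\to+\infty$. This follows from the standard asymptotics of $\Gamma$ along horizontal lines (Stirling's formula gives $|\Gamma(\sigma+\ii T)|\sim\sqrt{2\pi}\,|T|^{\sigma-1/2}e^{-\pi|T|/2}$ uniformly for $\sigma$ in a bounded interval), exactly as in the estimate on $C_K$ already used in Section~\ref{sec:ci} and in Appendix~\ref{app:bc}: the factor $e^{-(n-1)\pi|T|/2}$ from $\Gamma(x)^{n-1}$ beats the polynomial growth from the other $\Gamma$-factors and from $q^{-x},\lambda^{(n-1)x}$, which are bounded on vertical strips. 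Second, on the line $\operatorname{Re}(x)=R$ I would estimate $|G(q,\lambda)|$ directly for fixed $0<\lambda\le(n-1)q^{1/(n-1)}$. Writing $x=R+\ii y$, the modulus of the integrand behaves (by Stirling applied to all three $\Gamma$-factors, now with large real part $R$) like
\[
C\cdot\bigl(q^{-1}\lambda^{n-1}\bigr)^{R}\cdot\frac{\Gamma(R)^{n-1}}{\Gamma((n-1)R+c)}\cdot(\text{subexponential in }y,\text{ integrable}),
\]
and the key point is that $\Gamma(R)^{n-1}/\Gamma((n-1)R+c)$ decays super-exponentially in $R$: by Stirling, $\log\bigl(\Gamma(R)^{n-1}/\Gamma((n-1)R)\bigr)=-(n-1)R\log(n-1)+O(\log R)$, so the whole bound is $\le C'\,\bigl(\tfrac{\lambda^{n-1}}{q(n-1)^{n-1}}\bigr)^{R}\cdot R^{O(1)}$. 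Since $\lambda^{n-1}\le(n-1)^{n-1}q$ exactly says the base is $\le 1$, letting $R\to+\infty$ forces $G(q,\lambda)=0$ (when $\lambda^{n-1}=(n-1)^{n-1}q$ the base equals $1$ but the polynomial-times-$R^{-(n-1)/2}$-type correction from the ratio of $\Gamma$'s still drives the bound to $0$; alternatively one treats this boundary value by continuity from $\lambda<(n-1)q^{1/(n-1)}$).

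\textbf{Main obstacle.} The delicate point is making the estimate on the shifted contour $\operatorname{Re}(x)=R$ \emph{uniform} in the imaginary direction so that the $y$-integral is controlled independently of $R$, and simultaneously extracting the correct $R$-dependence of $\Gamma(R)^{n-1}/\Gamma((n-1)R+c)$ with enough precision to see that it dominates the geometric factor $(q^{-1}\lambda^{n-1})^R$ in the critical regime $\lambda^{n-1}\approx(n-1)^{n-1}q$. This is precisely the kind of two-variable Stirling estimate (large $\operatorname{Re}(x)$ \emph{and} large $\operatorname{Im}(x)$ simultaneously) that Appendix~\ref{app:bc} is set up to handle, so I would invoke those bounds; the remaining work is bookkeeping of constants and confirming that $(n-1)\bigl(\delta-\tfrac12\bigr)$-type conditions, which guaranteed convergence on the original contour, are compatible with the shift. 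Once $G(q,\lambda)\equiv 0$ on $(0,(n-1)q^{1/(n-1)}]$ is established, Lemma~\ref{le:ac_G} is proved.
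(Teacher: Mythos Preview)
Your proposal is correct and follows essentially the same approach as the paper: shift the vertical contour to the right (no poles in $\operatorname{Re}(x)>0$), and use Stirling's formula to show the integral on $\operatorname{Re}(x)=R$ tends to $0$ as $R\to+\infty$ when $\lambda\le(n-1)q^{1/(n-1)}$. The paper's presentation differs cosmetically---it first bends the tails of the vertical line into horizontal half-lines at $\operatorname{Im}(x)=\pm 1$ (justified via arc estimates in the right half-plane, citing Dixon--Ferrar), then observes the resulting integral is independent of $\epsilon$ and lets $\epsilon\to+\infty$---but the content is the same contour shift you describe, with the same Stirling-based decay $\Gamma(R)^{n-1}/\Gamma((n-1)R)\sim(n-1)^{-(n-1)R}$ driving the vanishing.
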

\begin{proof}
Let $R>0$ be a sufficiently big positive number. Let us fix $\delta\in (0,1)$. We would like to deform the contour $\epsilon + \ii \RR$ into the contour consisting of the 3 linear pieces
$-\ii - s$ ($-\infty < s \leq -\epsilon$),
$s \ii + \epsilon$ ( $-1\leq s\leq 1$), and
$\ii + s$ ($\epsilon\leq s<+\infty$). The integral $G(q,\lambda)$ is a limit as $R\to \infty$ of the integral over $s\ii +\epsilon$ ($-T\leq s\leq T$) where $T:=\sqrt{R^2-\epsilon^2}$, while the integral over the deformed contour is a limit as $R\to \infty$ of the integral over the contour consisting of the 3 linear pieces
$-\ii - s$ ($-\sqrt{R^2-1} < s \leq -\epsilon$),
$s \ii + \epsilon$ ( $-1\leq s\leq 1$), and
$\ii + s$ \smash{$\big(\epsilon\leq s<\sqrt{R^2-1}\big)$}. The difference between the two integrals is an integral over the two arcs
\smash{$C_R\colon R e^{\ii \theta}$} ($\arcsin(1/R)\leq \theta\leq \arcsin{T/R}$) and
\smash{$\overline{C}_R\colon R e^{\ii \theta}$} $(-\arcsin(T/R)\leq \theta\leq -\arcsin{1/R})$. One has to prove that
\[
\lim_{R\to +\infty} \int_{C_R \mbox{ or } \overline{C}_R}
q^{-x}\Gamma(x)^{n-1}
\frac{\lambda^{-\frac{n}{2}+(n-1)x+m-\frac{1}{2} } }{
\Gamma\big(-\frac{n}{2}+(n-1)x+m+\frac{1}{2} \big) }
\frac{{\rm d}x}{x} =0.
\]
This is proved in the same way as in \cite[Section 5]{DF}. Namely, divide $C_R$ into two pieces
$C_{R}'\colon R e^{\ii \theta}$ ($\arcsin(1/R)\leq \theta\leq \delta$) and
$C_{R}''\colon R e^{\ii \theta}$ ($\delta \leq \theta\leq \arcsin{T/R}$) and then use the standard asymptotic estimates for the $\Gamma$-function and the assumption $|\lambda|\leq (n-1) q^{1/(n-1)}$.

Finally, to complete the proof. Note that the integral is independent of $\epsilon>0$, because the $\Gamma$-functions in $G(q,\lambda)$ do not have poles on the positive real axis. Letting $\epsilon\to +\infty$ and using again the standard asymptotic estimates for the $\Gamma$-function, we get that $G(q,\lambda)=0$ for~${\lambda\leq (n-1) q^{1/(n-1)}}$.
\end{proof}

Using the above lemma, we get
\[
\int_{u(q)}^\infty e^{-\lambda s} G(q,\lambda) {\rm d}\lambda =
\int_{0}^\infty e^{-\lambda s} G(q,\lambda){\rm d}\lambda.
\]
Substituting $G(q,\lambda)$ with the corresponding Mellin--Barnes integral, exchanging the order of integration and using that
\[
\int_{0}^\infty e^{-\lambda s}
\frac{\lambda^{-\frac{n}{2}+(n-1)x+m-\frac{1}{2} } }{
\Gamma\bigl(-\frac{n}{2}+(n-1)x+m+\frac{1}{2} \bigr) } {\rm d}\lambda =
s^{\frac{n}{2}-(n-1)x-m-\frac{1}{2}},
\]
we get
\begin{equation}\label{LT-G}
\int_{u(q)}^\infty e^{-\lambda s} G(q,\lambda) {\rm d}\lambda =
(2\pi)^{(1-n)/2}
\int_{\epsilon -\ii\infty}^{\epsilon + \ii \infty}
q^{-x}\Gamma(x)^{n-1}
s^{\frac{n}{2}-(n-1)x-m-\frac{1}{2}}
\frac{{\rm d}x}{x}.
\end{equation}

\subsection{Oscillatory integral}

Let us consider the following family of functions
\[
f(x,q)=x_1+\cdots + x_{n-2} +
\frac{q}{ x_1\cdots x_{n-2}} \big(1+x_{n-1}^2+x_n^2\big),
\]
where $q$ is a positive real number and
\[
x=(x_1,\dots, x_n)\in V:=\CC^n\setminus{\big\{
x_1\cdots x_{n-2} \big(1+x_{n-1}^2+x_n^2\big)= 0\big\} }.
\]
Let $\Gamma:=\RR^{n-2}_{>0}\times \RR^2 \subset V$, that is, $\Gamma$
is the real $n$-dimensional cycle in $V$ consisting of points $x=(x_1,\dots,x_n)$,
such that, the first $n-2$ coordinates are positive real numbers and
the last two ones are arbitrary real numbers. Note that the cycle
$\Gamma$ belongs to the following group of
semi-infinite homology cycles:
\[
\varprojlim H_n (V, \operatorname{Re}(f(x,q))>M,\ZZ)\cong \ZZ^{n-1},
\]
where the inverse limit is taken over all $M\in \RR$.
\begin{Proposition}\label{prop:oscil_int}
Under the above notation the following identity holds:
\[
2\ii
\int_{\Gamma} e^{-f(x,q)}
\frac{{\rm d}x_1\wedge \cdots \wedge {\rm d}x_n}{
x_1\cdots x_{n-2} (1+x_{n-1}^2+x_n^2) } =
\int_{\epsilon-\ii\infty}^{\epsilon+\ii\infty}
q^{-x} \Gamma(x)^{n-1} \frac{{\rm d}x}{x},
\]
where the orientation of $\Gamma$ is induced from the standard
orientation on $\RR^n$.
\end{Proposition}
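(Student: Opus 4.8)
The plan is to evaluate the oscillatory integral on the left-hand side by iterated one-variable integration, first integrating out $x_{n-1}$ and $x_n$ to produce the factor $2\ii$ (and a Bessel/Gamma-type expression), then performing the Mellin transform in the remaining variables $x_1,\dots,x_{n-2}$ to recover the Mellin--Barnes integral. First I would deal with the two "quadratic" variables: on $\Gamma$ the inner integral over $(x_{n-1},x_n)\in\RR^2$ is
\[
\int_{\RR^2} \frac{e^{-\frac{q}{x_1\cdots x_{n-2}}(x_{n-1}^2+x_n^2)}}{1+x_{n-1}^2+x_n^2}\, {\rm d}x_{n-1}\,{\rm d}x_n,
\]
up to the overall factor $e^{-(x_1+\cdots+x_{n-2}) - q/(x_1\cdots x_{n-2})}$. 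Passing to polar coordinates $x_{n-1}^2+x_n^2=r$ turns this into a one-dimensional integral $\pi\int_0^\infty \frac{e^{-ar}}{1+r}\,{\rm d}r$ with $a=q/(x_1\cdots x_{n-2})$, which is (a constant multiple of) $e^{a}\Gamma(0,a)$, the incomplete Gamma function. The factor $2\ii$ on the left should emerge once this is expressed as a Mellin--Barnes contour integral: using the Mellin--Barnes representation
\[
\frac{1}{1+r} = \frac{1}{2\pi\ii}\int_{c-\ii\infty}^{c+\ii\infty} \Gamma(s)\Gamma(1-s)\, r^{-s}\, {\rm d}s,\qquad 0<c<1,
\]
and then $\int_0^\infty e^{-ar} r^{-s}\, {\rm d}r = \Gamma(1-s)\, a^{s-1}$, one gets that the $(x_{n-1},x_n)$-integral equals a contour integral in $s$ whose integrand carries $\Gamma(1-s)^2\Gamma(s)\, a^{s-1}$.

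Next I would integrate over $x_1,\dots,x_{n-2}\in\RR_{>0}$. Writing $a^{s-1} = q^{s-1}(x_1\cdots x_{n-2})^{1-s}$ and recalling the remaining exponential weight $e^{-\sum x_i}\, e^{-q/(x_1\cdots x_{n-2})}$ — wait, the $q/(x_1\cdots x_{n-2})$ term has already been absorbed into $a$ via the $x_{n-1},x_n$ integral, so what is left is just $e^{-(x_1+\cdots+x_{n-2})}$ times powers of the $x_i$. Thus $\int_{\RR_{>0}^{n-2}} e^{-\sum_i x_i} (x_1\cdots x_{n-2})^{1-s}\,\prod {\rm d}x_i = \Gamma(2-s)^{n-2}$. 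Collecting everything, the left-hand side becomes a single contour integral in $s$ with integrand proportional to $q^{s-1}\Gamma(s)\Gamma(1-s)^2\Gamma(2-s)^{n-2}$ divided by the appropriate elementary factors, and a change of variable $x = 1-s$ (or $x=2-s$, to be pinned down by matching the pole at $x=0$ and the location $\operatorname{Re}(x)=\epsilon$ of the contour) should turn this into $\int_{\epsilon-\ii\infty}^{\epsilon+\ii\infty} q^{-x}\Gamma(x)^{n-1}\,\frac{{\rm d}x}{x}$, reproducing the right-hand side including the constant $2\ii$. The shift of contours is legitimate because the integrands decay along vertical lines (standard Stirling estimates, already invoked in Lemma~\ref{le:ac_G} and Appendix~\ref{app:bc}), and no poles are crossed.

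The main obstacle, and the step requiring the most care, is bookkeeping of the constants and the exact shape of the Gamma factors after the two successive Mellin transforms: one must check that the spurious factors $\Gamma(1-s)$, $\Gamma(2-s)$ assemble into exactly $\Gamma(x)^{n-1}/x$ after the substitution, and that the contour $\operatorname{Re}(s)=c$ coming from the Mellin--Barnes representation of $1/(1+r)$ can be moved to the line $\operatorname{Re}(x)=\epsilon$ with $\epsilon>0$ without picking up residues. A secondary technical point is justifying the interchange of the order of integration (Fubini) between the $s$-contour and the $\RR_{>0}^{n-2}\times\RR^2$ integration; this should follow from absolute convergence once the real parts are chosen in the correct ranges ($0<c<1$ suffices to make $\Gamma(s)\Gamma(1-s)r^{-s}$ integrable against $e^{-ar}$, and $\operatorname{Re}(2-s)>0$ makes the $x_i$-integrals converge). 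A final consistency check is that the semi-infinite cycle $\Gamma=\RR_{>0}^{n-2}\times\RR^2$ indeed represents a class in $\varprojlim H_n(V,\operatorname{Re}(f)>M)$ so that the oscillatory integral converges; this is immediate since $\operatorname{Re}(f)\to+\infty$ as $\|x\|\to\infty$ along $\Gamma$.
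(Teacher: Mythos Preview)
Your ``wait'' moment is where the argument breaks. The exponent in $f$ is
\[
\frac{q}{x_1\cdots x_{n-2}}\bigl(1+x_{n-1}^2+x_n^2\bigr)
= \frac{q}{x_1\cdots x_{n-2}} + \frac{q}{x_1\cdots x_{n-2}}\bigl(x_{n-1}^2+x_n^2\bigr),
\]
so after integrating out $x_{n-1},x_n$ only the second summand is absorbed into your $a r$; the factor $e^{-a}=e^{-q/(x_1\cdots x_{n-2})}$ remains. With that factor present the $x_1,\dots,x_{n-2}$ integral is
\[
\int_{\RR_{>0}^{n-2}} e^{-\sum_i x_i - q/(x_1\cdots x_{n-2})}\,(x_1\cdots x_{n-2})^{-s}\,\frac{{\rm d}x_1\cdots {\rm d}x_{n-2}}{x_1\cdots x_{n-2}},
\]
which does \emph{not} factor into a product of one-variable Gamma integrals (note also that the volume form contributes $\prod {\rm d}x_i/x_i$, not $\prod {\rm d}x_i$, so even without the $e^{-a}$ your $\Gamma(2-s)^{n-2}$ would be $\Gamma(1-s)^{n-2}$). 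Your Mellin--Barnes route for $1/(1+r)$ therefore does not lead to a closed expression in a single contour variable.

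The paper sidesteps this by combining $e^{-a}$ with the $(x_{n-1},x_n)$-integral \emph{before} any Mellin machinery: in polar coordinates the inner integral is $\pi\int_1^\infty e^{-au}\,{\rm d}u/u$ (substitute $u=1+r^2$). Then the remaining $(x_1,\dots,x_{n-2})$-integral, for fixed $u$, is exactly
\[
J(qu):=\int_{\RR_{>0}^{n-2}} \exp\Bigl(-\sum_i x_i - \tfrac{qu}{x_1\cdots x_{n-2}}\Bigr)\,\frac{{\rm d}x_1\cdots {\rm d}x_{n-2}}{x_1\cdots x_{n-2}},
\]
whose Mellin transform is $\Gamma(x)^{n-1}$. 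Mellin inversion for $J$ followed by the elementary evaluation $\int_1^\infty u^{-x}\,{\rm d}u/u = 1/x$ produces the factor $1/x$ and the constant $2\ii$ directly, with no bookkeeping of shifted Gamma factors or contour moves needed.
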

\begin{proof}
Let us integrate out $x_{n-1}$ and $x_{n}$. Using polar coordinates
$x_{n-1}= r \cos \theta$ and $x_n=r\sin \theta$, since ${\rm d}x_{n-1}\wedge
{\rm d}x_n=r {\rm d}r\wedge\theta$, we get
\[
\int_{\RR^2} e^{-K (1+x_{n-1}^2+x_n^2)}
\frac{{\rm d}x_{n-1}\wedge {\rm d}x_n}{
1+x_{n-1}^2+x_n^2} =
\int_0^\infty e^{-K(1+r^2)}\int_{0}^{2\pi}
\frac{r {\rm d}r\wedge {\rm d}\theta }{1+r^2} =
\pi \int_1^\infty e^{-Ku} \frac{{\rm d}u}{u},
\]
where $K$ is a positive real number and for the second equality we used
the substitution $u=1+r^2$. Applying the above formula to our
oscillatory integral, we get
\begin{gather}
\int_{\Gamma} e^{-f(x,q)}
\frac{{\rm d}x_1\wedge \cdots \wedge {\rm d}x_n}{
x_1\cdots x_{n-2} \big(1+x_{n-1}^2+x_n^2\big) }\nonumber\\
\qquad = \pi
\int_{\RR^{n-2}_{>0}} \ \int_1^\infty
e^{-(
x_1+\cdots + x_{n-2} + \frac{q u}{ x_1\cdots x_{n-2}} )}
\frac{{\rm d}u}{u}\ \frac{{\rm d}x_1\cdots {\rm d}x_{n-2}}{x_1\cdots x_{n-2}},\label{osc_int:1}
\end{gather}
where ${\rm d}x_1\cdots {\rm d}x_{n-2}$ is the standard Lebesgue measure on
$\RR^{n-2}_{>0}$. On the other hand, let us recall the oscillatory
integral
\[
J(q):= \int_{\RR^{n-2}_{>0}}
\exp\left( -\left(
x_1+\cdots + x_{n-2} + \frac{q}{x_1\cdots x_{n-2}}
\right) \right) \frac{{\rm d}x_1\cdots {\rm d}x_{n-2}}{x_1\cdots x_{n-2}}.
\]
Note that the Mellin transform of $J(q)$ is
\[
\{\mathcal{M}J\}(x) = \int_0^\infty q^{x-1} J(q) {\rm d}q = \Gamma(x)^{n-1}.
\]
Recalling the Mellin inversion theorem, we get
\[
J(q) = \frac{1}{2\pi \ii} \int_{\epsilon-\ii\infty}^{\epsilon+\ii\infty} q^{-x}
\Gamma(x)^{n-1} {\rm d}x,
\]
where $\epsilon>0$ is a positive real number. Let us apply the above
formula to \eqref{osc_int:1}. Namely, on the right-hand side of \eqref{osc_int:1},
after exchanging the order of the integration, we get
\[
\pi \int_1^\infty J(qu) \frac{{\rm d}u}{u} = \frac{1}{2\ii}
\int_1^\infty
\int_{\epsilon-\ii\infty}^{\epsilon+\ii\infty} (q u) ^{-x}
\Gamma(x)^{n-1} {\rm d}x \frac{{\rm d}u}{u}.
\]
Exchanging again the order of integration and using that
\[
\int_1^\infty u^{-x} \frac{{\rm d}u}{u} = \left.
\frac{u^{-x} }{-x}\right|^{u=\infty}_{u=1} = \frac{1}{x},
\]
we get the formula stated in the proposition.
\end{proof}

\subsection{Laplace transform}
The function $f(x,q)$ has a minimum over $x\in \Gamma$ achieved at the
critical point $x_1=\cdots = x_{n-2}=q^{1/(n-1)}$,
$x_{n-1}=x_{n-2}=0$. Note that the corresponding critical value is
$u(q) = (n-1) q^{1/(n-1)}$. Let us consider the map $\Gamma\to
[u(q),+\infty)$, $x\mapsto f(x,q)$. The fiber over $\lambda \in
(u(q),+\infty)$ is the real algebraic hypersurface $\Gamma_\lambda\subset
\Gamma$ defined by
\[
x_1+\cdots + x_{n-2} +
\frac{q}{x_1\cdots x_{n-2}}
\bigl(1+x_{n-1}^2+x_n^2\bigr) = \lambda.
\]
It is easy to see that $\Gamma_\lambda$ is compact and it has the homotopy
type of a sphere. Indeed, the map
\[
\Gamma\setminus{\{u(q)\}}\to (u(q),+\infty),\qquad
x\mapsto f(x,q)
\]
is proper and regular. Therefore, according to the
Ehresmann's fibration theorem, it must be a~locally trivial fibration and hence
a trivial fibration, because $(u(q),+\infty)$ is a contractible
manifold. If $\lambda$ is sufficiently close to $u(q)$, then
$\Gamma_\lambda$ is contained in a Morse coordinate neighborhood of the
critical point $\big(q^{1/(n-1)},\dots, q^{1/(n-1)},0,0\big)$. Switching to
Morse coordinates for $f$, we get that the fiber $\Gamma_\lambda$ is
diffeomorphic to the $(n-1)$-dimensional sphere.

Let use denote by $\Gamma_{\leq \lambda}$ the subset of $\Gamma$ defined by
the inequality
\[
x_1+\cdots + x_{n-2} +
\frac{q}{x_1\cdots x_{n-2}}
\big(1+x_{n-1}^2+x_n^2\big) \leq \lambda.
\]
Note that $\Gamma_{\leq \lambda}$ is a manifold with boundary and its
boundary is precisely $\partial \Gamma_{\leq
 \lambda}=\Gamma_\lambda$. Put
\[
\mathcal{I}(q,\lambda) :=
\int_{\Gamma_{\leq \lambda}} \frac{
(\lambda-f(x,q))^{m-\frac{n}{2}-\frac{1}{2}} }{
\Gamma\big( m-\frac{n}{2}+\frac{1}{2} \big) }
\omega,
\]
where
\[
\omega:= \frac{{\rm d}x_1\wedge \cdots \wedge {\rm d}x_n}{
x_1\cdots x_{n-2}\big(1+x_{n-1}^2+ x_n^2\big) }.
\]
\begin{Lemma}\label{le:Lapl_I}
The following formula holds:
\[
\int_\Gamma e^{-f(x,q) s} \omega=
s^{m-\frac{n}{2}+\frac{1}{2}}
\int_{u(q)}^\infty e^{-\lambda s} \mathcal{I}(q,\lambda) {\rm d}\lambda.
\]
\end{Lemma}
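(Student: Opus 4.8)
The plan is to deduce this from the elementary formula for the Laplace transform of a Riemann--Liouville fractional integral. Put $\nu:=m-\tfrac{n}{2}+\tfrac12$; recall $m$ was chosen so large that $\widetilde\theta+m+\tfrac12$ has only positive eigenvalues, and since $\nu$ is exactly the eigenvalue of $\widetilde\theta+m+\tfrac12$ on the degree-$n$ class $\phi_N$, we have $\nu>0$, so $\Gamma(\nu)$ and all the powers occurring below are defined. It suffices to prove the formula for $s$ real and positive: both sides are holomorphic in $s$ on $\operatorname{Re}(s)>0$ — here one uses that $f(x,q)\geq u(q)>0$ on $\Gamma$, that $f$ is proper onto $[u(q),+\infty)$ (noted above), and that $e^{-f\operatorname{Re}(s)}$ dominates the mild singularities of $\omega$ along the locus $x_1\cdots x_{n-2}(1+x_{n-1}^2+x_n^2)=0$ approached inside $\Gamma$, where $f\to+\infty$ — so the identity propagates from $s>0$ by analytic continuation.

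First I would replace the integral over $\Gamma$ by an integral over $[u(q),+\infty)$. Since $f\colon\Gamma\setminus\{x_\ast\}\to(u(q),+\infty)$ is a proper submersion (the Ehresmann argument recorded above, with $x_\ast=\big(q^{1/(n-1)},\dots,q^{1/(n-1)},0,0\big)$ the minimum point), the positive density $\omega|_\Gamma$ admits a well-defined pushforward measure $\mathrm{d}\sigma$ on $[u(q),+\infty)$ characterized by $\int_\Gamma (g\circ f)\,\omega=\int_{u(q)}^\infty g(\lambda)\,\mathrm{d}\sigma(\lambda)$ for bounded measurable $g$ (the coarea formula for the fibration by level sets of $f$). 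Taking $g(\lambda)=e^{-\lambda s}$ gives $\int_\Gamma e^{-f(x,q)s}\omega=\int_{u(q)}^\infty e^{-\lambda s}\,\mathrm{d}\sigma(\lambda)$; applying the same identity to $\Gamma_{\leq\lambda}=f^{-1}\big([u(q),\lambda]\big)$ yields
\[
\mathcal{I}(q,\lambda)=\frac{1}{\Gamma(\nu)}\int_{u(q)}^{\lambda}(\lambda-\mu)^{\nu-1}\,\mathrm{d}\sigma(\mu),
\]
so $\mathcal{I}(q,\cdot)$ is the fractional integral of order $\nu$ of $\mathrm{d}\sigma$. It then remains to compute the right-hand side of the lemma. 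Since $s>0$ and all integrands are non-negative, Tonelli applies and
\begin{align*}
s^{\nu}\int_{u(q)}^\infty e^{-\lambda s}\,\mathcal{I}(q,\lambda)\,\mathrm{d}\lambda
&=\frac{s^{\nu}}{\Gamma(\nu)}\int_{u(q)}^\infty \mathrm{d}\sigma(\mu)\int_{\mu}^\infty e^{-\lambda s}(\lambda-\mu)^{\nu-1}\,\mathrm{d}\lambda\\
&=\frac{s^{\nu}}{\Gamma(\nu)}\int_{u(q)}^\infty e^{-\mu s}\,\Gamma(\nu)\,s^{-\nu}\,\mathrm{d}\sigma(\mu)
=\int_{u(q)}^\infty e^{-\mu s}\,\mathrm{d}\sigma(\mu),
\end{align*}
where the inner integral was evaluated by the substitution $\lambda=\mu+t$ together with $\int_0^\infty e^{-ts}t^{\nu-1}\,\mathrm{d}t=\Gamma(\nu)s^{-\nu}$. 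By the previous display this equals $\int_\Gamma e^{-f(x,q)s}\omega$, which is the claim.

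The only real obstacle is the measure-theoretic bookkeeping: verifying that $\omega|_\Gamma$ is a genuine locally finite positive density, that its pushforward under $f$ is a well-defined $\sigma$-finite Borel measure on $[u(q),+\infty)$ (finiteness on $[u(q),A]$ following from compactness of $\Gamma_{\leq A}$ inside the domain of $\omega$), that all integrals converge for $\operatorname{Re}(s)>0$ despite the behaviour of $\omega$ near the hypersurface excluded from $V$, and that the interchange of the $\lambda$- and $\mu$-integrations is legitimate. None of this is deep given that $f$ is proper with $f\to+\infty$ at that locus, but it is where the argument must be made precise. Alternatively one can bypass pushforward measures altogether by slicing $\Gamma$ along the level sets of $f$ via the Gelfand--Leray form $\omega/\mathrm{d}f$, which leads to the same computation; in either approach the engine is simply the Laplace transform of $t^{\nu-1}$.
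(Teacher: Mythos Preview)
Your proof is correct and is essentially the same as the paper's: both slice $\Gamma$ by the level sets of $f$, write $\mathcal{I}(q,\lambda)$ as a Riemann--Liouville fractional integral of the fiber measure, swap the order of integration, and evaluate the inner Laplace transform $\int_\mu^\infty e^{-\lambda s}(\lambda-\mu)^{\nu-1}\,\mathrm{d}\lambda=\Gamma(\nu)s^{-\nu}$. The only cosmetic difference is that the paper works directly with the Gelfand--Leray form $\omega/\mathrm{d}f$ on $\Gamma_\mu$ (the alternative you mention at the end) rather than the abstract pushforward measure $\mathrm{d}\sigma$.
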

\begin{proof}
Using Fubini's theorem, we transform
\[
\mathcal{I}(q,\lambda) = \int_{u(q)}^\lambda
\frac{(\lambda-\mu)^{m-\frac{n}{2}-\frac{1}{2}} }{
\Gamma\big(m-\frac{n}{2}+\frac{1}{2}\big) }
\int_{\Gamma_\mu} \frac{\omega}{{\rm d}f} {\rm d}\mu.
\]
Therefore,
\[
\int_{u(q)}^\infty e^{-\lambda s} \mathcal{I}(q,\lambda) {\rm d}\lambda =
\int_{u(q)}^\infty
\int_{u(q)}^\lambda \ e^{-\lambda s}
\frac{(\lambda-\mu)^{m-\frac{n}{2}-\frac{1}{2}} }{
\Gamma\big(m-\frac{n}{2}+\frac{1}{2}\big) }
\int_{\Gamma_\mu} \frac{\omega}{{\rm d}f} {\rm d}\mu {\rm d}\lambda.
\]
Exchanging the order of the integration, we get
\[
\int_{u(q)}^\infty
\left(
\int_\mu^\infty
e^{-\lambda s}
\frac{(\lambda-\mu)^{m-\frac{n}{2}-\frac{1}{2}} }{
\Gamma\big(m-\frac{n}{2}+\frac{1}{2}\big) } {\rm d}\lambda \right)
\int_{\Gamma_\mu} \frac{\omega}{{\rm d}f} {\rm d}\mu =
s^{-m+\frac{n}{2} -\frac{1}{2}}
\int_{u(q)}^\infty
e^{-\mu s}
\int_{\Gamma_\mu} \frac{\omega}{{\rm d}f} {\rm d}\mu.
\]
Recalling again Fubini's theorem we get that the above iterated
integral coincides with \[\int_\Gamma e^{-f(x,q) s} \omega.\] The
formula stated in the lemma follows.
\end{proof}

According to the above lemma, the Laplace transform of the integral
$\mathcal{I}(q,\lambda)$ is given by the following formula:
\[
\int_{u(q)}^\infty e^{-\lambda s} \mathcal{I}(q,\lambda)=
s^{-m+\frac{n}{2} -\frac{1}{2}}
\int_\Gamma e^{-f(x,q) s} \omega=:F(s).
\]
Let us recall Proposition \ref{prop:oscil_int} and note that $f(x,q)$
has the following rescaling symmetry:
\[
f\big(s\cdot x, s^{n-1} q\big)= f(x,q)s,
\]
 where
\[
s\cdot (x_1,\dots, x_n) = (s x_1,\dots, s x_{n-2}, x_{n-1}, x_n).
\]
Note that if $s>0$ is a positive real number, then the integration
cycle and the holomorphic form~$\omega$ are invariant under the
rescaling action by $s$. Therefore, the formula from Proposition~\ref{prop:oscil_int} yields the following formula:
\[
\int_{\Gamma} e^{-f(x,q) s} \omega = \frac{1}{2\ii}
\int_{\epsilon-\ii\infty}^{\epsilon+\ii\infty}
q^{-x} \Gamma(x)^{n-1} s^{-(n-1) x} \frac{{\rm d}x}{x}.
\]
Therefore, the function
\[
F(s) = \frac{1}{2\ii}
\int_{\epsilon-\ii\infty}^{\epsilon+\ii\infty}
q^{-x} \Gamma(x)^{n-1} s^{-(n-1) x -m+\frac{n}{2} -\frac{1}{2} } \frac{{\rm d}x}{x}.
\]
Comparing the above formula with \eqref{LT-G} and using that the Laplace transformation is injective on smooth functions, we get that
$G(q,\lambda)= 2\ii (2\pi)^{(n-1)/2} \I(q,\lambda)$. Finally, in order
to complete the proof of Proposition \ref{prop:mon_Phi}, we need only
to check that the analytic continuation of $\I(q,\lambda)$ around
$\lambda=u(q)=(n-1) q^{1/(n-1)}$ transforms $\I(q,\lambda)$ into
$-\I(q,\lambda)$. This however is a local computation. Indeed, if
$\lambda$ is sufficiently close to $(n-1) q^{1/(n-1)}$, then the
integration cycle defining $\I(q,\lambda)$ is sufficiently close to
the critical point $\big(q^{1/(n-1)},\dots, q^{1/(n-1)},0,0\big)$. By
switching to Morse coordinates, we get
\[
\int_{\Gamma_\mu} \omega/df = (\mu-u(q))^{\frac{n}{2}-1} P(q,\mu),
\]
where $P(q,\mu)$ is holomorphic at $\mu=u(q)$
(see \cite[Section 12.1, Lemma 2]{AGuV}). Therefore,
\begin{equation}\label{vanishing_period}
\mathcal{I}(q,\lambda) = \int_{u(q)}^\lambda
\frac{(\lambda-\mu)^{m-\frac{n}{2}-\frac{1}{2}} }{
 \Gamma\big(m-\frac{n}{2}+\frac{1}{2}\big) }
(\mu-u(q))^{\frac{n}{2}-1} P(q,\mu){\rm d}\mu.
\end{equation}
Changing the variables $\mu-u(q) = t (\lambda-u(q))$, we get
$\lambda-\mu= (1-t)(\lambda-u(q))$ and ${\rm d}\mu = (\lambda-u(q)) {\rm d}t$, we
get
\begin{align*}
\int_{u(q)}^\lambda
\frac{(\lambda-\mu)^{m-\frac{n}{2}-\frac{1}{2}} }{
 \Gamma\big(m-\frac{n}{2}+\frac{1}{2}\big) }
(\mu-u(q))^{i+\frac{n}{2}-1} {\rm d}\mu & = \int_0^1
\frac{(1-t)^{m-\frac{n}{2}-\frac{1}{2}} }{
 \Gamma\big(m-\frac{n}{2}+\frac{1}{2}\big) } t^{i+\frac{n}{2}-1} {\rm d}t \,
 (\lambda-u(q))^{i+m-1/2}
 \\
 &
 = \Gamma(i+n/2)
\frac{(\lambda-u(q))^{i+m-1/2} }{ \Gamma(i+m+1/2)}.
\end{align*}
Substituting the Taylor series expansion of
$P(q,\mu)=\sum_{i=0}^\infty P_i(q) (\mu-u(q))^i$ at $\mu=u(q)$
in \eqref{vanishing_period} and using the above formula, we get
\[
\I(q,\lambda) = (\lambda-u(q))^{m-1/2}
\sum_{i=0}^\infty
\frac{\Gamma(i+n/2)}{ \Gamma(i+m+1/2)} P_i(q)
(\lambda-u(q))^i.
\]
The above expansion is clearly anti-invariant under the analytic
continuation around $\lambda=u(q)$.

\appendix

\section{Bending the contour} \label{app:bc}
For the sake of completeness we would like to prove that if $\lambda$ is a positive real number, such that, $\lambda>(n-1) q^{1/(n-1)}$, then
\[
\lim_{K\to +\infty} \int_{C_K}
q^{-x}\lambda^{(n-1)x}
\frac{ \Gamma(x)^{n-1} }{
\Gamma\bigl(-\frac{n}{2}+(n-1)x+m+\frac{1}{2} \bigr) }
\frac{{\rm d}x}{x} =0,
\]
where $C_K$ is the contour defined in Section \ref{sec:ci} (see Figure \ref{Phi:contour}). The integrand of the above integral differs from the integrand in \eqref{Phi:2} by the constant factor \smash{$\lambda^{-\frac{n}{2}+m-\frac{1}{2}}$}. Therefore, the vanishing result needed in the derivation of \eqref{Phi:2} follows from the above statement.

Let us consider first the upper horizontal part of $C_K$, that is, $x=a + \ii K$, $\delta-K\leq a \leq \epsilon$. The estimate in this case is a direct consequence of the Stirling's formula for the gamma function. Namely, recall that if $x=a+\ii b\notin (-\infty, 0]$, then
\[
|\Gamma(x)|=
\sqrt{2\pi}
e^{-a-|b| |{\operatorname{Arg}}(x)|}|x|^{a-1/2}(1+o(1)),
\]
where $-\pi< \operatorname{Arg}(x)<\pi$ and $o(1)\to 0$ uniformly when $|x|\to \infty$ in any proper subsector $-\pi<\alpha\leq \operatorname{Arg}(x)\leq \beta<\pi$.
Put $c:=-\tfrac{n}{2} + m +\tfrac{1}{2}$. Using Stirling's formula, we get
\begin{align*}
|\Gamma((n-1)x+c)|={}& \sqrt{2\pi}
e^{-(n-1) a -c -(n-1) |b| |{\operatorname{Arg}}(x+c/(n-1))|}\\
&\times
|(n-1)x+c|^{(n-1)a+c-1/2}(1+o(1)).
\end{align*}
Note that $|{\operatorname{Arg}}(x+c/(n-1))|\leq |{\operatorname{Arg}}(x)|$ because we may choose $m$ so big that $c>0$ while
\[
|(n-1)x+c|^{(n-1)a+c-1/2}= (n-1)^{(n-1) a}
|x|^{(n-1) a + c-1/2} O(1).
\]
Moreover, both $(n-1)x+c$ and $x$ belong to the sector $-\frac{3\pi}{4}\leq \operatorname{Arg}(x)\leq \frac{3\pi}{4}$ for all $x$ in the horizontal integration contour. Therefore, we have an estimate of the form
\[
|\Gamma((n-1)x+c)|^{-1}\leq \operatorname{const}
(n-1)^{-(n-1) a}
|x|^{-(n-1) a - c+1/2}
e^{(n-1) a +(n-1) |b| |{\operatorname{Arg}}(x+c/(n-1))|},
\]
for all $x$ in the upper horizontal part of $C_K$, where the constant is independent of $K$.
Note that~${|q^{-x} \lambda^{(n-1)x}|=q^{-a} \lambda^{(n-1)a}}$. Combining all these estimates together, we get that the absolute value of the integrand along the upper horizontal contour can be bounded from above by
\[
\operatorname{const}
((n-1)q^{1/(n-1)} /\lambda)^{(n-1) a}
|x|^{-m-1/2} |{\rm d}a|\leq
\operatorname{const}
K^{-m-1/2} |{\rm d}a|,
\]
where we used that $\lambda>(n-1) q^{1/(n-1)}$ and $|x|^2\leq K^2+(K+\epsilon-\delta)^2\leq (1+|\epsilon-\delta|)^2 K^2$ for all~${x=a+\ii K}$ ($\delta-K\leq a\leq \epsilon$). Therefore, up to a constant independent of $K$ the integral is bounded by $K^{-m+1/2}$ which proves that the integral vanishes in the limit $K\to \infty$.

The estimate for the lower horizontal part of $C_K$, that is, $x=a-\ii K$, $\delta-K\leq a\leq \epsilon$ is the same as above. Let us consider the vertical part $x=\delta-K+\ii b$, $-K\leq b\leq K$. In order to apply Stirling's formula, let us first recall the reflection formula for the gamma function
\[
\Gamma(x)=
\Gamma(1-x)^{-1}
\frac{2\pi \ii}{e^{2\pi\ii x} -1}
e^{\pi\ii x}.
\]
If $x$ is on the vertical part of the integration contour, then $-x$ belongs to a proper subsector of~${-\pi< \operatorname{Arg}(x)<\pi}$ in which the Stirling's formula for $\Gamma(1-x)= (-x) \Gamma(-x)$ can be applied, that~is,
\[
|\Gamma(x)|= \sqrt{2\pi}
\frac{e^{-\pi b}}{\big|e^{2\pi\ii a} e^{-2\pi b}-1\big|}
|x|^{a-1/2}
e^{-a+|b| |{\operatorname{Arg}}(-x)|}
(1+o(1)),
\]
where $x=a+\ii b$. Similarly,
\begin{align*}
|\Gamma((n-1)x+c)| ={} &
\sqrt{2\pi}
\frac{e^{-\pi (n-1) b}}{\big|e^{2\pi\ii ((n-1)a+c)} e^{-2\pi (n-1)b}-1\big|}
|(n-1)x+c|^{(n-1)a+c-1/2}\\
&
\times
e^{-(n-1)a-c+(n-1)|b| |{\operatorname{Arg}}(-x-c/(n-1))|}
(1+o(1)).
\end{align*}
Note that if $x=a+\ii b$ is on the integration contour, then $a=\delta-K$ and $(n-1)a+c= \mu +m-K$, where $\mu=(n-1)(\delta-1/2)$. Therefore, $e^{2\pi\ii a}=e^{2\pi\ii\delta}$ and
$e^{2\pi\ii ((n-1)a+c)} = e^{2\pi\ii \mu}$ are constants independent of $K$. Moreover, we chose both $\mu$ and $\delta$ to be non-integers, so $e^{2\pi\ii\delta}-1$ and $e^{2\pi\ii\mu}-1$ are non-zero.
We get
\begin{align*}
\frac{|\Gamma(x)|^{n-1}}{
|\Gamma((n-1)x+c) x| }\leq {}& \operatorname{const}
\frac{\big| e^{2\pi\ii \mu} e^{-2\pi (n-1) b}-1\big|}{
\big| e^{2\pi\ii \delta} e^{-2\pi b}-1\big|^{n-1}}
\frac{|x|^{(n-1)(a-1/2)-1}}{
|(n-1)x+c|^{(n-1)a+c-1/2}} \\
&
\times
e^{(n-1)|b|(
|{\operatorname{Arg}}(-x))|-|{\operatorname{Arg}}(-x-c/(n-1) )|}
(1+o(1)).
\end{align*}
The first fraction is clearly a bounded function in $b\in \RR$. For the second one, we have
\[
\frac{|x|^{(n-1)(a-1/2)-1}}{
|(n-1)x+c|^{(n-1)a+c-1/2}}\leq \operatorname{const}
\frac{|x|^{-m-1/2}}{
(n-1)^{(n-1)a}}.
\]
Finally, for the exponential term, let us look at the triangle formed by vectors $-x$ and $-x-c/(n-1)$. The area of this triangle is
$\frac{|b|c}{2(n-1)}$. On the other hand, the difference $\theta:=|{\operatorname{Arg}}(-x))|-|{\operatorname{Arg}}(-x-c/(n-1) )|$ as $K\rightarrow \infty$ tends to $0$ uniformly in $x=\delta-K+\ii b$ for $|b|\leq K$. Therefore, up to a constant independent of $K$ we can bound $\theta$ from above by $\sin \theta$. Using that the area of the triangle is also $\tfrac{1}{2} |x| |x+c/(n-1)| \sin\theta$, we get
\begin{gather*}
(n-1)|b|(
|{\operatorname{Arg}}(-x))|-|{\operatorname{Arg}}(-x-c/(n-1) )| \\
\qquad=
(n-1)|b|\theta \leq
\operatorname{const} |b|\sin\theta \leq
\operatorname{const}
\frac{b^2 c}{|x| |(n-1)x+c|}.
\end{gather*}
The above expression is bounded by a constant independent of $K$. We get the following estimate:
\[
\frac{|\Gamma(x)|^{n-1}}{
|\Gamma((n-1)x+c) x| }\leq
\operatorname{const}
K^{-m-1/2}
(n-1)^{(n-1) K}
\]
for all $x=\delta-K + \ii b$, $-K\leq b\leq K$, where the constant is independent of $K$. Finally, since $\big|q^{-x}\lambda^{(n-1)x}\big|=q^{-a}\lambda^{(n-1)a}$, we get the following estimate:
\[
\left|
q^{-x}\lambda^{(n-1)x}
\frac{\Gamma(x)^{n-1}}{
\Gamma((n-1)x+c) x }
\right| \leq \operatorname{const}
\big((n-1) q^{\frac{1}{n-1}}/\lambda\big)^{(n-1)K}
K^{-m-1/2}.
\]
Since $\lambda> (n-1) q^{\frac{1}{n-1}}$ the integral along the vertical segment of $C_K$, up to a constant, is bounded by $K^{-m+1/2}$. Therefore, the integral vanishes in the limit $K\to \infty$.

\subsection*{Acknowledgments}
We would like to thank the anonymous referees for many useful
suggestions that helped us improve the exposition.
The research of T.M. is partially supported by Grant-in-Aid (Kiban C)
17K05193 and JP22K03265. This work is supported by the World Premier International Research
Center Initiative (WPI Initiative), MEXT, Japan.

\pdfbookmark[1]{References}{ref}
\LastPageEnding


\begin{thebibliography}{99}
\footnotesize\itemsep=0pt

\bibitem{AGuV}
Arnold V.I., Gusein-Zade S.M., Varchenko A.N., Singularities of differentiable
 maps. {V}ol.~{II}. {M}onodromy and asymptotics of integrals, \textit{Monog.
 Math.}, Vol.~82, \href{https://doi.org/10.1007/978-0-8176-8343-6}{Birkh\"auser}, Boston, MA, 2012.

\bibitem{Bat}
Bateman H., Erd\'elyi A., Higher transcendental functions. {V}ol.~{I},
 McGraw-Hill Book Co., Inc., New York, 1953.

\bibitem{Ba}
Bayer A., Semisimple quantum cohomology and blowups, \href{https://doi.org/10.1155/S1073792804140907}{\textit{Int. Math. Res.
 Not.}} \textbf{2004} (2004), 2069--2083, \href{https://arxiv.org/abs/math.AG/0403260}{arXiv:math.AG/0403260}.

\bibitem{Be}
Behrend K., Gromov--{W}itten invariants in algebraic geometry, \href{https://doi.org/10.1007/s002220050132}{\textit{Invent.
 Math.}} \textbf{127} (1997), 601--617, \href{https://arxiv.org/abs/alg-geom/9601011}{arXiv:alg-geom/9601011}.

\bibitem{BF_intnormalcone}
Behrend K., Fantechi B., The intrinsic normal cone, \href{https://doi.org/10.1007/s002220050136}{\textit{Invent. Math.}}
 \textbf{128} (1997), 45--88, \href{https://arxiv.org/abs/alg-geom/9601010}{arXiv:alg-geom/9601010}.

\bibitem{BeMa}
Behrend K., Manin Yu., Stacks of stable maps and {G}romov--{W}itten invariants,
 \href{https://doi.org/10.1215/S0012-7094-96-08501-4}{\textit{Duke Math.~J.}} \textbf{85} (1996), 1--60, \href{https://arxiv.org/abs/alg-geom/9506023}{arXiv:alg-geom/9506023}.

\bibitem{Cotti2021}
Cotti G., Degenerate {R}iemann--{H}ilbert--{B}irkhoff problems, semisimplicity,
 and convergence of {WDVV}-potentials, \href{https://doi.org/10.1007/s11005-021-01427-9}{\textit{Lett. Math. Phys.}} \textbf{111}
 (2021), 99, 44~pages, \href{https://arxiv.org/abs/2011.04498}{arXiv:2011.04498}.

\bibitem{DF}
Dixon A.L., Ferrar W.L., A class of discontinuous integrals,
 \href{https://doi.org/10.1093/qmath/os-7.1.81}{\textit{Quart.~J.~Math.}} \textbf{7} (1936), 81--96.

\bibitem{Du}
Dubrovin B., Geometry of {$2$}{D} topological field theories, in Integrable
 {S}ystems and {Q}uantum groups ({M}ontecatini {T}erme, 1993), \textit{Lecture
 Notes in Math.}, Vol. 1620, \href{https://doi.org/10.1007/BFb0094793}{Springer}, Berlin, 1996, 120--348,
 \href{https://arxiv.org/abs/hep-th/9407018}{arXiv:hep-th/9407018}.

\bibitem{Dubrovin1998}
Dubrovin B., Geometry and analytic theory of {F}robenius manifolds,
 \textit{Doc. Math.} \textbf{2} (1998), 315--326, \href{https://arxiv.org/abs/math.AG/9807034}{arXiv:math.AG/9807034}.

\bibitem{Dubrovin1999}
Dubrovin B., Painlev\'e transcendents in two-dimensional topological field
 theory, in The {P}ainlev\'e {P}roperty, CRM Ser. Math. Phys., \href{https://doi.org/10.1007/978-1-4612-1532-5_6}{Springer}, New
 York, 1999, 287--412, \href{https://arxiv.org/abs/math.AG/9803107}{arXiv:math.AG/9803107}.

\bibitem{DuZ}
Dubrovin B., Zhang Y., Normal forms of hierarchies of integrable {PDE}s,
 {F}robenius manifolds and {G}romov--{W}itten invariants,
 \href{https://arxiv.org/abs/math.DG/0108160}{arXiv:math.DG/0108160}.

\bibitem{FoFu2016}
Fomenko A., Fuchs D., Homotopical topology, 2nd ed., \textit{Grad. Texts in Math.}, Vol.~273, \href{https://doi.org/10.1007/978-3-319-23488-5}{Springer}, Cham, 2016.

\bibitem{FGM2010}
Frenkel E., Givental A., Milanov T., Soliton equations, vertex operators, and
 simple singularities, \href{https://doi.org/10.1007/s11853-010-0035-6}{\textit{Funct. Anal. Other Math.}} \textbf{3} (2010),
 47--63, \href{https://arxiv.org/abs/0909.4032}{arXiv:0909.4032}.

\bibitem{Fu}
Fulton W., Intersection theory, 2nd ed., \textit{Ergeb. Math. Grenzgeb.~(3)}, \href{https://doi.org/10.1007/978-1-4612-1700-8}{Springer},
 Berlin, 1998.

\bibitem{GGI2016}
Galkin S., Golyshev V., Iritani H., Gamma classes and quantum cohomology of
 {F}ano manifolds: {G}amma conjectures, \href{https://doi.org/10.1215/00127094-3476593}{\textit{Duke Math.~J.}} \textbf{165}
 (2016), 2005--2077, \href{https://arxiv.org/abs/1404.6407}{arXiv:1404.6407}.

\bibitem{Gat}
Gathmann A., Gromov--{W}itten invariants of blow-ups, \textit{J.~Algebraic
 Geom.} \textbf{10} (2001), 399--432, \href{https://arxiv.org/abs/math.AG/9804043}{arXiv:math.AG/9804043}.

\bibitem{GM}
Gelfand S.I., Manin Yu.I., Methods of homological algebra, 2nd~ed., \textit{Springer
 Monog. Math.}, \href{https://doi.org/10.1007/978-3-662-12492-5}{Springer}, Berlin, 2003.

\bibitem{Giv-AnKdV}
Givental A., {$A_{n-1}$} singularities and {$n$}{K}d{V} hierarchies,
 \href{https://doi.org/10.17323/1609-4514-2003-3-2-475-505}{\textit{Mosc. Math.~J.}} \textbf{3} (2003), 475--505,
 \href{https://arxiv.org/abs/math.AG/0209205}{arXiv:math.AG/0209205}.

\bibitem{Givental:qqh}
Givental A.B., Gromov--{W}itten invariants and quantization of quadratic
 {H}amiltonians, \href{https://doi.org/10.17323/1609-4514-2001-1-4-551-568}{\textit{Mosc. Math.~J.}} \textbf{1} (2001), 551--568,
 \href{https://arxiv.org/abs/math.AG/0108100}{arXiv:math.AG/0108100}.

\bibitem{GM2005}
Givental A.B., Milanov T.E., Simple singularities and integrable hierarchies,
 in The {B}readth of {S}ymplectic and {P}oisson {G}eometry, \textit{Progr.
 Math.}, Vol.~232, \href{https://doi.org/10.1007/0-8176-4419-9_7}{Birkh\"auser}, Boston, MA, 2005, 173--201,
 \href{https://arxiv.org/abs/math.AG/0307176}{arXiv:math.AG/0307176}.

\bibitem{GR}
Grauert H., Remmert R., Coherent analytic sheaves, \textit{Grundlehren Math.
 Wiss.}, Vol.~265, \href{https://doi.org/10.1007/978-3-642-69582-7}{Springer}, Berlin, 1984.

\bibitem{hertling_2002}
Hertling C., Frobenius manifolds and moduli spaces for singularities,
 \textit{Cambridge Tracts in Math.}, Vol. 151, \href{https://doi.org/10.1017/CBO9780511543104}{Cambridge University Press},
 Cambridge, 2002.

\bibitem{HerMaTe}
Hertling C., Manin Yu.I., Teleman C., An update on semisimple quantum cohomology
 and {$F$}-manifolds, \href{https://doi.org/10.1134/S0081543809010088}{\textit{Proc. Steklov Inst. Math.}} \textbf{264} (2009),
 62--69, \href{https://arxiv.org/abs/0803.2769}{arXiv:0803.2769}.

\bibitem{Iritani:gamma_structure}
Iritani H., An integral structure in quantum cohomology and mirror symmetry for
 toric orbifolds, \href{https://doi.org/10.1016/j.aim.2009.05.016}{\textit{Adv. Math.}} \textbf{222} (2009), 1016--1079,
 \href{https://arxiv.org/abs/0903.1463}{arXiv:0903.1463}.

\bibitem{Ma}
Manin Yu.I., Frobenius manifolds, quantum cohomology, and moduli spaces,
 \textit{Amer. Math. Soc. Colloquium Publications}, Vol.~47, \href{https://doi.org/10.1090/coll/047}{American
 Mathematical Society}, Providence, RI, 1999.

\bibitem{MM1997}
Manin Yu.I., Merkulov S.A., Semisimple {F}robenius (super)manifolds and quantum
 cohomology of {${\mathbf P}^r$}, \href{https://doi.org/10.12775/TMNA.1997.006}{\textit{Topol. Methods Nonlinear Anal.}}
 \textbf{9} (1997), 107--161, \href{https://arxiv.org/abs/alg-geom/9702014}{arXiv:alg-geom/9702014}.

\bibitem{Milanov:p2}
Milanov T., The period map for quantum cohomology of {$\mathbb{P}^2$},
 \href{https://doi.org/10.1016/j.aim.2019.05.011}{\textit{Adv. Math.}} \textbf{351} (2019), 804--869, \href{https://arxiv.org/abs/1706.04323}{arXiv:1706.04323}.

\bibitem{MilSa}
Milanov T., Saito K., Primitive forms and vertex operators, {i}n preparation.

\bibitem{Nori-zariski_conj}
Nori M.V., Zariski's conjecture and related problems, \href{https://doi.org/10.24033/asens.1450}{\textit{Ann. Sci. \'Ecole
 Norm. Sup.}} \textbf{16} (1983), 305--344.

\bibitem{Orlov}
Orlov D.O., Projective bundles, monoidal transformations, and derived
 categories of coherent sheaves, \href{https://doi.org/10.1070/IM1993v041n01ABEH002182}{\textit{Russian Acad. Sci. Izv. Math.}}
 \textbf{41} (1993), 133--141.

\bibitem{PK2001}
Paris R.B., Kaminski D., Asymptotics and {M}ellin--{B}arnes integrals,
 \textit{Encyclopedia of Math. Appl.}, Vol.~85, \href{https://doi.org/10.1017/CBO9780511546662}{Cambridge University Press},
 Cambridge, 2001.

\bibitem{Saito1983}
Saito K., Period mapping associated to a primitive form, \href{https://doi.org/10.2977/prims/1195182028}{\textit{Publ. Res.
 Inst. Math. Sci.}} \textbf{19} (1983), 1231--1264.

\bibitem{MSaito1989}
Saito M., On the structure of {B}rieskorn lattice, \href{https://doi.org/10.5802/aif.1157}{\textit{Ann. Inst. Fourier
 (Grenoble)}} \textbf{39} (1989), 27--72.

\bibitem{Shimada:ln}
Shimada I., Lectures on {Z}ariski {V}an-{K}ampen theorem, available at
 \url{http://www.math.sci.hiroshima-u.ac.jp/~shimada/LectureNotes/LNZV.pdf}.

\end{thebibliography}
\end{document}